\def\ps@pprintTitle{%
   \let\@oddhead\@empty
   \let\@evenhead\@empty
   \def\@oddfoot{\reset@font\hfil\thepage\hfil}
   \let\@evenfoot\@oddfoot
}
\newcommand\rurl[1]{%
  \href{http://#1}{\nolinkurl{#1}}%
}
\theoremstyle{definition}
\newtheorem{defn}{Definition}[section]
\newtheorem{rema}[defn]{Remark}
\newtheorem*{acknow}{Acknowledgments}
\theoremstyle{plain}
\newtheorem{thm}[defn]{Theorem}
\newtheorem{conj}[defn]{Conjecture}
\newtheorem{corl}[defn]{Corollary}
\newtheorem{theo}{Theorem}[subsection]
\newtheorem{lema}[theo]{Lemma}
\newtheorem{hq}[theo]{Corollary}
\newtheorem{md}[theo]{Proposition}
\theoremstyle{definition}
\newtheorem{rem}[theo]{Remark}
\def\leq{\leqslant}
\def\geq{\geqslant}
\def\DD{D\kern-.7em\raise0.4ex\hbox{\char '55}\kern.33em}
\def\blfootnote{\xdef\@thefnmark{}\@footnotetext}
\begin{document}
\fontsize{11.5pt}{11.5}\selectfont

\begin{frontmatter}

\title{{\bf On Singer's conjecture for the fourth algebraic transfer\\ in certain generic degrees}}

%\title{{\bf Structure of the space of $GL_4(\mathbb Z_2)$-coinvariants $\mathbb Z_2\otimes_{GL_4(\mathbb Z_2)} P_AH_*(\mathbb Z_2^4, \mathbb Z_2)$ in some generic degrees and its application}}

\author{{\bf \DD\d{\u A}NG V\~O PH\'UC}\href{https://orcid.org/0000-0002-6885-3996}{\includegraphics[scale=0.09]{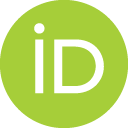}}}
%\author{{\bf \DD\d{\u A}NG V\~O PH\'UC}}
\address{{\fontsize{10pt}{10}\selectfont Department of Information Technology, FPT University, Quy Nhon A.I Campus,\\ An Phu Thinh New Urban Area, Quy Nhon City, Binh Dinh, Vietnam \\[2mm] 
%\textit{(With profound regards, this work is dedicated to Professors Daciberg Gon\c{c}alves and William Singer in commemoration of the special occasion of their birthdays)}
}}
%}}

\cortext[]{\href{mailto:dangphuc150488@gmail.com, phucdv14@fpt.edu.vn}{\texttt{Email Address: dangphuc150488@gmail.com\quad ORCID: \url{https://orcid.org/0000-0002-6885-3996}\\ The present paper serves as an erratum to our previously published work titled ``\textcolor[rgb]{1.0,0.0,0.0}{On Singer's conjecture for the fourth algebraic transfer in certain generic degrees'' [J. Homotopy Relat. Struct. 19, 431--473 (2024)]}. This corrigendum addresses minor computational discrepancies that were identified in \textcolor[rgb]{1.0,0.0,0.0}{Theorems \ref{dlc2}}, \ref{dlc3}  \textcolor[rgb]{1.0,0.0,0.0}{Proposition \ref{mdbsc2}}, \textcolor[rgb]{1.0,0.0,0.0}{Lemmata \ref{bdc31}, \ref{bdc32}, \ref{bdct2}} of the original manuscript. These updates have no effect on the formulation or conclusions of the primary theorems ( \textcolor[rgb]{1.0,0.0,0.0}{Theorems \ref{dlctq1}} and  \textcolor[rgb]{1.0,0.0,0.0}{\ref{dlctq2}}) established in the work. \textcolor[rgb]{1.0,0.0,0.0}{All the corrected results can be verified directly by using the program suite of our new algorithms presented in \cite{Phuc2-1, Phuc2-2, Phuc3, Phuc4}.} }}}
%For the reader's convenience, all corrections and modifications are distinctly marked in red text.

\begin{abstract}
Let $A$ be the Steenrod algebra over the finite field $k := \mathbb Z_2$ and $G(q)$ be the general linear group of rank $q$ over $k.$ A well-known open problem in algebraic topology is the explicit determination of the cohomology groups of the Steenrod algebra, ${\rm Ext}^{q, *}_A(k, k),$ for all homological degrees $q \geq 0.$ The Singer algebraic transfer of rank $q,$ formulated by William Singer in 1989, serves as a valuable method for the description of such Ext groups. This transfer maps from the coinvariants of a certain representation of $G(q)$ to ${\rm Ext}^{q, *}_A(k, k).$ Singer predicted that the algebraic transfer is always injective, but this has gone unanswered for all $q\geq 4.$ This paper establishes Singer's conjecture for rank four in the generic degrees $n = 2^{s+t+1} +2^{s+1} - 3$ whenever $t\neq 3$ and $s\geq 1,$ and  $n = 2^{s+t} + 2^{s} - 2$ whenever $t\neq 2,\, 3,\, 4$ and $s\geq 1.$  In conjunction with our previous results, this completes the proof of the Singer conjecture for rank four. All the obtained results can be verified directly by using the program suite of our novel algorithms presented in \cite{Phuc2-1, Phuc2-2, Phuc3, Phuc4}. We note that although Singer's conjecture still holds for the case of rank 4, it no longer holds for rank 6, as announced in our most recent work \cite{Phuc4}.

\end{abstract}

\begin{keyword}

Steenrod algebra; Peterson hit problem; Algebraic transfer; Lambda algebra

\MSC[2010] 55S10, 55S05, 55T15, 55R12
\end{keyword}

\end{frontmatter}

%\tableofcontents

\medskip

\section*{Short communication}

\textbf{Before delving into the details of this paper, we wish to highlight that our work proves the validity of Singer's conjecture for the fourth algebraic transfer. In conjunction with prior results, Singer's conjecture is therefore valid for ranks $\leq 4.$ However, given that the conjecture was disproven for rank 6 in our latest work \cite{Phuc4} (a result that was fully computationally verified with the \texttt{OSCAR} computer algebra system), the investigation of Singer's conjecture for the general case after nearly four decades has been brought to a close.}

\section{Introduction}

\setcounter{page}{1}

The symbol $k$ will be used to represent the prime field $\mathbb Z_2.$ Let $A$ be the classical Steenrod algebra and $G(q)$ be the general linear group of rank $q$ over $k.$ Consider $P_q=k[x_1,\ldots,x_q]$ to be the polynomial ring on $q$ generators $x_1,\ldots,x_q$, with $\deg(x_i) = 1$ for $i = 1,\, 2,\, \ldots, q.$ It is well known that $P_q$ has commuting actions of $A$ and $G(q)$ and is graded. The $A$-indecomposables can simply be defined as $\pmb{Q}^{q} := k\otimes_{A} P_q.$ It is obvious that these form a graded $k$-vector space; moreover, it is a (quotient) $G(q)$-module. Let $(P_q)_n$ be the $k$-subspace of $P_q$ generated by the homogeneous polynomials of the non-negative degree $n$ in $P_q.$ Then $P_q = \{(P_q)_n \}_{n\geq 0}.$ For a non-negative integer $n,$ denote by $$\pmb{Q}_n^{q}:= (k\otimes_A P_q)_n =  (P_q)_n/\overline{A}(P_q)_n$$ the $k$-vector subspace of $\pmb{Q}^{q}$ consisting of all the classes represented by the elements in $(P_q)_n.$ Here $\overline{A}(P_q)_n:= (P_q)_n\cap \overline{A}P_q = (P_q)_n\cap \sum_{t > 0}{\rm Im}(Sq^{t}),$ where $\overline{A}$ is the kernel of the epimorphism of graded $k$-algebras $A\longrightarrow k$ given by $Sq^{0}\longmapsto 1$ and $Sq^{i}\longmapsto 0$ for all $i > 0.$  The action of $G(q)$ commutes with that of the Steenrod squares $Sq^{t}$ on $P_q,$ and therefore, there exists an action of the general linear group $G(q)$ on $\pmb{Q}^{q}_n.$

We write $(P_q)^{*} = \{(P_q)^{*}_n\}_{n\geq 0}$ for the dual of $P_q,$ which is also the divided power algebra $\Gamma(a_1^{(1)}, \ldots, a_q^{(1)})$ of $q$ generators $a_1^{(1)}, \ldots, a_q^{(1)},$ where $a_i^{(1)}$ is the linear dual to $x_i.$ We here identify $a^{(0)}_i$ with the identity map of $k.$ The product on $(P_q)^{*}$ is the explicit expression of the divided power product. The right $A$-module structure of this algebra is described by $$(a^{(n)}_i)Sq^{t} = \binom{n-t}{t}a^{(n-t)}_i$$ together with the Cartan formula. From now on, we denote by $$\mathscr {P}_A((P_q)_n^{*}) := \langle \{\theta\in (P_q)_n^{*}:\, (\theta)Sq^{i} = 0,\, \mbox{for all $i > 0$}\} \rangle = {\rm Ext}^{0, n}_A(P_q, k)$$ the space of primitive homology classes as a representation of $G(q)$ for all $n,$ and the coinvariant $k\otimes _{G(q)} \mathscr {P}_A((P_q)_n^{*})$ is isomorphic as an $k$-vector space to $(\pmb{Q}_n^{q})^{G(q)},$ the subspace of $G(q)$-invariants of $\pmb{Q}_n^{q}.$ 

Going back to the Steenrod algebra and its application, a well-known related open problem, as is known, is to determine the set of homotopy classes $[\mathbb S^{n+q}, \mathbb S^{n}]$ of continuous based map between spheres. For $n+q > 0,$ these sets have a natural group structure, and they are abelian when $n+q > 1.$ The Freudenthal suspension theorem showed a relationship between the groups $[\mathbb S^{n+q}, \mathbb S^{n}]$ for fixed $q$ and varying $n.$ The supension map induces a sequence:
$$ \cdots \longrightarrow [\mathbb S^{n-1+q}, \mathbb S^{n-1}]\longrightarrow [\mathbb S^{n+q}, \mathbb S^{n}]\longrightarrow [\mathbb S^{n+1+q}, \mathbb S^{n+1}]\longrightarrow \cdots$$ of group homomorphisms, and when $n+q > 1,$ these homomorphisms are isomorphisms. Then, the stable value $[\mathbb S^{n+q}, \mathbb S^{n}]$ for $n$ sufficiently large is known as the $q$-th stable homotopy group of spheres, $\pi_q.$ The cohomology of $A$ with $k$-coefficients, ${\rm Ext}_A(k, k) = \{{\rm Ext}^{q, t}_A(k, k)\}_{q\geq 0,\, t\geq 0}$ features prominently in homotopy theory as the $E_2$-term of the Adams (bigraded) spectral sequence $E_2^{q, t} = {\rm Ext}^{q, t}_A(k, k)$ for the computation of groups $\pi_q.$ The graded algebra ${\rm Ext}_A(k, k)$ has attracted the attention of numerous distinguished mathematicians, such as Adams \cite{J.A2}, Adem \cite{Adem}, Wall \cite{Wall}, Wang \cite{Wang}, Lin \cite{W.L}, and others. Their efforts have been directed towards unraveling the complex structure of this algebra. Nonetheless, its true nature remains enigmatic up to the present time. The algebraic transfer, also known as the cohomological transfer, as defined by Singer \cite{W.S1}, is anticipated to be an invaluable instrument in the exploration of Ext groups. This transfer is a $k$-linear map
$$ Tr_q^{A}: k\otimes _{G(q)} \mathscr {P}_A((P_q)_n^{*}) \longrightarrow {\rm Ext}_A^{\dim k^{q}, \dim k^{q}+n}(k, k) = {\rm Ext}_A^{q, q+n}(k, k).$$
This Singer transfer has been studied by many topologists like Boardman \cite{J.B}, Bruner et al. \cite{B.H.H}, Ch\ohorn n and H\`a \cite{C.Ha},  H\`a \cite{Ha}, H\uhorn ng \cite{Hung}, H\uhorn ng and Qu\`ynh \cite{H.Q}, Nam \cite{Nam}, the present author \cite{Phuc0, Phuc1, Phuc2}, etc. However, it is also not a straightforward task to explicitly understand the structure of the (co)domain of the transfer. We also understand that $\dim k \otimes_{G(q)} \mathscr{P}_A((P_q)_n^*) = \dim (\pmb{Q}_n^q)^{G(q)}.$ Hence, to explore the domain of the Singer transfer, we usually work within its dual space $(\pmb{Q}_n^q)^{G(q)}.$ Calculating the dimension of $(\pmb{Q}_n^q)^{G(q)}$ necessitates an explicit basis for the vector space $\pmb{Q}_n^q.$ The dimension identification problem for $\pmb{Q}_n^q$ is known as the Peterson "hit" problem in the literature \cite{F.P}. This is why there is a strong connection between the investigation of the Singer transfer's domain and Peterson's hit problem. A wealth of results with abundant references on these subjects can be found in works by the present author \cite{Phuc0, Phuc2}, Sum \cite{N.S, N.S1}, and notably, the monographs authored by Walker and Wood \cite{W.W3, W.W4}.

The following Kameko maps \cite{M.K} are useful for analyzing $\pmb{Q}^{q}_{n}$ and the domain of the Singer transfer: For each $n\geq 0,$ the \textit{down Kameko map} $\overline {Sq}^0: (P_q)_{2n+q}\longrightarrow (P_q)_n$ is a surjective linear map defined on monomials by $\overline {Sq}^0(f) = g$ if $f = \prod_{1\leq i\leq q}x_ig^{2}$ and $\overline {Sq}^0(f) = 0$ otherwise. The \textit{up Kameko map} $\varphi: (P_q)_{n}\longrightarrow (P_q)_{2n+q}$ is an injective linear map defined on monomials by $\varphi(g) = \prod_{1\leq i\leq q}x_ig^{2}.$ Let us consider the arithmetic function $$\mu(n) = \mbox{min}\{*\in \mathbb N:\ \alpha(n + *)\leq *\},$$ where the $\alpha$ function counts the number of ones in the binary expansion of its argument. It has been demonstrated that if $\mu(2n+q) = q$, then $\varphi$ induces a map of $kG(q)$-modules $\varphi: \pmb{Q}^{q}_{n}\longrightarrow \pmb{Q}^{q}_{2n+q}$ that is the inverse of $\overline{Sq}^{0}$ \cite{M.K}. In particular, Peterson \cite{F.P} conjectured that $\pmb{Q}^{q}_n$ is trivial if and only if $\mu(n) > q.$ This was finally proved by Wood \cite{R.W} after being investigated by a number of researchers. Thus, based on these aforementioned facts, it is sufficient to compute the domain of $Tr_q^{A}$ in degrees $n$ such that $\mu(n) < q.$

Besides Singer's transfer, the (mod two) lambda algebra \cite{Bousfield} can also be used to describe Ext groups. One can view $\Lambda$ as the $E_1$-term of the Adams spectral sequence converging to the 2-component of the stable homotopy groups of spheres. As is well known, the lambda algebra $\Lambda$ is homogeneous quadratic. According to \cite{Wang}, there is a unique differential algebra endomorphism $\theta: \Lambda\longrightarrow \Lambda$ with $\theta(\lambda_{n}) = \lambda_{2n+1}.$ This one-to-one mapping induces the well-known endomorphism $Sq^{0}$ of ${\rm Ext}_A(k, k)$. It was shown that this classical $Sq^{0}$ commutes with the so-called Kameko $Sq^0$ through Singer's transfer. We denote by $\Lambda^{q, n}$ the $k$-vector subspace of $\Lambda$ generated by all monomials of length $q.$ In \cite{C.Ha}, Ch\ohorn n and H\`a defined a noteworthy linear  transformation $\psi_q: (P_q)_n^{*}\longrightarrow \Lambda^{q, n},$ which is determined as follows: $\psi_q(\prod_{1\leq s\leq q}a_{s}^{(j_{s})}) = \sum_{k\geq j_1}\lambda_k\psi_{q-1}((\prod_{2\leq s\leq q}a_{s}^{(j_{s})})Sq^{k-j_1}),$ where $\prod_{1 \leq s \leq q} a_{s}^{(j_{s})}$ is any element in $(P_q)_n^{*}$. Regarded as the $E_1$-level of the Singer transfer, this homomorphism yields the following interesting outcome, as demonstrated by \cite{C.Ha}:

\begin{thm}\label{dlCH}
With the above notation, if $\zeta\in \mathscr {P}_A((P_q)_n^{*}),$ then $\psi_q(\zeta)$ is a cycle in $\Lambda$ and is a representative of $Tr_q^{A}([\zeta]).$
\end{thm}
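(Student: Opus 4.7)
The plan is to establish both assertions --- that $\psi_q(\zeta)$ is a cycle and that it is a chain-level representative of $Tr_q^A([\zeta])$ --- by a single induction on the rank $q$, taking full advantage of the recursive definition of $\psi_q$ given just before the statement.

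For the base case $q = 1$, a primitive element is (up to scalars) $\zeta = a_1^{(n)}$ subject to $\binom{n-t}{t} \equiv 0 \pmod 2$ for all $t > 0$, and $\psi_1(\zeta) = \lambda_n$. Direct inspection of the lambda-algebra differential $d(\lambda_n) = \sum_{i+j = n-1} \binom{j}{i}\, \lambda_i \lambda_j$ shows that $d(\lambda_n) = 0$ under exactly the same binomial congruences that characterize primitivity of $a_1^{(n)}$, while the equality of $\lambda_n$ with $Tr_1^A([a_1^{(n)}])$ is the well-known identification of length-one lambda monomials with first-row transfer classes. For the inductive step, I would decompose $\zeta = \sum_m \xi_m \otimes a_q^{(m)}$ with $\xi_m \in (P_{q-1})^*_{n-m}$, write
\[
\psi_q(\zeta) \;=\; \sum_{m}\sum_{k \geq m}\psi_{q-1}\!\bigl(\xi_m\, Sq^{k-m}\bigr)\,\lambda_k,
\]
and compute $d\bigl(\psi_q(\zeta)\bigr)$ using a Leibniz-type rule $d(\omega \lambda_k) = d(\omega)\lambda_k + \omega\, d(\lambda_k)$ in $\Lambda$. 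The first summand is controlled by the inductive hypothesis applied to $\xi_m\, Sq^{k-m}$, while the second, after reducing $\omega\, d(\lambda_k)$ to normal form via Adem relations, should reorganize (by the Cartan formula governing the right $A$-action on $(P_q)^*$) into $\sum_{i > 0}\psi_q(\zeta\, Sq^i)\cdot(\text{terms})$, which vanishes because $\zeta$ is primitive.

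For the second assertion, I would identify $\psi_q$ with the chain map that arises at the $E_1$-page of the Adams spectral sequence when one transports the Singer construction through the cobar description of $\Lambda$. Concretely, the recursive clause $\psi_q(\prod a_s^{(j_s)}) = \sum_{k \geq j_q} \psi_{q-1}\bigl((\prod_{s<q} a_s^{(j_s)})\,Sq^{k-j_q}\bigr)\lambda_k$ is exactly the bookkeeping that appends one more step to a minimal-resolution-style cocycle: the last factor $\lambda_k$ records the newly adjoined generator and the internal $Sq^{k-j_q}$ records its pairing with the previous level, which is the very formula Singer uses to define $Tr_q^A$ at chain level. Once $\psi_q(\zeta)$ is known to be a cycle, this identification is unambiguous modulo boundaries and delivers the desired equality $[\psi_q(\zeta)] = Tr_q^A([\zeta])$.

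The principal obstacle I anticipate is the Adem-relation bookkeeping required in the inductive step: one has to rewrite the terms arising from $\omega\cdot d(\lambda_k)$ --- which are a priori products of the form $\psi_{q-1}(\cdots)\lambda_i\lambda_j$ that are not in admissible form --- into normal form in $\Lambda$ and match the resulting binomial coefficients with those produced by the Cartan formula applied to $\bigl(\prod a_s^{(j_s)}\bigr)Sq^t$. This matching ultimately reduces to the same combinatorial coincidence between the Adem relations and the formula $(a_i^{(n)})Sq^t = \binom{n-t}{t}a_i^{(n-t)}$ that makes $H^*(\Lambda) \cong \mathrm{Ext}_A(k,k)$ in the first place; nevertheless, tracking the indices cleanly across the recursion is the delicate part of the argument.
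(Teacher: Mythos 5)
There is no proof in the present paper to compare against: Theorem \ref{dlCH} is cited verbatim from Ch\ohorn n--H\`a \cite{C.Ha}, so the review below assesses your argument on its own terms rather than against an in-paper argument.

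Your inductive step has a genuine gap in the cycle part. You decompose $\zeta = \sum_m \xi_m \otimes a_q^{(m)}$, expand $d\bigl(\psi_q(\zeta)\bigr) = \sum_{m,k} d\bigl(\psi_{q-1}(\xi_m Sq^{k-m})\bigr)\lambda_k + \sum_{m,k}\psi_{q-1}(\xi_m Sq^{k-m})\,d(\lambda_k)$, and propose to ``control'' the first sum by the inductive hypothesis applied to $\xi_m Sq^{k-m}$. But the inductive hypothesis as you have stated it says only ``primitive $\Rightarrow$ cocycle,'' and there is no reason for $\xi_m Sq^{k-m}$ to be primitive in $(P_{q-1})^*$ --- primitivity of $\zeta$ imposes coupled linear relations among the various $\xi_m Sq^a$, not primitivity of each one individually. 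So the first summand does not drop out, and the induction as set up does not close. What you actually need, and what the argument silently assumes, is a \emph{stronger} inductive claim: a chain-level Leibniz-type identity of the form $d\bigl(\psi_q(\eta)\bigr) = \sum_{i>0}\psi_q(\eta\,Sq^{i})\,\lambda_{i-1}$ (or a suitably normalized variant) valid for \emph{all} $\eta\in (P_q)_n^*$, from which the cocycle assertion for primitive $\eta$ is a one-line corollary. Proving that identity is exactly the Adem-vs-Cartan bookkeeping you flag as the ``principal obstacle,'' so the hard content has been relocated rather than discharged; unless you state and prove this strengthened identity, the induction does not go through.

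The second assertion is also not yet a proof. Saying that $\psi_q$ ``is the chain map that arises at the $E_1$-page of the Adams spectral sequence'' and that the recursion is ``exactly the bookkeeping\ldots which is the very formula Singer uses'' asserts the conclusion rather than deriving it. To turn this into an argument you would need to exhibit the specific chain-level model of $Tr_q^A$ you are comparing against (Singer's original construction via invariant-theoretic localization, or Boardman's chain-level representation as in \cite{J.B}) and produce an explicit chain homotopy between $\psi_q$ and that model on primitives, showing the two agree modulo boundaries. As written, the ``once $\psi_q(\zeta)$ is a cycle, this identification is unambiguous modulo boundaries'' step is circular: the identification \emph{is} what must be established.
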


During the 1980s, Singer believed that the transfer is always an isomorphism. However, in \cite{W.S1}, he showed that it is not surjective in the rank 5 case by showing that it is not an epimorphism, citing his proof that $Ph_1\in {\rm Ext}_A^{5, 14}(k, k)$ does not belong to the image of $Tr_5^{A}.$ He then proposed the following conjecture:

\begin{conj}[see \cite{W.S1}]\label{gtS}
$Tr_q^{A}$ is a one-to-one map for any $q.$
\end{conj}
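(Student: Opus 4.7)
The plan is not to attempt Conjecture \ref{gtS} in full generality---it remains open for every $q\geq 4$---but to establish the rank-four case in the two families of generic degrees announced in the abstract, namely
\[ n = 2^{s+t+1}+2^{s+1}-3 \quad (t\neq 3,\ s\geq 1), \qquad n = 2^{s+t}+2^{s}-2 \quad (t\neq 2,3,4,\ s\geq 1), \]
which, combined with the author's prior results, will complete $q=4$. The guiding triangle is: the domain $k\otimes_{G(4)}\mathscr{P}_A((P_4)_n^{*})$ on one side, its image in $\Lambda^{4,n}$ under $\psi_4$ on the second, and the known structure of ${\rm Ext}_A^{4,\,4+n}(k,k)$ on the third. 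Because of Theorem \ref{dlCH}, injectivity at each $n$ is equivalent to showing that distinct nonzero invariant classes of $\pmb{Q}_n^{4}$ are carried to linearly independent cycles in $\Lambda$ whose homology classes in ${\rm Ext}_A^{4,\,4+n}(k,k)$ are nonzero.

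First I would systematize the Kameko reduction. Whenever $\mu(2n+4)=4$, Kameko's $\varphi$ furnishes a $kG(4)$-isomorphism $\pmb{Q}_n^{4}\cong \pmb{Q}_{2n+4}^{4}$; since the classical $Sq^{0}$ on ${\rm Ext}_A$ corresponds under $Tr_4^{A}$ to $\overline{Sq}^{0}$, iterating $\varphi$ collapses each parameter family to a small set of ``base'' degrees indexed by the small values of $s$ and $t$. In each base degree I would produce an admissible-monomial basis of $(P_4)_n/\overline{A}(P_4)_n$ by the standard spike-plus-hit-relation procedure, and then cut down to $(\pmb{Q}_n^{4})^{G(4)}$ by imposing invariance under permutations of $x_1,\dots,x_4$ together with the transvection $x_1\mapsto x_1+x_2$, which generate $G(4)$.

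For each surviving invariant $[\zeta]$ I would compute $\psi_4(\zeta)\in \Lambda^{4,n}$ from the inductive formula just before Theorem \ref{dlCH}, reducing to admissible $\lambda$-monomials using the lambda-algebra differential, and then identify the resulting homology class against the tabulated indecomposables of ${\rm Ext}_A^{4,*}(k,k)$ from Lin \cite{W.L} (the families $d_i,e_i,f_i,g_i,p_i,D_3(i),\ldots$). The excluded values of $t$ in the two families correspond exactly to those base degrees where the target Ext group meets already known cases handled in earlier papers, so they need not be revisited.

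The main obstacle---and the source of the computational corrections this corrigendum is addressing---is the admissible-basis step in the base degrees: the rank-four hit problem produces sizeable lists of admissible monomials, and the verification that a given near-admissible monomial is hit relies on delicate Cartan-formula chains, with easy-to-miss cancellations exactly when $s$ is small or when $t$ is close to the excluded parameters. A secondary difficulty is proving that $\psi_4(\zeta)$ is not a boundary in $\Lambda$: one must argue that the putative preimage under the differential would have to live in a bidegree whose admissible monomials are known to contribute elsewhere, and here the precise match with Lin's tables of ${\rm Ext}_A^{4,*}(k,k)$ is what ultimately rules out accidental vanishing.
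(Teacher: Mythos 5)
Your proposal correctly identifies that the statement is a conjecture proved here only for $q=4$ in the two announced families, and your strategy---Kameko reduction to base degrees, admissible-monomial bases of $\pmb{Q}^4_n$, extraction of $G(4)$-invariants via $\Sigma_4$ together with the transvection $\sigma_4$, and the chain-level map $\psi_4$ into $\Lambda^{4,n}$ matched against the known generators of ${\rm Ext}_A^{4,*}(k,k)$---is exactly the route the paper takes (Subsections \ref{sub2.1}--\ref{sub2.2} and Section 4). The only small imprecision is that the domain of $Tr_4^A$ is the coinvariant $k\otimes_{G(4)}\mathscr{P}_A((P_4)_n^*)$ rather than the invariant $(\pmb{Q}^4_n)^{G(4)}$; the paper works with the latter and dualizes, and your dimension count is unaffected.
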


Nonetheless, even if it is not an isomorphism, it still promises to be a useful tool for understanding Ext groups. It has been verified by Singer himself \cite{W.S1} for $q = 1,\, 2$ and by Boardman \cite{J.B} for $q = 3.$ (In fact, the transfer $Tr_q^{A}$ is an isomorphism in those three cases.) These classical results demonstrate the non-trivial nature of the algebraic transfer. Inspired by the aforementioned achievements, we will examine Conjecture \ref{gtS} for rank $q =4.$ Precisely, in this paper, we aim to explicitly describe the domain and codomain of the Singer algebraic transfer of rank 4 in some generic degrees $n$ satisfying $\mu(n) < 4.$ As immediate consequences, Singer's Conjecture \ref{gtS} holds for $q = 4$ in respective degrees. This has made a significant contribution towards the final verification of Singer's conjecture in the case of homological degree 4. Our approach relies on the techniques developed for the "hit" problem for the polynomial algebra $P_4$ \cite{N.S, N.S1} and the chain-level representation of the fourth transfer homomorphism $Tr_4^{A}$ via the lambda algebra $\Lambda.$ 

It is importan to note that for $q >4,$ Conjecture \ref{gtS} does not hold. Indeed, our most recent work \cite{Phuc4} shows that the conjecture is not true in bidegree $(6, 6+36).$

Now, under the condition $\mu(n) < 4,$  Conjecture \ref{gtS} for rank 4 should be considered solely in the following generic degrees $n$:
\begin{equation}\label{ct}
 \begin{array}{lll}
i)  &n &= 2^{s+1} - t,\ \mbox{$t \leq 3$}, \\[1mm]
ii) &n &= 2^{s+t+1} +2^{s+1} - 3,\\[1mm]
iii) &n &= 2^{s+t} + 2^{s} - 2,\\[1mm]
iv) &n &= 2^{s+t+u} + 2^{s+t} + 2^{s} - 3,
\end{array}
\end{equation}
whenever $s,\, t,\, u$ are positive integers. 

$-$ Case i) was investigated by Sum, as referenced in \cite[Theorem 4.1]{N.S2}. It is worth noting that in this case where $s  = 5,\, t = 3$ and $s = 6,\, t = 2,$ the proof provided in \cite{N.S2} regarding the isomorphism of Singer's transfer in bidegrees $(4, 65)$ and $(4, 130)$ has been shown to be inaccurate. %Sum has also made available a corrected version of this result on arXiv:1710.07895. 
To facilitate readers' understanding and ensure the article is self-contained, let's elucidate the aforementioned event as follows: Concretely, referring to \cite{N.S2}, $\dim k\otimes_{G(4)}\mathscr {P}_A((P_4)_{61}^{*}) = \dim {\rm Ext}_A^{4, 65}(k, k)=0,$ and $\dim k\otimes_{G(4)}\mathscr {P}_A((P_4)_{126}^{*}) = 1= \dim {\rm Ext}_A^{4, 130}(k, k),$ but by Theorem \ref{dlntg} (see Section \ref{s2}), we must have $${\rm Ext}_A^{4,65}(k, k)  = \langle D_3(0)\rangle,\ \mbox{and}\  {\rm Ext}_A^{4, 130}(k, k) = \langle h_0^{2}h_6^{2}, D_3(1)\rangle,$$ where $h_0^{2}h_6^{2}\neq 0,$ and $h_0^{2}h_6^{2}$ lies within ${\rm Im}(Tr_4^{A}),$ while $D_3(s)\neq 0,$ for all $s\geq 0,$ and $D_3(s)$ does not belong to ${\rm Im}(Tr_4^{A})$ (see also Remark \ref{nxcc} in Subsection \ref{sub2.2}). 

$-$ As for Cases ii) and iii) when $t=3,$ these have been resolved by the author in \cite{Phuc1}.

$-$ Case iii) with $t = 2$ and $t = 4,$ as well as case iv), have been established by the author in \cite{Phuc2}.

Thus, to accomplish the verification of Conjecture \ref{gtS} for rank 4 in all positive degrees $n,$ we will address this conjecture in degrees $n$ of type ii) for $t\neq 3$ and type iii) for $t\neq 2,\, 3,\, 4.$ The main outcomes of this study are as follows.

\begin{thm}\label{dlctq1}
Given the generic degree $n$ of type ii), the fourth algebraic transfer $$ Tr_4^{A}: k\otimes_{G(4)}\mathscr {P}_A((P_4)_{n}^{*}) \longrightarrow {\rm Ext}_A^{4, 4+n}(k, k)$$ is an isomorphism for all positive integers $s$ and any $t\neq 3.$
\end{thm}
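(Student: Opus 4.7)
The plan is to follow the template used in the author's previous works \cite{Phuc1, Phuc2} and in Sum \cite{N.S2}: compute both the domain and the codomain of $Tr_4^A$ in bidegree $(4, 4+n)$ explicitly, then match them via the chain-level representative $\psi_4$ supplied by Theorem \ref{dlCH}.

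\textbf{Step 1: the domain.} First I would compute an explicit basis of $k\otimes_{G(4)}\mathscr{P}_A((P_4)_n^{*}) \cong (\pmb{Q}_n^4)^{G(4)}$. Note that $n = 2^{s+t+1}+2^{s+1}-3$ is odd for every $s\geq 1$, so the down-Kameko map $\overline{Sq}^0: \pmb{Q}_n^4 \to \pmb{Q}_{(n-4)/2}^4$ is not defined and the usual Kameko splitting is unavailable; hence $\pmb{Q}_n^4$ must be handled as a single $kG(4)$-module. Starting from the admissible-monomial basis of $(P_4)_n$ modulo hit elements (drawing on the detailed analyses of Sum \cite{N.S, N.S1}), I would organize the basis by weight vector, first extract $\Sigma_4$-invariants by symmetrization, and then impose invariance under a generating transvection such as $x_1 \mapsto x_1 + x_2$. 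The resulting linear system should yield the dimension of $(\pmb{Q}_n^4)^{G(4)}$ and an explicit set of representatives $\zeta_1,\ldots,\zeta_r\in \mathscr{P}_A((P_4)_n^{*})$.

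\textbf{Step 2: the codomain and the matching.} Next I would identify ${\rm Ext}_A^{4, 4+n}(k,k)$ from the known cohomological computations (Lin, Chen, Bruner), summarized in Theorem \ref{dlntg}. In these generic degrees of type ii the group is generated by classes built from Adams generators $h_i$ and by members of the fourth-row indecomposable families, each admitting an explicit $\Lambda$-cocycle representative. For each $\zeta_j$ I would then compute $\psi_4(\zeta_j)\in\Lambda^{4,n}$ recursively from the defining formula of Ch\ohorn n--H\`a, reduce to admissible form using the quadratic relations of $\Lambda$, and identify it with a cocycle representative of an Ext generator. Linear independence of the images $\{\psi_4(\zeta_j)\}$ in Ext gives injectivity of $Tr_4^A$, and a dimension count then upgrades this to an isomorphism. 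Along the way, the $Sq^0$-operation (induced by $\theta(\lambda_i)=\lambda_{2i+1}$ on $\Lambda$) allows an inductive comparison between different values of $s$ at fixed $t$, cutting the number of truly independent cases to verify.

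\textbf{Main obstacles.} The chief difficulty is the combinatorial volume of Step 1: for large $s, t$, the admissible basis of $\pmb{Q}_n^4$ spreads across many weight vectors, and one has to arrange the $G(4)$-invariance calculation so that the answer comes out uniformly in $(s,t)$. A closely related obstacle is the $\psi_4$-computation in Step 2: iterating the recursive definition on long monomials and then reducing to admissible $\Lambda$-form is delicate, and a single sign or index slip at an intermediate stage can alter the conclusion about which Ext classes lie in the image of $Tr_4^A$ (indeed, this is precisely the source of the discrepancies that the present corrigendum corrects). Finally, boundary cases with very small $s$ or the low values $t\in\{1,2\}$ may fail to fit the uniform pattern established for $t\geq 4$, and would need either an ad hoc direct computation or a reference to the earlier works \cite{Phuc1, Phuc2, N.S2}; the excluded value $t = 3$ is handled in \cite{Phuc1} and lies outside the present theorem.
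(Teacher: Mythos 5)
Your plan mirrors the paper's own strategy closely: for each sub-case $t=1$, $t=2$, $t\geq 4$ the paper takes Sum's admissible monomial basis of $\pmb{Q}_n^4$, splits it by weight vector (which substitutes for the unavailable Kameko splitting, exactly as you note), extracts $\Sigma_4$-invariants first and then imposes $\sigma_4$-invariance, dualizes to get $\overline{A}$-annihilated primitives, and evaluates $\psi_4$ on them to land on specific $\Lambda$-cocycles representing the generators of ${\rm Ext}_A^{4,4+n}(k,k)$ listed in Theorem \ref{dlntg}. One small difference worth flagging: in several cases (e.g.\ $s\geq 3$ of Theorem \ref{dlc1}) the paper does not compute $\dim(\pmb{Q}_n^4)^{G(4)}$ purely by invariant theory and then compare; instead it establishes an upper bound from the weight-vector decomposition and gets the matching lower bound by exhibiting an $\overline{A}$-annihilated element whose $\psi_4$-image is already known to be a nonzero Ext class, so the transfer itself is used as a tool to pin down the domain dimension. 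Also, your suggestion that the classical $Sq^0$ on $\Lambda$ cuts the number of $s$-cases is not really exploited here for the domain: since $n$ is odd the Kameko map on $\pmb{Q}^4_n$ is unavailable, and in fact the paper separately treats $s=1$, $s=2$ and $s\geq 3$ for each $t$ with a direct computation, using $Sq^0$-periodicity only on the Ext side to identify the target classes.
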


\begin{thm}\label{dlctq2}
With respect to the generic degree $n$ of type iii), the transfer homomorphism $$ Tr_4^{A}: k\otimes_{G(4)}\mathscr {P}_A((P_4)_{n}^{*}) \longrightarrow {\rm Ext}_A^{4, 4+n}(k, k)$$ is one-to-one but fails to be onto at $s = 1,$ and $t = 7,$ while it is an isomorphism for any $s\geq 1,$ and all $t \geq 1,\, t\neq 2,\, 3,\, 4,\, 7.$
\end{thm}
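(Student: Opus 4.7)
Set $n := 2^{s+t}+2^{s}-2$. Since $n+2 = 2^{s+t}+2^{s}$ has $\alpha$-value $2$, one has $\mu(n)\le 2 < 4$, so $\pmb{Q}_n^4$ is nontrivial and must be computed directly. My plan is to produce an explicit basis of the domain $k\otimes_{G(4)}\mathscr{P}_A((P_4)_n^*)\cong(\pmb{Q}_n^4)^{G(4)}$, identify the codomain $\mathrm{Ext}_A^{4,4+n}(k,k)$ from the known 4-line structure, and evaluate $Tr_4^A$ on each domain basis element via its lambda-algebra representative $\psi_4(\zeta)$ as given by Theorem~\ref{dlCH}. Since the Kameko up-map $\varphi:\pmb{Q}_m^4\to\pmb{Q}_{2m+4}^4$ commutes, through $Tr_4^A$, with the classical $Sq^0$ on Ext, I will use this compatibility to propagate information about $\psi_4$-images between degrees in the family, thereby reducing much of the work to a small number of seed cases in $s$.

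\textbf{Step 1 (domain).} I extract an explicit $k$-basis of $\pmb{Q}_n^4$ from Sum's admissible-monomial basis for $(P_4)_n$ modulo $\overline{A}(P_4)_n$ \cite{N.S,N.S1}, combined with the author's earlier tabulations in the neighbouring generic-degree families \cite{Phuc0,Phuc1,Phuc2}. The invariant subspace $(\pmb{Q}_n^4)^{G(4)}$ is then extracted in two stages: first by symmetrizing under $\Sigma_4\subset G(4)$ to isolate orbit sums, then by imposing invariance under the remaining upper-triangular generators of $G(4)$. The outcome is an explicit finite list of orbit-sum representatives whose cardinality supplies the target dimension count.

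\textbf{Step 2 (codomain and matching).} For $\mathrm{Ext}_A^{4,4+n}(k,k)$ I combine Theorem~\ref{dlntg} with the known 4-line computations of Lin \cite{W.L}, Wang \cite{Wang}, and Bruner et al.\ \cite{B.H.H} to list its generators as decomposable products in the Hopf elements $h_i$ together with the standard indecomposables ($c_i,d_i,e_i,f_i,g_i,p_i,D_3(i),\ldots$). For each orbit-sum representative $\zeta$ produced in Step~1 I compute $\psi_4(\zeta)\in\Lambda^{4,n}$ from the recursion preceding Theorem~\ref{dlCH}, reduce it to admissible form in $\Lambda$, and match against the tabulated $\Lambda$-representatives of the Ext generators. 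When $t\neq 2,3,4,7$ the resulting cycles exhaust the codomain, yielding the claimed isomorphism; in every case they remain linearly independent, giving injectivity for all $s\ge 1$ and all admissible $t$.

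\textbf{Step 3 (main obstacle: the exceptional $(s,t)=(1,7)$).} The principal difficulty is the bidegree $(4,260)$ arising from $s=1,\,t=7$ (so $n=256$), where the codomain contains a distinguished indecomposable of $D_3$- or $g$-family type. I must verify, by an exhaustive analysis of admissible monomials of degree $256$ in $P_4$, that \emph{every} $G(4)$-invariant class maps under $\psi_4$ into the subspace of $\Lambda^{4,256}$ spanned by the $h_\bullet$-decomposable cycles, so that the distinguished indecomposable is necessarily missed. This mirrors, at a much larger combinatorial scale, the mechanism by which $D_3(s)\notin\mathrm{Im}(Tr_4^A)$ recalled in Remark~\ref{nxcc}; controlling the combinatorial explosion of admissible monomials at $n=256$ while keeping both the $G(4)$-invariance computation and the lambda-algebra reductions tractable is the main technical hurdle of the proof.
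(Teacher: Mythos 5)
Your high-level strategy — produce an explicit basis of $(\pmb{Q}_n^4)^{G(4)}$, identify $\mathrm{Ext}_A^{4,4+n}(k,k)$ from the $4$-line, and compare via the $\Lambda$-representatives of Theorem~\ref{dlCH} — is indeed the paper's strategy. But there are two concrete gaps that would make the proposal, as written, unworkable.

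First, Step~3 misidentifies where the $(s,t)=(1,7)$ case is hard and thereby misses the tool that makes it tractable. You propose an "exhaustive analysis of admissible monomials of degree $256$" and worry about combinatorial explosion. The paper never works directly at $n=256$: the down Kameko map $[\overline{Sq}^0]_{256}\colon\pmb{Q}^4_{256}\to\pmb{Q}^4_{126}$ is an epimorphism of $kG(4)$-modules, hence induces a left-exact sequence of $G(4)$-invariants, so
$\dim(\pmb{Q}^4_{256})^{G(4)}\le\dim(\mathrm{Ker}[\overline{Sq}^0]_{256})^{G(4)}+\dim(\pmb{Q}^4_{126})^{G(4)}$.
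The paper's Lemma~\ref{bdct2} shows the kernel's $G(4)$-invariants vanish for $s\in\{1,2\}$, and the quotient degree $126$ (a type~i degree handled by Sum) contributes exactly one dimension; so the domain is $1$-dimensional, spanned by $[\zeta_{1,7}]$. Since $Tr_4^A([\zeta_{1,7}])=h_1^2h_7^2\ne 0$ and $\mathrm{Ext}_A^{4,260}(k,k)=\langle h_1^2h_7^2,\,D_3(2)\rangle$ is $2$-dimensional, injectivity and non-surjectivity drop out of the dimension count automatically. You do not need to verify that "every $G(4)$-invariant class" maps into the decomposables — once the domain is $1$-dimensional there is only one class to check. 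Without the Kameko reduction, your plan really would face an intractable admissible-monomial computation in degree $256$.

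Second, your proposal says nothing about the case $t=1$ (degrees $n=2^{s+1}+2^s-2$), which is part of the theorem and is genuinely delicate: the domain is trivial for $s\in\{1,2,4\}$, and $1$-dimensional for $s\ge 3$ with $s\ne 4$, matching the vanishing pattern of $h_1h_s^3$ and the sporadic appearance of $h_2c_1$ at $s=3$ in $\mathrm{Ext}_A^{4,3\cdot 2^s+2}(k,k)$. This bifurcation is again driven by the Kameko short exact sequence (Theorem~\ref{dlc3}), and an argument that just "symmetrizes under $\Sigma_4$" and then imposes the upper-triangular generator will not, by itself, explain why the $s=4$ case behaves differently from $s=3$ and $s\ge 5$. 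You should fold the Kameko filtration into Step~1 as a structural ingredient rather than using Kameko only to propagate $\psi_4$-images along $Sq^0$-families.
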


The approach to proving Theorems \ref{dlctq1} and \ref{dlctq2} involves explicitly calculating the domain and codomain of the fourth transfer $Tr_4^{A}$ for the cases under consideration. For a more in-depth explanation of this procedure, please refer to Subsection \ref{sub2.1} regarding Theorem \ref{dlctq1}, and Subsection \ref{sub2.2} concerning Theorem \ref{dlctq2}.

\section{Proof strategy for key results}\label{s2}

As mentioned above, in order to prove Theorems \ref{dlctq1} and \ref{dlctq2}, we need to explicitly compute the domain and codomain of the fourth algebraic transfer. While determining the codomain is straightforward, explicitly identifying the generators of the domain of $Tr_4$ and its image is very difficult. For the purpose of determining the image of this transfer’s domain, we will explicitly construct its dual space and apply Theorem \ref{dlCH}, along with a significant remark (Remark \ref{nxmd} below), to clarify its image. %This explains why it took us several years to finish all the computations. 
In order to facilitate readers' comprehension, we provide a simplified procedure here that reduces the proof of the main results to demonstrating Theorems \ref{dlc1}, \ref{dlc2}, and \ref{dlct} (see Subsection \ref{sub2.1}), and Theorems \ref{dlc3} and \ref{dlct2} (see Subsection \ref{sub2.2}). We also wish to highlight that the current techniques are highly sophisticated and give a unique outlook on images of the fourth algebraic transfer, contrasting with the approaches previously adopted by H\uhorn ng \cite{Hung}, H\`a \cite{Ha}, Nam \cite{Nam}, and H\uhorn ng-Qu\`ynh \cite{H.Q}. The actual calculation methods are exceptionally intricate, which is why completing the computation of the transfer map's domain took us numerous years.

\begin{rema}\label{nxmd}
For each non-negative integer $s,$ the monomial $\lambda_{2^{s}-1}\in \Lambda^{1, 2^{s}-1}$ is a cycle in the algebra $\Lambda$ and is a representative of the Adams element $h_s\in {\rm Ext}_A^{1, 2^{s}}(k, k).$ 
\end{rema}

In fact, to eliminate the need to repeatedly explain that $\lambda_{2^{s}-1}$ is a cycle in the algebra $\Lambda$ and serves as a representative of the Adams element $h_s\in {\rm Ext}_A^{1, 2^{s}}(k, k)$ during the calculation of the image of $Tr_4^A$, we will include this information in a comment, specifically Remark \ref{nxmd} above. It will be used as a reference throughout Subsections \ref{sub2.1} and \ref{sub2.2}.

Next, we will compute the codomain of the transfer by applying the following well-known result.

\begin{thm}[see \cite{J.A2, Adem, Wall, Wang, W.L}]\label{dlntg}

\emph{}

\begin{enumerate}

\item[$\bullet$] ${\rm Ext}_{A}^{1, *}(k, k)$ is generated by  $h_i$ for $i\geq 0;$

\item[$\bullet$] ${\rm Ext}_{A}^{2, *}(k, k)$ is generated by $h_ih_j$ for $j\geq i\geq 0$ and $j\neq i+1;$

\item[$\bullet$] ${\rm Ext}_{A}^{3, *}(k, k)$ is generated by $h_ih_jh_{\ell},\, c_t$ for $t\geq 0;$ $\ell\geq j\geq i\geq 0,$ and subject only to the relations $h_ih_{i+1} = 0,\ h_ih_{i+2}^{2} = 0$ and $h_i^3 = h^2_{i-1}h_{i+1};$ 

\item[$\bullet$] ${\rm Ext}_{A}^{4, *}(k, k)$ is generated by $h_ih_jh_{\ell}h_m,\, h_uc_v,\, d_t,\, e_t,\, f_t,\, g_{t+1},\, p_t,\, D_3(t),\, p'_t$ for $m\geq \ell\geq j\geq i\geq 0$, $u,\ v,\ t\geq 0$, and subject to the relations in iii) together with $h^2_ih^2_{i+3} = 0,\, h_{v-1}c_v = 0,\, h_{v}c_v = 0,\, h_{v+2}c_v = 0$ and $h_{v+3}c_v = 0.$
\end{enumerate}
\end{thm}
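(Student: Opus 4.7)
The overall plan is to compute each ${\rm Ext}_A^{q,*}(k,k)$ for $q \leq 4$ as the cohomology of the length-$q$ summand $\Lambda^{q,*}$ of the lambda algebra equipped with its Wang--Curtis differential. Because $\Lambda$ is explicit both as a graded algebra and as a chain complex, indecomposable generators appear as distinguished cocycles, while each relation is verified by producing an explicit primitive. The classical way to organize this bookkeeping is via the May spectral sequence (or, equivalently, via a minimal free resolution of $k$ over $A$), which filters $\Lambda^{q,*}$ and lets one read off the $h_i$ in homological degree $1$, the $c_t$ in homological degree $3$, and all remaining rank-$4$ indecomposables from its $E_\infty$-page.

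First I would dispose of the easy cases $q = 1, 2, 3$. For $q = 1$, a short computation with the Wang differential shows that $\lambda_n$ is a cocycle if and only if $n = 2^s - 1$, yielding exactly the $h_s$ for $s \geq 0$. For $q = 2$, the cocycle space modulo boundaries is spanned by products $h_i h_j$, and the single nontrivial relation $h_i h_{i+1} = 0$ comes from an explicit primitive in $\Lambda^{2,*}$ (the Adams vanishing relation underlying the Hopf-invariant-one problem). For $q = 3$, the products $h_i h_j h_\ell$ cover the decomposables, Adams' cocycle $c_0 \in \Lambda^{3,11}$ and its $Sq^0$-iterates $c_t$ supply the indecomposables, and the relations $h_i h_{i+2}^2 = 0$ and $h_i^3 = h_{i-1}^2 h_{i+1}$ are each certified by exhibiting explicit primitives (the latter being Wang's relation).

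The substantive case is $q = 4$. The plan is to run the May spectral sequence, whose $E_2$-page is a polynomial algebra with known generators and differentials, and to read off the new indecomposable classes $d_t, e_t, f_t, g_{t+1}, p_t, D_3(t), p'_t$ as $E_\infty$-survivors that lie outside the image of the multiplication ${\rm Ext}^1 \otimes {\rm Ext}^3 \to {\rm Ext}^4$; the remaining decomposable generators $h_i h_j h_\ell h_m$ and $h_u c_v$ are just products of classes already constructed. The quadratic relations $h_i^2 h_{i+3}^2 = 0$ and $h_{v-1}c_v = h_v c_v = h_{v+2}c_v = h_{v+3}c_v = 0$ are each verified by producing an explicit $\Lambda^{4,*}$-cobounding element.

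The principal obstacle, and what makes this a genuinely deep classical theorem, is \emph{completeness}: one must show that the listed indecomposables exhaust every internal degree. This requires careful May spectral sequence bookkeeping (or an equivalent Curtis-table walk inside $\Lambda$), combined with $Sq^0$-periodicity so that, for instance, $d_t$ is the iterated $Sq^0$-image of $d_0$ and similarly for $e_t, f_t, g_{t+1}, p_t, D_3(t), p'_t$, allowing the verification to be reduced to a finite explicit check in a bounded range of internal degrees. This combinatorial accounting---rather than any single clever cocycle---is the heart of the argument, and it is the step I would expect to consume the bulk of the work; the cited papers of Adams, Adem, Wall, Wang and Lin together carry out precisely this task.
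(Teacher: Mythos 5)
The paper does not prove this theorem: it is stated as a compilation of classical results and supported only by citations to Adams, Adem, Wall, Wang, and Lin. There is therefore no ``paper's own proof'' against which to compare your argument.

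As a freestanding account, your proposal is a reasonable high-level survey of how the cited literature establishes this structure: the lambda algebra $\Lambda$ with its Wang--Curtis differential as the underlying chain complex, the May spectral sequence (or an equivalent Curtis-table computation) to organize candidate cocycles, and $Sq^0$-periodicity to reduce the verification to a bounded range of internal degrees. You also correctly single out completeness in homological degree $4$ as the genuinely hard step, which is the content of Lin's paper \cite{W.L}. However, what you have written is an outline, not a proof. In particular, the claim that the relations $h_i h_{i+1} = 0$, $h_i h_{i+2}^2 = 0$, $h_i^3 = h_{i-1}^2 h_{i+1}$, $h_i^2 h_{i+3}^2 = 0$ and the four $h_j c_v = 0$ vanishings are ``each certified by exhibiting explicit primitives'' is asserted but not carried out; none of the cobounding chains is produced, and the May differentials needed to confirm that no further indecomposables survive at $E_\infty$ are not tracked. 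Producing those primitives and carrying out the spectral-sequence bookkeeping is precisely the content of the cited papers, so your sketch is consistent with the literature but would need to be expanded substantially (essentially reproducing \cite{Wang} and \cite{W.L}) before it could stand as a proof of the theorem rather than a description of one.
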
 

Consequently, Corollaries \ref{hqc1}, \ref{hqc2}, and \ref{hqct} (see Subsection \ref{sub2.1}) establish the validity of Theorem \ref{dlctq1}, while Corollaries \ref{hqc3} and \ref{hqct2} (see Subsection \ref{sub2.2}) establish the validity of Theorem \ref{dlctq2}. 

In summary, following the procedures detailed in Subsections \ref{sub2.1} and \ref{sub2.2}, to complete the proof of Theorem \ref{dlctq1}, our task is to prove only Theorems \ref{dlc1}, \ref{dlc2}, and \ref{dlct} within the framework provided in Subsection \ref{sub2.1}. Similarly, to finalize the proof of Theorem \ref{dlctq2}, our focus is solely on proving Theorems \ref{dlc3} and \ref{dlct2}, as outlined in Subsection \ref{sub2.2}.

Directly following from Theorems \ref{dlctq1} and \ref{dlctq2}, we arrive at the following conclusion.

\begin{corl}\label{hqc4}
For positive integers $s$ and $t,$ Singer's Conjecture \ref{gtS} is valid for $q = 4$ in degrees $2^{s+t+1} + 2^{s+1} - 3$ for $t\neq 3,$ as well as in degrees $2^{s+t} + 2^{s} - 2$ for $t\neq 2,\, 3,\, 4.$
\end{corl}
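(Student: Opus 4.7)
The plan is to obtain Corollary \ref{hqc4} as an immediate formal consequence of Theorems \ref{dlctq1} and \ref{dlctq2}, using only the observation that Conjecture \ref{gtS} for a given $q$ and $n$ asks for injectivity of $Tr_q^A$ in bidegree $(q,q+n)$, which is a strictly weaker property than being an isomorphism.

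First I would recall the statement of Conjecture \ref{gtS}: it predicts that $Tr_q^A$ is a one-to-one map for every $q$, which for $q=4$ and a fixed degree $n$ amounts to the injectivity of the single linear map
\[
Tr_4^A \colon k\otimes_{G(4)}\mathscr{P}_A\bigl((P_4)_n^{*}\bigr) \longrightarrow {\rm Ext}_A^{4,\,4+n}(k,k).
\]
Then I would split the statement of the corollary into the two ranges of generic degrees given in \eqref{ct}. For $n = 2^{s+t+1}+2^{s+1}-3$ with $s\geq 1$ and $t\neq 3$, Theorem \ref{dlctq1} asserts that $Tr_4^A$ is an isomorphism in bidegree $(4,4+n)$; in particular it is injective, so Conjecture \ref{gtS} holds there. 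For $n = 2^{s+t}+2^{s}-2$ with $s\geq 1$ and $t\neq 2,3,4$, Theorem \ref{dlctq2} asserts that $Tr_4^A$ is one-to-one in bidegree $(4,4+n)$ (with surjectivity possibly failing only at $(s,t)=(1,7)$); injectivity is again the exact content of Conjecture \ref{gtS} for these degrees. Combining the two cases yields precisely the conclusion of Corollary \ref{hqc4}.

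There is no genuine mathematical obstacle at the level of this corollary; the only step is to verify that the hypotheses on $(s,t)$ in the corollary coincide with the hypotheses of the two main theorems, and to note that injectivity is preserved when passing from the isomorphism statement of Theorem \ref{dlctq1} and from the (possibly non-surjective) one-to-one statement of Theorem \ref{dlctq2}. The genuine difficulty has already been absorbed into Theorems \ref{dlctq1} and \ref{dlctq2}, whose proofs rely on the explicit determination of $(\pmb{Q}_n^4)^{G(4)}$ via the hit-problem techniques of \cite{N.S,N.S1} and on the chain-level representation of $Tr_4^A$ through the lambda algebra $\Lambda$ afforded by Theorem \ref{dlCH} together with Remark \ref{nxmd}, as outlined in Subsections \ref{sub2.1} and \ref{sub2.2}.
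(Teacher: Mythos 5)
Your proposal is correct and matches the paper's approach: the paper presents Corollary \ref{hqc4} as following ``directly'' from Theorems \ref{dlctq1} and \ref{dlctq2}, and your elaboration -- that Conjecture \ref{gtS} only demands injectivity, which is supplied by the isomorphism statement of Theorem \ref{dlctq1} and by the one-to-one (though not always onto) statement of Theorem \ref{dlctq2} -- is exactly the intended reasoning. Nothing is missing.
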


\subsection{Degree \text{\boldmath $n = 2^{s+t+1} +2^{s+1} - 3,\, t\geq 1,\, t\neq 3$}}\label{sub2.1}

 \emph{}

{\bf Case \text{\boldmath $t = 1$}.} We have $2^{s+t+1} +2^{s+1} - 3 = 2^{s+2} +2^{s+1} - 3.$ Based upon an admissible basis of the $k$-vector space $\pmb{Q}^{4}_{2^{s+2} +2^{s+1} - 3}$ in \cite{N.S}, we find that 

\begin{theo}\label{dlc1}
For a positive integer $s$, then
$$ \dim k\otimes_{G(4)}\mathscr {P}_A((P_4)_{2^{s+2} +2^{s+1} - 3}^{*}) = \left\{\begin{array}{ll}
0 &\mbox{if $s = 2$},\\
1 &\mbox{if $s \neq 2$}.
\end{array}\right.$$
\end{theo}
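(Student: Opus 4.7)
The plan is to dualize the problem and compute $\dim(\pmb{Q}^{4}_n)^{G(4)}$, where $n = 2^{s+2}+2^{s+1}-3$, starting from an explicit admissible monomial basis of $\pmb{Q}^{4}_n$. Such a basis, together with the hit relations needed to reduce arbitrary elements of $(P_4)_n$ to admissible form modulo $\overline{A}(P_4)_n$, has been constructed by Sum in \cite{N.S} for this family of degrees. I would begin by splitting the admissible basis along the weight-vector filtration: the subquotients are $G(4)$-stable, so the computation of $G(4)$-invariants decouples according to the $\omega$-sequences of admissible monomials of degree $n$, each of which may be treated in isolation.

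Within each $\omega$-component I would first determine the $\Sigma_4$-invariants by enumerating orbits of admissible monomials and taking orbit sums, which is a purely combinatorial step. Since $G(4) = GL_4(\mathbb F_2)$ is generated by $\Sigma_4$ together with a single transvection $\rho$ (for instance $x_1 \mapsto x_1+x_2$, fixing $x_3, x_4$), invariance under $G(4)$ reduces to $\rho$-invariance of the $\Sigma_4$-invariants. The operator $\rho$ expands each orbit sum into a polynomial that must be rewritten in the admissible basis using Sum's hit reductions, producing a linear system over $k$ whose solution space is $(\pmb{Q}^{4}_n)^{G(4)}$.

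For generic $s$ the $\omega$-analysis should leave a single $\omega$-sequence carrying candidate invariants, and the transvection constraint should cut this candidate space down to dimension one, giving the stated answer. The degree $n = 21$ case $(s=2)$ is the delicate one: here additional hit relations, available only at this low degree, are expected either to collapse the would-be $\Sigma_4$-invariant into $\overline{A}(P_4)_{21}$ or to force it to fail $\rho$-invariance, producing the drop to dimension zero. I would verify this collapse by pinpointing the responsible relations explicitly. The small case $s=1$ (degree $9$) can be handled by direct inspection of the full admissible basis and the $G(4)$-action.

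The principal obstacle is controlling Sum's hit relations uniformly as $s$ varies: the admissible basis grows with $s$, and the output of $\rho$ involves non-admissible monomials whose reduction is sensitive to the positions of the digit blocks $2^{s+1}$ and $2^{s+2}$ in the exponent vectors. My strategy for taming this is to parametrize admissible monomials by the locations of these blocks and to propagate the $\rho$-action and its reductions through the parameterization, rather than monomial by monomial. This is the same generic-$s$ bookkeeping that we employed in \cite{Phuc1, Phuc2} for the closely related degree families $2^{s+t+1}+2^{s+1}-3$ at $t=3$ and $2^{s+t}+2^{s}-2$ at $t\in\{2,3,4\}$, and it is expected to carry over with the same style of case analysis, specialized to the values $s=1,2,3$ as boundary checks.
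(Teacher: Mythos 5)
Your plan follows essentially the same route as the paper: dualize to $(\pmb{Q}^{4}_{n_s})^{G(4)}$, take Sum's admissible monomial basis of $\pmb{Q}^{4}_{n_s}$ from \cite{N.S}, compute $\Sigma_4$-invariants via orbit sums (this is Lemma \ref{bd1} and its companions), and then impose invariance under the one extra generator $\sigma_4\colon x_1\mapsto x_1+x_2$ of $G(4)$ (your $\rho$), using Theorem \ref{dlSi} together with the Cartan formula to push $\sigma_4$ of an admissible monomial back into admissible form. The boundary cases $s=1,2$ are handled by direct inspection of the short basis lists, which is also what you propose. So structurally the two arguments agree.

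One claim in your proposal needs correction, because as stated it would lead you astray at $s\geq 3$. You say that since the subquotients $(\pmb{Q}^{4}_n)^{\omega}$ are $G(4)$-stable, the computation of $G(4)$-invariants ``decouples according to the $\omega$-sequences, each of which may be treated in isolation.'' The $\omega$-stratification is a direct-sum decomposition of $\pmb{Q}^{4}_n$ as a $\Sigma_4$-module, but only a \emph{filtration} as a $G(4)$-module, because the transvection does not preserve the weight vector of a monomial. Consequently one has the inequality
$\dim(\pmb{Q}^{4}_n)^{G(4)}\leq\sum_{\deg(\omega)=n}\dim((\pmb{Q}^{4}_n)^{\omega})^{G(4)}$
(recorded in Section \ref{s3}), and the actual $G(4)$-invariant class need not be $\omega$-homogeneous. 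In the present degree family this already matters: for $s\geq 3$ the paper's generator is $[\widetilde{\zeta_s}+\zeta_s^{*}]$, where $\zeta_s^{*}$ is the orbit-sum sitting in the top weight component $\omega_{(s,2)}$ (Proposition \ref{md3}) and $\widetilde{\zeta_s}$ is a correction drawn entirely from the lower component $\omega_{(s,1)}$, needed to make the class genuinely $\sigma_4$-invariant in $\pmb{Q}^{4}_{n_s}$ rather than merely in the associated graded. So the per-$\omega$ calculation buys you the upper bound and a candidate leading term, but you must then lift that candidate across the filtration, which is an extra step your sketch omits. You also omit the complementary lower-bound argument the paper gives via the chain-level transfer: $\zeta_s=a_2^{(2^{s+1}-1)}a_3^{(2^{s+1}-1)}a_4^{(2^{s+1}-1)}$ is $\overline{A}$-annihilated and $\psi_4(\zeta_s)=\lambda_0\lambda_{2^{s+1}-1}^{3}$ represents $h_0h_{s+1}^{3}$, which is nonzero for $s\geq 3$ and forces the coinvariant to be at least one-dimensional; this is not strictly necessary if the direct computation is carried through completely, but it is a useful consistency check you may want to retain.
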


Let us sketch the proof of the theorem. Firstly, by Sum \cite{N.S}, the dimensions of the $k$-vector spaces $\pmb{Q}^{4}_{3(2^{s} -1) + 3.2^{s}}$ are determined as follows:
$$ \dim \pmb{Q}^{4}_{3(2^{s} -1) + 3.2^{s}} = \left\{ \begin{array}{ll}
46 &\mbox{if $s = 1$},\\
94 &\mbox{if $s = 2$},\\
105 &\mbox{if $s \geq 3$}.
\end{array}\right.$$
Thanks to these results, for $s \in \{1, 2\},$ a direct computation yields that 
$$ k\otimes_{G(4)}\mathscr {P}_A((P_4)_{2^{s+2} +2^{s+1} - 3}^{*}) = \left\{\begin{array}{ll}
\langle [\zeta_1] \rangle &\mbox{if $s = 1$},\\
0 &\mbox{if $s = 2$},\\
\end{array}\right.
$$
where $\zeta_1 = a_1^{(1)}a_2^{(3)}a_3^{(3)}a_4^{(2)} + a_1^{(1)}a_2^{(3)}a_3^{(4)}a_4^{(1)} + a_1^{(1)}a_2^{(5)}a_3^{(2)}a_4^{(1)} +  a_1^{(1)}a_2^{(6)}a_3^{(1)}a_4^{(1)}\in \mathscr {P}_A((P_4)_{3(2^{1}-1) + 3.2^{1}}^{*}).$ By the unstable condition, to verify that $\zeta_1$ is $\overline{A}$-annihilated, we only need to consider the effects of $Sq^{1}$ and $Sq^{2}$. 

For $s\geq 3,$ based on a monomial basis of $\pmb{Q}^{4}_{3(2^{s} -1) + 3.2^{s}},$ we derive
\begin{equation}\label{pt3}
 \dim k\otimes_{G(4)}\mathscr {P}_A((P_4)_{2^{s+2} +2^{s+1} - 3}^{*})\leq 1.
\end{equation}
On the other side, it is pretty clear that the elements $ \zeta_s = a_2^{(2^{s+1}-1)}a_3^{(2^{s+1}-1)}a_4^{(2^{s+1}-1)}$ in $(P_4)^{*}_{3(2^{s} -1) + 3.2^{s}}$ are $\overline{A}$-annihilated. Since $\zeta_s\in \mathscr {P}_A((P_4)_{2^{s+2} +2^{s+1} - 3}^{*}),$ by  Remark \ref{nxmd} and Theorem \ref{dlCH}, we must have that the cycles $\psi_4(\zeta_s) = \lambda_0\lambda_{2^{s+1}-1}^{3}$ in $\Lambda$ are representatives of the non-zero elements $h_0h_{s+1}^{3}\in {\rm Ext}_A^{4, 6.2^{s}  +1}(k, k).$ These lead to $h_0h_{s+1}^{3}\in {\rm Im}(Tr_4^{A}).$ On the other hand, by Theorem \ref{dlntg}, 
 \begin{equation}\label{pt4}
{\rm Ext}_A^{4, 6.2^{s}  +1}(k, k)\\
=  \left\{\begin{array}{ll}
\langle h_0h_{2}^{3}, h_1c_0 \rangle = \langle h_1c_0\rangle &\mbox{if $s = 1$},\\
\langle h_0h_{s+1}^{3}\rangle  &\mbox{if $s\geq 2$},
\end{array}\right.
\end{equation}
where $h_0h_{s+1}^{3} = 0$ for $s = 2$ and $h_0h_{s+1}^{3} = h_0h_s^{2}h_{s+2}\neq 0$ for all $s\geq 3.$ These data together with the inequality \eqref{pt3} imply that the coinvariants space $k\otimes_{G(4)}\mathscr {P}_A((P_4)_{2^{s+2} +2^{s+1} - 3}^{*})$ is $1$-dimensional. Moreover, by a direct computation using the monomial bases of $\pmb{Q}^{4}_{3(2^{s} -1) + 3.2^{s}},$ one can obtain that the coinvariants $k\otimes_{G(4)}\mathscr {P}_A((P_4)_{2^{s+2} +2^{s+1} - 3}^{*})$ are generated by $[\zeta_s],$ for any $s\geq 3.$

\begin{rem}\label{nxt} Clearly, $\lambda_3^{2}\lambda_2$ is a representative of the non-zero element $c_0\in {\rm Ext}_A^{3, 11}(k, k).$ Then, since $\zeta_1$ is $\overline{A}$-annihilated, by a direct computation using the representation of $Tr_4^{A}$ over $\Lambda$ and Theorem \ref{dlCH}, we deduce that 
$$\begin{array}{ll}
\medskip
\psi_4(a_1^{(1)}a_2^{(3)}a_3^{(3)}a_4^{(2)}) &= \lambda_1\lambda_3^{2}\lambda_2 + \lambda_1\lambda_3\lambda_4\lambda_1 +  \lambda_1\lambda_4\lambda_3\lambda_1,\\
\medskip
\psi_4(a_1^{(1)}a_2^{(3)}a_3^{(4)}a_4^{(1)}) &=\lambda_1\lambda_3\lambda_4\lambda_1  + \lambda_1\lambda_4\lambda_3\lambda_1+ \lambda_1\lambda_5\lambda_2\lambda_1,\\
\medskip
  \psi_4(a_1^{(1)}a_2^{(5)}a_3^{(2)}a_4^{(1)}) &= \lambda_1\lambda_5\lambda_2\lambda_1 + \lambda_1\lambda_6\lambda_1^{2},\\
\medskip
 \psi_4(a_1^{(1)}a_2^{(6)}a_3^{(1)}a_4^{(1)}) &=\lambda_1\lambda_6\lambda_1^{2},
\end{array}$$
and therefore the cycle $\psi_4(\zeta_1) = \lambda_1\lambda_3^{2}\lambda_2$ in $\Lambda^{4, 9}$ is a representative of the element $h_1c_0\in {\rm Ext}_A^{4, 13}(k, k).$ This fact together with the equality \eqref{pt4} show that the non-zero element $h_1c_0$ belongs to the image of the fourth algebraic transfer.
\end{rem}

It can be deduced by combining Remark \ref{nxt} with Theorem \ref{dlc1} and the equality \eqref{pt4} that 

\begin{hq}\label{hqc1}
The Singer transfer $$ Tr_4^{A}: k\otimes_{G(4)}\mathscr {P}_A((P_4)_{6.2^{s}-3}^{*}) \longrightarrow {\rm Ext}_A^{4, 6.2^{s}  +1}(k, k)$$ is an isomorphism for every positive integer $s.$
\end{hq}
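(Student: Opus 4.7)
The plan is to assemble the three ingredients already prepared in the discussion preceding the statement: the dimension of the domain coming from Theorem \ref{dlc1}, the dimension of the codomain coming from \eqref{pt4}, and an explicit cycle in $\Lambda^{4,*}$ representing a non-zero element in the image. Since in every case both sides turn out to be at most one-dimensional, the key reduction is that once we can exhibit a single element of $k\otimes_{G(4)}\mathscr {P}_A((P_4)_{6\cdot 2^{s}-3}^{*})$ whose image under $Tr_4^{A}$ is non-zero, injectivity and surjectivity both follow by dimension count.

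First I would split the argument into the three ranges dictated by the available data. For $s=2$, Theorem \ref{dlc1} gives $\dim k\otimes_{G(4)}\mathscr {P}_A((P_4)_{21}^{*})=0$ and \eqref{pt4} together with the relation $h_0 h_3^{3}=0$ gives ${\rm Ext}_A^{4,25}(k,k)=0$, so $Tr_4^{A}$ is trivially an isomorphism. For $s=1$, Theorem \ref{dlc1} yields a one-dimensional domain spanned by $[\zeta_1]$, while \eqref{pt4} shows ${\rm Ext}_A^{4,13}(k,k)=\langle h_1 c_0\rangle$; Remark \ref{nxt} computes $\psi_4(\zeta_1)=\lambda_1\lambda_3^{2}\lambda_2$, which by Theorem \ref{dlCH} represents $h_1 c_0\neq 0$, so $Tr_4^{A}([\zeta_1])=h_1 c_0$ and $Tr_4^{A}$ is an isomorphism.

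For the remaining range $s\geq 3$, Theorem \ref{dlc1} again gives a one-dimensional domain, generated (as observed in the sketch of Theorem \ref{dlc1}) by $[\zeta_s]$ with $\zeta_s = a_2^{(2^{s+1}-1)}a_3^{(2^{s+1}-1)}a_4^{(2^{s+1}-1)}$. The discussion immediately above the corollary shows $\psi_4(\zeta_s)=\lambda_0\lambda_{2^{s+1}-1}^{3}$, which by Remark \ref{nxmd} and Theorem \ref{dlCH} is a representative of $h_0 h_{s+1}^{3}\in {\rm Ext}_A^{4,6\cdot 2^{s}+1}(k,k)$. By \eqref{pt4}, ${\rm Ext}_A^{4,6\cdot 2^{s}+1}(k,k)=\langle h_0 h_{s+1}^{3}\rangle$ and, using the relation $h_i^{3}=h_{i-1}^{2}h_{i+1}$ from Theorem \ref{dlntg}, we have $h_0 h_{s+1}^{3}=h_0 h_s^{2} h_{s+2}\neq 0$ for $s\geq 3$. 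Hence $Tr_4^{A}$ sends the generator $[\zeta_s]$ to the generator of a one-dimensional target, and is an isomorphism.

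At this point the remaining work is purely bookkeeping: one just verifies that the three cases above exhaust all $s\geq 1$, and notes that in every case both the domain and codomain are simultaneously trivial or simultaneously one-dimensional with a non-zero image. No further computation is required because the difficult work — producing the admissible-monomial bases underlying Theorem \ref{dlc1}, the $\psi_4$-calculation in Remark \ref{nxt}, and the explicit identification of $\psi_4(\zeta_s)$ as a representative of $h_0 h_{s+1}^{3}$ — has already been done. Consequently, the only potential obstacle, namely controlling the image of $Tr_4^{A}$, is sidestepped by the fortunate circumstance that the codomain is at most one-dimensional in all relevant bidegrees, reducing the verification of the isomorphism to the non-vanishing of a single computed representative in $\Lambda$.
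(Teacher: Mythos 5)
Your proposal is correct and follows essentially the same route the paper takes: it combines Theorem \ref{dlc1} (dimension of the domain), equality \eqref{pt4} (structure and dimension of the codomain), Remark \ref{nxt} (the $\psi_4$-computation of $Tr_4^{A}([\zeta_1]) = h_1c_0$ for $s=1$), and the identification $\psi_4(\zeta_s)=\lambda_0\lambda_{2^{s+1}-1}^{3}$ from the sketch of Theorem \ref{dlc1} for $s\geq 3$, then closes by dimension count. The only difference is that you spell out the three cases $s=1$, $s=2$, $s\geq 3$ explicitly, whereas the paper compresses this into a single sentence referencing those prior results.
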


{\bf Case \text{\boldmath$t = 2$}.} For this, $2^{s+t+1} +2^{s+1} - 3 = 2^{s+3} +2^{s+1} - 3,$ and the structure of the coinvariant spaces $k\otimes_{G(4)}\mathscr {P}_A((P_4)_{2^{s+3} +2^{s+1} - 3}^{*})$ are given as follows. 

\begin{theo}\label{dlc2}
With a positive integer $s,$ we have 
$$ k\otimes_{G(4)}\mathscr {P}_A((P_4)_{2^{s+3} +2^{s+1} - 3}^{*}) = \left\{\begin{array}{ll}
\langle [\zeta] \rangle &\mbox{if $s = 1$},\\
0 &\mbox{if $s =2$},\\
\langle[a_2^{(2^{s}-1)}a_3^{(2^{s}-1)}a_4^{(2^{s+3}-1)}] \rangle&\mbox{if $s \geq 3$},\\
\end{array}\right.$$
where $\zeta$ is the following sum:
$$\begin{array}{ll}
&a_1^{(5)}a_2^{(5)}a_3^{(5)}a_4^{(2)}+
 a_1^{(5)}a_2^{(5)}a_3^{(6)}a_4^{(1)}+
 a_1^{(3)}a_2^{(5)}a_3^{(8)}a_4^{(1)}+
a_1^{(5)}a_2^{(3)}a_3^{(8)}a_4^{(1)} +
\medskip
 a_1^{(3)}a_2^{(6)}a_3^{(7)}a_4^{(1)}\\
&+ a_1^{(5)}a_2^{(7)}a_3^{(4)}a_4^{(1)}+
a_1^{(7)}a_2^{(5)}a_3^{(4)}a_4^{(1)}+
 a_1^{(3)}a_2^{(9)}a_3^{(4)}a_4^{(1)}+
 a_1^{(9)}a_2^{(3)}a_3^{(4)}a_4^{(1)}+
\medskip
a_1^{(3)}a_2^{(9)}a_3^{(3)}a_4^{(2)}\\
&+
a_1^{(9)}a_2^{(3)}a_3^{(3)}a_4^{(2)}+
a_1^{(5)}a_2^{(9)}a_3^{(2)}a_4^{(1)}+
a_1^{(9)}a_2^{(5)}a_3^{(2)}a_4^{(1)} +
a_1^{(5)}a_2^{(10)}a_3^{(1)}a_4^{(1)}+
\medskip
a_1^{(9)}a_2^{(6)}a_3^{(1)}a_4^{(1)}\\
&+
 a_1^{(3)}a_2^{(11)}a_3^{(2)}a_4^{(1)}+
 a_1^{(11)}a_2^{(3)}a_3^{(2)}a_4^{(1)} +
 a_1^{(5)}a_2^{(5)}a_3^{(3)}a_4^{(4)}+
 a_1^{(5)}a_2^{(3)}a_3^{(5)}a_4^{(4)}+
\medskip
 a_1^{(3)}a_2^{(5)}a_3^{(5)}a_4^{(4)}\\
&+
 a_1^{(3)}a_2^{(12)}a_3^{(1)}a_4^{(1)}+
  a_1^{(11)}a_2^{(4)}a_3^{(1)}a_4^{(1)}+
 a_1^{(7)}a_2^{(8)}a_3^{(1)}a_4^{(1)}+
 a_1^{(7)}a_2^{(7)}a_3^{(1)}a_4^{(2)}+
\medskip
a_1^{(13)}a_2^{(2)}a_3^{(1)}a_4^{(1)}\\
&+
 a_1^{(14)}a_2^{(1)}a_3^{(1)}a_4^{(1)}+
 a_1^{(6)}a_2^{(5)}a_3^{(3)}a_4^{(3)}+
 a_1^{(5)}a_2^{(3)}a_3^{(6)}a_4^{(3)}+
 a_1^{(3)}a_2^{(6)}a_3^{(5)}a_4^{(3)} +
\medskip
 a_1^{(6)}a_2^{(3)}a_3^{(3)}a_4^{(5)}\\
&+
 a_1^{(3)}a_2^{(3)}a_3^{(6)}a_4^{(5)}+
 a_1^{(3)}a_2^{(6)}a_3^{(3)}a_4^{(5)}+
  a_1^{(5)}a_2^{(3)}a_3^{(3)}a_4^{(6)}+
 a_1^{(3)}a_2^{(5)}a_3^{(3)}a_4^{(6)}+
\medskip
 a_1^{(3)}a_2^{(3)}a_3^{(5)}a_4^{(6)}\\
&+
 a_1^{(3)}a_2^{(3)}a_3^{(3)}a_4^{(8)}+
a_1^{(3)}a_2^{(3)}a_3^{(4)}a_4^{(7)} +
 a_1^{(3)}a_2^{(5)}a_3^{(2)}a_4^{(7)}+
 a_1^{(3)}a_2^{(6)}a_3^{(1)}a_4^{(7)}+
\medskip
 a_1^{(3)}a_2^{(3)}a_3^{(9)}a_4^{(2)}\\
&+
 a_1^{(3)}a_2^{(3)}a_3^{(10)}a_4^{(1)}+
a_1^{(5)}a_2^{(3)}a_3^{(7)}a_4^{(2)}+
 a_1^{(5)}a_2^{(7)}a_3^{(3)}a_4^{(2)}+
 a_1^{(7)}a_2^{(5)}a_3^{(3)}a_4^{(2)}.
\end{array}$$
\end{theo}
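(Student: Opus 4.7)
The plan is to follow the blueprint of Theorem \ref{dlc1}'s proof, now applied to the odd degree $n=2^{s+3}+2^{s+1}-3$ (for which a quick binary count gives $\mu(n)=3<4$). The starting input is Sum's admissible monomial basis for $\pmb{Q}^{4}_{n}$ from \cite{N.S, N.S1}, together with its explicit dimension in each of the three regimes $s=1$, $s=2$, and $s\geq 3$. Because $n$ is odd, the Kameko down map from degree $n$ is identically zero (no degree-$n$ monomial has the form $x_1x_2x_3x_4 g^{2}$), so no degree-halving reduction is available and I must work with the full admissible basis directly.

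Next, I would enumerate the $\Sigma_4$-orbits of admissible classes in each regime, producing a finite list of symmetric candidates in $(P_4)^{*}_n$. On this list I impose one further generator of $G(4)$ beyond $\Sigma_4$ --- typically the transvection $\sigma\colon x_1\mapsto x_1+x_2$ fixing $x_2,x_3,x_4$ --- rewrite the resulting polynomial identities back into the admissible basis modulo $\overline{A}(P_4)_n$, and assemble a homogeneous linear system over $k$. Solving this system should produce a single non-trivial solution for $s=1$ (the $44$-term sum $\zeta$) and for $s\geq 3$ (the spike-type class $a_2^{(2^{s}-1)}a_3^{(2^{s}-1)}a_4^{(2^{s+3}-1)}$), and no non-trivial solution at all for $s=2$.

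To certify that these solutions lie in $\mathscr{P}_A((P_4)_n^{*})$, the unstable condition on divided powers combined with the Cartan formula reduces everything to checking annihilation under $Sq^{1}$ and $Sq^{2}$ alone. For the spike class in the range $s\geq 3$, this reduces to the mod-$2$ vanishings of the binomial coefficients $\binom{2^{s}-2}{1}$, $\binom{2^{s+3}-2}{1}$, $\binom{2^{s}-3}{2}$ and $\binom{2^{s+3}-3}{2}$, together with the observation that the $Sq^{1}$-free factors force the cross terms in Cartan to vanish; for $\zeta$ at $s=1$ one verifies term-by-term cancellations in the same spirit as Remark \ref{nxt}.

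The principal obstacle is the sheer size of the admissible basis, especially at $s=1$ where $n=17$ and the basis is already sizeable, so that showing the $44$ summands of $\zeta$ indeed assemble into the unique $G(4)$-invariant modulo $\overline{A}(P_4)_{17}$ amounts to a substantial piece of sparse linear algebra. For $s\geq 3$ a second layer of difficulty arises because the admissible basis of \cite{N.S1} comes in $s$-parametrized families and the invariance calculation has to be handled uniformly in $s$; the pay-off is that the combinatorics should collapse onto the single spike $a_2^{(2^{s}-1)}a_3^{(2^{s}-1)}a_4^{(2^{s+3}-1)}$, whose representative cycle in $\Lambda$ can subsequently be identified against Theorem \ref{dlntg} to match the linear-algebraic upper bound $1$ with the actual Ext-group dimension.
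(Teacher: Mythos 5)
Your proposal is correct and, at its core, mirrors what the paper actually does: start from Sum's explicit admissible-monomial basis of $\pmb{Q}^4_{n_s}$, compute $\Sigma_4$-invariants by solving the linear system coming from $\sigma_1,\sigma_2,\sigma_3$ modulo $\overline{A}(P_4)_{n_s}$, then narrow to $G(4)$-invariants by imposing the transvection $\sigma_4\colon x_1\mapsto x_1+x_2$, and finally dualize. Your observation that the odd degree $n_s=2^{s+3}+2^{s+1}-3$ admits no Kameko reduction is accurate and explains why you must work with the whole basis here, in contrast to Subsection \ref{sub4.2}.

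The one organizational point you underplay is how the paper keeps the linear algebra from exploding. It first splits $\pmb{Q}^4_{n_s}\cong\underline{\pmb{Q}^4_{n_s}}\oplus\overline{\pmb{Q}^4_{n_s}}$ (monomials with a zero exponent versus strictly positive exponents), decomposes each piece into $\Sigma_4$-orbit submodules such as $\Sigma_4(b_{s,1})\oplus\Sigma_4(b_{s,13})\oplus\Sigma_4(b_{s,25})\oplus\Sigma_4(b_{s,37})$, and for $s\geq 2$ further refines by the weight vectors $\omega_{(s,1)}$ and $\omega_{(s,2)}$, using Singer's criterion (Theorem \ref{dlSi}) so that the minimal-spike weight gives $[\,x\,]_{\omega}=[\,x\,]$. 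This turns one $150$-dimensional system into several small, $s$-uniform ones (Lemmata \ref{bdc21}--\ref{bdc23}), which is precisely what addresses the ``second layer of difficulty'' you flag about $s$-parametrized families. Your brute-force enumeration of $\Sigma_4$-orbits would land on the same invariants but would not scale as cleanly to $s\geq 3$.

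Two minor slips in framing. First, the linear-algebra step produces the generator $[\widetilde{\zeta}]$ of $(\pmb{Q}^4_{n_1})^{G(4)}$ as an $8$-term sum of admissible monomials in $P_4$ (for $s=1$: $b_{1,41}+b_{1,42}+b_{1,78}+b_{1,79}+b_{1,80}+b_{1,81}+b_{1,86}+b_{1,87}$); the $44$-term sum $\zeta$ appears only after dualizing via $\langle\zeta,\widetilde{\zeta}\rangle=1$, it is not itself the solution of your system. Second, for $s=1$ the paper does not re-verify $\overline{A}$-annihilation of $\zeta$: it is imported verbatim from H\`a's work \cite{Ha}, so the $Sq^1,Sq^2$ check you sketch, while valid, is replaced there by a citation. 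For the spike at $s\geq 3$ your binomial computation is correct.
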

Noting that the element $\zeta$ is the same as in \cite{Ha}, and so, it is $\overline{A}$-annihilated. The proof of the theorem is based on the admissible bases of the $k$-vector spaces $\pmb{Q}^{4}_{2^{s+3} +2^{s+1} - 3}$ (see \cite{N.S}). 

It is apparent that the non-zero element $e_0$ in ${\rm Ext}_{A}^{4, 21}(k, k)$ is represented by the cycle
$$\overline{e}_0:= \lambda_3^{3}\lambda_8 + \lambda_3\lambda_5^{2}\lambda_4 + \lambda_3^{2}\lambda_7\lambda_4 + \lambda_7\lambda_5\lambda_3\lambda_2 + \lambda_3^{2}\lambda_5\lambda_6$$ in $\Lambda^{4, 17}.$ Thus, since $\zeta\in P((P_4){2^{1+3} +2^{1+1} - 3}^{*})$, by directly computing using the differential $d: \Lambda \longrightarrow \Lambda$ given by $d(\lambda_{n-1}) = \displaystyle\sum_{j \geq 1}\binom{n-j-1}{j}\lambda_{n-j-1}\lambda_{j-1}$ for all $n \geq 1$, combined with the representation of the fourth transfer over $\Lambda$, we can conclude that
$$
\psi_4(\zeta) =  \overline{e}_0+ d(\lambda_3\lambda_5\lambda_{10} + \lambda_3\lambda_{12}\lambda_3+ \lambda_4\lambda_7^{2} + \lambda_0\lambda_{11}\lambda_7)
$$
 is a cycle in $\Lambda^{4, 2^{1+3} +2^{1+1} - 3},$ and so, one derives
\begin{equation}\label{pt6}
Tr_4^{A}([\zeta]) = [\psi_4(\zeta)] = [\overline{e}_0] = e_0.
\end{equation}
Next, it is easy to see that for any $s\geq 3,$ we have
\begin{equation}\label{pt6-1}
Tr_4^{A}([a_2^{(2^{s}-1)}a_3^{(2^{s}-1)}a_4^{(2^{s+3}-1)}]) = [\psi_4(a_2^{(2^{s}-1)}a_3^{(2^{s}-1)}a_4^{(2^{s+3}-1)})] = h_0h^{2}_{s}h_{s+3}.
\end{equation}
On the other side, using Theorem \ref{dlntg}, we may deduce that
 \begin{equation}\label{pt7}
{\rm Ext}_A^{4, 2^{s+3}+2^{s+1} +1}(k, k)\\
 =  \left\{\begin{array}{ll}
\langle h_0h^{2}_{1}h_{4}, e_0 \rangle =  \langle e_0\rangle &\mbox{if $s = 1$},\\
0 &\mbox{if $s = 2$},\\
 \langle h_0h^{2}_{s}h_{s+3} \rangle &\mbox{if $s\geq 3$}.
\end{array}\right.
\end{equation}
Combining Theorem \ref{dlc2} with the equalities \eqref{pt6}, \eqref{pt6-1} and \eqref{pt7}, we obtain the following corollary:

\begin{hq}\label{hqc2}
The transfer map $Tr_4^{A}$ is an isomorphism in degree $2^{s+3}+2^{s+1} -3$ for any $s\geq 1.$
\end{hq}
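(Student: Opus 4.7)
The plan is to reduce Corollary \ref{hqc2} to a case split on $s$, pairing the explicit description of the domain from Theorem \ref{dlc2} with the explicit description of the codomain from \eqref{pt7}, and then using the chain-level image computations \eqref{pt6} and \eqref{pt6-1} to verify that $Tr_4^A$ is nonzero on the chosen generator whenever both sides are nontrivial. Since all of these inputs are already established in the surrounding text, the corollary itself amounts to mechanical bookkeeping.

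First I would dispatch the degenerate case $s = 2$: Theorem \ref{dlc2} gives $k\otimes_{G(4)}\mathscr{P}_A((P_4)_{2^{s+3}+2^{s+1}-3}^*) = 0$ and \eqref{pt7} gives ${\rm Ext}_A^{4,\, 2^{s+3}+2^{s+1}+1}(k, k) = 0$, so $Tr_4^A$ is trivially an isomorphism between two zero vector spaces. For $s = 1$, Theorem \ref{dlc2} presents the domain as the one-dimensional space $\langle [\zeta]\rangle$, and \eqref{pt7} presents the codomain as the one-dimensional space $\langle e_0\rangle$; equation \eqref{pt6} identifies $Tr_4^A([\zeta]) = e_0 \neq 0$, so the induced $k$-linear map between two one-dimensional spaces is an isomorphism. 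For $s \geq 3$, Theorem \ref{dlc2} and \eqref{pt7} again identify both sides as one-dimensional, with generators $[a_2^{(2^s-1)}a_3^{(2^s-1)}a_4^{(2^{s+3}-1)}]$ and $h_0 h_s^2 h_{s+3}$ respectively, and \eqref{pt6-1} shows that $Tr_4^A$ sends the selected generator to the nonzero element $h_0 h_s^2 h_{s+3}$. In all three ranges the transfer is therefore an isomorphism in the claimed bidegree.

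The hard part of the argument does not lie in this bookkeeping but in the upstream inputs, especially Theorem \ref{dlc2}. Establishing that the coinvariants $k\otimes_{G(4)}\mathscr{P}_A((P_4)_{2^{s+3}+2^{s+1}-3}^*)$ have exactly the displayed shape, with the rather elaborate $44$-term cycle $\zeta$ surviving coinvariants for $s = 1$ and the clean monomial $a_2^{(2^s-1)}a_3^{(2^s-1)}a_4^{(2^{s+3}-1)}$ surviving for $s \geq 3$, requires a detailed admissible-basis computation on $\pmb{Q}^4_{2^{s+3}+2^{s+1}-3}$ based on the bases from \cite{N.S} together with $G(4)$-invariance arguments on the dual side. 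By contrast, once Theorem \ref{dlc2}, \eqref{pt6}, \eqref{pt6-1} and \eqref{pt7} are in hand, Corollary \ref{hqc2} follows immediately by matching one-dimensional spaces.
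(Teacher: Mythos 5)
Your proposal is correct and matches the paper's argument exactly: the paper derives Corollary \ref{hqc2} by combining Theorem \ref{dlc2} with the equalities \eqref{pt6}, \eqref{pt6-1}, and \eqref{pt7}, which is precisely the case split on $s \in \{1\}$, $\{2\}$, $\{s \geq 3\}$ that you make explicit. You have merely unpacked the one-line deduction the paper leaves implicit, and your observation that the genuine difficulty sits upstream in Theorem \ref{dlc2} is an accurate reading of the paper's structure.
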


\begin{rem}\label{nxcc0}
Let us consider the up Kameko map $\varphi: P_4\longrightarrow P_4,\ u\longmapsto x_1\ldots x_4u^{2}.$  It can be easily seen that for each integer $s\geq 1,$ 
$$ k\otimes_{G(4)}\mathscr {P}_A((P_4)_{21.2^{s} - 4}^{*}) \cong k\otimes_{G(4)}\mathscr {P}_A((P_4)_{38}^{*}).$$
Combining this with Theorem \ref{dlc2}, Proposition \ref{mdbsc2} (see Section three) and a previous result in \cite{Phuc2}, we get
$$k\otimes_{G(4)}\mathscr {P}_A((P_4)_{21.2^{s} - 4}^{*}) =   \left\{\begin{array}{ll}
k[\zeta_0] = k[\zeta] = k([\widetilde{\zeta}])^{*} &\mbox{if $s = 0$},\\[1mm]
\langle [a_1^{(2^{s-1}-1)}a_2^{(2^{s-1}-1)}a_3^{(2^{s+2}-1)}a_4^{(2^{s+4}-1)}], [\zeta_s] = ([\varphi^{s}(\widetilde{\zeta})])^{*} \rangle &\mbox{if $s \geq 1$}, 
\end{array}\right.$$
where the polynomials $\zeta = \zeta_0$ and $\widetilde{\zeta}$ are respectively described as in Theorem \ref{dlc2} and Proposition \ref{mdbsc2}, while $\zeta_s\, (s\geq 0)$ is the following sum:\\[2mm]
{\begin{small}
$\begin{array}{ll}
&a_1^{(2^{s+2} + 2^{s+1}-1)}a_2^{(2^{s+2}+2^{s+1}-1)}a_3^{(2^{s+2}+2^{s+1}-1)}a_4^{(2^{s+1} +2^{s}-1)}+
\medskip
 a_1^{(2^{s+2} + 2^{s+1}-1)}a_2^{(2^{s+2} + 2^{s+1}-1)}a_3^{(2^{s+2} + 2^{s+1}+ 2^{s}-1)}a_4^{(2^{s+1}-1)}\\
&+
 a_1^{(2^{s+2}-1)}a_2^{(2^{s+2} + 2^{s+1}-1)}a_3^{(2^{s+3}+ 2^{s}-1)}a_4^{(2^{s+1}-1)}+
\medskip
a_1^{(2^{s+2} + 2^{s+1}-1)}a_2^{(2^{s+2}-1)}a_3^{(2^{s+3}+ 2^{s}-1)}a_4^{(2^{s+1}-1)}\\
&+
 a_1^{(2^{s+2}-1)}a_2^{(2^{s+2} + 2^{s+1}+ 2^{s}-1)}a_3^{(2^{s+3}-1)}a_4^{(2^{s+1}-1)}+
\medskip
 a_1^{(2^{s+2} + 2^{s+1}-1)}a_2^{(2^{s+3}-1)}a_3^{(2^{s+2}+ 2^{s}-1)}a_4^{(2^{s+1}-1)}\\
&+
a_1^{(2^{s+3}-1)}a_2^{(2^{s+2} + 2^{s+1}-1)}a_3^{(2^{s+2}+ 2^{s}-1)}a_4^{(2^{s+1}-1)}+
\medskip
 a_1^{(2^{s+2}-1)}a_2^{(2^{s+3} + 2^{s+1}-1)}a_3^{(2^{s+2} + 2^{s}-1)}a_4^{(2^{s+1}-1)}\\
&+
 a_1^{(2^{s+3} + 2^{s+1}-1)}a_2^{(2^{s+2}-1)}a_3^{(2^{s+2}+2^{s}-1)}a_4^{(2^{s+1}-1)}+
\medskip
a_1^{(2^{s+2}-1)}a_2^{(2^{s+3} + 2^{s+1}-1)}a_3^{(2^{s+2}-1)}a_4^{(2^{s+1}+2^{s}-1)}\\
&+
a_1^{(2^{s+3} + 2^{s+1}-1)}a_2^{(2^{s+2}-1)}a_3^{(2^{s+2}-1)}a_4^{(2^{s+1}+2^{s}-1)}+
\medskip
a_1^{(2^{s+2} + 2^{s+1}-1)}a_2^{(2^{s+3}+2^{s+1}-1)}a_3^{(2^{s+1} + 2^{s}-1)}a_4^{(2^{s+1}-1)}\\
&+
a_1^{(2^{s+3}+2^{s+1}-1)}a_2^{(2^{s+2}+2^{s+1}-1)}a_3^{(2^{s+1}+2^{s}-1)}a_4^{(2^{s+1}-1)} +
\medskip
a_1^{(2^{s+2}+2^{s+1}-1)}a_2^{(2^{s+3} + 2^{s+1}+2^{s}-1)}a_3^{(2^{s+1}-1)}a_4^{(2^{s+1}-1)}\\
&+
a_1^{(2^{s+3}+2^{s+1}-1)}a_2^{(2^{s+2}+2^{s+1}+2^{s}-1)}a_3^{(2^{s+1}-1)}a_4^{(2^{s+1}-1)}+
\medskip
 a_1^{(2^{s+2}-1)}a_2^{(2^{s+3}+2^{s+2}-1)}a_3^{(2^{s+1}+2^{s}-1)}a_4^{(2^{s+1}-1)}\\
&+
 a_1^{(2^{s+3}+2^{s+2}-1)}a_2^{(2^{s+2}-1)}a_3^{(2^{s+1}+2^{s}-1)}a_4^{(2^{s+1}-1)} +
\medskip
 a_1^{(2^{s+2}+2^{s+1}-1)}a_2^{(2^{s+2}+2^{s+1}-1)}a_3^{(2^{s+2}-1)}a_4^{(2^{s+2}+2^{s}-1)}\\
&+
 a_1^{(2^{s+2}+2^{s+1}-1)}a_2^{(2^{s+2}-1)}a_3^{(2^{s+2}+2^{s+1}-1)}a_4^{(2^{s+2}+2^{s}-1)}+
\medskip
 a_1^{(2^{s+2}-1)}a_2^{(2^{s+2}+2^{s+1}-1)}a_3^{(2^{s+2}+2^{s+1}-1)}a_4^{(2^{s+2}+2^{s}-1)}\\
&+
 a_1^{(2^{s+2}-1)}a_2^{(2^{s+3}+2^{s+2}+2^{s}-1)}a_3^{(2^{s+1}-1)}a_4^{(2^{s+1}-1)}+
\medskip
  a_1^{(2^{s+3}+2^{s+2}-1)}a_2^{(2^{s+2}+2^{s}-1)}a_3^{(2^{s+1}-1)}a_4^{(2^{s+1}-1)}\\
&+
 a_1^{(2^{s+3}-1)}a_2^{(2^{s+3}+2^{s}-1)}a_3^{(2^{s+1}-1)}a_4^{(2^{s+1}-1)}+
\medskip
 a_1^{(2^{s+3}-1)}a_2^{(2^{s+3}-1)}a_3^{(2^{s+1}-1)}a_4^{(2^{s+1}+2^{s}-1)}\\
&+
a_1^{(2^{s+3}+2^{s+2}+2^{s+1}-1)}a_2^{(2^{s+1}+2^{s}-1)}a_3^{(2^{s+1}-1)}a_4^{(2^{s+1}-1)}+
\medskip
 a_1^{(2^{s+3}+2^{s+2}+2^{s+1}+2^{s}-1)}a_2^{(2^{s+1}-1)}a_3^{(2^{s+1}-1)}a_4^{(2^{s+1}-1)}\\
&+
 a_1^{(2^{s+2}+2^{s+1} + 2^{s}-1)}a_2^{(2^{s+2}+2^{s+1}-1)}a_3^{(2^{s+2}-1)}a_4^{(2^{s+2}-1)}+
\medskip
 a_1^{(2^{s+2}+2^{s+1}-1)}a_2^{(2^{s+2}-1)}a_3^{(2^{s+2}+2^{s+1}+2^{s}-1)}a_4^{(2^{s+2}-1)}\\
&+
 a_1^{(2^{s+2}-1)}a_2^{(2^{s+2}+2^{s+1}+2^{s}-1)}a_3^{(2^{s+2}+2^{s+1}-1)}a_4^{(2^{s+2}-1)} +
\medskip
 a_1^{(2^{s+2}+2^{s+1}+2^{s}-1)}a_2^{(2^{s+2}-1)}a_3^{(2^{s+2}-1)}a_4^{(2^{s+2}+2^{s+1}-1)}\\
&+
 a_1^{(2^{s+2}-1)}a_2^{(2^{s+2}-1)}a_3^{(2^{s+2}+2^{s+1}+2^{s}-1)}a_4^{(2^{s+2}+2^{s+1}-1)}+
\medskip
 a_1^{(2^{s+2}-1)}a_2^{(2^{s+2}+2^{s+1}+2^{s}-1)}a_3^{(2^{s+2}-1)}a_4^{(2^{s+2}+2^{s+1}-1)}\\
&+
 a_1^{(2^{s+2}+2^{s+1}-1)}a_2^{(2^{s+2}-1)}a_3^{(2^{s+2}-1)}a_4^{(2^{s+2}+2^{s+1}+2^{s}-1)}+
\medskip
 a_1^{(2^{s+2}-1)}a_2^{(2^{s+2}+2^{s+1}-1)}a_3^{(2^{s+2}-1)}a_4^{(2^{s+2}+2^{s+1}+2^{s}-1)}\\
&+
 a_1^{(2^{s+2}-1)}a_2^{(2^{s+2}-1)}a_3^{(2^{s+2}+2^{s+1}-1)}a_4^{(2^{s+2}+2^{s+1}+2^{s}-1)}+
\medskip
 a_1^{(2^{s+2}-1)}a_2^{(2^{s+2}-1)}a_3^{(2^{s+2}-1)}a_4^{(2^{s+3}+2^{s}-1)}\\
&+
a_1^{(2^{s+2}-1)}a_2^{(2^{s+2}-1)}a_3^{(2^{s+2}+2^{s}-1)}a_4^{(2^{s+3}-1)} +
\medskip
 a_1^{(2^{s+2}-1)}a_2^{(2^{s+2}+2^{s+1}-1)}a_3^{(2^{s+1}+2^{s}-1)}a_4^{(2^{s+3}-1)}\\
&+
 a_1^{(2^{s+2}-1)}a_2^{(2^{s+2}+2^{s+1}+2^{s}-1)}a_3^{(2^{s+1}-1)}a_4^{(2^{s+3}-1)}+
\medskip
 a_1^{(2^{s+2}-1)}a_2^{(2^{s+2}-1)}a_3^{(2^{s+3}+2^{s+1}-1)}a_4^{(2^{s+1}+2^{s}-1)}\\
&+
 a_1^{(2^{s+2}-1)}a_2^{(2^{s+2}-1)}a_3^{(2^{s+3}+2^{s+1}+2^{s}-1)}a_4^{(2^{s+1}-1)}+
\medskip
a_1^{(2^{s+2}+2^{s+1}-1)}a_2^{(2^{s+2}-1)}a_3^{(2^{s+3}-1)}a_4^{(2^{s+1}+2^{s}-1)}\\
&+
 a_1^{(2^{s+2}+2^{s+1}-1)}a_2^{(2^{s+2}+2^{s+1}+2^{s}-1)}a_3^{(2^{s+2}-1)}a_4^{(2^{s+1}+2^{s}-1)}+
 a_1^{(2^{s+2}+2^{s+1}+2^{s}-1)}a_2^{(2^{s+2}+2^{s+1}-1)}a_3^{(2^{s+2}-1)}a_4^{(2^{s+1}+2^{s}-1)}.
\end{array}$\\[1mm]
\end{small}}
We note also that by our previous work \cite{Phuc2}, we deduce that $$[a_1^{(2^{s-1}-1)}a_2^{(2^{s-1}-1)}a_3^{(2^{s+2}-1)}a_4^{(2^{s+4}-1)}] = ([\varphi^{s-1}(\widehat{\zeta})])^*\ \mbox{for every positive integer $s,$}$$ 
where $ \widehat{\zeta} = \sum_{1\leq i<j\leq 4}x_i^{7}x_j^{31} + \sum_{1\leq i<j\leq 4}x_i^{31}x_j^{7} + \mbox{other terms}.$ Using the chain-level representation of $Tr_4^{A}$ via lambda algebra, we get 
$$ \begin{array}{ll}
\medskip
 &Tr_4^{A}([\zeta_s]) = [\psi_4(\zeta_s)] = [\lambda_{2^{s+2}-1}\lambda_{2^{s+2}+2^{s+1}-1}^{2}\lambda_{2^{s+2}+2^{s}-1} + \lambda_{2^{s+3}-1}\lambda_{2^{s+2}+2^{s+1}-1}\lambda_{2^{s+2}-1}\lambda_{2^{s+1}+2^{s}-1}\\
\medskip
&\quad+ \lambda_{2^{s+2}-1}^{2}\lambda_{2^{s+3}-1}\lambda_{2^{s+2}+2^{s}-1} + \lambda_{2^{s+2}-1}^{2}\lambda_{2^{s+2}+2^{s+1}-1}\lambda_{2^{s+2}+2^{s+1} + 2^{s}-1}+\lambda_{2^{s+2}-1}^{3}\lambda_{2^{s+3}+2^{s}-1}\\
\medskip
&\quad+ d(\lambda_{2^{s+2}+2^{s}-1}\lambda_{2^{s+3}-1}^{2}+\lambda_{2^{s+2}-1}\lambda_{2^{s+2}+2^{s+1}-1}\lambda_{2^{s+3}+2^{s+1} + 2^{s}-1} + \lambda_{2^{s+2}-1}\lambda_{2^{s+3}+2^{s+2}+2^{s}-1}\lambda_{2^{s+2}-1}\\
\medskip
&\quad + \lambda_{2^{s}-1}\lambda_{2^{s+3}+2^{s+2}-1}\lambda_{2^{s+3}-1})]\\
\medskip
& =[\lambda_{2^{s+2}-1}\lambda_{2^{s+2}+2^{s+1}-1}^{2}\lambda_{2^{s+2}+2^{s}-1} + \lambda_{2^{s+3}-1}\lambda_{2^{s+2}+2^{s+1}-1}\lambda_{2^{s+2}-1}\lambda_{2^{s+1}+2^{s}-1}\\
&\quad+ \lambda_{2^{s+2}-1}^{2}\lambda_{2^{s+3}-1}\lambda_{2^{s+2}+2^{s}-1} + \lambda_{2^{s+2}-1}^{2}\lambda_{2^{s+2}+2^{s+1}-1}\lambda_{2^{s+2}+2^{s+1} + 2^{s}-1}+\lambda_{2^{s+2}-1}^{3}\lambda_{2^{s+3}+2^{s}-1}] = e_s,
\end{array}$$
for any $s\geq 0.$ Consequently, we deduce that:
\begin{hq}[see H\`a \cite{Ha}]
$Tr_4^{A}$ detects all indecomposable elements $e_s\in {\rm Ext}_A^{4, 21.2^{s}}(k, k),\,s\geq 0.$ 
\end{hq}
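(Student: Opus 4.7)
My plan is to construct an explicit preimage of $e_s$ under $Tr_4^A$ for each $s\geq 0$ by working entirely on the chain level via Theorem \ref{dlCH}, using the class $[\zeta_s]$ already pinpointed in Remark \ref{nxcc0}. Since $e_s$ is a nonzero indecomposable element of $\mathrm{Ext}_A^{4,21\cdot 2^s}(k,k)$, once membership in the image is verified, detection follows immediately.

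For the base case $s=0$, I would apply the linear transformation $\psi_4$ to each of the $45$ monomial summands of $\zeta=\zeta_0$ (displayed in Theorem \ref{dlc2}) using the recursive formula $\psi_q(\prod_{s} a_s^{(j_s)}) = \sum_{k\geq j_q}\psi_{q-1}((\prod_{s<q} a_s^{(j_s)})Sq^{k-j_q})\lambda_k$ from the introduction. The target is to express $\psi_4(\zeta)\in\Lambda^{4,17}$ in the form $\overline{e}_0 + d(\lambda_3\lambda_5\lambda_{10} + \lambda_3\lambda_{12}\lambda_3 + \lambda_4\lambda_7^{2} + \lambda_0\lambda_{11}\lambda_7)$, where $\overline{e}_0 = \lambda_3^3\lambda_8 + \lambda_3\lambda_5^2\lambda_4 + \lambda_3^2\lambda_7\lambda_4 + \lambda_7\lambda_5\lambda_3\lambda_2 + \lambda_3^2\lambda_5\lambda_6$ is the classical $\Lambda$-representative of $e_0$. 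Expanding the boundary via $d(\lambda_{n-1})=\sum_{j\geq 1}\binom{n-j-1}{j}\lambda_{n-j-1}\lambda_{j-1}$ and matching the resulting $\lambda$-monomials against those produced by $\psi_4$-expansion yields the required cancellation mod $2$. By Theorem \ref{dlCH}, this gives $Tr_4^A([\zeta]) = [\psi_4(\zeta)] = [\overline{e}_0] = e_0$.

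For $s\geq 1$, rather than repeat the bookkeeping for the much longer expression $\zeta_s$, I would invoke the intertwining of $Sq^0$ with the Singer transfer recalled in the introduction: the Kameko $\overline{Sq}^0$ on $\pmb{Q}^4$ (dual to the up Kameko map $\varphi$) corresponds on the Ext side to the classical Adams endomorphism $Sq^0$. Since $\zeta_s$ corresponds dually to $\varphi^s(\widetilde{\zeta})$ per Remark \ref{nxcc0}, and since $(Sq^0)^s(e_0)=e_s$, an $s$-fold iteration of this intertwining yields $Tr_4^A([\zeta_s]) = (Sq^0)^s(Tr_4^A([\widetilde{\zeta}])) = (Sq^0)^s(e_0) = e_s$. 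For a more hands-on verification, one may instead extract from $\psi_4(\zeta_s)$ the cycle $\lambda_{2^{s+2}-1}\lambda_{2^{s+2}+2^{s+1}-1}^{2}\lambda_{2^{s+2}+2^{s}-1} + \lambda_{2^{s+3}-1}\lambda_{2^{s+2}+2^{s+1}-1}\lambda_{2^{s+2}-1}\lambda_{2^{s+1}+2^{s}-1} + \cdots$ displayed in Remark \ref{nxcc0}, modulo the explicit coboundary $d(\lambda_{2^{s+2}+2^{s}-1}\lambda_{2^{s+3}-1}^{2} + \cdots)$ indicated there, and identify it with the $\Lambda$-representative of $e_s$.

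The principal obstacle is the combinatorial explosion in the direct chain-level calculation: each monomial in $\zeta_s$ produces a tree of Cartan-formula expansions under $\psi_4$, and verifying the cancellations modulo $2$ requires careful parity tracking of the binomial coefficients in the differential $d$. The $Sq^0$-shortcut is the main labor-saver, reducing the entire family of claims to the single base case $s=0$, where the explicit boundary recorded in equation \eqref{pt6} together with the known lambda-algebra representative $\overline{e}_0$ of $e_0$ provides the decisive input.
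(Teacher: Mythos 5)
Your proposal is correct and the base case $s=0$ matches the paper's own computation exactly: the identity $\psi_4(\zeta) = \overline{e}_0 + d(\lambda_3\lambda_5\lambda_{10} + \lambda_3\lambda_{12}\lambda_3 + \lambda_4\lambda_7^{2} + \lambda_0\lambda_{11}\lambda_7)$ is precisely equation \eqref{pt6}, so $Tr_4^A([\zeta]) = e_0$ follows by Theorem \ref{dlCH}. Where you diverge is for $s\geq 1$: your primary route bootstraps from the base case via the intertwining of the Kameko $\overline{Sq}^0$ with the classical $Sq^0$ through the transfer, whereas Remark \ref{nxcc0} carries out the chain-level computation uniformly for all $s\geq 0$, producing the explicit cycle $\lambda_{2^{s+2}-1}\lambda_{2^{s+2}+2^{s+1}-1}^{2}\lambda_{2^{s+2}+2^{s}-1} + \cdots$ plus its correcting coboundary, and identifying it with the $\Lambda$-representative of $e_s$ directly. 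The paper explicitly acknowledges your $Sq^0$-shortcut as valid (it is the route essentially already implicit in H\`a's work), but deliberately eschews it because that shortcut establishes only membership in the image; the direct uniform computation additionally exhibits the explicit generating set of $k\otimes_{G(4)}\mathscr{P}_A((P_4)_{21\cdot 2^s-4}^*)$ and its $\Lambda$-level representative for each $s$, which is the genuine extra content of the remark. So your approach is lighter in labor but extracts strictly less information. One small notational slip: in the chain $Tr_4^A([\zeta_s]) = (Sq^0)^s(Tr_4^A([\widetilde{\zeta}]))$, the argument of the inner transfer should be the class $[\zeta_0]=[\zeta]$ in the coinvariants $k\otimes_{G(4)}\mathscr{P}_A((P_4)_{38}^*)$, not $[\widetilde{\zeta}]$, which lives in the dual invariant space $(\pmb{Q}^4_{38})^{G(4)}$; the duality is stated in Remark \ref{nxcc0} as $[\zeta_0] = ([\widetilde{\zeta}])^*$, so you should apply the transfer to $[\zeta_0]$.
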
 
In \cite{Ha}, H\`a employed Boardman's chain-level representation of the algebraic transfer \cite{J.B}, in conjunction with the sub-Hopf algebra technique, to demonstrate that the element $e_0$ lies in the image of $Tr_4^{A}.$  (Of course, it is known, the Kameko $Sq^{0}$ and the classical $Sq^{0}$ commute with each other through the Singer transfer, and so, as $e_{s+1} = Sq^{0}(e_s),$ $e_s\in {\rm Im}(Tr_4^{A})$ for every $s.$) But his method does not allow for an explicit determination of a generating set for the domain $k\otimes_{G(4)}\mathscr {P}_A((P_4)_{21.2^{s} - 4}^{*})$ of $Tr_4^{A}$. Therefore, Remark \ref{nxcc0} is of significant importance in this regard. In addition, our technique for determining the structure of the domain of $Tr_4^{A}$ can be applied not only to the stem of $e_s$, but also to the stems of $d_s, f_s$ and $p_s$.
\end{rem}

{\bf Cases \text{\boldmath$t\geq 4$}.} By Sum \cite{N.S}, for each $t\geq 4,$ the dimension of the $k$-vector spaces $\pmb{Q}^{4}_{2^{s+t+1} +2^{s+1} - 3}$ are determined as follows:
$$ \dim \pmb{Q}^{4}_{2^{s+t+1} +2^{s+1} - 3} =  \left\{\begin{array}{ll}
\medskip
150&\mbox{if $s = 1$},\\
\medskip
195&\mbox{if $s = 2$},\\
\medskip
210&\mbox{if $s\geq 3$},
\end{array}\right.$$
Thanks to these results, direct calculations lead to the following technical result.

\begin{theo}\label{dlct}
Let $s,\, t$ be positive integers such that $t\geq 4.$ Then, 
$$ 
\dim k\otimes_{G(4)}\mathscr {P}_A((P_4)_{2^{s+t+1} +2^{s+1} - 3}^{*})  = \left\{\begin{array}{ll}
1&\mbox{if $s = 1, 2$},\\
2&\mbox{if $s \geq 3$}.
\end{array}\right.$$
Moreover, 
$$ \begin{array}{ll}
\medskip
&k\otimes_{G(4)}\mathscr {P}_A((P_4)_{2^{s+t+1} +2^{s+1} - 3}^{*})\\
&  = \left\{\begin{array}{ll}
\langle [a_2^{(2^{s+1}-1)}a_3^{(2^{s+t}-1)}a_4^{(2^{s+t}-1)}] \rangle&\mbox{if $s = 1, 2$},\\[1mm]
\langle [a_2^{(2^{s+1}-1)}a_3^{(2^{s+t}-1)}a_4^{(2^{s+t}-1)}] , [a_1^{(0)}a_2^{(2^{s}-1)}a_3^{(2^{s}-1)}a_4^{(2^{s+t+1}-1)}] \rangle&\mbox{if $s \geq 3$}.
\end{array}\right.
\end{array}$$
\end{theo}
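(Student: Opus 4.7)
My plan is to mirror the arguments used for Theorems \ref{dlc1} and \ref{dlc2}. The starting ingredient is Sum's admissible monomial basis of $\pmb{Q}^{4}_{n}$ at $n = 2^{s+t+1}+2^{s+1}-3$, of dimension $150$, $195$, or $210$ according to whether $s = 1$, $s = 2$, or $s \geq 3$. Dualising produces a spanning set for $\mathscr{P}_A((P_4)^{*}_n) \subseteq (P_4)^{*}_n$, after which I would compute the $G(4)$-coinvariants, which by duality coincide with the invariant subspace $(\pmb{Q}^{4}_{n})^{G(4)}$.

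Next I would exhibit the two candidate primitives
$$\alpha_{s,t} := a_1^{(0)}a_2^{(2^{s+1}-1)}a_3^{(2^{s+t}-1)}a_4^{(2^{s+t}-1)}, \quad \beta_{s,t} := a_1^{(0)}a_2^{(2^{s}-1)}a_3^{(2^{s}-1)}a_4^{(2^{s+t+1}-1)}.$$
Each factor carries an exponent of the form $2^{k}-1$, so the identity $(a_i^{(2^{k}-1)})Sq^{j} = \binom{2^{k}-1-j}{j}a_i^{(2^{k}-1-j)}$ combined with Lucas' theorem shows every factor is $\overline{A}$-annihilated, and the Cartan formula propagates this to the product. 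The degree constraint is immediate from $(2^{s+1}-1)+2(2^{s+t}-1) = 2(2^{s}-1)+(2^{s+t+1}-1) = n$. Applying Theorem \ref{dlCH} together with Remark \ref{nxmd} then yields
$$\psi_4(\alpha_{s,t}) = \lambda_0\lambda_{2^{s+1}-1}\lambda_{2^{s+t}-1}^{2}, \quad \psi_4(\beta_{s,t}) = \lambda_0\lambda_{2^{s}-1}^{2}\lambda_{2^{s+t+1}-1},$$
which are cycles in $\Lambda^{4,n}$ representing the classes $h_0h_{s+1}h_{s+t}^{2}$ and $h_0h_{s}^{2}h_{s+t+1}$ respectively in ${\rm Ext}_A^{4, n+4}(k,k)$.

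The lower bound then follows from Theorem \ref{dlntg}. For $s = 1$ the element $h_0h_1^{2}h_{t+2}$ vanishes because $h_0h_1 = 0$, whereas $h_0h_2h_{t+1}^{2}$ is nonzero, contributing one class to the image of $Tr_4^{A}$. For $s = 2$ the relation $h_ih_{i+2}^{2} = 0$ forces $h_0h_2^{2}h_{t+3} = 0$, while $h_0h_3h_{t+2}^{2}$ remains nonzero; again exactly one class survives. For $s \geq 3$ with $t \geq 4$ none of the relations of Theorem \ref{dlntg} applies, hence $h_0h_{s+1}h_{s+t}^{2}$ and $h_0h_{s}^{2}h_{s+t+1}$ are linearly independent nonzero classes, forcing the coinvariant to have dimension at least two.

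The main obstacle is the matching upper bound: showing that the admissible basis of $\pmb{Q}^{4}_{n}$, of size up to $210$, collapses modulo the $G(4)$-action to precisely the claimed $1$- or $2$-dimensional invariant space. My approach here would be to partition the admissible basis into $\Sigma_4$-orbits, discard all orbits not stabilised by every transposition (a decisive first reduction), and then quotient out by the transvection $x_1 \mapsto x_1+x_2$ modulo the hit summand $\overline{A}(P_4)_n$, iteratively rewriting the resulting terms back into admissible form. This is the genuinely intricate and error-prone computation where the numerical discrepancies corrected in the present erratum originated; once it is executed and matched against $\alpha_{s,t}$ and $\beta_{s,t}$, both the dimension count and the explicit generators of Theorem \ref{dlct} follow.
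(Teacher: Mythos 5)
Your proposal tracks the paper's proof closely: the same two $\overline{A}$-annihilated primitives, the same $\psi_4$-cycles $\lambda_0\lambda_{2^{s+1}-1}\lambda_{2^{s+t}-1}^2$ and $\lambda_0\lambda_{2^{s}-1}^2\lambda_{2^{s+t+1}-1}$, the same Ext-group accounting via Theorem~\ref{dlntg} for the lower bound, and a $\Sigma_4$-then-$\sigma_4$ invariant computation on Sum's admissible basis for the upper bound. The one structural tool you omit is the weight-vector decomposition $\pmb{Q}^{4}_{n_{s,t}}\cong(\pmb{Q}^{4}_{n_{s,t}})^{\omega_{(s,t,1)}}\oplus(\pmb{Q}^{4}_{n_{s,t}})^{\omega_{(s,t,2)}}$ into $G(4)$-submodules, which the paper exploits (stated as Proposition~\ref{mdt}) to split the invariant computation into two independent, far smaller problems; your $\Sigma_4$-orbit bookkeeping would respect weights implicitly, but making the splitting explicit at the outset is what keeps the $150$-- to $210$--dimensional calculation tractable and is exactly the analogue of the decompositions used in the proofs of Theorems~\ref{dlc1} and~\ref{dlc2}. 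Your Ext-side analysis is in fact sharper than the wording in the paper's own proof sketch, which flatly calls both $h_0h_{s+1}h_{s+t}^2$ and $h_0h_{s}^2h_{s+t+1}$ non-zero and asserts dimension two for all $s>0$, $t>3$; you correctly note that $h_0h_1^2h_{t+2}=0$ (via $h_0h_1=0$) and $h_0h_2^2h_{t+3}=0$ (via $h_ih_{i+2}^2=0$), so the Ext group collapses to dimension one for $s\in\{1,2\}$, which is precisely what the one-dimensionality of the coinvariant demands. One imprecision: you cannot ``discard orbits not stabilised by every transposition'' --- every $\Sigma_4$-orbit is $\Sigma_4$-stable by construction; what one does is compute, inside the $\Sigma_4$-submodule each orbit spans, the $\Sigma_4$-invariant subspace (which may vanish once relations modulo $\overline{A}(P_4)_n$ are imposed) and then feed the survivors into the $\sigma_4$-equation.
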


Due to Theorem \ref{dlntg}, ${\rm Ext}_A^{4, 2^{s+t+1} + 2^{s+1}+1}(k, k) = \langle h_0h_{s+1}h_{s+t}^{2},\, h_0h_s^{2}h_{s+t+1}\rangle.$ Noting also that if $t = 3,$ then $h_0h_{s+1}h_{s+t}^{2} = 0.$ If $t = 1$ and $s > 2,$ then  $h_0h_{s+1}h_{s+t}^{2} = h_0h_s^{2}h_{s+t+1}$. For $t = 2$ and $s > 1,$ we have $h_0h_{s+1}h_{s+t}^{2} = 0$ and $h_0h_s^{2}h_{s+t+1} = 0.$ In the case in which $s\in \{1, 2\},$ we have $h_0h_s^{2}h_{s+t+1} = 0.$ Notice that $a_2^{(2^{s+1}-1)}a_3^{(2^{s+t}-1)}a_4^{(2^{s+t}-1)}$, and $a_2^{(2^{s}-1)}a_3^{(2^{s}-1)}a_4^{(2^{s+t+1}-1)}$ are $\overline{A}$-annihilated. By this and Remark \ref{nxmd}, we, consequently, claim that by Theorem \ref{dlCH}, the cycles 
$$ \begin{array}{ll} 
\medskip
\lambda_0\lambda_{2^{s+1}-1}\lambda_{2^{s+t}-1}^2 &=\psi_4(a_2^{(2^{s+1}-1)}a_3^{(2^{s+t}-1)}a_4^{(2^{s+t}-1)}),\\
\lambda_0\lambda^{2}_{2^{s}-1}\lambda_{2^{s+t+1}-1} &=\psi_4(a_2^{(2^{s}-1)}a_3^{(2^{s}-1)}a_4^{(2^{s+t+1}-1)})
\end{array}$$ in $\Lambda^{4, 2^{s+t+1} +2^{s+1} - 3}$ are representative of the non-zero elements $h_0h_{s+1}h_{s+t}^{2}$ and $h_0h_{s}^{2}h_{s+t+1},$ respectively and so, the following is a direct consequence from these data and Theorem \ref{dlct}.

\begin{hq}\label{hqct}
The fourth transfer $Tr_4^{A}$ is an isomorphism in degree $2^{s+t+1} + 2^{s+1} - 3$ for all $s > 0$ and $t > 3.$
\end{hq}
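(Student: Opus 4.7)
The plan is to match the dimensions of the domain and codomain of $Tr_4^A$ in bidegree $(4, 4 + 2^{s+t+1}+2^{s+1}-3)$ and then verify surjectivity by exhibiting explicit preimages via the chain-level representation. Theorem \ref{dlct} already supplies the domain: it is $1$-dimensional for $s \in \{1,2\}$ and $2$-dimensional for $s \geq 3$, together with explicit generators. So the main work is on the codomain side and on the compatibility between the $\psi_4$-images of the domain generators and the claimed Ext classes.

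First, I would use Theorem \ref{dlntg} to pin down ${\rm Ext}_A^{4, 2^{s+t+1}+2^{s+1}+1}(k,k)$. Since $2^{s+t+1}+2^{s+1}+1 = 2^0 + 2^{s+1} + 2 \cdot 2^{s+t}$ and $ = 2^0 + 2 \cdot 2^{s} + 2^{s+t+1}$, the only admissible products of $h_i$'s in this bidegree are $h_0 h_{s+1}h_{s+t}^2$ and $h_0 h_s^2 h_{s+t+1}$, together with possible contributions from $c_v,\, d_t,\, e_t,\, f_t,\, g_{t+1},\, p_t,\, D_3(t),\, p'_t$. Comparing internal degrees against the bidegrees where these indecomposables live (using the formulas in \cite{J.A2, Adem, Wall, Wang, W.L}) rules them out once $t\geq 4$. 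For the remaining two products, I would apply the relations $h_i h_{i+1} = 0$, $h_i h_{i+2}^2 = 0$, and $h_i^3 = h_{i-1}^2 h_{i+1}$ from Theorem \ref{dlntg} case by case: for $s\in\{1,2\}$ the element $h_0 h_s^2 h_{s+t+1}$ vanishes (since $h_0 h_1^2 = h_0 h_2^2 = 0$ when combined with $h_0^3 = 0$, or equivalently from $h_i h_{i+2}^2 = 0$ read in low range), leaving a $1$-dimensional codomain; for $s\geq 3$ neither product collapses under these relations and the two are visibly distinct basis monomials, so the codomain is $2$-dimensional. This produces the desired dimension match.

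Next, to realize the surjection I would invoke Theorem \ref{dlCH} via the chain-level transfer $\psi_4$. The domain generators $a_2^{(2^{s+1}-1)}a_3^{(2^{s+t}-1)}a_4^{(2^{s+t}-1)}$ and $a_2^{(2^{s}-1)}a_3^{(2^{s}-1)}a_4^{(2^{s+t+1}-1)}$ are $\overline{A}$-annihilated (which one checks by the same Cartan-formula argument as in Theorem \ref{dlc1}, noting that all Milnor-like operations $Sq^j$ annihilate divided powers $a^{(2^r-1)}$ for $1\leq j \leq 2^r - 1$ by $\binom{2^r-1-j}{j} \equiv 0 \pmod 2$), so each lies in $\mathscr{P}_A((P_4)_n^*)$. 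Unwinding the recursive definition of $\psi_4$ and using Remark \ref{nxmd} (which identifies $\lambda_{2^r-1}$ with a cycle representing $h_r$), one obtains
\[
\psi_4(a_2^{(2^{s+1}-1)}a_3^{(2^{s+t}-1)}a_4^{(2^{s+t}-1)}) = \lambda_0 \lambda_{2^{s+1}-1}\lambda_{2^{s+t}-1}^{2},
\]
\[
\psi_4(a_2^{(2^{s}-1)}a_3^{(2^{s}-1)}a_4^{(2^{s+t+1}-1)}) = \lambda_0 \lambda_{2^{s}-1}^{2}\lambda_{2^{s+t+1}-1},
\]
representing $h_0 h_{s+1} h_{s+t}^2$ and $h_0 h_s^2 h_{s+t+1}$ respectively in ${\rm Ext}_A^{4,*}(k,k)$.

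Combining the two pieces: for $s\in\{1,2\}$ the single non-trivial codomain generator $h_0 h_{s+1}h_{s+t}^2$ lies in the image, and both sides have dimension $1$; for $s\geq 3$ both codomain generators $h_0 h_{s+1}h_{s+t}^2$ and $h_0 h_s^2 h_{s+t+1}$ lie in the image and are linearly independent as just observed, and both sides have dimension $2$. Hence $Tr_4^A$ is an isomorphism, as claimed. The main obstacle I expect is the relation-checking and case analysis needed to rule out contributions from the exotic $A$-generators $c_v, d_t, e_t, f_t, g_{t+1}, p_t, D_3(t), p'_t$ in the relevant bidegree for the full range $t\geq 4,\, s\geq 1$; this is essentially a bookkeeping exercise on internal degrees, but it must be done carefully in order to be sure that the codomain is genuinely spanned by the two $h$-monomials above.
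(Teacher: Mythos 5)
Your proof follows essentially the same route as the paper: use Theorem \ref{dlct} for the domain, Theorem \ref{dlntg} to identify the codomain with $\langle h_0h_{s+1}h_{s+t}^2,\, h_0h_s^2h_{s+t+1}\rangle$, determine which of these vanish for $s \in \{1,2\}$, and then invoke Theorem \ref{dlCH} together with Remark \ref{nxmd} to show the $\psi_4$-images of the explicit domain generators land on the nonzero products. One small wrinkle: your parenthetical justification that $h_0 h_1^2 = h_0 h_2^2 = 0$ ``when combined with $h_0^3 = 0$'' is off --- $h_0^n$ is famously nonzero for all $n$ in ${\rm Ext}_A(k,k)$ --- but the cited relations $h_ih_{i+1}=0$ (giving $h_0h_1=0$ for $s=1$) and $h_ih_{i+2}^2=0$ (giving $h_0h_2^2=0$ for $s=2$) do the job directly, so the conclusion stands.
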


\subsection{Degree \text{\boldmath $n = 2^{s+t} + 2^{s} - 2,\, t\geq 1,\, t\neq 2,\, 3,\, 4$}}\label{sub2.2}

The basic strategy for determining the $G(4)$-invariants $(\pmb{Q}^{4}_{2^{s+t} + 2^{s} - 2})^{G(4)}$ is as follows: we set up a short exact sequences of $kG(4)$-modules either by using the Kameko homomorphism or weight vectors. (Readers can refer to Section \ref{s3} for an explanation of the concept of a weight vector of a monomial). These short exact sequence induce a left exact sequences of $G(4)$-invariants. Then, we need to calculate $G(4)$-invariants for the the kernel and quotient $k$-modules in the cases under consideration.

\emph{}

{\bf Case \text{\boldmath $t = 1.$}} For this, $2^{s+t} + 2^{s} - 2 = 2^{s+1} + 2^{s} - 2,$ and the Kameko homomorphism
$$[\overline{Sq}^{0}]_{2^{s+1} + 2^{s} - 2}:= \overline{Sq}^{0}: \pmb{Q}^{4}_{2^{s+1} + 2^{s} - 2} \longrightarrow \pmb{Q}^{4}_{2^{s-1} + 2^{s} - 3}$$
is an epimorphism. So, one has a short exact sequence of $kG(4)$-modules:
$$ 0\longrightarrow {\rm Ker}([\overline{Sq}^{0}]_{2^{s+1} + 2^{s} - 2})\longrightarrow   \pmb{Q}^{4}_{2^{s+1} + 2^{s} - 2}\longrightarrow \pmb{Q}^{4}_{2^{s-1} + 2^{s} - 3}\longrightarrow 0.$$
This induces a left exact sequence of $G(4)$-invariants:
$$ 0\longrightarrow ({\rm Ker}([\overline{Sq}^{0}]_{2^{s+1} + 2^{s} - 2}))^{G(4)}\longrightarrow (\pmb{Q}^{4}_{2^{s+1} + 2^{s} - 2})^{G(4)}\longrightarrow (\pmb{Q}^{4}_{2^{s-1} + 2^{s} - 3})^{G(4)},$$
which leads to an estimate
\begin{equation}\label{pt8}
 \begin{array}{ll}
\dim k\otimes_{G(4)}\mathscr {P}_A((P_4)_{2^{s+1} + 2^{s} - 2}^{*}) &=\dim (\pmb{Q}^{4}_{2^{s+t} + 2^{s} - 2})^{G(4)}\leq \dim ({\rm Ker}[\overline{Sq}^{0}]_{2^{s+1} + 2^{s} - 2})^{G(4)}\\
&\qquad  + \dim k\otimes_{G(4)}\mathscr {P}_A((P_4)_{2^{s-1} + 2^{s}-3}^{*}).
\end{array}
\end{equation}

By employing a monomial basis of ${\rm Ker}[\overline{Sq}^{0}]_{2^{s+1} + 2^{s} - 2}$, we establish the following theorem:

%\begin{theo}\label{dlc3}
%\begin{itemize}
%\item[(i)] For $s\in \{1, 2, 4\},$ we have $\dim ({\rm Ker}[\overline{Sq}^{0}]_{2^{s+1} + 2^{s} - 2})^{G(4)} = 0.$ 

%\item[(ii)] For $s\geq 3,\, s \neq 4,$ we have $\dim (\pmb{Q}^{4}_{2^{s+1} + 2^{s} - 2})^{G(4)}=1.$
%\end{itemize}
%\end{theo}

\begin{theo}\label{dlc3}
Let $s$ be a positive integer.
\begin{itemize}
    \item[(i)] For each $s \geq 2,$ the space of $G(4)$-invariants of the kernel of the Kameko map is trivial:
    $$ \left({\rm Ker}[\overline{Sq}^{0}]_{2^{s+1} + 2^{s} - 2}\right)^{G(4)} = 0.$$
    \item[(ii)] For each $s \geq 2,$ we have:
$$
\dim \left(\pmb{Q}^{4}_{2^{s+1} + 2^{s} - 2}\right)^{G(4)} = 
\left\{
\begin{array}{ll}
\medskip
0 & \text{if } s = 2,\, 4, \\
1 & \text{if } s \geq 3,\, s \neq 4.
\end{array}
\right.
$$

\end{itemize}
\end{theo}

Based upon Theorem \ref{dlc3} and the inequality \eqref{pt8}, we could conclude the following.

$\bullet$ For $s = 1,$ evidently $\pmb{Q}^{4}_{2^{1-1} + 2^{1}-3} \cong k,$ and so 
\begin{equation}\label{ptp}
k\otimes_{G(4)}\mathscr {P}_A((P_4)_{2^{1-1} + 2^{1}-3}^{*}) = \langle [1]\rangle.
\end{equation}
Suppose that $[f]\in k\otimes_{G(4)}\mathscr {P}_A((P_4)_{2^{1+1} + 2^{1} - 2}^{*}).$ Then, $[f]$ is dual to $[\widetilde{f}]\in (\pmb{Q}^{4}_{2^{1+1} + 2^{1} - 2})^{G(4)}.$ Since the Kameko map $[\overline{Sq}^{0}]_{2^{1+1} + 2^{1} - 2}$ is an epimorphism, the dual of $[\overline{Sq}^{0}]_{2^{1+1} + 2^{1} - 2}([\widetilde{f}])$ belongs to the coinvariants $k\otimes_{G(4)}\mathscr {P}_A((P_4)_{2^{1+1} + 2^{1} - 2}^{*}).$ So, due to Theorem \ref{dlc3} and the equality \eqref{ptp}, $[f]$ is dual to $(\gamma [\varphi(1)] + [\widetilde{f}']),$
where $\gamma\in k,$ $[\widetilde{f}']\in {\rm Ker}[\overline{Sq}^{0}]_{2^{1+1} + 2^{1} - 2}$ and the up Kameko map $\varphi$ is determined by
$$ \begin{array}{ll}
\varphi: k &\longrightarrow (P_4)_{2^{1+1} + 2^{1} - 2}\\
\ \ \ \ u&\longmapsto\left\{\begin{array}{ll}
0&\mbox{if $u = 0$},\\
\prod_{1\leq i\leq 4}x_i&\mbox{if $u = 1$}.
\end{array}\right.
\end{array}$$ 
By straightforward computations using an admissible monomial basis of $\pmb{Q}^{4}_{2^{1+1} + 2^{1} - 2},$ we get $[\widetilde{f}] = 0,$ which implies that $[f] = ([\widetilde{f}])^{*} = 0.$ Hence, the coinvariant $k\otimes_{G(4)}\mathscr {P}_A((P_4)_{2^{1+1} + 2^{1} - 2}^{*})$ is trivial. 

$\bullet$ For $s \in \{2, 4\},$ by combining Theorems \ref{dlc1}, \ref{dlc3} with the inequality \eqref{pt8} and the fact that the $G(4)$-invariant $(\pmb{Q}^{4}_{2^{2-1} + 2^{2}-3})^{G(4)}$ is trivial (see Sum \cite{N.S}), it may be concluded that the coinvariants $k\otimes_{G(4)}\mathscr {P}_A((P_4)_{2^{s+1} + 2^{s} - 2}^{*})$ are also trivial. 
%the following inequality is immediate from Theorems \ref{dlc1}, \ref{dlc3} and the inequality \eqref{pt8}:
%\begin{equation}\label{pt10}
%\dim k\otimes_{G(4)}\mathscr {P}_A((P_4)_{2^{s+1} + 2^{s} - 2}^{*})\leq 1.
%\end{equation}
It is routine to verify that the elements 
$$ \begin{array}{ll}
\medskip
\zeta_3 &= (a_1^{(3)}a_2^{(7)}a_3^{(7)}a_4^{(5)} + a_1^{(3)}a_2^{(7)}a_3^{(9)}a_4^{(3)} + a_1^{(3)}a_2^{(11)}a_3^{(5)}a_4^{(3)} +  a_1^{(3)}a_2^{(13)}a_3^{(3)}a_4^{(3)})\in (P_4)^{*}_{2^{3+1}+ 2^{3}-2},\\
\zeta_s&= a_1^{(1)}a_2^{(2^{s-1}-1)}a_3^{(2^{s-1}-1)}a_4^{(2^{s+1}-1)}\in (P_4)^{*}_{2^{s+1} + 2^{s}-2},\ \mbox{for $s\geq 5$}
\end{array}$$
are $\overline{A}$-annihilated. Clearly, $\lambda_7^{2}\lambda_5 = Sq^{0}(\lambda_3^{2}\lambda_2)\in \Lambda^{3, 19}$ is a cycle in $\Lambda,$ and is a representative of $c_1 = Sq^{0}(c_0)\in {\rm Ext}_A^{3, 22}(k, k).$ These data together with Remark \ref{nxmd} and Theorem \ref{dlCH} yield that the cycles $\psi_4(\zeta_3) = \lambda_3\lambda_7^{2}\lambda_5$ and $\psi_4(\zeta_s) = \lambda_1\lambda_{2^{s-1}-1}^{2}\lambda_{2^{s+1}-1}$ in $\Lambda$ are representative of the elements $h_2c_1\in {\rm Ext}_A^{4, 3.2^{3}+2}(k, k)$ and $h_1h_{s-1}^{2}h_{s+1}\in {\rm Ext}_A^{4, 3.2^{s}+2}(k, k),$ respectively. It is also worth noticing that with the $\overline{A}$-annihilated elements $a^{(2^{s}-1)}\in (P_1)_{2^{s}-1}^{*}$ and 
$$ \begin{array}{ll}
\medskip
\widehat{\zeta} &= (a_1^{(7)}a_2^{(7)}a_3^{(5)} + a_1^{(7)}a_2^{(9)}a_3^{(3)}+ a_1^{(11)}a_2^{(5)}a_3^{(3)} + a_1^{(13)}a_2^{(3)}a_3^{(3)})\in (P_3)_{19}^{*},
\end{array}$$
we have $h_s=Tr_1^{A}([a_1^{(2^{s}-1)}])$ and $c_1 = Sq^{0}(c_0) = Tr^{A}_3([\widehat{\zeta}])$  (since the classical $Sq^{0}$ commutes with the Kameko $Sq^{0}$ via the rank 3 transfer). These arguments imply that
%\begin{equation}\label{pt11}
%\dim k\otimes_{G(4)}\mathscr {P}_A((P_4)_{2^{s+1} + 2^{s} - 2}^{*}) = 1,\ \mbox{for $s\not\in \{1, 2, 4\}$},
%\end{equation} 
the Singer transfer is an epimorphism in bidegree $(4, 3.2^{s}+2).$  Moreover, according to Theorem \ref{dlntg}, we easily get
$${\rm Ext}_A^{4, 3.2^{s}  +2}(k, k)\\
=  \left\{\begin{array}{ll}
\langle h_1h_3^{3}, h_2c_1\rangle = \langle h_2c_1\rangle &\mbox{if $s = 3$},\\
 0 &\mbox{if $s\in \{1, 2, 4\}$},\\
\langle h_1h_s^{3}\rangle  &\mbox{if $s > 4$},\\
\end{array}\right.
$$
where $h_1h_s^{3}=h_1h_{s-1}^{2}h_{s+1}\neq 0.$ Thus, from the above data and Theorem \ref{dlc3}, we obtain the following corollary.

\begin{hq}\label{hqc3}
The Singer algebraic transfer is an isomorphism in bidegree $(4, 3.2^{s}+2)$ for every positive integer $s.$
\end{hq}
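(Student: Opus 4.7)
The plan is to split on the value of $s$ and match dimensions of the domain and codomain, then exhibit explicit generators to force surjectivity in the non-trivial cases. The relevant stem is $n=2^{s+1}+2^{s}-2$, so the target is ${\rm Ext}_A^{4,\,3\cdot 2^{s}+2}(k,k)$.

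First I would read off the codomain from Theorem \ref{dlntg}. The only candidate four-fold products of $h_i$'s landing in this bidegree are essentially $h_1h_{s-1}^{2}h_{s+1}$ and its rewrites $h_1h_{s}^{3}$, together with possible $h_uc_v$ contributions. A short case analysis using $h_ih_{i+1}=0$, $h_ih_{i+2}^{2}=0$, and the $c_v$ relations collapses this to:  trivial for $s\in\{1,2,4\}$; $\langle h_2c_1\rangle$ for $s=3$; and $\langle h_1h_s^{3}\rangle=\langle h_1h_{s-1}^{2}h_{s+1}\rangle\neq 0$ for $s\ge 5$.

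Next I would bound the domain. For $s\in\{1,2,4\}$, Theorem \ref{dlc3}(i) gives $\dim ({\rm Ker}[\overline{Sq}^{0}]_{2^{s+1}+2^{s}-2})^{G(4)}=0$; feeding this into the left exact sequence that produces \eqref{pt8}, combined with $(\pmb{Q}^{4}_{2^{s-1}+2^{s}-3})^{G(4)}=0$ for $s=2,4$ (via \cite{N.S}) and the explicit check for $s=1$ that an arbitrary invariant has the form $\gamma[\varphi(1)]+[\widetilde f']$ and that $[\varphi(1)]=0$ in $\pmb{Q}^{4}_{3}$ on an admissible monomial basis, forces the coinvariant space to vanish. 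For $s=3$ and $s\ge 5$, Theorem \ref{dlc3}(ii) delivers a $1$-dimensional invariant space.

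The main step is then to match the non-trivial one-dimensional cases by exhibiting explicit primitive cycles. I would take
$$\zeta_3=a_1^{(3)}a_2^{(7)}a_3^{(7)}a_4^{(5)}+a_1^{(3)}a_2^{(7)}a_3^{(9)}a_4^{(3)}+a_1^{(3)}a_2^{(11)}a_3^{(5)}a_4^{(3)}+a_1^{(3)}a_2^{(13)}a_3^{(3)}a_4^{(3)},$$
and for $s\ge 5$ the monomial $\zeta_s=a_1^{(1)}a_2^{(2^{s-1}-1)}a_3^{(2^{s-1}-1)}a_4^{(2^{s+1}-1)}$. The plan is to (a) verify $\overline{A}$-annihilation, which by the unstable condition reduces to checking the actions of $Sq^{1}$ and $Sq^{2}$; (b) apply the recursive formula for $\psi_4$ to obtain $\psi_4(\zeta_3)=\lambda_3\lambda_7^{2}\lambda_5$ and $\psi_4(\zeta_s)=\lambda_1\lambda_{2^{s-1}-1}^{2}\lambda_{2^{s+1}-1}$; (c) identify these cycles with the generators $h_2c_1$ and $h_1h_{s-1}^{2}h_{s+1}$ of the respective Ext groups, using $\lambda_7^{2}\lambda_5=Sq^{0}(\lambda_3^{2}\lambda_2)$ (so representing $Sq^{0}(c_0)=c_1$) together with Remark \ref{nxmd}. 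By Theorem \ref{dlCH} this gives $Tr_4^{A}([\zeta_3])=h_2c_1$ and $Tr_4^{A}([\zeta_s])=h_1h_s^{3}$. Since in every case the dimensions of domain and codomain agree and the non-trivial generator is hit, $Tr_4^{A}$ is an isomorphism.

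The part I expect to be most delicate is step (b) for $\zeta_3$: iterating $\psi_q(\prod a_s^{(j_s)})=\sum_k\psi_{q-1}((\prod a_s^{(j_s)})Sq^{k-j_q})\lambda_k$ through four levels and tracking the mod-$2$ binomial coefficient cancellations among the four terms to see that everything collapses to the single monomial $\lambda_3\lambda_7^{2}\lambda_5$. Once this verification is in hand, the identification with the Adams--Wang generators of ${\rm Ext}_A^{4,*}(k,k)$ and the final dimension match are immediate.
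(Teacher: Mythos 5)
Your proposal matches the paper's own route essentially step for step: the same use of Theorem \ref{dlc3} together with the Kameko short exact sequence and the estimate \eqref{pt8} to pin down the domain in each case $s\in\{1,2,4\}$, $s=3$, $s\geq 5$; the same codomain computation from Theorem \ref{dlntg} (collapsing to $0$, $\langle h_2c_1\rangle$, or $\langle h_1h_s^3\rangle$); and the same choice of $\overline A$-annihilated dual generators $\zeta_3$ and $\zeta_s$ with $\psi_4(\zeta_3)=\lambda_3\lambda_7^2\lambda_5$ and $\psi_4(\zeta_s)=\lambda_1\lambda_{2^{s-1}-1}^2\lambda_{2^{s+1}-1}$ representing $h_2c_1$ and $h_1h_{s-1}^2h_{s+1}$ via Remark \ref{nxmd} and Theorem \ref{dlCH}. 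The only cosmetic difference is that for $s=4$ you attribute $(\pmb{Q}^4_{21})^{G(4)}=0$ directly to Sum's tables, whereas the paper reads it off from its own Theorem \ref{dlc1} (at $s'=2$, where $21=2^{2+2}+2^{2+1}-3$); these are equivalent since Theorem \ref{dlc1} rests on the same bases.
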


{\bf Cases \text{\boldmath$t\geq 5.$}} We now state the following technical theorem.

\begin{theo}\label{dlct2}
Let $s,\, t$ be positive integers such that $t\geq 5.$ Then, 
$$k\otimes_{G(4)}\mathscr {P}_A((P_4)_{2^{s+t} + 2^{s} - 2}^{*}) \\
 = \left\{\begin{array}{ll}
\langle [\zeta_{s,\, t}] \rangle &\mbox{if $s = 1,\, 2$},\\[1mm]
\langle [\zeta_{s,\, t}], [\widetilde{\zeta}_{s,\, t}] \rangle &\mbox{if $s = 3,\, 4$},\\[1mm]
\langle [\zeta_{s,\, t}], [\widetilde{\zeta}_{s,\, t}], [\widehat{\zeta}_{s,\, t}] \rangle &\mbox{if $s \geq 5$},
\end{array}\right.$$
where $$ \begin{array}{lll}
\medskip
\zeta_{s,\, t}&:=a_1^{(1)}a_2^{(2^{s}-1)}a_3^{(2^{s+t-1}-1)}a_4^{(2^{s+t-1}-1)},\\
\medskip
 \widetilde{\zeta}_{s,\, t}&:= a_3^{(2^{s}-1)}a_4^{(2^{s+t}-1)},\\
\medskip
\widehat{\zeta}_{s,\, t}&:= a_1^{(1)}a_2^{(2^{s-1}-1)}a_3^{(2^{s-1}-1)}a_4^{(2^{s+t}-1)}.
\end{array}$$
\end{theo}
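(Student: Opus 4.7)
The plan is to mirror the Kameko-sequence strategy used in the proof of Theorem \ref{dlc3}. Writing $n=2^{s+t}+2^{s}-2$ and $m:=(n-4)/2=2^{s+t-1}+2^{s-1}-3$, the down Kameko map provides a short exact sequence of $kG(4)$-modules
\begin{equation*}
0\longrightarrow {\rm Ker}[\overline{Sq}^{0}]_{n}\longrightarrow \pmb{Q}^{4}_{n}\xrightarrow{[\overline{Sq}^{0}]_{n}} \pmb{Q}^{4}_{m}\longrightarrow 0,
\end{equation*}
and taking $G(4)$-invariants yields the bound
\begin{equation*}
\dim (\pmb{Q}^{4}_{n})^{G(4)}\;\leq\;\dim ({\rm Ker}[\overline{Sq}^{0}]_{n})^{G(4)}+\dim (\pmb{Q}^{4}_{m})^{G(4)}.
\end{equation*}

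The right-hand flanking term can be read off from already-proved results. When $s\geq 3$ one has $m=2^{(s-2)+t+1}+2^{(s-2)+1}-3$, which is exactly a type ii) degree in Subsection \ref{sub2.1}; Theorem \ref{dlct} applies with $(s',t')=(s-2,t)$ and, since $t\geq 5\geq 4$, gives $\dim (\pmb{Q}^{4}_{m})^{G(4)}=1$ for $s\in\{3,4\}$ and $\dim (\pmb{Q}^{4}_{m})^{G(4)}=2$ for $s\geq 5$. A short computation shows that the explicit invariant generators supplied by Theorem \ref{dlct} pull back under the up Kameko map to the classes dual to $[\zeta_{s,t}]$ and, for $s\geq 5$, $[\widehat{\zeta}_{s,t}]$. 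For $s\in\{1,2\}$ the degree $m$ reduces to $2^{t}-2$ or $2^{t+1}-1$ respectively, both of type i), and Sum's computations in \cite{N.S2} together with a direct check give $\dim (\pmb{Q}^{4}_{m})^{G(4)}=1$ with unique generator lifting to $[\zeta_{s,t}]^{*}$.

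The main technical step is then the kernel computation, where the claim is that $\dim ({\rm Ker}[\overline{Sq}^{0}]_{n})^{G(4)}=0$ for $s\in\{1,2\}$ and equals $1$ for $s\geq 3$, spanned by the class dual to $[\widetilde{\zeta}_{s,t}]$. To carry this out, I would use the admissible monomial basis of $\pmb{Q}^{4}_{n}$ from \cite{N.S}, restrict to the admissible classes whose monomials are not of the form $x_{1}x_{2}x_{3}x_{4}\cdot g^{2}$ (and hence lie in ${\rm Ker}[\overline{Sq}^{0}]_{n}$), write a putative invariant as a $k$-linear combination of them, and impose fixedness under the standard generators of $G(4)$ (the three adjacent transpositions of $S_{4}$ together with the transvection $x_{1}\mapsto x_{1}+x_{2}$) to extract a linear system. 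The solution space will turn out to be as claimed. Finally, $\overline{A}$-annihilation of the three candidate elements is automatic: each is a product of factors $a_{i}^{(2^{k}-1)}$ that are individually $\overline{A}$-annihilated (by a direct Lucas-theorem check on $(a_{i}^{(n)})Sq^{t}=\binom{n-t}{t}a_{i}^{(n-t)}$), and the Cartan formula propagates this to the product.

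The principal obstacle is the sheer combinatorial size of the kernel computation: the admissible bases of $\pmb{Q}^{4}_{n}$ in this range have dimensions of order several hundred, the $G(4)$-action couples monomials with distinct weight vectors, and the argument must be uniform in $s\geq 1$ and $t\geq 5$. In practice one first handles a few small values of $s$ by direct (machine-assisted) calculation in order to detect the stable pattern of admissible monomials and invariant relations, and then gives a uniform argument showing that no additional invariant class appears beyond $[\widetilde{\zeta}_{s,t}]^{*}$; linear independence of the three candidate classes in $(\pmb{Q}^{4}_{n})^{G(4)}$, combined with the upper bound above, then forces equality and completes the proof.
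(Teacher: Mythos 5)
Your proposal follows essentially the same route as the paper: decompose $\pmb{Q}^{4}_{n}$ via the Kameko map, read off the flanking coinvariants from Theorem \ref{dlct} (for $s\geq 3$, where $m=2^{(s-2)+t+1}+2^{(s-2)+1}-3$ is type ii)) and from Sum's type-i) computations (for $s\in\{1,2\}$), reduce the kernel to a weight-vector-specific invariant computation (the paper's Lemma \ref{bdct2}, with $\omega_{(s,t)}$), and identify the resulting classes with the explicitly named $\overline{A}$-annihilated elements by pairing/dual-Kameko lifting. The only cosmetic difference is that you work from the left-exact sequence and an upper bound plus an independence argument, whereas the paper uses that the Kameko surjection splits as $kG(4)$-modules and hence gives an exact count directly; both your reduction of the cases $s\in\{1,2\}$ vs.\ $s\in\{3,4\}$ vs.\ $s\geq 5$ and your identification of $\zeta_{s,t}$, $\widehat{\zeta}_{s,t}$ as lifts of the Theorem \ref{dlct} generators under the dual Kameko map agree with the paper.
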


The theorem shows that the elements $\zeta_{s,\, t},$ $\widetilde{\zeta}_{s,\, t}$  and $\widehat{\zeta}_{s,\, t}$ belong to $\mathscr {P}_A((P_4)_{2^{s+t} + 2^{s} - 2}^{*}).$ So, by Theorem \ref{dlCH}, $\psi_4(\zeta_{s,\, t})$ are cycles in $\Lambda;$ moreover using the representation in $\Lambda$ of $Tr_4^{A},$ we get
$$ \begin{array}{ll}
Tr_4^{A}([\zeta_{s,\, t}])  &= [\psi_4(\zeta_{s,\, t})] = [\lambda_1\lambda_{2^{s}-1}\lambda^{2}_{2^{s+t-1}-1}]\\
& = h_1h_{s}h_{s+t-1}^{2}\in {\rm Ext}_{A}^{4, 2^{s+t} + 2^{s} + 2}(k, k),\ \mbox{for all $s\geq 1, \ s\neq 2$},\\[1mm]
Tr_4^{A}([\widetilde{\zeta}_{s,\, t}])  &= [\psi_4(\widetilde{\zeta}_{s,\, t})] = [\lambda_0^{2}\lambda_{2^{s}-1}\lambda_{2^{s+t}-1}]\\
& = h_0^{2}h_sh_{s+t}\in {\rm Ext}_{A}^{4, 2^{s+t} + 2^{s} + 2}(k, k), \ \mbox{for all $s \geq 2$},\\[1mm]
Tr_4^{A}([\widehat{\zeta}_{s,\, t}])  &= [\psi_4(\widehat{\zeta}_{s,\, t})] = [\lambda_1\lambda^{2}_{2^{s-1}-1}\lambda_{2^{s+t}-1}]\\
& = h_1h^{2}_{s-1}h_{s+t}\in {\rm Ext}_{A}^{4, 2^{s+t} + 2^{s} + 2}(k, k),\ \mbox{for all $s\geq 5$}.
\end{array}$$
On the other side, by Theorem \ref{dlntg}, 
$$ {\rm Ext}_{A}^{4, 2^{s+t} + 2^{s} + 2}(k, k) = \left\{\begin{array}{ll}
\langle h_1^{2}h_7^{2}, D_3(2) \rangle &\mbox{if $s = 1$ and $t = 7$},\\[1mm]
\langle h_1^{2}h_t^{2} \rangle &\mbox{if $s = 1$ and $t \geq 5,\, t\neq 7$},\\[1mm]
\langle h_0^{2}h_2h_{t+2} \rangle = \langle h_1^{3}h_{t+2} \rangle &\mbox{if $s = 2$ and $t\geq 5$},\\[1mm]
\langle h_1h_sh_{s+t-1}^{2},\, h_0^{2}h_sh_{s+t},\, h_1h_{s-1}^{2}h_{s+t} \rangle &\mbox{if $s \geq 3$ and $t\geq 5$},
\end{array}\right.$$
where $h_1h_{s-1}^{2}h_{s+t}  = 0$ if $s = 3,\, 4,$ and $t \geq  5.$ Since $Tr_4^{A}([\zeta_{1,\, 7}]) = h_1^{2}h_7^{2},$ by Theorem \ref{dlct2}, the indecomposable element $D_3(2)$ is not in the image of $Tr_4^{A}.$ From this, it follows that:

\begin{hq}\label{hqct2}
The transfer homomorphism $$ Tr_4^{A}: k\otimes_{G(4)}\mathscr {P}_A((P_4)_{2^{s+t} + 2^{s} - 2}^{*})  \longrightarrow {\rm Ext}_{A}^{4, 2^{s+t} + 2^{s} + 2}(k, k)$$ is not an epimorphism if $s = 1,\, t = 7,$ and is an isomorphism if $s\geq 1,\, t\geq 5,\, t\neq 7.$
\end{hq}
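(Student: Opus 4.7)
The plan is to derive this corollary as a direct consequence of Theorem \ref{dlct2} together with the lambda-algebra representation of $Tr_4^A$ provided by Theorem \ref{dlCH} and Remark \ref{nxmd}, matched against the structure of ${\rm Ext}_{A}^{4, 2^{s+t}+2^s+2}(k,k)$ from Theorem \ref{dlntg}. The strategy is to (a) take the explicit generators of the domain furnished by Theorem \ref{dlct2}, (b) push them through $\psi_4$ and identify the resulting cycles with known Adams-family elements, and (c) compare dimensions case by case.

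First, I would read off from Theorem \ref{dlct2} that the domain is spanned by $\{[\zeta_{s,t}]\}$ for $s=1,2$; by $\{[\zeta_{s,t}],[\widetilde\zeta_{s,t}]\}$ for $s=3,4$; and by $\{[\zeta_{s,t}],[\widetilde\zeta_{s,t}],[\widehat\zeta_{s,t}]\}$ for $s\geq 5$. Since each of these polynomial duals is $\overline A$-annihilated, Theorem \ref{dlCH} guarantees that the images under $\psi_4$ are cycles in $\Lambda^{4,\,2^{s+t}+2^s-2}$. A short direct calculation (of exactly the kind carried out for $\zeta_s$ in Case \text{\boldmath$t=1$} of Subsection \ref{sub2.1}) gives the values $\psi_4(\zeta_{s,t})=\lambda_1\lambda_{2^s-1}\lambda_{2^{s+t-1}-1}^{2}$, $\psi_4(\widetilde\zeta_{s,t})=\lambda_0^{2}\lambda_{2^s-1}\lambda_{2^{s+t}-1}$, and $\psi_4(\widehat\zeta_{s,t})=\lambda_1\lambda_{2^{s-1}-1}^{2}\lambda_{2^{s+t}-1}$. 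Invoking Remark \ref{nxmd} and the multiplicativity of the edge map, these cycles are representatives of the products $h_1h_sh_{s+t-1}^{2}$, $h_0^{2}h_sh_{s+t}$, and $h_1h_{s-1}^{2}h_{s+t}$ in ${\rm Ext}_{A}^{4,\,2^{s+t}+2^s+2}(k,k)$.

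Next, I would read off ${\rm Ext}_{A}^{4,\,2^{s+t}+2^s+2}(k,k)$ from Theorem \ref{dlntg}, applying the relations $h_ih_{i+1}=0$, $h_ih_{i+2}^{2}=0$, $h_i^{3}=h_{i-1}^{2}h_{i+1}$ and $h_i^{2}h_{i+3}^{2}=0$ to prune the candidate monomial basis. This computation shows that the codomain equals $\langle h_1^{2}h_t^{2}\rangle$ (dim $1$) for $s=1$, $t\neq 7$; equals $\langle h_1^{2}h_7^{2},D_3(2)\rangle$ (dim $2$) for $s=1,t=7$; equals $\langle h_1^{3}h_{t+2}\rangle$ (dim $1$) for $s=2$; and becomes $2$-dimensional for $s=3,4$ (since $h_1h_{s-1}^{2}h_{s+t}=0$) and $3$-dimensional for $s\geq 5$. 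Matching the $\psi_4$-values above against these bases immediately yields injectivity in every case (the images are distinct non-zero Adams-family generators, linearly independent in ${\rm Ext}_A$), and surjectivity whenever the image dimension equals the codomain dimension, which is exactly when $(s,t)\neq(1,7)$.

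The main obstacle is the exceptional case $(s,t)=(1,7)$, where one must exhibit the non-surjectivity by arguing $D_3(2)\notin {\rm Im}(Tr_4^A)$. The transfer image there is one-dimensional and, by the computation above, hits $h_1^{2}h_7^{2}$; hence surjectivity would force $D_3(2)$ to also lie in the image, contradicting the general fact (invoked earlier in the paper's discussion around Remark \ref{nxcc}) that the indecomposable classes $D_3(s)$ are not detected by the rank-four transfer. A secondary technical nuisance is the identification at $s=2$: one must confirm that the $\psi_4$-value $\lambda_1\lambda_3\lambda_{2^{t+1}-1}^{2}$ represents the unique non-zero class $h_1^{3}h_{t+2}$ of the one-dimensional codomain, which is handled by applying the relation $h_1^{3}=h_0^{2}h_2$ together with $h_ih_{i+2}^{2}=0$ to collapse the otherwise-written product $h_1h_2h_{t+1}^{2}$ onto the generator.
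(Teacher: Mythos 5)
Your overall strategy matches the paper's: read off the domain generators from Theorem~\ref{dlct2}, compute their images in the lambda algebra, compare with the codomain from Theorem~\ref{dlntg}, and count dimensions. However, your treatment of the $s=2$ case contains a genuine mathematical error, and your argument for non-surjectivity at $(s,t)=(1,7)$ is logically circular.

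The $s=2$ error is the serious one. You compute $\psi_4(\zeta_{2,t})=\lambda_1\lambda_3\lambda_{2^{t+1}-1}^{2}$ and claim this ``represents the unique non-zero class $h_1^{3}h_{t+2}$,'' obtained ``by applying the relation $h_1^{3}=h_0^{2}h_2$ together with $h_ih_{i+2}^{2}=0$ to collapse the otherwise-written product $h_1h_2h_{t+1}^{2}$ onto the generator.'' This cannot work: the relation $h_ih_{i+1}=0$ from Theorem~\ref{dlntg} gives $h_1h_2=0$, so $h_1h_2h_{t+1}^{2}=0$ outright, and no amount of rewriting produces the nonzero class $h_1^{3}h_{t+2}$ from the zero class. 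If $\psi_4(\zeta_{2,t})$ really were $\lambda_1\lambda_3\lambda_{2^{t+1}-1}^{2}$, the transfer at $s=2$ would vanish on the generator and therefore fail to be injective, contradicting the corollary. This is precisely why the paper is careful to state $Tr_4^{A}([\zeta_{s,t}]) = h_1h_sh_{s+t-1}^{2}$ only ``for all $s\geq 1,\ s\neq 2$,'' and why in the proof of Theorem~\ref{dlct2} the $s=2$ generator of the coinvariant is taken to be the different primitive $a_1^{(1)}a_2^{(1)}a_3^{(1)}a_4^{(2^{t+2}-1)}$, whose image $\lambda_1^{3}\lambda_{2^{t+2}-1}$ represents $h_1^{3}h_{t+2}$. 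Your argument cannot be repaired by relations alone; one must either change the representative (as the paper does) or carry out the $\psi_4$ computation for $\zeta_{2,t}$ far more carefully to exhibit the extra terms and boundary corrections.

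A secondary issue is the appeal to ``the general fact (invoked earlier in the paper's discussion around Remark~\ref{nxcc}) that the indecomposable classes $D_3(s)$ are not detected by the rank-four transfer.'' Remark~\ref{nxcc} appears after this corollary in the paper and in fact uses it; you would be arguing in a circle. The dimension count you give in the previous paragraph (image $1$-dimensional, codomain $2$-dimensional at $(s,t)=(1,7)$) is already a complete and non-circular proof of non-surjectivity, so drop the appeal to $D_3(s)$ being undetected and simply record that $D_3(2)$, being the remaining basis element, is what the transfer misses.
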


\begin{rem}\label{nxcc}
H\uhorn ng's calculations \cite{Hung} for the domain of the fourth transfer in stem of $D_3(1)$ were conducted using a computer algebra system. It is therefore considered necessary to reassess this result. Knowing that the iterated Kameko map $(\overline{Sq}^{0})^{s-2}: \pmb{Q}^{4}_{65.2^{s} - 4} \longrightarrow \pmb{Q}^{4}_{256}$ is an isomorphism of $kG(4)$-modules for any $s\geq 2.$ Thanks to this together with Theorem \ref{dlct2} and Sum's calculations \cite{N.S2} show that
$$k\otimes_{G(4)}\mathscr {P}_A((P_4)_{65.2^{s} - 4}^{*}) =   \left\{\begin{array}{ll}
0&\mbox{if $s = 0$ (see Sum \cite{N.S2})},\\[1mm]
k[a_3^{(63)}a_4^{(63)}] &\mbox{if $s = 1$ (see Sum \cite{N.S2})},\\[1mm]
k[\zeta_{1,\, 7}] &\mbox{if $s  = 2$ (see Theorem \ref{dlct2})},\\[1mm]
k[a_1^{(2^{s-1}-1)}a_2^{(2^{s-1}-1)}a_3^{(2^{s+5}-1)}a_4^{(2^{s+5}-1)}] &\mbox{if $s \geq 3$},
\end{array}\right.$$
and so, $k\otimes_{G(4)}\mathscr {P}_A((P_4)_{65.2^{s} - 4}^{*}) = k[a_1^{(2^{s-1}-1)}a_2^{(2^{s-1}-1)}a_3^{(2^{s+5}-1)}a_4^{(2^{s+5}-1)}] $ for arbitrary $s\geq 1.$ At the $E_1$-level of Singer's transfer, we compute:
$$ 
 Tr_4^{A}( [a_1^{(2^{s-1}-1)}a_2^{(2^{s-1}-1)}a_3^{(2^{s+5}-1)}a_4^{(2^{s+5}-1)}]) = [\psi_4(a_1^{(2^{s-1}-1)}a_2^{(2^{s-1}-1)}a_3^{(2^{s+5}-1)}a_4^{(2^{s+5}-1)})] = h_ {s-1}^{2}h_{s+5}^{2},$$
for every $s > 0.$ On the other hand, according to Theorem \ref{dlntg},
$${\rm Ext}_{A}^{4, 65.2^{s}}(k, k) = \left\{\begin{array}{ll}
 \langle D_3(0)\rangle &\mbox{if $s = 0$},\\[1mm]
 \langle D_3(s), h_ {s-1}^{2}h_{s+5}^{2}\rangle &\mbox{if $s \geq 1$}.
\end{array}\right.
$$ 
\end{rem}
We thereby can confirm the following consequence, which was demonstrated by H\uhorn ng \cite[Theorem 7.3]{Hung} through computer calculations.

\begin{hq}
No element in the the $Sq^{0}$-family $\{D_3(s):\, s\geq 0\}$ belongs to the image of the fourth transfer homomorphism.
\end{hq}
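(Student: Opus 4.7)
The plan is to read off the conclusion directly from the explicit description of the domain of $Tr_4^A$ furnished by Remark \ref{nxcc}, combined with Theorem \ref{dlntg}. For each $s \geq 0$ the codomain ${\rm Ext}_A^{4, 65 \cdot 2^s}(k, k)$ has basis $\{D_3(0)\}$ when $s = 0$ and $\{D_3(s),\, h_{s-1}^2 h_{s+5}^2\}$ when $s \geq 1$, so it suffices to show that $\mathrm{Im}(Tr_4^A)$ either is zero (for $s = 0$) or lies inside $\langle h_{s-1}^2 h_{s+5}^2 \rangle$ (for $s \geq 1$).

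I would first dispose of $s = 0$ via Sum's vanishing $k\otimes_{G(4)}\mathscr{P}_A((P_4)_{61}^*) = 0$ recorded in Remark \ref{nxcc}: the transfer is identically zero in this bidegree, so $D_3(0) \notin \mathrm{Im}(Tr_4^A)$. For $s \geq 1$, Remark \ref{nxcc} provides the uniform one-dimensional description
\[
k\otimes_{G(4)}\mathscr{P}_A((P_4)_{65 \cdot 2^s - 4}^*) \;=\; k\bigl[\,a_1^{(2^{s-1}-1)}\, a_2^{(2^{s-1}-1)}\, a_3^{(2^{s+5}-1)}\, a_4^{(2^{s+5}-1)}\,\bigr],
\]
and the same remark evaluates the image of this generator under $Tr_4^A$, via Theorem \ref{dlCH} and the chain-level formula in $\Lambda$, as $h_{s-1}^2 h_{s+5}^2$. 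Comparing this value with the basis structure from Theorem \ref{dlntg} gives $D_3(s) \notin \mathrm{Im}(Tr_4^A)$ for every $s \geq 1$, completing the argument.

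The main obstacle, absorbed into Remark \ref{nxcc} itself, is therefore twofold: first, pinning down the correct single generator of the domain uniformly in $s$, which relies on the iterated Kameko isomorphism $(\overline{Sq}^0)^{s-2}: \pmb{Q}^4_{65 \cdot 2^s - 4} \to \pmb{Q}^4_{256}$ reducing the problem to the case $(s,t) = (1,7)$ of Theorem \ref{dlct2} (together with Sum's calculation at $s=1$); and second, verifying that $\psi_4$ applied to the generator is actually a representative of $h_{s-1}^2 h_{s+5}^2$ with no hidden summand that could secretly contribute to $D_3(s)$. I would obtain the latter either by a direct expansion, noting that the Mersenne-exponent structure forces the recursion for $\psi_4$ to collapse to the leading term $\lambda_{2^{s-1}-1}^2 \lambda_{2^{s+5}-1}^2$ (whose class in ${\rm Ext}_A^{4,*}(k,k)$ is $h_{s-1}^2 h_{s+5}^2$ by Remark \ref{nxmd}), or, more conceptually, by propagating $Tr_4^A([\zeta_{1,7}]) = h_1^2 h_7^2$ via the commutation of Kameko's $\overline{Sq}^0$ with the classical $Sq^0$ across $Tr_4^A$, which applies $(Sq^0)^{s-2}$ to give $h_{s-1}^2 h_{s+5}^2$ for every $s \geq 2$.
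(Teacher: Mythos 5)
Your proposal is correct and tracks the paper's own argument: the corollary is read off from Remark \ref{nxcc}, which uses Sum's vanishing for $s=0$, the iterated Kameko isomorphism $(\overline{Sq}^0)^{s-2}: \pmb{Q}^4_{65\cdot 2^s-4}\to\pmb{Q}^4_{256}$ together with Theorem \ref{dlct2} to pin down the single generator $a_1^{(2^{s-1}-1)}a_2^{(2^{s-1}-1)}a_3^{(2^{s+5}-1)}a_4^{(2^{s+5}-1)}$ for $s\geq 1$, and the chain-level evaluation $Tr_4^A$ of that generator as $h_{s-1}^2h_{s+5}^2$, which is then compared against the basis of ${\rm Ext}_A^{4,65\cdot 2^s}(k,k)$ from Theorem \ref{dlntg}. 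Your two suggested routes for the last step (direct $\psi_4$-expansion of a Mersenne-exponent generator, or propagating $Tr_4^A([\zeta_{1,7}])=h_1^2h_7^2$ by the commutation of Kameko's $\overline{Sq}^0$ with the classical $Sq^0$) are both standard and consistent with what the paper invokes.
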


\begin{acknow}
This work was accomplished over a five year period. I would like to sincerely thank my colleagues in the algebraic topology group in Vietnam and internationally, for their invaluable discussions relevant to this work.

%My profound appreciation goes to the handling editor and the anonymous referees for their exhaustive review of my manuscript. Their insightful comments and suggestions have greatly improved the clarity and overall readability of the paper. He also expressed profound gratitude to the entire Editorial Board of the journal for their meticulous review of the article during the third round of refereeing. Their efforts significantly contributed to enhancing the quality of this printed matter.
\end{acknow}

\section{Prerequisites}\label{s3}

Before presenting the proofs for Theorems \ref{dlc1}, \ref{dlc2}, and \ref{dlct} (refer to Subsection \ref{sub4.1}), as well as for Theorems \ref{dlc3}, and \ref{dlct2} (refer to Subsection \ref{sub4.2}), let's briefly revisit some crucial background information provided in \cite{M.K, W.S2, N.S1}.

$\bullet$ A sequence of non-negative integers $\omega = (\omega_1, \omega_2, \ldots, \omega_i,\ldots)$ is called a \textit{weight vector}, if $\omega_i  = 0$ for $i\gg 0.$ One defines $\deg(\omega) = \sum_{i\geq 1}2^{i-1}\omega_i.$ For a natural number $n,$ let us denote by $\alpha_j(n)$ the $j$-th coefficients in dyadic expansion of $n,$ from which we have  $\alpha(n) = \sum_{j\geq 0}\alpha_j(n)$ and $n = \sum_{j\geq 0}\alpha_j(n)2^j,$ where $\alpha_j(n)\in \{0, 1\},\ j = 0, 1, \ldots.$ Let $x = \prod_{1\leq i\leq 4}x_i^{a_i}$ be a monomial in $P_4,$ define two sequences associated with $x$ by $ \omega(x) :=(\omega_1(x), \omega_2(x), \ldots, \omega_j(x), \ldots)$ and $(a_1, a_2, \ldots, a_4),$ where $\omega_j(x)=\sum_{1\leq i\leq 4}\alpha_{j-1}(a_i),$ for $j\geq 1.$ These sequences are respectively called the {\it weight vector} and the \textit{exponent vector} of $x.$ By convention, the sets of all the weight vectors and the exponent vectors are given the left lexicographical order.

$\bullet$ Given the monomials $x = \prod_{1\leq i\leq 4}x_i^{a_i}$ and $y = \prod_{1\leq i\leq 4}x_i^{b_i}$ in $(P_4)_n,$ write $a$ and $b$ for the exponent vectors of $x$ and $y,$ respectively. We say that $x  < y$ if and only if one of the following holds: (i) $\omega(x) < \omega(y);$ (ii) $\omega(x) = \omega(y)$ and $a < b.$

$\bullet$ We denote two $k$-subspaces associated with a weight vector $\omega$ of degree $n$ by $(P_4)_n^{\omega} = \langle\{ x\in (P_4)_n|\ \omega(x)\leq \omega\}\rangle$ and $(P_4)_n^{< \omega} = \langle \{ x\in (P_4)_n|\ \omega(x) < \omega\}\rangle.$ Clearly these spaces are not $A$-submodules of $P_4.$ Given the homogeneous polynomials $f$ and $g$ in $(P_4)_n,$ the following equivalence relations "$\equiv$" and "$\equiv_{\omega}$" on $(P_4)_n$ are well-defined:
\begin{enumerate}
\item [(i)]$f \equiv g $ if and only if $(f - g)\in \overline{A}(P_4)_n;$ 
\item[(ii)] $f \equiv_{\omega} g$ if and only if $f, \, g\in (P_4)_n^{\omega}$ and $(f -g)\in ((\overline{A}(P_4)_n \cap (P_4)_n^{\omega}) + (P_4)_n^{< \omega}).$\\[1mm]
In particular, if $f\equiv 0$ (resp. $f\equiv_{\omega} 0$), then $f$ is called \textit{hit} (resp. \textit{$\omega$-hit}).
\end{enumerate}

Then, by the equivalence relation "$\equiv_\omega$", one has a quotient
$$(\pmb{Q}_n^{4})^{\omega} := (P_4)_n^{\omega}/ (((P_4)_n^{\omega}\cap \overline{A}(P_4)_n)+ (P_4)_n^{< \omega}).$$ 
For the reader's convenience, we write $[f]$ for the class in $\pmb{Q}_n^{4}$ represented by $f.$ If $f\in (P_4)_n^{\omega},$ then denote by $[f]_{\omega}$ the class in $(\pmb{Q}_n^{4})^{\omega} $ represented by $f.$ According to \cite{W.W3, W.W4}, this $(\pmb{Q}_n^{4})^{\omega}$ has the structure of an $G(4)$-module for any $n > 0.$ Furthermore, it is straightforward to show that
$$ \begin{array}{ll}
\medskip
\dim \pmb{Q}^{4}_n &=\displaystyle\sum_{\deg(\omega) = n} \dim (\pmb{Q}_n^{4})^{\omega},\ \ \mbox{and}\ \dim (\pmb{Q}^{4}_n)^{G(4)}\leq \displaystyle\sum_{\deg(\omega) = n}\dim ((\pmb{Q}_n^{4})^{\omega})^{G(4)}.
\end{array}$$

$\bullet$ We denote by $\underline{P_4}$ and $\overline{P_4}$ the $A$-submodules of $P_4$ spanned by all the monomials $\prod_{1\leq i\leq 4}x_i^{a_i}$ such that $\prod_{1\leq i\leq 4}a_i = 0,$ and $\prod_{1\leq i\leq 4}a_i > 0,$ respectively. Then, $\underline{P_4} = \bigoplus_{n\geq 0}(\underline{P_4})_n$ and $\overline{P_4} = \bigoplus_{n\geq 0}(\overline{P_4})_n.$ In each $n\geq 0,$ set $\underline{\pmb{Q}^{4}_n} = \langle \{[f]\in \pmb{Q}^{4}_n:\, f\in (\underline{P_4})_n\}\rangle$ and $\overline{\pmb{Q}^{4}_n}= \langle \{[f]\in  \pmb{Q}^{4}_n:\, f\in (\overline{P_4})_n\}\rangle.$ Then, one has an isomorphism: $\pmb{Q}^{4}_n\cong \underline{\pmb{Q}^{4}_n}\bigoplus \overline{\pmb{Q}^{4}_n}.$ If $\omega$ is a weight vector of degree $n,$ then we set $(\underline{\pmb{Q}^{4}_n})^{\omega}= \langle \{[f]_{\omega}\in (\pmb{Q}_n^{4})^{\omega} :\, f\in  (\underline{P_4})^{\omega}_n\}\rangle,$ and $(\overline{\pmb{Q}^{4}_n})^{\omega}= \langle \{[f]_{\omega}\in (\pmb{Q}_n^{4})^{\omega} :\, f\in  (\overline{P_4})^{\omega}_n\}\rangle.$

$\bullet$ A monomial $x\in (P_4)_n$ is said to be {\it inadmissible} if there exist monomials $y_1, y_2,\ldots, y_k$ in $(P_4)_n$ such that $y_j < x$ for $1\leq j\leq k$ and $x \equiv  \sum_{1\leq j\leq k} y_j.$ Then, $x$ is said to be {\it admissible} if it is not inadmissible.

Thus it is easily seen that $\pmb{Q}^{4}_n$ is a $k$-vector space with a basis consisting of all classes represented by admissible monomials in $(P_4)_n.$

$\bullet$ A monomial $z = x_1^{v_1}\ldots x_4^{v_4}$ in $(P_4)_n$ is called a {\it spike} if every exponent $v_j$ is of the form $2^{\xi_j} - 1.$ In particular, if the exponents $\xi_j$ can be arranged to satisfy $\xi_1 > \xi_2 > \ldots > \xi_{m-1}\geq \xi_m \geq 1$ where only the last two smallest exponents can be equal, and $\xi_r = 0$ for $ m < r  \leq 4,$ then $z$ is called a {\it minimal spike}.
 
The following technical result is due to Singer \cite{W.S2}.

\begin{thm}\label{dlSi}
Given any $n\geq 1$ with $\mu(n)\leq 4,$ let $z$ be a minimal spike in $(P_4)_n.$ Then all monomials $x$ in $(P_4)_n$ are hit if $\omega(x) < \omega(z).$ 
\end{thm}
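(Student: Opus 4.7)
The plan is to prove the theorem by induction on the pair $(n, \omega(x))$ ordered with $n$ outer and $\omega(x)$ inner in lex order. The strategy is to show that any monomial $x$ with $\omega(x) < \omega(z)$ can be rewritten modulo $\overline{A}(P_4)_n$ as a sum of monomials of either strictly smaller weight vector (still less than $\omega(z)$, so the inner induction applies) or strictly smaller degree that itself satisfies $\mu \leq 4$ (handled by the outer induction). Both possibilities close the induction and force $x$ to be hit.

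The first step is to locate the smallest index $j_0$ with $\omega_{j_0}(x) < \omega_{j_0}(z)$. Because $z$ is a minimal spike of degree $n$ and $\mu(n) \leq 4$, the sequence $\omega(z) = (\omega_1(z), \omega_2(z), \ldots)$ is rigidly determined by the binary expansion of $n$; in particular $\omega_{j_0}(z)$ is the largest value that $\omega_{j_0}$ can take on any monomial in $(P_4)_n$ whose first $j_0 - 1$ weight entries agree with those of $x$. Writing $x = \prod_{i=1}^{4} x_i^{a_i}$, this rigidity pins down which of the four exponents must carry a $0$ in the $(j_0-1)$-st binary digit. The second step is to produce, for each such $x$, an auxiliary polynomial $f \in (P_4)_{n - 2^{j_0-1}}$ such that $Sq^{2^{j_0-1}}(f) = x + \Sigma$, where $\Sigma$ is a $k$-linear combination of monomials with $\omega < \omega(x)$. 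A natural candidate for $f$ is obtained from $x$ by subtracting $2^{j_0-1}$ from a specific exponent chosen as dictated by Step 1; the identity is then verified by expanding via the Cartan formula and applying Lucas's theorem to the mod-$2$ binomial coefficients.

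The main obstacle, and where the bulk of the work lies, is controlling the error term $\Sigma$: one must guarantee that every monomial appearing there has weight vector strictly less than $\omega(x)$ in lex order. This uses the rigidity of $\omega(z)$ and the equalities $\omega_i(x) = \omega_i(z)$ for $i < j_0$ in an essential way, and it is the technical heart of Singer's argument \cite{W.S2}. A secondary concern is the base of the outer induction, namely the smallest $n$ with $\mu(n) \leq 4$: for those degrees the statement reduces to a finite check against the admissible basis of $\pmb{Q}^4_n$ tabulated by Sum \cite{N.S, N.S1}. Once both the error control and the base cases are in place, the double induction delivers the hit relation for $x$, completing the proof.
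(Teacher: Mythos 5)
The paper does not prove this theorem itself; it is cited directly from Singer \cite{W.S2}, so there is no in-paper argument to compare against. Evaluating your proposal on its own terms: the overall shape (induction plus a Steenrod-square reduction) is not unreasonable, but the central step fails, and the claim you defer to as ``the technical heart of Singer's argument'' is in fact false for the construction you propose.

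Your claim is that the error term $\Sigma$ in $Sq^{2^{j_0-1}}(f) = x + \Sigma$ consists only of monomials of strictly smaller weight vector. This is not true. Take $n = 12$, so that $\mu(12) = 4$ and the minimal spike is $z = x_1^7 x_2^3 x_3 x_4$ with $\omega(z) = (4,2,1)$. Let $x = x_1 x_2^3 x_3^4 x_4^4$, which has $\omega(x) = (2,1,2) < \omega(z)$, with first discrepancy at $j_0 = 1$. Your recipe says to subtract $2^{j_0-1} = 1$ from an exponent whose $0$-th bit is $0$, e.g.\ $a_3 = 4$, giving $f = x_1 x_2^3 x_3^3 x_4^4$. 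Then $Sq^1(f) = x + x_1^2 x_2^3 x_3^3 x_4^4 + x_1 x_2^4 x_3^3 x_4^4$, and the two error monomials have weight vectors $(2,3,1)$ and $(2,1,2)$: the first is strictly \emph{larger} than $\omega(x)$ in left lexicographic order, and the second equals $\omega(x)$. Neither is strictly smaller, so your inner induction on weight vectors does not close. Deferring this step to Singer does not complete the argument; the whole content of the theorem lies exactly there, and the concrete mechanism you propose for discharging it does not work.

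A secondary defect is the ``rigidity'' claim, namely that $\omega_{j_0}(z)$ is the largest value of $\omega_{j_0}$ compatible with the first $j_0 - 1$ entries. This is also false: for $n = 10$ the minimal spike is $z = x_1^7 x_2^3$ with $\omega(z) = (2,2,1)$, yet $x_1^3 x_2^3 x_3^2 x_4^2$ has $\omega_1 = 2 = \omega_1(z)$ and $\omega_2 = 4 > 2 = \omega_2(z)$. For reference, Singer's actual proof in \cite{W.S2} proceeds through a different route: a lemma to the effect that a monomial with $\omega_1(x) < \mu(\deg x)$ is hit, combined with the square-free/square decomposition $x = v w^2$ and the anti-automorphism $\chi$ to move Steenrod operations between factors, rather than the single $Sq^{2^{j_0-1}}$ move and lex-weight comparison on which your plan rests.
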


$\bullet$ For each $1\leq j\leq 4,$ we define the $k$-homomorphism $\sigma_j: k^{4} \longrightarrow k^{4}$ by\\  
$\left\{\begin{array}{ll}
\sigma_j(x_j) &= x_{j+1},\\
 \sigma_j(x_{j+1}) &= x_j,\\
 \sigma_j(x_t) &= x_t,
\end{array}\right.$ for $t\not\in \{j, j+1\},\ 1\leq j\leq 3,$ and $\sigma_4(x_1) = x_1 + x_2,\ \sigma_4(x_i) = x_i$ for $2\leq i\leq 4.$ 
%We also note that $k^{4}\cong \langle x_1, x_2, x_3, x_4 \rangle\subset P_4.$

Hereafter, we write $\Sigma_4$ as the symmetric group of rank $4.$ Then, $\Sigma_4$ is generated by the ones associated with $\sigma_j,\ 1\leq j\leq 3.$ For each permutation in $\Sigma_4$, consider corresponding permutation matrix; these form a group of matrices isomorphic to $\Sigma_4.$ So,  $G(4)\cong GL(k^{4})$ is generated by the matrices associated with $\sigma_j,\ 1\leq j\leq 4.$ Let $x = \prod_{1\leq i\leq 4}x_i^{a_i}$ be an monomial in $(P_4)_n.$ Then, the weight vector $\omega$ of $x$ is invariant under the permutation of the generators $x_i,\ i = 1, 2, \ldots, 4.$ Hence $(\pmb{Q}^{4}_n)^{\omega}$ also has a $\Sigma_4$-module structure. As we have seen, $\sigma_j$ induces a homomorphism of $A$-algebras which is also denoted by $\sigma_j: P_4\longrightarrow P_4.$ So, an element $[f]_{\omega}\in (\pmb{Q}^{4}_n)^{\omega}$ is an $G(4)$-invariant if and only if $\sigma_j(f)  \equiv_{\omega} f$ for $1\leq j\leq 4.$  It is an $\Sigma_4$-invariant if and only if $\sigma_j(f) \equiv_{\omega} f $ for $1\leq j\leq 3.$

$\bullet$ In what follows, for any monomials $v_1, v_2, \ldots, v_s\in (P_4)_n$ and for a subgroup $G$ of $G(4),$ we denote by $G(v_1; v_2; \ldots, v_s)$ the $G$-submodule of $(\pmb{Q}^{4}_n)^{\omega}$ generated by the set $\{[v_i]_{\omega}:\, 1\leq i\leq s\}.$ It is to be noted that if $\omega$ is a weight vector of a minimal spike, then $[v_i]_{\omega} = [v_i]$ for all $i.$ Denote by $<-,\, ->$ the dual pairing between $H_*(k^{4}, k)=(P_4)^{*}$ and $H^{*}(k^{4}, k) = P_4.$ In particular, $<a_1^{(i_1)}a_2^{(i_2)}\ldots a_4^{(i_4)},\, x_1^{i'_1}x_2^{i'_2}\ldots x_4^{i'_4}>$ is $1$ if $i_j = i'_j$ for every $j,$ and is $0$ otherwise. 

\section{Proofs of Theorems \ref{dlc1}, \ref{dlc2}, \ref{dlct}, \ref{dlc3}, and \ref{dlct2}}

Our approach in demonstrating these theorems involves utilizing Sum's list \cite{N.S} of generators for $\pmb{Q}^{4}_n$ in generic degrees $n$. We then compute the $G(4)$-coinvariants of $\pmb{Q}^{4}_n$ by first finding its $\Sigma_4$-invariants, dualizing each element of $(\pmb{Q}^{4}_n)^{G(4)}$, and using the $E_1$-level of Singer's transfer to determine its image. The challenge in executing this construction lies in explicitly representing inadmissible elements as linear combinations of admissible terms (modulo $A$-decomposables in $P_4$). These calculations are often difficult to perform, and as such, we are nearly required to re-solve the hit problem for $P_4$. To achieve this, we employ a combination of manual computation and computer programmes. In actuality, this is necessary because Sum's paper \cite{N.S1} only thoroughly addressed two special cases of the form \eqref{ct} and omitted the proof of the remaining cases, referring to \cite{N.S} instead. Furthermore, as \cite{N.S} is still a preprint, we also utilized a computer algorithm \cite[Appendix]{Phuc0} to confirm the dimension results for $\pmb{Q}^{4}_n.$ 

\textbf{Note.} The suite of algorithms that we constructed in \cite{Phuc2-1, Phuc3, Phuc4} can be used to verify all the results and replicate the manual calculations.

\subsection{Proving Theorems \ref{dlc1}, \ref{dlc2}, and \ref{dlct} under \text{\boldmath $n = 2^{s+t+1} +2^{s+1} - 3$}}\label{sub4.1}

\subsubsection*{Proof of Theorem \ref{dlc1} (The case $t = 1$ and $s\geq 1$)}

For convenience, put $n_s:=2^{s+2} +2^{s+1} - 3,$ and according to Sum \cite{N.S},  we have
$$\dim \pmb{Q}^{4}_{n_s} = \left\{ \begin{array}{ll}
46 &\mbox{if $s = 1$},\\
94 &\mbox{if $s = 2$},\\
105 &\mbox{if $s \geq 3$}.
\end{array}\right.$$
Recall that $\pmb{Q}^{4}_{n_s} \cong \underline{\pmb{Q}^{4}_{n_s}}\bigoplus \overline{\pmb{Q}^{4}_{n_s}}.$ By Sum \cite{N.S}, the basis for $\underline{\pmb{Q}^{4}_{n_s}}$ is a set consisting of all the classes represented by the following admissible monomials:
\begin{center}
\begin{tabular}{lrr}
$a_{s,\,1}= x_2^{2^s-1}x_3^{2^s-1}x_4^{2^{s+2}-1}$, & \multicolumn{1}{l}{$a_{s,\,2}= x_2^{2^s-1}x_3^{2^{s+2}-1}x_4^{2^s-1}$,} & \multicolumn{1}{l}{$a_{s,\,3}= x_2^{2^{s+2}-1}x_3^{2^s-1}x_4^{2^s-1}$,} \\
$a_{s,\,4}= x_1^{2^s-1}x_3^{2^s-1}x_4^{2^{s+2}-1}$, & \multicolumn{1}{l}{$a_{s,\,5}= x_1^{2^s-1}x_3^{2^{s+2}-1}x_4^{2^s-1}$,} & \multicolumn{1}{l}{$a_{s,\,6}= x_1^{2^s-1}x_2^{2^s-1}x_4^{2^{s+2}-1}$,} \\
$a_{s,\,7}= x_1^{2^s-1}x_2^{2^s-1}x_3^{2^{s+2}-1}$, & \multicolumn{1}{l}{$a_{s,\,8}= x_1^{2^s-1}x_2^{2^{s+2}-1}x_4^{2^s-1}$,} & \multicolumn{1}{l}{$a_{s,\,9}= x_1^{2^s-1}x_2^{2^{s+2}-1}x_3^{2^s-1}$,} \\
$a_{s,\,10}= x_1^{2^{s+2}-1}x_3^{2^s-1}x_4^{2^s-1}$, & \multicolumn{1}{l}{$a_{s,\,11}= x_1^{2^{s+2}-1}x_2^{2^s-1}x_4^{2^s-1}$,} & \multicolumn{1}{l}{$a_{s,\,12}= x_1^{2^{s+2}-1}x_2^{2^s-1}x_3^{2^s-1}$,} \\
$a_{s,\,13}= x_2^{2^s-1}x_3^{2^{s+1}-1}x_4^{3.2^s-1}$, & \multicolumn{1}{l}{$a_{s,\,14}= x_2^{2^{s+1}-1}x_3^{2^s-1}x_4^{3.2^s-1}$,} & \multicolumn{1}{l}{$a_{s,\,15}= x_2^{2^{s+1}-1}x_3^{3.2^s-1}x_4^{2^s-1}$,} \\
$a_{s,\,16}= x_1^{2^s-1}x_3^{2^{s+1}-1}x_4^{3.2^s-1}$, & \multicolumn{1}{l}{$a_{s,\,17}= x_1^{2^s-1}x_2^{2^{s+1}-1}x_4^{3.2^s-1}$,} & \multicolumn{1}{l}{$a_{s,\,18}= x_1^{2^s-1}x_2^{2^{s+1}-1}x_3^{3.2^s-1}$,} \\
$a_{s,\,19}= x_1^{2^{s+1}-1}x_3^{2^s-1}x_4^{3.2^s-1}$, & \multicolumn{1}{l}{$a_{s,\,20}= x_1^{2^{s+1}-1}x_3^{3.2^s-1}x_4^{2^s-1}$,} & \multicolumn{1}{l}{$a_{s,\,21}= x_1^{2^{s+1}-1}x_2^{2^s-1}x_4^{3.2^s-1}$,} \\
$a_{s,\,22}= x_1^{2^{s+1}-1}x_2^{2^s-1}x_3^{3.2^s-1}$, & \multicolumn{1}{l}{$a_{s,\,23}= x_1^{2^{s+1}-1}x_2^{3.2^s-1}x_4^{2^s-1}$,} & \multicolumn{1}{l}{$a_{s,\,24}= x_1^{2^{s+1}-1}x_2^{3.2^s-1}x_3^{2^s-1}$,} \\
$a_{s,\,25}= x_2^{2^{s+1}-1}x_3^{2^{s+1}-1}x_4^{2^{s+1}-1}$, & \multicolumn{1}{l}{$a_{s,\,26}= x_1^{2^{s+1}-1}x_3^{2^{s+1}-1}x_4^{2^{s+1}-1}$,} & \multicolumn{1}{l}{$a_{s,\,27}= x_1^{2^{s+1}-1}x_2^{2^{s+1}-1}x_4^{2^{s+1}-1}$,} \\
$a_{s,\,28}= x_1^{2^{s+1}-1}x_2^{2^{s+1}-1}x_3^{2^{s+1}-1}$. &       &  
\end{tabular}%
\end{center}

Thanks to these, a direct computation shows that
$$ \begin{array}{ll}
\medskip
\Sigma_4(a_{s,\,1}) &= \langle \{[a_{s,\,j}]:\, 1\leq j\leq 12\} \rangle,\\
\medskip
\Sigma_4(a_{s,\,13}) &= \langle \{[a_{s,\,j}]:\, 13\leq j\leq 24\} \rangle,\\
\medskip
\Sigma_4(a_{s,\,25}) &= \langle \{[a_{s,\,j}]:\, 25\leq j\leq 28\} \rangle.
\end{array}$$ 
These imply that there is a direct summand decomposition of the $\Sigma_4$-modules:
$$ \underline{\pmb{Q}^{4}_{n_s}} = \Sigma_4(a_{s,\,1}) \bigoplus \Sigma_4(a_{s,\,13}) \bigoplus \Sigma_4(a_{s,\,25}).$$
In the following, it will play a key role in the proof of the theorem.

\begin{lema}\label{bd1}
With the notations above,
\begin{itemize}
\item[i)]  $\Sigma_4(a_{s,\,1})^{\Sigma_4} = \langle [q_{s,\,1}] \rangle,$ with $q_{s,\,1} = \sum_{1\leq j\leq 12}a_{s,\,j};$
\item[ii)]  $\Sigma_4(a_{s,\,13})^{\Sigma_4} = \langle [q_{s,\,2}] \rangle,$ with $q_{s,\,2} = \sum_{13\leq j\leq 24}a_{s,\,j};$
\item[iii)]  $\Sigma_4(a_{s,\,25})^{\Sigma_4} = \langle [q_{s,\,3}] \rangle,$ with $q_{s,\,3} = \sum_{25\leq j\leq 28}a_{s,\,j}.$
\end{itemize}
\end{lema}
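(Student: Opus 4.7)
\textbf{Proof proposal for Lemma~\ref{bd1}.}

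The plan is to verify, for each of the three summands separately, two things: (a) the asserted generator $[q_{s,i}]$ is indeed fixed by $\Sigma_4$, and (b) the space of $\Sigma_4$-fixed classes is at most one-dimensional. Since $\Sigma_4$ is generated by $\sigma_1,\sigma_2,\sigma_3$, it suffices to check both items on these three transpositions. The basis of each summand consists of admissible monomials (taken from Sum's list), so an element $[f]\in \Sigma_4(a_{s,k})$ is $\Sigma_4$-fixed if and only if, for $j=1,2,3$, the polynomial $\sigma_j(f)-f$ lies in $\overline{A}(P_4)_{n_s}$, and the coefficients can be read off after rewriting $\sigma_j(f)-f$ in the admissible basis.

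\emph{Part (i).} The twelve admissible monomials $a_{s,1},\ldots,a_{s,12}$ are precisely the permutations of the exponent vector $(0,2^{s}-1,2^{s}-1,2^{s+2}-1)$, with stabiliser of order $2$ (the two equal entries), giving an orbit of size $4!/2!=12$. Each $\sigma_j$ ($j=1,2,3$) permutes these twelve monomials directly, so no hit reduction is needed: the $\Sigma_4$-action on $\Sigma_4(a_{s,1})$ is just the transitive permutation representation on the basis. Hence the invariants are one-dimensional, spanned by the orbit sum $[q_{s,1}]$.

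\emph{Part (iii).} Similarly, $a_{s,25},\ldots,a_{s,28}$ are the permutations of $(0,2^{s+1}-1,2^{s+1}-1,2^{s+1}-1)$; the stabiliser has order $3!$, the orbit has size $4$, and the $\sigma_j$'s act by permutation on this four-element basis. The invariants are then one-dimensional, generated by $[q_{s,3}]$.

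\emph{Part (ii), the main obstacle.} Here the exponent vector $(0,2^{s}-1,2^{s+1}-1,3\cdot 2^{s}-1)$ has trivial stabiliser in $\Sigma_4$, so the set-theoretic orbit would have $24$ elements; however, Sum's admissible basis lists only the $12$ monomials $a_{s,13},\ldots,a_{s,24}$. The remaining twelve permutations (those whose first nonzero exponent block is $3\cdot 2^{s}-1$ appearing before $2^{s+1}-1$ in coordinate order) are inadmissible and must be reduced. The plan is:
\begin{enumerate}
\item[(1)] For each $\sigma_j$, $j=1,2,3$, and each $a_{s,k}$ with $13\leq k\leq 24$, if $\sigma_j(a_{s,k})$ is admissible, record it directly; if not, apply the Cartan/Kameko-style reductions (using the standard identity $x_i^{3\cdot 2^{s}-1}x_\ell^{2^{s+1}-1}\equiv x_i^{2^{s+1}-1}x_\ell^{3\cdot 2^{s}-1}+\cdots$ modulo admissible monomials of smaller exponent vector) to rewrite it as a $k$-linear combination of the basis $\{[a_{s,13}],\ldots,[a_{s,24}]\}$ (and possibly of elements of $\Sigma_4(a_{s,1})\oplus\Sigma_4(a_{s,25})$, which, by the direct-sum decomposition, cannot contribute to this summand).
\item[(2)] Take a generic invariant $[f]=\sum_{k=13}^{24}\gamma_k[a_{s,k}]$ and impose $\sigma_j([f])=[f]$ for $j=1,2,3$, producing a homogeneous linear system in the $\gamma_k$.
\item[(3)] Solve the system to conclude $\gamma_{13}=\gamma_{14}=\cdots=\gamma_{24}$, which forces $[f]=\gamma_{13}[q_{s,2}]$.
\end{enumerate}
The symmetry generator $\sigma_3$ (swapping $x_3,x_4$) will equate coefficients within pairs where the role of the third and fourth variables are exchanged, and $\sigma_1,\sigma_2$ will propagate these identifications through the orbit. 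The essential work is in step~(1): carefully carrying out the reductions of inadmissible images so that the linear system in step~(2) reflects a true transitive action (modulo hit elements) on the twelve admissible monomials. Once this is done, the conclusion of step~(3) is forced, and $[q_{s,2}]$ is both invariant and the unique (up to scalar) invariant. The hardest part is the bookkeeping for $\sigma_1$, which sends several admissibles to monomials requiring two successive reductions before landing in the basis; this is where computer verification via the algorithm of~\cite{Phuc0} is most valuable.
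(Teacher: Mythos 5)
Your proposal is correct and follows the same overall strategy as the paper: decompose $\underline{\pmb{Q}^{4}_{n_s}}$ into the three $\Sigma_4$-summands, show each has one-dimensional $\Sigma_4$-invariants generated by the orbit sum, and handle separately the complication in the middle summand arising from inadmissible permutations. For parts (i) and (iii) your orbit-stabiliser framing — that $a_{s,1},\ldots,a_{s,12}$ and $a_{s,25},\ldots,a_{s,28}$ are, respectively, all $12 = 4!/2!$ and all $4 = 4!/3!$ permutations of a fixed exponent vector, so the $\Sigma_4$-action is literally the transitive permutation representation on the basis — is cleaner than the paper's explicit coefficient-chasing in~\eqref{dt1}--\eqref{dt2}, and it makes transparent why no hit-reduction is needed for these summands. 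Two small caveats. First, your characterisation of the missing twelve permutations in part (ii) (``first nonzero exponent block is $3\cdot 2^s-1$ appearing before $2^{s+1}-1$'') is garbled: the correct description is that in the admissible $a_{s,13},\ldots,a_{s,24}$ the entry $2^{s+1}-1$ always precedes $3\cdot 2^s-1$ in coordinate order, and the inadmissible permutations are exactly those where this order is reversed. Second, your claim that some $\sigma_1$-images ``require two successive reductions'' is not borne out: the paper's reductions in~\eqref{dt5} are all single $Sq^2$-reductions of the form $x_i^{3\cdot 2^s-1}x_\ell^{2^{s+1}-1}\cdot m \equiv a_{s,k} \pmod{\overline{A}(P_4)_{n_s}}$ (swapping the two offending exponents), and after applying these once the linear system already closes; Theorem~\ref{dlSi} disposes of any terms of strictly smaller weight. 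Apart from these imprecisions, the plan for part (ii) — reduce inadmissible images, set up the homogeneous linear system in the coefficients $\gamma_{13},\ldots,\gamma_{24}$ from $\sigma_j(f)\equiv f$ for $j=1,2,3$, and deduce all coefficients equal — is exactly what the paper does, so the argument would go through once the reductions in step (1) are carried out concretely.
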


\begin{proof}

We first prove Part $i).$ Since the set $\{[a_{s,\,j}]:\, 1\leq j\leq 12 \}$ is a basis of $\Sigma_4(a_{s,\,1}),$ if $[f]\in \Sigma_4(a_{s,\,1})^{\Sigma_4},$ then 
\begin{equation}\label{dt1}
\begin{array}{ll}
f &\equiv \big(\gamma_1a_{s,\,1} + \gamma_2a_{s,\,2} + \gamma_3a_{s,\,3} + \gamma_4a_{s,\,4} + \gamma_5a_{s,\,5} + \gamma_6a_{s,\,6}\\
&\quad \gamma_7a_{s,\,7} + \gamma_8a_{s,\,8} + \gamma_9a_{s,\,9} + \gamma_{10}a_{s,\,10} + \gamma_{11}a_{s,\,11} + \gamma_{12}a_{s,\,12}\big),
\end{array}
\end{equation} in which $\gamma_j\in k$ for every $j.$ Acting the homomorphisms $\sigma_i: P_4\longrightarrow P_4,\, i = 1, 2, 3,$ on both sides of \eqref{dt1}, we get
\begin{equation}\label{dt2}
\begin{array}{ll}
\sigma_1(f) &\equiv \big(\gamma_4a_{s,\,1} +  \gamma_5a_{s,\,2} +  \gamma_{10}a_{s,\,3} +  \gamma_{1}a_{s,\,4} +  \gamma_{2}a_{s,\,5} +  \gamma_{6}a_{s,\,6} +  \gamma_{7}a_{s,\,7}\\
\medskip
&\quad +  \gamma_{11}a_{s,\,8} +  \gamma_{12}a_{s,\,9} +  \gamma_{3}a_{s,\,10} +  \gamma_{8}a_{s,\,11} +  \gamma_{9}a_{s,\,12}\big);\\
\sigma_2(f) &\equiv \big(\gamma_{1}a_{s,\,1} + \gamma_{3}a_{s,\,2} + \gamma_{2}a_{s,\,3} + \gamma_{6}a_{s,\,4} + \gamma_{8}a_{s,\,5} + \gamma_{4}a_{s,\,6} + \gamma_{9}a_{s,\,7}\\
\medskip
&\quad + \gamma_{5}a_{s,\,8} + \gamma_{7}a_{s,\,9} + \gamma_{11}a_{s,\,10} + \gamma_{10}a_{s,\,11} + \gamma_{12}a_{s,\,12}\big).\\
\sigma_3(f) &\equiv \big(\gamma_{2}a_{s,\,1} + \gamma_{1}a_{s,\,2} + \gamma_{3}a_{s,\,3} + \gamma_{5}a_{s,\,4} + \gamma_{4}a_{s,\,5} + \gamma_{7}a_{s,\,6} + \gamma_{6}a_{s,\,7}\\
&\quad + \gamma_{9}a_{s,\,8} + \gamma_{8}a_{s,\,9} + \gamma_{10}a_{s,\,10} + \gamma_{12}a_{s,\,11} + \gamma_{11}a_{s,\,12}\big).
\end{array}
\end{equation}

Since $[f]\in \Sigma_4(a_{s,\,1})^{\Sigma_4},$ $\sigma_i(f)\equiv f$ for $1\leq i\leq 3.$ Combining this and the equalities \eqref{dt1} and \eqref{dt2}, we find that
$$ \begin{array}{ll}
\medskip
(\gamma_1 + \gamma_4)a_{s,\,1} + (\gamma_2 + \gamma_5)a_{s,\,2} + (\gamma_3 + \gamma_{10})a_{s,\,3} + (\gamma_8 + \gamma_{11})a_{s,\,8} + (\gamma_9 + \gamma_{12})a_{s,\,9}& \equiv 0,\\
\medskip
(\gamma_2 + \gamma_3)a_{s,\,2} + (\gamma_4 + \gamma_6)a_{s,\,4} + (\gamma_5 + \gamma_{8})a_{s,\,5} + (\gamma_7 + \gamma_{9})a_{s,\,7} + (\gamma_{10} + \gamma_{11})a_{s,\,10}& \equiv 0,\\
(\gamma_1 + \gamma_2)a_{s,\,1} + (\gamma_4 + \gamma_5)a_{s,\,4} + (\gamma_6 + \gamma_{7})a_{s,\,6} + (\gamma_8 + \gamma_{9})a_{s,\,8} + (\gamma_{11} + \gamma_{12})a_{s,\,11}& \equiv 0.
\end{array}$$
This implies that $\gamma_1  = \gamma_j$ for all $j,\, 2\leq j\leq 12.$ By a similar computation, we also get Part $iii).$

We now prove Part $ii).$ We see that a basis for $\Sigma_4(a_{s,\,13})$ is the set $\{[a_{s,\,j}]:\, 13\leq j\leq 24\}.$ Assume that $[g]\in \Sigma_4(a_{s,\,13})^{\Sigma_4},$ then we have
\begin{equation}\label{dt3}
g\equiv \sum_{13\leq j\leq 24}\beta_ja_{s,\,j},
\end{equation} with $\beta_j\in k,\, j = 13, \ldots, 24. $ Acting the homomorphisms $\sigma_i: P_4\longrightarrow P_4,\, 1\leq i\leq 3,$ on both sides of \eqref{dt3}, we get
\begin{equation}\label{dt4}
\begin{array}{ll}
\sigma_1(g) &\equiv \big(\beta_{16}a_{s,\,13} + \beta_{19}a_{s,\,14} +  \beta_{20}a_{s,\,15} +  \beta_{13}a_{s,\,16} +  \beta_{21}a_{s,\,17} \\
&\quad +  \beta_{22}a_{s,\,18} +  \beta_{14}a_{s,\,19} +  \beta_{15}a_{s,\,20} +  \beta_{17}a_{s,\,21}+  \beta_{18}a_{s,\,22}\\
\medskip
&\quad  +  \beta_{23}x_1^{3.2^s-1}x_2^{2^{s+1}-1}x_4^{2^s-1} +  \beta_{24}x_1^{3.2^s-1}x_2^{2^{s+1}-1}x_3^{2^s-1}\big);\\
\sigma_2(g) &\equiv \big( \beta_{14}a_{s,\,13} +  \beta_{13}a_{s,\,14} +  \beta_{17}a_{s,\,16} +  \beta_{16}a_{s,\,17} +  \beta_{21}a_{s,\,19}  \\
&\quad  +  \beta_{23}a_{s,\,20}+  \beta_{19}a_{s,\,21}+  \beta_{24}a_{s,\,22} +  \beta_{20}a_{s,\,23} +  \beta_{22}a_{s,\,24}\\
\medskip
&\quad  +  \beta_{15}x_2^{3.2^s-1}x_3^{2^{s+1}-1}x_4^{2^s-1} +   \beta_{18}x_1^{2^s-1}x_2^{3.2^s-1}x_3^{2^{s+1}-1}\big);\\
\sigma_3(g) &\equiv \big( \beta_{15}a_{s,\,14} +  \beta_{14}a_{s,\,15} +  \beta_{18}a_{s,\,17} +  \beta_{17}a_{s,\,18} +  \beta_{20}a_{s,\,19} \\
&\quad +  \beta_{19}a_{s,\,20}+   \beta_{22}a_{s,\,21}  +  \beta_{21}a_{s,\,22} +  \beta_{24}a_{s,\,23} +  \beta_{23}a_{s,\,24}\\
&\quad  +  \beta_{13}x_2^{2^s-1}x_3^{3.2^s-1}x_4^{2^{s+1}-1}+  \beta_{16}x_1^{2^s-1}x_3^{3.2^s-1}x_4^{2^{s+1}-1} \big).
\end{array}
\end{equation}
Using Cartan's formula and Theorem \ref{dlSi}, we have
\begin{equation}\label{dt5}
 \begin{array}{ll}
\medskip
x_1^{3.2^s-1}x_2^{2^{s+1}-1}x_4^{2^s-1} &= Sq^{2}(x_1^{3.2^s-3}x_2^{2^{s+1}-1}x_4^{2^s-1}) + a_{s,\,23} \mod (\overline{A}(P_4)_{n_s});\\
\medskip
 x_1^{3.2^s-1}x_2^{2^{s+1}-1}x_3^{2^s-1} &= Sq^{2}(x_1^{3.2^s-3}x_2^{2^{s+1}-1}x_3^{2^s-1} ) + a_{s,\,24} \mod (\overline{A}(P_4)_{n_s});\\
\medskip
x_2^{3.2^s-1}x_3^{2^{s+1}-1}x_4^{2^s-1} &= Sq^{2}(x_2^{3.2^s-3}x_3^{2^{s+1}-1}x_4^{2^s-1}) + a_{s,\,15} \mod (\overline{A}(P_4)_{n_s});\\
\medskip
x_1^{2^s-1}x_2^{3.2^s-1}x_3^{2^{s+1}-1} &= Sq^{2}(x_1^{2^s-1}x_2^{3.2^s-3}x_3^{2^{s+1}-1}) + a_{s,\,18} \mod (\overline{A}(P_4)_{n_s});\\
\medskip
x_2^{2^s-1}x_3^{3.2^s-1}x_4^{2^{s+1}-1} &= Sq^{2}(x_2^{2^s-1}x_3^{3.2^s-3}x_4^{2^{s+1}-1} ) + a_{s,\,13} \mod (\overline{A}(P_4)_{n_s});\\
\medskip
x_1^{2^s-1}x_3^{3.2^s-1}x_4^{2^{s+1}-1} &= Sq^{2}(x_1^{2^s-1}x_3^{3.2^s-3}x_4^{2^{s+1}-1}) + a_{s,\,16} \mod (\overline{A}(P_4)_{n_s}).
\end{array}
\end{equation}
Then, using the equalities \eqref{dt3}, \eqref{dt4} and \eqref{dt5} and the relations $\sigma_i(g)\equiv g,$ for $i\in \{1, 2, 3\},$ deduce that $\beta_{13} = \beta_j$ for all $j,\, 14\leq j\leq 24.$ The lemma is completely proved.
\end{proof}

\begin{proof}[{\it Proof of Theorem \ref{dlc1}}]
Due to Sum \cite{N.S}, the basis for $\overline{\pmb{Q}^{4}_{n_s}}$ is a set consisting of all the classes represented by the following monomials:

For $s\geq 1,$
\begin{center}
\begin{tabular}{lll}
$a_{s,\,29}= x_1x_2^{2^s-1}x_3^{2^s-1}x_4^{2^{s+2}-2}$, & $a_{s,\,30}= x_1x_2^{2^s-1}x_3^{2^{s+2}-2}x_4^{2^s-1}$, & $a_{s,\,31}= x_1x_2^{2^{s+2}-2}x_3^{2^s-1}x_4^{2^s-1}$, \\
$a_{s,\,32}= x_1x_2^{2^s-1}x_3^{2^{s+1}-2}x_4^{3.2^{s}-1}$, & $a_{s,\,33}= x_1x_2^{2^{s+1}-2}x_3^{2^s-1}x_4^{3.2^{s}-1}$, & $a_{s,\,34}= x_1x_2^{2^{s+1}-2}x_3^{3.2^{s}-1}x_4^{2^s-1}$, \\
$a_{s,\,35}= x_1x_2^{2^{s+1}-2}x_3^{2^{s+1}-1}x_4^{2^{s+1}-1}$, & $a_{s,\,36}= x_1x_2^{2^{s+1}-1}x_3^{2^{s+1}-2}x_4^{2^{s+1}-1}$, & $a_{s,\,37}= x_1x_2^{2^{s+1}-1}x_3^{2^{s+1}-1}x_4^{2^{s+1}-2}$, \\
$a_{s,\,38}= x_1^{2^{s+1}-1}x_2x_3^{2^{s+1}-2}x_4^{2^{s+1}-1}$, & $a_{s,\,39}= x_1^{2^{s+1}-1}x_2x_3^{2^{s+1}-1}x_4^{2^{s+1}-2}$, & $a_{s,\,40}= x_1^{2^{s+1}-1}x_2^{2^{s+1}-1}x_3x_4^{2^{s+1}-2}$.\\
\end{tabular}%
\end{center}

For $s = 1,$
\begin{center}
\begin{tabular}{lll}
$a_{1,\,41}= x_1x_2x_3^{3}x_4^{4}$, & $a_{1,\,42}= x_1x_2^{3}x_3x_4^{4}$, & $a_{1,\,43}= x_1x_2^{3}x_3^{4}x_4$, \\
$a_{1,\,44}= x_1^{3}x_2x_3x_4^{4}$, & $a_{1,\,45}= x_1^{3}x_2x_3^{4}x_4$, & $a_{1,\,46}= x_1^{3}x_2^{4}x_3x_4$.\\
\end{tabular}%
\end{center}

For $s\geq 2,$
\begin{center}
\begin{tabular}{clrr}
$a_{s,\,41}= x_1x_2^{2^s-2}x_3^{2^s-1}x_4^{2^{s+2}-1}$, & $a_{s,\,42}= x_1x_2^{2^s-2}x_3^{2^{s+2}-1}x_4^{2^s-1}$, & \multicolumn{1}{l}{$a_{s,\,43}= x_1x_2^{2^s-1}x_3^{2^s-2}x_4^{2^{s+2}-1}$,} &  \\
$a_{s,\,44}= x_1x_2^{2^s-1}x_3^{2^{s+2}-1}x_4^{2^s-2}$, & $a_{s,\,45}= x_1x_2^{2^{s+2}-1}x_3^{2^s-2}x_4^{2^s-1}$, & \multicolumn{1}{l}{$a_{s,\,46}= x_1x_2^{2^{s+2}-1}x_3^{2^s-1}x_4^{2^s-2}$,} &  \\
$a_{s,\,47}= x_1^{2^s-1}x_2x_3^{2^s-2}x_4^{2^{s+2}-1}$, & $a_{s,\,48}= x_1^{2^s-1}x_2x_3^{2^{s+2}-1}x_4^{2^s-2}$, & \multicolumn{1}{l}{$a_{s,\,49}= x_1^{2^s-1}x_2^{2^{s+2}-1}x_3x_4^{2^s-2}$,} &  \\
$a_{s,\,50}= x_1^{2^{s+2}-1}x_2x_3^{2^s-2}x_4^{2^s-1}$, & $a_{s,\,51}= x_1^{2^{s+2}-1}x_2x_3^{2^s-1}x_4^{2^s-2}$, & \multicolumn{1}{l}{$a_{s,\,52}= x_1^{2^{s+2}-1}x_2^{2^s-1}x_3x_4^{2^s-2}$,} &  \\
$a_{s,\,53}= x_1x_2^{2^s-2}x_3^{2^{s+1}-1}x_4^{3.2^s-1}$, & $a_{s,\,54}= x_1x_2^{2^{s+1}-1}x_3^{2^s-2}x_4^{3.2^s-1}$, & \multicolumn{1}{l}{$a_{s,\,55}= x_1x_2^{2^{s+1}-1}x_3^{3.2^s-1}x_4^{2^s-2}$,} &  \\
$a_{s,\,56}= x_1^{2^{s+1}-1}x_2x_3^{2^s-2}x_4^{3.2^s-1}$, & $a_{s,\,57}= x_1^{2^{s+1}-1}x_2x_3^{3.2^s-1}x_4^{2^s-2}$, & \multicolumn{1}{l}{$a_{s,\,58}= x_1^{2^{s+1}-1}x_2^{3.2^s-1}x_3x_4^{2^s-2}$,} &  \\
$a_{s,\,59}= x_1^{2^s-1}x_2x_3^{2^s-1}x_4^{2^{s+2}-2}$, & $a_{s,\,60}= x_1^{2^s-1}x_2x_3^{2^{s+2}-2}x_4^{2^s-1}$, & \multicolumn{1}{l}{$a_{s,\,61}= x_1^{2^s-1}x_2^{2^s-1}x_3x_4^{2^{s+2}-2}$,} &  \\
$a_{s,\,62}= x_1^{2^s-1}x_2x_3^{2^{s+1}-2}x_4^{3.2^s-1}$, & $a_{s,\,63}= x_1x_2^{2^s-1}x_3^{2^{s+1}-1}x_4^{3.2^s-2}$, & \multicolumn{1}{l}{$a_{s,\,64}= x_1x_2^{2^{s+1}-1}x_3^{2^s-1}x_4^{3.2^s-2}$,} &  \\
$a_{s,\,65}= x_1x_2^{2^{s+1}-1}x_3^{3.2^s-2}x_4^{2^s-1}$, & $a_{s,\,66}= x_1^{2^s-1}x_2x_3^{2^{s+1}-1}x_4^{3.2^s-2}$, & \multicolumn{1}{l}{$a_{s,\,67}= x_1^{2^s-1}x_2^{2^{s+1}-1}x_3x_4^{3.2^s-2}$,} &  \\
$a_{s,\,68}= x_1^{2^{s+1}-1}x_2x_3^{2^s-1}x_4^{3.2^s-2}$, & $a_{s,\,69}= x_1^{2^{s+1}-1}x_2x_3^{3.2^s-2}x_4^{2^s-1}$, & \multicolumn{1}{l}{$a_{s,\,70}= x_1^{2^{s+1}-1}x_2^{2^s-1}x_3x_4^{3.2^s-2}$,} &  \\
$a_{s,\,71}= x_1^{3}x_2^{2^s-1}x_3^{2^{s+2}-3}x_4^{2^s-2}$, & $a_{s,\,72}= x_1^{3}x_2^{2^{s+2}-3}x_3^{2^s-2}x_4^{2^s-1}$, & \multicolumn{1}{l}{$a_{s,\,73}= x_1^{3}x_2^{2^{s+2}-3}x_3^{2^s-1}x_4^{2^s-2}$,} &  \\
$a_{s,\,74}= x_1^{3}x_2^{2^{s+1}-3}x_3^{2^s-2}x_4^{3.2^s-1}$, & $a_{s,\,75}= x_1^{3}x_2^{2^{s+1}-3}x_3^{3.2^s-1}x_4^{2^s-2}$, & \multicolumn{1}{l}{$a_{s,\,76}= x_1^{3}x_2^{2^{s+1}-1}x_3^{3.2^s-3}x_4^{2^s-2}$,} &  \\
$a_{s,\,77}= x_1^{2^{s+1}-1}x_2^{3}x_3^{3.2^s-3}x_4^{2^s-2}$, & $a_{s,\,78}= x_1^{3}x_2^{2^s-1}x_3^{2^{s+1}-3}x_4^{3.2^s-2}$, & \multicolumn{1}{l}{$a_{s,\,79}= x_1^{3}x_2^{2^{s+1}-3}x_3^{2^s-1}x_4^{3.2^s-2}$,} &  \\
$a_{s,\,80}= x_1^{3}x_2^{2^{s+1}-3}x_3^{3.2^s-2}x_4^{2^s-1}$, & $a_{s,\,81}= x_1^{3}x_2^{2^{s+1}-3}x_3^{2^{s+1}-2}x_4^{2^{s+1}-1}$, & \multicolumn{1}{l}{$a_{s,\,82}= x_1^{3}x_2^{2^{s+1}-3}x_3^{2^{s+1}-1}x_4^{2^{s+1}-2}$,} &  \\
$a_{s,\,83}= x_1^{3}x_2^{2^{s+1}-1}x_3^{2^{s+1}-3}x_4^{2^{s+1}-2}$, & $a_{s,\,84}= x_1^{2^{s+1}-1}x_2^{3}x_3^{2^{s+1}-3}x_4^{2^{s+1}-2}$.\\
&       &  
\end{tabular}
\end{center}

For $s = 2,$
\begin{center}
\begin{tabular}{lrr}
$a_{2,\,85}= x_1^{3}x_2^{3}x_3^{3}x_4^{12}$, & \multicolumn{1}{l}{$a_{2,\,86}= x_1^{3}x_2^{3}x_3^{12}x_4^{3}$,} & \multicolumn{1}{l}{$a_{2,\,87}= x_1^{7}x_2^{9}x_3^{2}x_4^{3}$,} \\
$a_{2,\,88}= x_1^{7}x_2^{9}x_3^{3}x_4^{2}$, & \multicolumn{1}{l}{$a_{2,\,89}= x_1^{3}x_2^{3}x_3^{4}x_4^{11}$,} & \multicolumn{1}{l}{$a_{2,\,90}= x_1^{3}x_2^{3}x_3^{7}x_4^{8}$,} \\
$a_{2,\,91}= x_1^{3}x_2^{7}x_3^{3}x_4^{8}$, & \multicolumn{1}{l}{$a_{2,\,92}= x_1^{3}x_2^{7}x_3^{8}x_4^{3}$,} & \multicolumn{1}{l}{$a_{2,\,93}= x_1^{7}x_2^{3}x_3^{3}x_4^{8}$,} \\
$a_{2,\,94}= x_1^{7}x_2^{3}x_3^{8}x_4^{3}$.\\ &       &  
\end{tabular}%
\end{center}

For $s\geq 3,$
\begin{center}
\begin{tabular}{lcl}
$a_{s,\,85}= x_1^{3}x_2^{2^s-3}x_3^{2^s-2}x_4^{2^{s+2}-1}$, & $a_{s,\,86}= x_1^{3}x_2^{2^s-3}x_3^{2^{s+2}-1}x_4^{2^s-2}$, & $a_{s,\,87}= x_1^{3}x_2^{2^{s+2}-1}x_3^{2^s-3}x_4^{2^s-2}$, \\
$a_{s,\,88}= x_1^{2^{s+2}-1}x_2^{3}x_3^{2^s-3}x_4^{2^s-2}$, & $a_{s,\,89}= x_1^{3}x_2^{2^s-3}x_3^{2^s-1}x_4^{2^{s+2}-2}$, & $a_{s,\,90}= x_1^{3}x_2^{2^s-3}x_3^{2^{s+2}-2}x_4^{2^s-1}$, \\
$a_{s,\,91}= x_1^{3}x_2^{2^s-1}x_3^{2^s-3}x_4^{2^{s+2}-2}$, & $a_{s,\,92}= x_1^{2^s-1}x_2^{3}x_3^{2^s-3}x_4^{2^{s+2}-2}$, & $a_{s,\,93}= x_1^{2^s-1}x_2^{3}x_3^{2^{s+2}-3}x_4^{2^s-2}$, \\
$a_{s,\,94}= x_1^{3}x_2^{2^s-3}x_3^{2^{s+1}-2}x_4^{3.2^s-1}$, & $a_{s,\,95}= x_1^{3}x_2^{2^s-3}x_3^{2^{s+1}-1}x_4^{3.2^s-2}$, & $a_{s,\,96}= x_1^{3}x_2^{2^{s+1}-1}x_3^{2^s-3}x_4^{3.2^s-2}$, \\
$a_{s,\,97}= x_1^{2^{s+1}-1}x_2^{3}x_3^{2^s-3}x_4^{3.2^s-2}$, & $a_{s,\,98}= x_1^{2^s-1}x_2^{3}x_3^{2^{s+1}-3}x_4^{3.2^s-2}$, & $a_{s,\,99}= x_1^{7}x_2^{2^{s+2}-5}x_3^{2^s-3}x_4^{2^s-2}$, \\
$a_{s,\,100}= x_1^{7}x_2^{2^{s+1}-5}x_3^{2^s-3}x_4^{3.2^s-2}$, & $a_{s,\,101}= x_1^{7}x_2^{2^{s+1}-5}x_3^{3.2^s-3}x_4^{2^s-2}$, & $a_{s,\,102}= x_1^{7}x_2^{2^{s+1}-5}x_3^{2^{s+1}-3}x_4^{2^{s+1}-2}$.\\
\end{tabular}%
\end{center}

For $s = 3,$
\begin{center}
\begin{tabular}{lcl}
$a_{3,\,103}= x_1^{7}x_2^{7}x_3^{7}x_4^{24}$, & $a_{3,\,104}= x_1^{7}x_2^{7}x_3^{9}x_4^{22}$, & $a_{3,\,105}= x_1^{7}x_2^{7}x_3^{25}x_4^{6}$.\\
\end{tabular}%
\end{center}

For $s\geq 4,$
\begin{center}
\begin{tabular}{lcl}
$a_{s,\,103}= x_1^{7}x_2^{2^s-5}x_3^{2^s-3}x_4^{2^{s+2}-2}$, & $a_{s,\,104}= x_1^{7}x_2^{2^s-5}x_3^{2^{s+1}-3}x_4^{3.2^s-2}$, & $a_{s,\,105}= x_1^{7}x_2^{2^s-5}x_3^{2^{s+2}-3}x_4^{2^s-2}$.
\end{tabular}%
\end{center}

We now consider the following cases.

\underline{\it Case $s = 1$}. We have a direct summand decomposition of the $\Sigma_4$-modules:
$$ \overline{\pmb{Q}^{4}_{n_1}} = \Sigma_4(a_{1,\,29}, a_{1,\,32}) \bigoplus \Sigma_4(a_{1,\,35}),$$
where $ \Sigma_4(a_{1,\,29}; a_{1,\,32}) = \langle \{[a_{1,\,j}]:\, j = 29, 30, \ldots, 34, 41, 42, \ldots, 46\} \rangle,$ and 
$$\Sigma_4(a_{1,\,35}) = \langle \{[a_{1,\,j}]:\, 35\leq j\leq 40\} \rangle.$$

\begin{lema}\label{bd2}
The following statements are true:
\begin{itemize}
\item[i)]  $\Sigma_4(a_{1,\,29}; a_{1,\,32})^{\Sigma_4} = \langle [q_{1,\,4}] \rangle,$ with $q_{1,\,4} = \sum_{29\leq j\leq 31}a_{1,\,j} + \sum_{44\leq j\leq 46}a_{1,\,j};$
\item[ii)]  $\Sigma_4(a_{1,\,35})^{\Sigma_4} = 0.$
\end{itemize}
\end{lema}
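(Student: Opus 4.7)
The plan is to mimic the strategy used for Lemma \ref{bd1}: for each of the two submodules, write a generic representative as a $k$-linear combination of the basis classes, apply the generators $\sigma_1, \sigma_2, \sigma_3$ of $\Sigma_4$ acting on $P_4$, normalize any resulting inadmissible monomials back into the given admissible basis modulo $\overline{A}(P_4)_{n_1}$, and then impose the invariance conditions $\sigma_i(f)\equiv f$ to read off a linear system in the coefficients.

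For Part (i), I would set $f \equiv \sum_{j\in J}\gamma_j\, a_{1,j}$ with $J = \{29,\ldots,34\}\cup\{41,\ldots,46\}$ and $\gamma_j\in k$, so that $[f]\in \Sigma_4(a_{1,29};a_{1,32})^{\Sigma_4}$. A direct calculation of $\sigma_i(a_{1,j})$ either lands in the admissible basis or produces a monomial that needs to be reduced; the latter will be handled by the same Cartan/Singer reductions as in \eqref{dt5}, using Theorem \ref{dlSi} and the fact that $n_1 = 9$ admits the minimal spike $x_1x_2x_3^{3}x_4^{4}$ so that monomials of smaller weight vector are hit. Equating the coefficients of each admissible monomial in $\sigma_i(f)-f\equiv 0$ for $i = 1,2,3$ yields a homogeneous linear system; solving it should force $\gamma_{29}=\gamma_{30}=\gamma_{31}=\gamma_{44}=\gamma_{45}=\gamma_{46}$ and $\gamma_{32}=\gamma_{33}=\gamma_{34}=\gamma_{41}=\gamma_{42}=\gamma_{43}=0$ (or conceivably an additional relation pairing these two $\Sigma_3$-orbits under $\sigma_3$), producing the single invariant $[q_{1,4}]$.

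For Part (ii), I would take $g \equiv \sum_{35\leq j\leq 40}\beta_j a_{1,j}$ and repeat the same procedure, keeping in mind that the exponents in this block are tightly clustered (mixtures of $1,2,3$ in the four variables), so the $\sigma_i$-images are more likely to be inadmissible and will require careful reduction via Cartan's formula. The invariance conditions should now force every $\beta_j$ to vanish, which is equivalent to showing that no nontrivial $\Sigma_4$-invariant combination exists in this block.

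The main obstacle I anticipate is bookkeeping in the rewriting step: each image $\sigma_i(a_{1,j})$ that is not itself admissible must be expressed, modulo $\overline{A}(P_4)_9$, as a $k$-linear combination of the listed admissible basis monomials (potentially across blocks, though invariance of weight vectors ensures the reductions stay inside the chosen $\Sigma_4$-submodule). Ensuring that every reduction is complete and that no cross-terms involving monomials outside the given basis are overlooked is the delicate part; once the reductions are correctly tabulated, the resulting linear systems are small ($12$ unknowns in Part (i), $6$ in Part (ii)) and can be solved by elementary linear algebra to extract the stated generators.
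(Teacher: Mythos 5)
Your proposal follows essentially the same approach as the paper's proof: write a generic representative of a $\Sigma_4$-invariant class as a $k$-linear combination of the given admissible basis monomials, apply the transpositions $\sigma_1,\sigma_2,\sigma_3$, reduce the inadmissible monomials that appear back into the admissible basis via the Cartan formula and Theorem~\ref{dlSi}, and then read off the homogeneous linear system on the coefficients; this is exactly what the paper does in its equations \eqref{dt6}--\eqref{dt8} for part (i), with part (ii) left as a routine analogue. One small slip worth flagging: $x_1x_2x_3^3x_4^4$ is \emph{not} a spike, since the exponent $4$ is not of the form $2^{\xi}-1$; the minimal spike in $(P_4)_9$ is a permutation of $x_1^{7}x_2x_3$ (e.g.\ $a_{1,12}$). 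Fortunately your chosen monomial has the same weight vector $(3,1,1)$ as the actual minimal spike, so the hit criterion of Theorem~\ref{dlSi} is applied with the correct threshold and the reductions you describe would still be the right ones.
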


Combining Lemmas \ref{bd1} and \ref{bd2} gives

\begin{hq}\label{hq1}
The space of $\Sigma_4$-invariants $(\pmb{Q}^{4}_{n_1})^{\Sigma_4}$ is generated by the classes $[q_{1,\,t}]$ for all $t,\, 1\leq t\leq 4.$
\end{hq}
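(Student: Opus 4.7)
The plan is to imitate, mutatis mutandis, the argument of Lemma \ref{bd1}. In each part I write an arbitrary prospective invariant as a formal $k$-linear combination of the admissible generators of the relevant $\Sigma_4$-submodule, apply each of the three transposition generators $\sigma_1,\sigma_2,\sigma_3$ of $\Sigma_4$ to this combination, reduce any inadmissible monomials that arise modulo $\overline{A}(P_4)_{n_1}$ using Cartan's formula together with Theorem \ref{dlSi} (exactly as in \eqref{dt5}), and finally impose $\sigma_i(f)\equiv f$ for $i=1,2,3$ to read off a linear system in the unknown coefficients.

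For part i), the basis $\{[a_{1,j}]:\ j\in\{29,\ldots,34,41,\ldots,46\}\}$ of $\Sigma_4(a_{1,29};a_{1,32})$ consists of twelve classes, so I write $f\equiv\sum_{j}\gamma_j a_{1,j}$ with $\gamma_j\in k$. Applying each $\sigma_i$ permutes the variables $x_1,\ldots,x_4$; several resulting monomials fall outside the admissible basis (for instance monomials with exponent patterns that exceed the admissibility bound on the leading variable), and these are to be rewritten as admissible monomials by the same sort of $Sq^{2}$-reductions as displayed in \eqref{dt5} specialized to $s=1$. Comparing coefficients of each basis class on both sides of $\sigma_i(f)\equiv f$ produces a homogeneous linear system in $\{\gamma_j\}$ whose solution space I expect to be one-dimensional: concretely, the relations coming from the three $\sigma_i$ should force $\gamma_{32}=\gamma_{33}=\gamma_{34}=\gamma_{41}=\gamma_{42}=\gamma_{43}=0$ while tying together $\gamma_{29}=\gamma_{30}=\gamma_{31}=\gamma_{44}=\gamma_{45}=\gamma_{46}$. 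This isolates $[q_{1,4}]$ as the unique nonzero $\Sigma_4$-invariant up to scalar.

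For part ii), the basis $\{[a_{1,j}]:\ 35\leq j\leq 40\}$ of $\Sigma_4(a_{1,35})$ has only six classes, so I write $g\equiv\sum_{35\leq j\leq 40}\delta_j a_{1,j}$ and repeat the same procedure. Under $\sigma_1$ a typical image such as $\sigma_1(a_{1,35})=x_2x_1^{2}x_3^{3}x_4^{3}=x_1^{2}x_2x_3^{3}x_4^{3}$ is inadmissible and must first be rewritten via a $Sq^{1}$- or $Sq^{2}$-reduction analogous to \eqref{dt5}; similarly for $\sigma_2,\sigma_3$. The resulting linear constraints from $\sigma_i(g)\equiv g$ should be non-degenerate enough to force $\delta_j=0$ for all $j$, whence $\Sigma_4(a_{1,35})^{\Sigma_4}=0$.

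The genuine obstacle, as in Lemma \ref{bd1}, is the explicit bookkeeping of the inadmissible monomials produced by $\sigma_i$ and their reduction modulo $\overline{A}(P_4)_{n_1}$: each such image must be rewritten as a unique linear combination of admissible monomials before coefficients can be matched, and any omission would falsify the final system. Once every image has been reduced in this way, the remainder is a short exercise in linear algebra over $k$.
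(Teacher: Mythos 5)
Your proposal is correct and matches the paper's route: the corollary is obtained by combining Lemma \ref{bd1} (which supplies $[q_{1,1}],[q_{1,2}],[q_{1,3}]$ from the $\Sigma_4$-module summand $\underline{\pmb{Q}^{4}_{n_1}}$) with Lemma \ref{bd2} (which you reprove here), using the additivity of $\Sigma_4$-invariants across the direct sum $\pmb{Q}^{4}_{n_1}\cong\underline{\pmb{Q}^{4}_{n_1}}\oplus\overline{\pmb{Q}^{4}_{n_1}}$. Your sigma-reduction argument for $\Sigma_4(a_{1,29};a_{1,32})$ and $\Sigma_4(a_{1,35})$ is exactly the paper's method, and your predicted coefficient constraints ($\gamma_j=0$ for $j\in\{32,33,34,41,42,43\}$, equality among the rest) agree with the paper's explicit computation; the only thing you leave implicit is the trivial final step of invoking Lemma \ref{bd1} for the remaining three generators.
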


\begin{proof}[{\it Proof of Lemma \ref{bd2}}]

The proof of $ii)$ is straightforward. We prove $i)$ in detail. Suppose that $[g]\in \Sigma_4(a_{1,\,29}; a_{1,\,32})^{\Sigma_4}.$ Then, we have 
\begin{equation}\label{dt6}
g\equiv \big(\sum_{29\leq j\leq 34}\gamma_ja_{1,\,j} + \sum_{41\leq j\leq 46}\gamma_ja_{1,\,j}\big),
\end{equation}
with $\gamma_j\in k.$ Acting the homomorphisms $\sigma_i: P_4\longrightarrow P_4,\, 1\leq i\leq 3,$ on both sides of \eqref{dt6}, we get
\begin{equation}\label{dt7}
\begin{array}{ll}
\sigma_1(g)&\equiv \big(\gamma_{29}a_{1,\,29} + \gamma_{30}a_{1,\,30} + \gamma_{32}a_{1,\,32} + \gamma_{41}a_{1,\,41} + \gamma_{44}a_{1,\,42}\\
    &\quad +\gamma_{45}a_{1,\,43}  + \gamma_{42}a_{1,\,44} + \gamma_{43}a_{1,\,45} + \gamma_{31}x_1^{6}x_2x_3x_4\\   
\medskip
&\quad + \gamma_{33}x_1^{2}x_2x_3x_4^{5} + \gamma_{34}x_1^{2}x_2x_3^{5}x_4 + \gamma_{46}x_1^{4}x_2^{3}x_3x_4\big)\\
\sigma_2(g)&\equiv \big(\gamma_{29}a_{1,\,29} + \gamma_{31}a_{1,\,30} + \gamma_{30}a_{1,\,31} + \gamma_{33}a_{1,\,32} + \gamma_{42}a_{1,\,41}+\gamma_{41}a_{1,\,42} \\
\medskip
&\quad  + \gamma_{44}a_{1,\,44}+ \gamma_{46}a_{1,\,45} + \gamma_{45}a_{1,\,46} + \gamma_{34}x_1x_2^{5}x_3^{2}x_4 + \gamma_{43}x_1x_2^{4}x_3^{3}x_4\big)\\
\sigma_3(g)&\equiv \big(\gamma_{30}a_{1,\,29} + \gamma_{29}a_{1,\,30} + \gamma_{31}a_{1,\,31} + \gamma_{34}a_{1,\,43} + \gamma_{33}a_{1,\,34} + \gamma_{43}a_{1,\,42}+\gamma_{42}a_{1,\,43}\\
&\quad  + \gamma_{45}a_{1,\,44} + \gamma_{44}a_{1,\,45} + \gamma_{46}a_{1,\,46} + \gamma_{32}x_1x_2x_3^{5}x_4^{2} + \gamma_{41}x_1x_2x_3^{4}x_4^{3}\big).
\end{array}
\end{equation}
Using the Cartan formula and Theorem \ref{dlSi}, we have
\begin{equation}\label{dt8}
\begin{array}{ll}
x_1^{6}x_2x_3x_4 &= Sq^{1}(x_1^{5}x_2x_3x_4) + Sq^{2}(x_1^{3}x_2^{2}x_3x_4 + x_1^{3}x_2x_3^{2}x_4 + x_1^{3}x_2x_3x_4^{2})\\
\medskip
&\quad  +  a_{1,\,44} +  a_{1,\,45} +  a_{1,\,46} \mod (\overline{A}(P_4)_{n_1});  \\
\medskip
x_1^{2}x_2x_3x_4^{5} &= Sq^{1}(x_1x_2x_3x_4^{5}) + a_{1,\,29} +  a_{1,\,32} +  a_{1,\,33} \mod (\overline{A}(P_4)_{n_1});  \\
\medskip
x_1^{2}x_2x_3^{5}x_4 &= Sq^{1}(x_1x_2x_3^{5}x_4) + Sq^{2}(x_1x_2x_3^{3}x_4^{2}) + a_{1,\,30} +  a_{1,\,34} +  a_{1,\,41} \mod (\overline{A}(P_4)_{n_1});  \\
x_1^{4}x_2^{3}x_3x_4 &= Sq^{1}(x_1x_2^{5}x_3x_4) + Sq^{2}(x_1x_2^{3}x_3x_4^{2} + x_1x_2^{3}x_3^{2}x_4 + x_1^{2}x_2^{3}x_3x_4)\\
\medskip
&\quad  + a_{1,\,31} +  a_{1,\,42} +  a_{1,\,43} \mod (\overline{A}(P_4)_{n_1});  \\
\medskip
x_1x_2^{5}x_3^{2}x_4 &= Sq^{2}(x_1x_2^{3}x_3^{2}x_4) +  a_{1,\,43} \mod (\overline{A}(P_4)_{n_1});  \\
\medskip
x_1x_2^{4}x_3^{3}x_4 &=Sq^{2}(x_1x_2^{2}x_3^{3}x_4) +  a_{1,\,34} \mod (\overline{A}(P_4)_{n_1});  \\
\medskip
x_1x_2x_3^{5}x_4^{2} &=Sq^{2}(x_1x_2x_3^{3}x_4^{2}) +  a_{1,\,41} \mod (\overline{A}(P_4)_{n_1});  \\
\medskip
x_1x_2x_3^{4}x_4^{3} &=Sq^{2}(x_1x_2x_3^{2}x_4^{3}) +  a_{1,\,32} \mod (\overline{A}(P_4)_{n_1}).
\end{array}
\end{equation}
Combining \eqref{dt6}, \eqref{dt7} and \eqref{dt8}, we get
$$ \begin{array}{ll}
(\sigma_1(g) + g) &\equiv \big( \gamma_{33}(a_{1,\,29} +a_{1,\,32}) +  \gamma_{34}(a_{1,\,30} +a_{1,\,41})+ (\gamma_{31} + \gamma_{46})a_{1,\,31}+(\gamma_{42} + \gamma_{44} + \gamma_{46})a_{1,\,42} \\
\medskip
&\quad + (\gamma_{43} + \gamma_{45} + \gamma_{46})a_{1,\,43} + (\gamma_{31} + \gamma_{42} + \gamma_{44})a_{1,\,44} + (\gamma_{31} + \gamma_{43} + \gamma_{45})a_{1,\,45}\big),\\
(\sigma_2(g) + g) &\equiv \big((\gamma_{30} + \gamma_{31})a_{1,\,30}+ (\gamma_{32} + \gamma_{33})a_{1,\,32} + (\gamma_{34} + \gamma_{43})a_{1,\,34}\\
\medskip
&\quad + (\gamma_{41} + \gamma_{42})a_{1,\,41} + (\gamma_{45} + \gamma_{46})a_{1,\,45}\big)\\
(\sigma_3(g) + g) &\equiv \big((\gamma_{29} + \gamma_{30})a_{1,\,29} + (\gamma_{32} + \gamma_{41})a_{1,\,32} + (\gamma_{33} + \gamma_{34})a_{1,\,33} \\
\medskip
&\quad +(\gamma_{42} + \gamma_{43})a_{1,\,42} + (\gamma_{44} + \gamma_{45})a_{1,\,44}\big).
\end{array}$$
From these equalities and the relations $\sigma_j(g) \equiv g,\, 1\leq j\leq 3,$ we obtain $\gamma_j = 0$ for $j\in \{32, 33, 34, 41, 42, 43\},$ and $\gamma_{29} = \gamma_{30} = \gamma_{31} = \gamma_{44} = \gamma_{45} = \gamma_{46}.$ This completes the proof of the lemma.
\end{proof}

The following assertion is useful.

\begin{md}\label{md1}
We have $(\pmb{Q}^{4}_{n_1})^{G(4)} = \langle [q_{1,\, 4}] \rangle.$
\end{md}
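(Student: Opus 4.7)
The plan is to use the inclusion $(\pmb{Q}^{4}_{n_1})^{G(4)} \subseteq (\pmb{Q}^{4}_{n_1})^{\Sigma_4}$ together with the characterization of $\Sigma_4$-invariants given in Corollary \ref{hq1}. Since $G(4)$ is generated by $\Sigma_4$ and the matrix $\sigma_4$ (which acts as $x_1 \mapsto x_1 + x_2$ and fixes $x_2, x_3, x_4$), an element of $(\pmb{Q}^{4}_{n_1})^{\Sigma_4}$ is $G(4)$-invariant if and only if it is additionally fixed by $\sigma_4$. I therefore start from an arbitrary $[f] \in (\pmb{Q}^{4}_{n_1})^{G(4)}$ and, by Corollary \ref{hq1}, write $f \equiv \sum_{t=1}^{4} \gamma_t\, q_{1,t}$ with $\gamma_t \in k$, then impose $\sigma_4(f) \equiv f$ in $\pmb{Q}^{4}_{n_1}$.

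The bulk of the argument consists of computing $\sigma_4(q_{1,t})$ for each $t$, and re-expressing the result in the $46$-element admissible basis $\{[a_{1,j}]\}_{j=1}^{46}$. Concretely, if $a_{1,j}$ has $x_1$-exponent $e$, then $\sigma_4(a_{1,j}) = (x_1+x_2)^{e}\cdot(\text{the rest})$, which expands via Lucas' theorem into a sum of monomials of the form $x_1^{k}x_2^{e-k}\cdot(\text{rest})$. Many $a_{1,j}$ have $e \in \{0,1\}$, so their images are immediate; those with $e \in \{3,5,7\}$ expand into several inadmissible monomials. Each of these is then rewritten modulo $\overline{A}(P_4)_{n_1}$ by applying the Cartan formula together with Theorem \ref{dlSi}, in exactly the style of the hit reductions tabulated in \eqref{dt5} and \eqref{dt8}.

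Once $\sigma_4(q_{1,t}) + q_{1,t}$ is expressed explicitly in the admissible basis, the condition $\sigma_4(f) \equiv f$ becomes a homogeneous linear system over $k$ in the four unknowns $\gamma_1, \gamma_2, \gamma_3, \gamma_4$. The expected outcome, consistent with the dimension prediction in Theorem \ref{dlc1} and with the generator $\zeta_1$ identified in Remark \ref{nxt}, is that $\gamma_1 = \gamma_2 = \gamma_3 = 0$ while $\gamma_4$ remains free, which yields $(\pmb{Q}^{4}_{n_1})^{G(4)} = \langle [q_{1,4}] \rangle$.

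The main obstacle is purely bookkeeping. The monomials $a_{1,10}, a_{1,11}, a_{1,12}$ (carrying $x_1^{7}$) together with $a_{1,19}$--$a_{1,24}$ (carrying $x_1^{3}$), and the spike-type generators appearing in $q_{1,3}$ and the degree-$3$ contributions in $q_{1,4}$, each produce numerous inadmissible terms after $\sigma_4$; a correct preimage under some $Sq^{m}$ must be exhibited for every one of them so that the admissible reduction can be assembled into the span $\langle\{[a_{1,j}]\}\rangle$. Verifying that the coefficient matrix then forces $\gamma_1, \gamma_2, \gamma_3$ to vanish while leaving $q_{1,4}$ unconstrained is the crux of the proposition, and is where the delicate cancellations between contributions from $q_{1,1}, q_{1,2}, q_{1,3}$ and the $x_1^{3}$-terms of $q_{1,4}$ must be accounted for with care.
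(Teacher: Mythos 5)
Your proposal follows exactly the paper's route: by Corollary \ref{hq1} any $G(4)$-invariant class is $\sum_{t}\gamma_t[q_{1,t}]$, and imposing $\sigma_4(f)\equiv f$, reducing the inadmissible monomials via the Cartan formula and Theorem \ref{dlSi}, yields a linear system forcing $\gamma_1=\gamma_2=\gamma_3=0$. The paper simply compresses the ``direct computation'' you describe into the single display showing the coefficient relations $m_1=0$, $m_1+m_2=0$, $m_2+m_3=0$.
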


\begin{proof}
Let $h\in (P_4)_{n_1}$ such that $[h]\in (\pmb{Q}^{4}_{n_1})^{G(4)}.$ Since $\Sigma_4\subset G(4),$ $[h]\in (\pmb{Q}^{4}_{n_1})^{\Sigma_4}$ and therefore by Corollary \ref{hq1}, we have $ h\equiv \sum_{1\leq t\leq 4}m_tq_{1,\, t},$ where $m_t\in k$ for every $t.$ Since $[h]\in (\pmb{Q}^{4}_{n_1})^{G(4)},$ $(\sigma_4(h) + h)\equiv 0$ with $\sigma_4: P_4\longrightarrow P_4.$ From Theorem \ref{dlSi} and a direct computation, we obtain
$$ (\sigma_4(h) + h) \equiv (m_1a_{1,\, 1} + (m_1 + m_2)a_{1,\, 8} + (m_2 + m_3)a_{1,\, 17} + \mbox{other terms})\equiv 0.$$
and therefore $h\equiv q_{1,\, 4}.$ The proposition is proved.
\end{proof}

Now, we consider the element
$$ \zeta = (a_1^{(1)}a_2^{(3)}a_3^{(3)}a_4^{(2)} + a_1^{(1)}a_2^{(3)}a_3^{(4)}a_4^{(1)} + a_1^{(1)}a_2^{(5)}a_3^{(2)}a_4^{(1)} +  a_1^{(1)}a_2^{(6)}a_3^{(1)}a_4^{(1)})\in (P_4)_{3(2^{1}-1) + 3.2^{1}}^{*}.$$
Then, it is $\overline{A}$-annihilated. Indeed, due to the well-known unstable condition of the Steenrod squares, we need only to consider the effects of $Sq^{1}$ and $Sq^{2}$. By a direct computation, we get
$$ \begin{array}{ll}
\medskip
(\zeta)Sq^{1} &= a_1^{(1)}a_2^{(3)}a_3^{(3)}a_4^{(1)} + a_1^{(1)}a_2^{(3)}a_3^{(3)}a_4^{(1)} + a_1^{(1)}a_2^{(5)}a_3^{(1)}a_4^{(1)} +a_1^{(1)}a_2^{(5)}a_3^{(1)}a_4^{(1)} = 0,\\
\medskip
(\zeta)Sq^{2} &=  0 + a_1^{(1)}a_2^{(3)}a_3^{(2)}a_4^{(1)} + a_1^{(1)}a_2^{(3)}a_3^{(2)}a_4^{(1)} + 0 = 0.
\end{array}$$
On the other side, since $<\zeta, q_{1,\, 4}>= 1,$ by Proposition \ref{md1}, implies that $$ k\otimes_{G(4)}\mathscr {P}_A((P_4)_{n_1}^{*}) = ((\pmb{Q}^{4}_{n_1})^{G(4)})^{*} = \langle ([q_{1,\, 4}])^{*}\rangle  = \langle[\zeta]\rangle.$$

\underline{Case $s = 2$}. We first have the following remark.

\begin{rem} Based on the bases of $\underline{\pmb{Q}^{4}_{n_s}}$ and $\overline{\pmb{Q}^{4}_{n_s}}$ above, the weight vector of $a_{s,\, j}$ is one of the following sequences:
$$\omega_{(s, 1)}:=   \underset{\mbox{{$s$ times of $3$}}}{\underbrace{(3, 3, \ldots, 3},1,1)},\ \ \omega_{(s, 2)}:= \underset{\mbox{{$(s+1)$ times of $3$}}}{\underbrace{(3, 3, \ldots, 3)}}.$$ Hence, we have the following isomorphisms:
$$ \begin{array}{ll}
\medskip
\pmb{Q}^{4}_{n_s} &\cong (\pmb{Q}^{4}_{n_s})^{\omega_{(s, 1)}} \bigoplus (\pmb{Q}^{4}_{n_s})^{\omega_{(s, 2)}},\\
(\pmb{Q}^{4}_{n_s})^{\omega_{(s, j)}} &\cong (\underline{\pmb{Q}^{4}_{n_s}})^{\omega_{(s, j)}}\bigoplus (\overline{\pmb{Q}^{4}_{n_s}})^{\omega_{(s, j)}},\ j = 1,\, 2.
\end{array}$$
\end{rem}

Then, for $s = 2,$ we have $\pmb{Q}^{4}_{n_2} \cong (\pmb{Q}^{4}_{n_2})^{\omega_{(2, 1)}} \bigoplus (\pmb{Q}^{4}_{n_2})^{\omega_{(2, 2)}}$ and the following.

\begin{md}\label{md2}
The spaces of invariants $((\pmb{Q}^{4}_{n_2})^{\omega_{(2, 1)}})^{G(4)}$ and $((\pmb{Q}^{4}_{n_2})^{\omega_{(2, 2)}})^{G(4)}$ are trivial.
\end{md}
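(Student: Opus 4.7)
The plan is to split $(\pmb{Q}^{4}_{n_2})^{\omega_{(2,j)}}$ into its $\Sigma_4$-orbit summands, compute the $\Sigma_4$-invariants explicitly by imitating the template of Lemmas \ref{bd1} and \ref{bd2} with $s=2$, and then cut these invariants down by imposing the extra relation $\sigma_4(f) \equiv_{\omega_{(2,j)}} f$, in exact parallel with the derivation of Proposition \ref{md1} from Corollary \ref{hq1}.

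First, I would partition Sum's admissible basis for $\pmb{Q}^{4}_{n_2}$ according to weight. The weight $\omega_{(2,1)} = (3,3,1,1)$ collects those admissible monomials at least one of whose exponents has a binary digit at position $\geq 3$; this includes the $\Sigma_4$-orbits of $a_{2,1}$ and $a_{2,13}$ from $\underline{\pmb{Q}^{4}_{n_2}}$, together with the orbits of $a_{2,29}$, $a_{2,32}$, $a_{2,41}$ and those beginning at $a_{2,85}$ from $\overline{\pmb{Q}^{4}_{n_2}}$. The weight $\omega_{(2,2)} = (3,3,3)$ collects the admissibles whose exponents are all bounded by $7$, namely the orbits of $a_{2,25}$ and $a_{2,35}$. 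This gives an explicit $\Sigma_4$-module decomposition of each weight summand.

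Second, for each weight class I would determine the space of $\Sigma_4$-invariants. The $\underline{\pmb{Q}^{4}_{n_2}}$-orbits are already handled by Lemma \ref{bd1} with $s=2$, producing the representatives $q_{2,1}$, $q_{2,2}$ and $q_{2,3}$. The remaining orbits in $\overline{\pmb{Q}^{4}_{n_2}}$ require an analogous but fresh computation: write a generic element in each orbit as $\sum \gamma_j a_{2,j}$, apply the three transpositions $\sigma_i$ for $i=1,2,3$, reduce the resulting non-admissible terms to admissible form via Cartan's formula together with Theorem \ref{dlSi} (absorbing any monomial of strictly smaller weight into $(P_4)_{n_2}^{<\omega_{(2,j)}}$), and solve the linear system $\sigma_i(f) \equiv_{\omega_{(2,j)}} f$. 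This yields a finite explicit basis $\{p_\alpha\}$ of $((\pmb{Q}^{4}_{n_2})^{\omega_{(2,j)}})^{\Sigma_4}$.

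Finally, I would apply $\sigma_4$ (which sends $x_1 \mapsto x_1 + x_2$ and fixes the other generators) to each $p_\alpha$. Expanding with the binomial rule and Cartan's formula, rewriting the output in the admissible basis modulo $\overline{A}(P_4)_{n_2} + (P_4)_{n_2}^{<\omega_{(2,j)}}$, and imposing $\sigma_4(\sum m_\alpha p_\alpha) \equiv_{\omega_{(2,j)}} \sum m_\alpha p_\alpha$ produces a homogeneous linear system in the coefficients $m_\alpha$. The claim is that in both weight classes this system forces $m_\alpha = 0$ for every $\alpha$, whence the $G(4)$-invariants are trivial. The main obstacle is precisely this last step: $\sigma_4$ generates a large family of non-admissible monomials $x_1^{a-i}x_2^{b+i}x_3^c x_4^d$ which must be systematically reduced to the admissible basis using carefully chosen Steenrod squares; several intermediate terms have weight strictly below $\omega_{(2,j)}$ and drop out by the equivalence $\equiv_{\omega_{(2,j)}}$, but tracking which terms survive requires delicate bookkeeping, and this is where the computer assistance alluded to in Section \ref{s3} and at the start of Section 4 becomes essential.
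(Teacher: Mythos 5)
Your plan mirrors the paper's proof exactly: split each weight summand into $\underline{\pmb{Q}^4_{n_2}}$ and $\overline{\pmb{Q}^4_{n_2}}$ parts, decompose into $\Sigma_4$-orbit summands, compute $\Sigma_4$-invariants (using Lemma \ref{bd1} for the $\underline{\pmb{Q}}$ part and fresh linear-algebra computations for the $\overline{\pmb{Q}}$ part), and finally impose the $\sigma_4$ constraint to force every coefficient to vanish. The only slip is in your orbit inventory: the $\omega_{(2,2)}$ component of $\overline{\pmb{Q}^4_{n_2}}$ is spanned by the classes of $a_{2,j}$ for $j\in\{35,\ldots,40,81,\ldots,84\}$ and therefore splits into two orbits $\Sigma_4(a_{2,35})\oplus\Sigma_4(a_{2,81})$ rather than one, and in the $\omega_{(2,1)}$ component you omit the orbit $\Sigma_4(a_{2,53})$ as well as the indices $59,\ldots,80$ belonging to the summand $\mathbb V$ — but once the full inventory is in hand the method goes through exactly as in the paper.
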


\begin{proof}
We first prove $((\pmb{Q}^{4}_{n_2})^{\omega_{(2, 1)}})^{G(4)} = 0.$ It is clear that the monomial $a_{s,\, 12} = x_1^{2^{s+2}-1}x_2^{2^s-1}x_3^{2^s-1}$ is minimal spike in $(P_4)_{n_s}$ and $\omega(a_{s,\, 12}) = \omega_{(s, 1)}$ for any $s\geq 1.$ Hence, $[a_{s,\, j}]_{\omega_{(s, 1)}} = [a_{s,\, j}].$  Recall that $(\pmb{Q}^{4}_{n_2})^{\omega_{(2, 1)}} \cong (\underline{\pmb{Q}^{4}_{n_2}})^{\omega_{(2, 1)}}\bigoplus (\overline{\pmb{Q}^{4}_{n_2}})^{\omega_{(2, 1)}}.$ Then, by Lemma \ref{bd1}, we get
$(\underline{\pmb{Q}^{4}_{n_2}})^{\omega_{(2, 1)}}  =  \Sigma_4(a_{2,\,1}) \bigoplus \Sigma_4(a_{2,\,13}),$ and
\begin{equation}\label{dt9}
((\underline{\pmb{Q}^{4}_{n_2}})^{\omega_{(2, 1)}})^{\Sigma_4} = \langle [q_{2,\, 1}], [q_{2,\, 2}]\rangle.
\end{equation}

Using an admissible basis of $\overline{\pmb{Q}^{4}_{n_2}}$ above, we have a direct summand decomposition of the $\Sigma_4$-modules:
$$  (\overline{\pmb{Q}^{4}_{n_2}})^{\omega_{(2, 1)}} = \Sigma_4(a_{2,\,41}) \bigoplus \Sigma_4(a_{2,\,53}) \bigoplus \mathbb V,$$
where $$ \begin{array}{ll}
\medskip
& \Sigma_4(a_{2,\,41}) = \langle \{[a_{2,\,j}]:\, 41\leq j\leq 52\} \rangle,\ \Sigma_4(a_{2,\,53}) = \langle \{[a_{2,\,j}]:\, 53\leq j\leq 58\} \rangle,\\
\medskip
&\mathbb V = \langle \{[a_{2,\,j}]:\, j = 29, \ldots, 34, 59, \ldots, 80, 85, \ldots, 94\} \rangle.
\end{array}$$
Then, by a similar technique as in the proof of Lemma \ref{bd2}, we find that
\begin{equation}\label{dt10}
\begin{array}{ll}
\medskip
&\Sigma_4(a_{2,\,41})^{\Sigma_4} = \langle [\widehat{q_{2,\, 1}}] \rangle\ \mbox{with}\ \widehat{q_{2,\, 1}}:= \sum_{41\leq j\leq 52}a_{2,\,j};\\
\medskip
&\Sigma_4(a_{2,\,53})^{\Sigma_4} = \langle [\widehat{q_{2,\, 2}}] \rangle\ \mbox{with}\ \widehat{q_{2,\, 2}}:= \sum_{53\leq j\leq 58}a_{2,\,j};\\
\medskip
&\mathbb V^{\Sigma_4} = 0.
\end{array}
\end{equation}
Thus, from the equalities \eqref{dt9} and \eqref{dt10}, we get 
\begin{equation}\label{dt11}
((\pmb{Q}^{4}_{n_2})^{\omega_{(2, 1)}})^{\Sigma_4} = \langle [q_{2,\, 1}], [q_{2,\, 2}], [\widehat{q_{2,\, 1}}], [\widehat{q_{2,\, 2}}] \rangle.
\end{equation}

Now, let $h\in (P_4)_{n_2}$ such that $[h]\in ((\pmb{Q}^{4}_{n_2})^{\omega_{(2, 1)}})^{G(4)}.$ Since  $[h]\in ((\pmb{Q}^{4}_{n_2})^{\omega_{(2, 1)}})^{\Sigma_4},$ by the equality \eqref{dt11}, we have 
\begin{equation}\label{dt12}
h\equiv (\xi_1q_{2,\, 1} + \xi_2q_{2,\, 2} + \xi_3\widehat{q_{2,\, 1}} + \xi_4\widehat{q_{2,\, 2}}),
\end{equation}
where $\xi_t\in k$ for $1\leq t\leq 4.$ Acting the homomorphism $\sigma_4:P_4\longrightarrow P_4$ on both sides of \eqref{dt12}, and using Theorem \ref{dlSi} with the relation $(\sigma_4(h) + h)\equiv 0,$ we obtain
$$ (\sigma_4(h) + h) \equiv ((\xi_1 + \xi_3)a_{2,\, 1} + \xi_1a_{2,\, 3} + (\xi_1 + \xi_2)a_{2,\, 8} + (\xi_2 + \xi_4)a_{2,\, 13} + \mbox{other terms})  \equiv 0.$$
The last equality implies $\xi_1 = \xi_2 = \xi_3 = \xi_4.$ This means that $((\pmb{Q}^{4}_{n_2})^{\omega_{(2, 1)}})^{G(4)} = 0.$ 

Next, we have $(\pmb{Q}^{4}_{n_2})^{\omega_{(2, 2)}} \cong (\underline{\pmb{Q}^{4}_{n_2}})^{\omega_{(2, 2)}}\bigoplus (\overline{\pmb{Q}^{4}_{n_2}})^{\omega_{(2, 2)}}.$ According to Lemma \ref{bd1}, we get $$((\underline{\pmb{Q}^{4}_{n_2}})^{\omega_{(2, 2)}})^{\Sigma_4} = \langle [q_{2,\, 3}]_{\omega_{(2, 2)}}\rangle.$$
A basis of $(\overline{\pmb{Q}^{4}_{n_2}})^{\omega_{(2, 2)}}$ is the set $\{[a_{2,\,j}]_{\omega_{(2, 2)}}:\, j = 35, 36, \ldots, 40, 81, \ldots, 84\}$ and we have a direct summand decomposition of the $\Sigma_4$-modules:
$$  (\overline{\pmb{Q}^{4}_{n_2}})^{\omega_{(2, 2)}}  = \Sigma_4(a_{2,\,35})\bigoplus \Sigma_4(a_{2,\,81}),$$
where $\Sigma_4(a_{2,\,35}) = \langle \{[[a_{2,\,j}]_{\omega_{(2, 2)}}:\, j = 35, \ldots, 40]\} \rangle$ and $\Sigma_4(a_{2,\,81}) = \langle \{[[a_{2,\,j}]_{\omega_{(2, 2)}}:\, j = 81, \ldots, 84]\} \rangle.$ By using Theorem \ref{dlSi} and a similar computation as above, we also obtain 
$$  \Sigma_4(a_{2,\,35})^{\Sigma_4} = \langle [\widehat{q_{2,\, 3}}]_{\omega_{(2,\, 2)}}\rangle,\ \  \Sigma_4(a_{2,\,81})^{\Sigma_4} = \langle [\widehat{q_{2,\, 4}}]_{\omega_{(2,\, 2)}} \rangle,$$
where $\widehat{q_{2,\, 3}} = \sum_{35\leq j\leq 40}a_{2,\, j}$ and $\widehat{q_{2,\, 4}} = \sum_{81\leq j\leq 84}a_{2,\, j}.$ Then, $$ ((\pmb{Q}^{4}_{n_2})^{\omega_{(2, 2)}})^{\Sigma_4} = \langle [q_{2,\, 3}]_{\omega_{(2,\, 2)}}, \langle [\widehat{q_{2,\, 3}}]_{\omega_{(2,\, 2)}}, \langle [\widehat{q_{2,\, 4}}]_{\omega_{(2,\, 2)}} \rangle.$$
Let $[g]_{_{\omega_{(2,\, 2)}}}\in ((\pmb{Q}^{4}_{n_2})^{\omega_{(2, 2)}})^{G(4)},$ we have $g\equiv_{_{\omega_{(2,\, 2)}}} (\ell_1q_{2,\, 3} + \ell_2\widehat{q_{2,\, 3}} + \ell_3\widehat{q_{2,\, 4}})$ with $\ell_i\in k$ for every $i.$ By a direct computation using the relation $(\sigma_4(g) + g)\equiv_{\omega_{(2,\, 2)}} 0,$ we obtain
$$  (\sigma_4(g) + g)\equiv_{_{\omega_{(2,\, 2)}}} ( (\ell_1 + \ell_2)a_{2,\, 25} + (\ell_2 + \ell_3)a_{2,\, 36} + \ell_3a_{2,\, 83}+ \mbox{other terms})  \equiv_{\omega_{(2,\, 2)}} 0.$$
This means that $\ell_1 = \ell_2 = \ell_3 = 0.$ The proposition follows.
\end{proof}

Now, since $\dim (\pmb{Q}^{4}_{n_2})^{G(4)} \leq \dim ((\pmb{Q}^{4}_{n_2})^{\omega_{(2, 1)}})^{G(4)} +\dim ((\pmb{Q}^{4}_{n_2})^{\omega_{(2, 2)}})^{G(4)},$ by Proposition \ref{md2},  we get
$$ k\otimes_{G(4)}\mathscr {P}_A((P_4)_{n_2}^{*}) = ((\pmb{Q}^{4}_{n_2})^{G(4)})^{*}  = 0.$$

\underline{Case $s \geq 3$}. For simplicity, we prove the cases $s\geq 4.$ Calculations of the case $s = 3$ use similar techniques. Recall that 
$$ 
\pmb{Q}^{4}_{n_s} \cong (\pmb{Q}^{4}_{n_s})^{\omega_{(s, 1)}} \bigoplus (\pmb{Q}^{4}_{n_s})^{\omega_{(s, 2)}},
$$
where $\dim (\pmb{Q}^{4}_{n_s})^{\omega_{(s, 1)}} = 90$ and $\dim (\pmb{Q}^{4}_{n_s})^{\omega_{(s, 2)}} = 15$ for all $s\geq 4.$ 

\begin{md}\label{md3}
We have
%\begin{itemize}
%\item [i)] $((\pmb{Q}^{4}_{n_s})^{\omega_{(s, 1)}})^{G(4)}$ is trivial.
%\item[i)]  $((\pmb{Q}^{4}_{n_s})^{\omega_{(s, 1)}})^{G(4)}  = \langle [\widetilde{\zeta_s}]_{\omega_{(s, 1)}} \rangle,$
%where $$ \widetilde{\zeta_s} = \sum_{1\leq j\leq 24}a_{s,\, j} +  \sum_{29\leq j\leq 34}a_{s,\, j} + \sum_{41\leq j\leq 80}a_{s,\, j} +  \sum_{85\leq j\leq 105,\, j\neq 102 }a_{s,\, j};$$
%\item[ii)]
$((\pmb{Q}^{4}_{n_s})^{\omega_{(s, 2)}})^{G(4)}  = \langle [\zeta_s^{*}]_{\omega_{(s, 2)}} \rangle,$
where $$\zeta_s^{*} = \sum_{25\leq j\leq 28}a_{s,\, j} +  \sum_{35\leq j\leq 40}a_{s,\, j} + \sum_{81\leq j\leq 84}a_{s,\, j} + a_{s,\, 102}.$$
%\end{itemize}
\end{md}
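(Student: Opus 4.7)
The plan is to follow the same strategy used to establish Propositions \ref{md1} and \ref{md2}. That is, first decompose $(\pmb{Q}^{4}_{n_s})^{\omega_{(s,2)}}$ as a direct sum of cyclic $\Sigma_4$-submodules, then compute the $\Sigma_4$-invariants of each summand, and finally use the relation $\sigma_4(h) + h \equiv_{\omega_{(s,2)}} 0$ to cut down to the $G(4)$-invariant subspace.

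From the admissible basis for $\pmb{Q}^{4}_{n_s}$ given in the body, I would first verify that the $15$ admissible monomials of weight $\omega_{(s,2)}$ are precisely
$$ \{a_{s,j} : 25 \leq j \leq 28\} \cup \{a_{s,j} : 35 \leq j \leq 40\} \cup \{a_{s,j} : 81 \leq j \leq 84\} \cup \{a_{s,102}\}, $$
and that these split as a $\Sigma_4$-module into four orbits, giving
$$ (\pmb{Q}^{4}_{n_s})^{\omega_{(s,2)}} = \Sigma_4(a_{s,25}) \oplus \Sigma_4(a_{s,35}) \oplus \Sigma_4(a_{s,81}) \oplus \Sigma_4(a_{s,102}).$$
For the first three summands the argument of Lemma \ref{bd1}(iii) and the treatment of $\Sigma_4(a_{2,35})$, $\Sigma_4(a_{2,81})$ in the proof of Proposition \ref{md2} transports verbatim (with only the exponents adjusted to the present value of $s$), yielding the $\Sigma_4$-invariants $[q_{s,3}]_{\omega_{(s,2)}}$, $[\widehat{q_{s,3}}]_{\omega_{(s,2)}}$, $[\widehat{q_{s,4}}]_{\omega_{(s,2)}}$, where $\widehat{q_{s,3}} = \sum_{35\leq j\leq 40}a_{s,j}$ and $\widehat{q_{s,4}} = \sum_{81\leq j\leq 84}a_{s,j}$. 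For the fourth summand one verifies directly, by applying Cartan's formula to straighten each $\sigma_i(a_{s,102})$ for $i = 1, 2, 3$ and then discarding the terms of strictly smaller weight vector via Theorem \ref{dlSi}, that $[a_{s,102}]_{\omega_{(s,2)}}$ is itself $\Sigma_4$-fixed, so $\Sigma_4(a_{s,102})^{\Sigma_4} = \langle [a_{s,102}]_{\omega_{(s,2)}}\rangle$.

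Putting these together, any $G(4)$-invariant $[h]_{\omega_{(s,2)}}$ admits an expansion
$$ h \equiv_{\omega_{(s,2)}} \alpha_1 q_{s,3} + \alpha_2 \widehat{q_{s,3}} + \alpha_3 \widehat{q_{s,4}} + \alpha_4 a_{s,102}, \qquad \alpha_i \in k.$$
The final step, which I expect to be the principal obstacle, is to compute $\sigma_4(h) + h$ modulo $(P_4)^{<\omega_{(s,2)}}_{n_s} + (\overline{A}(P_4)_{n_s}\cap (P_4)^{\omega_{(s,2)}}_{n_s})$ and extract enough independent linear constraints on the $\alpha_i$ to pin down the invariant subspace. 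The substitution $\sigma_4(x_1) = x_1 + x_2$ produces a long list of inadmissible monomials of weight $\omega_{(s,2)}$, each of which must be reduced to an admissible representative using the hit relations for $P_4$ of Sum \cite{N.S} (modeled on the sample reductions in \eqref{dt5} and \eqref{dt8}); the trick will be to identify a small set of admissible monomials, one carefully chosen from each of the four orbits, on which the coefficient can be read off cleanly. The key qualitative difference from the $s = 2$ setting of Proposition \ref{md2} is that the orbit of $a_{s,102}$, absent for $s = 2$, must close the chain of relations back into the orbit of $a_{s,25}$ (via the $x_2$ tail of $\sigma_4(x_1) = x_1 + x_2$ acting on the $x_1^7$ factor of $a_{s,102}$) rather than terminating it. One then expects the system of coefficient equations to reduce to
$$ \alpha_1 + \alpha_2 \equiv 0, \qquad \alpha_2 + \alpha_3 \equiv 0, \qquad \alpha_3 + \alpha_4 \equiv 0, \qquad \alpha_1 + \alpha_4 \equiv 0,$$
whose unique nonzero solution is $\alpha_1 = \alpha_2 = \alpha_3 = \alpha_4 = 1$, giving $h \equiv_{\omega_{(s,2)}} \zeta_s^{*}$ and hence the one-dimensional space claimed. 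If instead the computation produces a further independent equation of the form $\alpha_4 \equiv 0$ (as happens in the $s = 2$ case through the monomial $a_{2,83}$), then the invariants would collapse to $0$, so the precise bookkeeping of the coefficients coming out of $\sigma_4(a_{s,102})$ is where all the real work lies.
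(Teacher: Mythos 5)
Your proposal follows essentially the same route as the paper: decompose $(\pmb{Q}^{4}_{n_s})^{\omega_{(s,2)}}$ into the four $\Sigma_4$-orbits $\Sigma_4(a_{s,25})$, $\Sigma_4(a_{s,35})$, $\Sigma_4(a_{s,81})$, $\Sigma_4(a_{s,102})$ (the paper phrases this via the $\underline{\pmb{Q}^4_{n_s}}\oplus\overline{\pmb{Q}^4_{n_s}}$ split, but it is the same decomposition), obtain the four $\Sigma_4$-invariants $[q_{s,3}]$, $[q_{s,4}]$, $[q_{s,5}]$, $[a_{s,102}]$, and then use $\sigma_4$ to extract the coefficient relations. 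The paper's explicit computation yields $(\beta_1+\beta_2)a_{s,25}+(\beta_2+\beta_3)(a_{s,36}+a_{s,37})+(\beta_3+\beta_4)a_{s,83}\equiv_{\omega_{(s,2)}}0$, which forces $\beta_1=\beta_2=\beta_3=\beta_4$ exactly as you anticipated, and in particular the coefficient of $a_{s,83}$ is $\beta_3+\beta_4$ rather than $\beta_3$ alone precisely because of the contribution from $\sigma_4(a_{s,102})$—the key difference from the $s=2$ case that you correctly flagged.
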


\begin{proof}[{\it Outline of the proof}]

We will give the sketch of proof of ii). The proofs of i) use similar idea. 

We have an isomorphism of the $k$-vector spaces
$$ (\pmb{Q}^{4}_{n_s})^{\omega_{(s, 2)}}  \cong (\underline{\pmb{Q}^{4}_{n_s}})^{\omega_{(s, 2)}} \bigoplus (\overline{\pmb{Q}^{4}_{n_s}})^{\omega_{(s, 2)}},$$
where $$ \begin{array}{ll}
\medskip
(\underline{\pmb{Q}^{4}_{n_s}})^{\omega_{(s, 2)}} &= \langle \{[a_{s,\, j}]_{\omega_{(s, 2)}}:\, 25\leq j\leq 28\}\rangle,\\
 (\overline{\pmb{Q}^{4}_{n_s}})^{\omega_{(s, 2)}} &= \langle \{[a_{s,\, j}]_{\omega_{(s, 2)}}:\, j = 35, \ldots, 40, 81, \ldots, 84, 102\} \rangle.
\end{array}
$$
Then, by Lemma \ref{bd1}, $$ ((\underline{\pmb{Q}^{4}_{n_s}})^{\omega_{(s, 2)}})^{\Sigma_4} = \langle  [q_{s,\, 3}]_{\omega_{(s, 2)}} \rangle.$$
Now, if $[f]\in ((\overline{\pmb{Q}^{4}_{n_s}})^{\omega_{(s, 2)}})^{\Sigma_4},$ then we have $$f\equiv_{\omega_{(s, 2)}} \big(\sum_{35\leq j\leq 40} \gamma_ja_{s,\, j} + \sum_{81\leq j\leq 84} \gamma_ja_{s,\, j} + \gamma_{102}a_{s,\, 102}\big)$$ in which $\gamma_j\in k.$ So, by a simple computation using the relations $\sigma_i(f)\equiv_{\omega_{(s, 2)}} f$ for $i\in \{1, 2, 3\},$ where $\sigma_i: P_4\longrightarrow P_4,$ we get $\gamma_{35} = \gamma_{36} = \cdots = \gamma_{40},$ and $\gamma_{81} = \gamma_{82} = \gamma_{83} = \gamma_{84}.$ This leads to $$ ((\overline{\pmb{Q}^{4}_{n_s}})^{\omega_{(s, 2)}})^{\Sigma_4} = \langle  [q_{s,\, 4}]_{\omega_{(s, 2)}},\, [q_{s,\, 5}]_{\omega_{(s, 5)}}, [a_{s,\, 102}]_{\omega_{(s, 2)}}  \rangle,$$
with $q_{s,\, 4}:= \sum_{35\leq j\leq 40}a_{s,\, j}$ and $q_{s,\, 5}:= \sum_{81\leq j\leq 84}a_{s,\, j}.$

Let $h\in (P_4)_{n_s}$ such that $[h]\in ((\pmb{Q}^{4}_{n_s})^{\omega_{(s, 2)}})^{G(4)}.$ Since $\Sigma_4\subset G(4),$ from the above calculations, we have
$$ h\equiv_{\omega_{(s, 2)}} \big( \beta_1q_{s,\, 3} + \beta_2q_{s,\, 4} + \beta_3q_{s,\, 5} +\beta_4a_{s,\, 102}\big),$$
with $\beta_i\in k$ for every $i.$ Direct computing from the relation $(\sigma_4(h) + h)\equiv 0,$ we obtain
$$ \begin{array}{ll}
 (\sigma_4(h) + h) &\equiv_{\omega_{(s, 2)}}  \big((\beta_1 + \beta_2)a_{s,\, 25} + (\beta_2 + \beta_3)(a_{s,\, 36} + a_{s,\, 37})  + (\beta_3 + \beta_4)a_{s,\, 83}\big) \equiv 0,
\end{array}$$
and this is apparently that $\beta_1 = \beta_2 = \beta_3 = \beta_4.$ The proposition is proved.
\end{proof}

%The following inequality is immediate from Proposition \ref{md3}. 
%$$ 
 %\dim k\otimes_{G(4)}\mathscr {P}_A((P_4)_{n_s}^{*}) \leq \dim ((\pmb{Q}^{4}_{n_s})^{\omega_{(s, 1)}})^{G(4)} + \dim ((\pmb{Q}^{4}_{n_s})^{\omega_{(s, s)}})^{G(4)} = 2.$$
A direct calculation using Proposition \ref{md3} and the homomorphisms $\sigma_i: P_4 \longrightarrow P_4$ for $1 \leq i \leq 4$ shows that
 $(\pmb{Q}^{4}_{n_s})^{G(4)} = \langle [\widetilde{\zeta_s} + \zeta_s^{*}] \rangle,$ where $\widetilde{\zeta_s} = \sum_{1\leq j\leq 24}a_{s,\, j} +  \sum_{29\leq j\leq 34}a_{s,\, j} + \sum_{41\leq j\leq 80}a_{s,\, j} +  \sum_{85\leq j\leq 105,\, j\neq 102 }a_{s,\, j}.$ On the other hand, as indicated in the introduction, we can verify that the element $$ \zeta_s = a_2^{(2^{s+1}-1)}a_3^{(2^{s+1}-1)}a_4^{(2^{s+1}-1)}\in (P_4)^{*}_{3(2^{s} -1) + 3.2^{s}}\in \mathscr {P}_A((P_4)_{n_s}^{*})$$ and that the cycle $\psi_4(\zeta_s) = \lambda_0\lambda_{s+1}^{3}$ in $\Lambda$ is a representative of $h_0h_{s+1}^{3}\in {\rm Ext}_A^{4, 6.2^{s}  +1}(k, k).$ Hence, $h_0h_{s+1}^{3}$ is in the image of $Tr_4^{A}.$ Moreover, by Theorem \ref{dlntg}, we have $ {\rm Ext}_A^{4, 6.2^{s}  +1}(k, k) = \langle h_0h_{s+1}^{3}\rangle $ with $h_0h_{s+1}^{3} = h_0h_s^{2}h_{s+2}\neq 0$ for any $s\geq 3.$ From these data, the coinvariant  $k\otimes_{G(4)}\mathscr {P}_A((P_4)_{n_s}^{*})$ is $1$-dimensional. So, the theorem follows from the facts that $ (\pmb{Q}^{4}_{n_s})^{G(4)} = \langle [\widetilde{\zeta_s} + \zeta_s^{*}]\rangle$ and  $<\zeta_s,  \widetilde{\zeta_s} +  \zeta_s^*>= 1.$ 
\end{proof}

\subsubsection*{Proof of Theorem \ref{dlc2}  (The case $t = 2$ and $s\geq 1$)}

Let \( n_s := 2^{s+3} + 2^{s+1} - 3 \). By \cite{N.S}, the space \( \pmb{Q}^{4}_{n_s} \) has dimension 150, and its basis consists of the classes represented by the following admissible monomials:

For all $s\geq 1,$

\begin{tabular}{lrr}
$b_{s,\,1}= x_2^{2^{s+1}-1}x_3^{2^{s+2}-1}x_4^{2^{s+2}-1}$, & \multicolumn{1}{l}{$b_{s,\,2}= x_2^{2^{s+2}-1}x_3^{2^{s+1}-1}x_4^{2^{s+2}-1}$,} & \multicolumn{1}{l}{$b_{s,\,3}= x_2^{2^{s+2}-1}x_3^{2^{s+2}-1}x_4^{2^{s+1}-1}$,} \\
$b_{s,\,4}= x_1^{2^{s+1}-1}x_3^{2^{s+2}-1}x_4^{2^{s+2}-1}$, & \multicolumn{1}{l}{$b_{s,\,5}= x_1^{2^{s+2}-1}x_3^{2^{s+1}-1}x_4^{2^{s+2}-1}$,} & \multicolumn{1}{l}{$b_{s,\,6}= x_1^{2^{s+2}-1}x_3^{2^{s+2}-1}x_4^{2^{s+1}-1}$,} \\
$b_{s,\,7}= x_1^{2^{s+1}-1}x_2^{2^{s+2}-1}x_4^{2^{s+2}-1}$, & \multicolumn{1}{l}{$b_{s,\,8}= x_1^{2^{s+2}-1}x_2^{2^{s+1}-1}x_4^{2^{s+2}-1}$,} & \multicolumn{1}{l}{$b_{s,\,9}= x_1^{2^{s+2}-1}x_2^{2^{s+2}-1}x_4^{2^{s+1}-1}$,} \\
$b_{s,\,10}= x_1^{2^{s+1}-1}x_2^{2^{s+2}-1}x_3^{2^{s+2}-1}$, & \multicolumn{1}{l}{$b_{s,\,11}= x_1^{2^{s+2}-1}x_2^{2^{s+1}-1}x_3^{2^{s+2}-1}$,} & \multicolumn{1}{l}{$b_{s,\,12}= x_1^{2^{s+2}-1}x_2^{2^{s+2}-1}x_3^{2^{s+1}-1}$,} \\
$b_{s,\,13}= x_1^{2^{s}-1}x_2^{2^{s}-1}x_3^{2^{s+3}-1}$, & \multicolumn{1}{l}{$b_{s,\,14}= x_1^{2^{s}-1}x_2^{2^{s+3}-1}x_3^{2^{s}-1}$,} & \multicolumn{1}{l}{$b_{s,\,15}= x_1^{2^{s+3}-1}x_2^{2^{s}-1}x_3^{2^{s}-1}$,} \\
$b_{s,\,16}= x_1^{2^{s}-1}x_3^{2^{s}-1}x_4^{2^{s+3}-1}$, & \multicolumn{1}{l}{$b_{s,\,17}= x_1^{2^{s}-1}x_3^{2^{s+3}-1}x_4^{2^{s}-1}$,} & \multicolumn{1}{l}{$b_{s,\,18}= x_1^{2^{s}-1}x_2^{2^{s}-1}x_4^{2^{s+3}-1}$,} \\
$b_{s,\,19}= x_1^{2^{s}-1}x_2^{2^{s}-1}x_3^{2^{s+3}-1}$, & \multicolumn{1}{l}{$b_{s,\,20}= x_1^{2^{s}-1}x_2^{2^{s+3}-1}x_4^{2^{s}-1}$,} & \multicolumn{1}{l}{$b_{s,\,21}= x_1^{2^{s}-1}x_2^{2^{s+3}-1}x_3^{2^{s}-1}$,} \\
$b_{s,\,22}= x_1^{2^{s+3}-1}x_3^{2^{s}-1}x_4^{2^{s}-1}$, & \multicolumn{1}{l}{$b_{s,\,23}= x_1^{2^{s+3}-1}x_2^{2^{s}-1}x_4^{2^{s}-1}$,} & \multicolumn{1}{l}{$b_{s,\,24}= x_1^{2^{s+3}-1}x_2^{2^{s}-1}x_3^{2^{s}-1}$,} \\
$b_{s,\,25}= x_2^{2^{s}-1}x_3^{2^{s+1}-1}x_4^{7.2^{s}-1}$, & \multicolumn{1}{l}{$b_{s,\,26}= x_2^{2^{s+1}-1}x_3^{2^{s}-1}x_4^{7.2^{s}-1}$,} & \multicolumn{1}{l}{$b_{s,\,27}= x_2^{2^{s+1}-1}x_3^{7.2^{s}-1}x_4^{2^{s}-1}$,} \\
$b_{s,\,28}= x_1^{2^{s}-1}x_3^{2^{s+1}-1}x_4^{7.2^{s}-1}$, & \multicolumn{1}{l}{$b_{s,\,29}= x_1^{2^{s}-1}x_2^{2^{s+1}-1}x_4^{7.2^{s}-1}$,} & \multicolumn{1}{l}{$b_{s,\,30}= x_1^{2^{s}-1}x_2^{2^{s+1}-1}x_3^{7.2^{s}-1}$,} \\
$b_{s,\,31}= x_1^{2^{s+1}-1}x_3^{2^{s}-1}x_4^{7.2^{s}-1}$, & \multicolumn{1}{l}{$b_{s,\,32}= x_1^{2^{s+1}-1}x_3^{7.2^{s}-1}x_4^{2^{s}-1}$,} & \multicolumn{1}{l}{$b_{s,\,33}= x_1^{2^{s+1}-1}x_2^{2^{s}-1}x_4^{7.2^{s}-1}$,} \\
$b_{s,\,34}= x_1^{2^{s+1}-1}x_2^{2^{s}-1}x_3^{7.2^{s}-1}$, & \multicolumn{1}{l}{$b_{s,\,35}= x_1^{2^{s+1}-1}x_2^{7.2^{s}-1}x_4^{2^{s}-1}$,} & \multicolumn{1}{l}{$b_{s,\,36}= x_1^{2^{s+1}-1}x_2^{7.2^{s}-1}x_3^{2^{s}-1}$,} \\
$b_{s,\,37}= x_2^{2^{s+1}-1}x_3^{3.2^{s}-1}x_4^{5.2^{s}-1}$, & \multicolumn{1}{l}{$b_{s,\,38}=  x_1^{2^{s+1}-1}x_3^{3.2^{s}-1}x_4^{5.2^{s}-1}$,} & \multicolumn{1}{l}{$b_{s,\,39}=  x_1^{2^{s+1}-1}x_2^{3.2^{s}-1}x_4^{5.2^{s}-1}$,} \\
$b_{s,\,40}=  x_1^{2^{s+1}-1}x_2^{3.2^{s}-1}x_3^{5.2^{s}-1}$. &       &  
\end{tabular}%

For $s\geq 1,$

\begin{center}
\begin{longtable}{ll}
$b_{s,\,41}= x_1x_2^{2^{s}-1}x_3^{2^{s}-1}x_4^{2^{s+3}-2}$, & $b_{s,\,42}= x_1x_2^{2^{s}-1}x_3^{2^{s+3}-2}x_4^{2^{s}-1}$, \\
$b_{s,\,43}= x_1x_2^{2^{s+3}-2}x_3^{2^{s}-1}x_4^{2^{s}-1}$, & $b_{s,\,44}= x_1x_2^{2^{s}-1}x_3^{2^{s+1}-2}x_4^{7.2^{s}-1}$, \\
$b_{s,\,45}= x_1x_2^{2^{s+1}-2}x_3^{2^{s}-1}x_4^{7.2^{s}-1}$, & $b_{s,\,46}= x_1x_2^{2^{s+1}-2}x_3^{7.2^{s}-1}x_4^{2^{s}-1}$, \\
$b_{s,\,47}= x_1x_2^{2^{s+1}-2}x_3^{3.2^{s}-1}x_4^{5.2^{s}-1}$, & $b_{s,\,48}= x_1x_2^{2^{s+1}-2}x_3^{2^{s+2}-1}x_4^{2^{s+2}-1}$, \\
$b_{s,\,49}= x_1x_2^{2^{s+2}-1}x_3^{2^{s+1}-2}x_4^{2^{s+2}-1}$, & $b_{s,\,50}= x_1x_2^{2^{s+2}-1}x_3^{2^{s+2}-1}x_4^{2^{s+1}-2}$, \\
$b_{s,\,51}= x_1^{2^{s+2}-1}x_2x_3^{2^{s+1}-2}x_4^{2^{s+2}-1}$, & $b_{s,\,52}= x_1^{2^{s+2}-1}x_2x_3^{2^{s+2}-1}x_4^{2^{s+1}-2}$, \\
$b_{s,\,53}= x_1^{2^{s+2}-1}x_2^{2^{s+2}-1}x_3x_4^{2^{s+1}-2}$, & $b_{s,\,54}= x_1x_2^{2^{s+1}-1}x_3^{2^{s+2}-2}x_4^{2^{s+2}-1}$, \\
$b_{s,\,55}= x_1x_2^{2^{s+1}-1}x_3^{2^{s+2}-1}x_4^{2^{s+2}-2}$, & $b_{s,\,56}= x_1x_2^{2^{s+2}-2}x_3^{2^{s+1}-1}x_4^{2^{s+2}-1}$, \\
$b_{s,\,57}= x_1x_2^{2^{s+2}-2}x_3^{2^{s+2}-1}x_4^{2^{s+1}-1}$, & $b_{s,\,58}= x_1x_2^{2^{s+2}-1}x_3^{2^{s+1}-1}x_4^{2^{s+2}-2}$, \\
$b_{s,\,59}= x_1x_2^{2^{s+2}-1}x_3^{2^{s+2}-2}x_4^{2^{s+1}-1}$, & $b_{s,\,60}= x_1^{2^{s+1}-1}x_2x_3^{2^{s+2}-2}x_4^{2^{s+2}-1}$, \\
$b_{s,\,61}= x_1^{2^{s+1}-1}x_2x_3^{2^{s+2}-1}x_4^{2^{s+2}-2}$, & $b_{s,\,62}= x_1^{2^{s+1}-1}x_2^{2^{s+2}-1}x_3x_4^{2^{s+2}-2}$, \\
$b_{s,\,63}= x_1^{2^{s+2}-1}x_2x_3^{2^{s+1}-1}x_4^{2^{s+2}-2}$, & $b_{s,\,64}= x_1^{2^{s+2}-1}x_2x_3^{2^{s+2}-2}x_4^{2^{s+1}-1}$, \\
$b_{s,\,65}= x_1^{2^{s+2}-1}x_2^{2^{s+1}-1}x_3x_4^{2^{s+2}-2}$, & $b_{s,\,66}= x_1^{3}x_2^{2^{s+2}-3}x_3^{2^{s+1}-2}x_4^{2^{s+2}-1}$, \\
$b_{s,\,67}= x_1^{3}x_2^{2^{s+2}-3}x_3^{2^{s+2}-1}x_4^{2^{s+1}-2}$, & $b_{s,\,68}= x_1^{3}x_2^{2^{s+2}-1}x_3^{2^{s+2}-3}x_4^{2^{s+1}-2}$, \\
$b_{s,\,69}= x_1^{2^{s+2}-1}x_2^{3}x_3^{2^{s+2}-3}x_4^{2^{s+1}-2}$, & $b_{s,\,70}= x_1^{3}x_2^{2^{s+1}-1}x_3^{2^{s+2}-3}x_4^{2^{s+2}-2}$, \\
$b_{s,\,71}= x_1^{3}x_2^{2^{s+2}-3}x_3^{2^{s+1}-1}x_4^{2^{s+2}-2}$, & $b_{s,\,72}= x_1^{3}x_2^{2^{s+2}-3}x_3^{2^{s+2}-2}x_4^{2^{s+1}-1}$.
\end{longtable}%
\end{center}

For $s = 1,$

\begin{center}
\begin{tabular}{lll}
$b_{1,\,73}= x_1^{3}x_2^{3}x_3^{4}x_4^{7}$, & $b_{1,\,74}= x_1^{3}x_2^{3}x_3^{7}x_4^{4}$, & $b_{1,\,75}= x_1^{3}x_2^{7}x_3^{3}x_4^{4}$, \\
$b_{1,\,76}= x_1^{7}x_2^{3}x_3^{3}x_4^{4}$, & $b_{1,\,77}= x_1x_2x_3^{3}x_4^{12}$, & $b_{1,\,78}= x_1x_2^{3}x_3x_4^{12}$, \\
$b_{1,\,79}= x_1x_2^{3}x_3^{12}x_4$, & $b_{1,\,80}= x_1^{3}x_2x_3x_4^{12}$, & $b_{1,\,81}= x_1^{3}x_2x_3^{12}x_4$, \\
$b_{1,\,82}= x_1x_2^{3}x_3^{4}x_4^{9}$, & $b_{1,\,83}= x_1^{3}x_2x_3^{4}x_4^{9}$, & $b_{1,\,84}= x_1x_2^{3}x_3^{5}x_4^{8}$, \\
$b_{1,\,85}= x_1^{3}x_2x_3^{5}x_4^{8}$, & $b_{1,\,86}= x_1^{3}x_2^{5}x_3x_4^{8}$, & $b_{1,\,87}= x_1^{3}x_2^{5}x_3^{8}x_4$.
\end{tabular}%
\end{center}

For $s\geq 2,$

\begin{center}
\begin{tabular}{ll}
$b_{s,\,73}= x_1^{2^{s+1}-1}x_2^{3}x_3^{2^{s+2}-3}x_4^{2^{s+2}-2}$, & $b_{s,\,74}= x_1^{3}x_2^{2^{s+1}-3}x_3^{2^{s+2}-2}x_4^{2^{s+2}-1}$, \\
$b_{s,\,75}= x_1^{3}x_2^{2^{s+1}-3}x_3^{2^{s+2}-1}x_4^{2^{s+2}-2}$, & $b_{s,\,76}= x_1^{3}x_2^{2^{s+2}-1}x_3^{2^{s+1}-3}x_4^{2^{s+2}-2}$, \\
$b_{s,\,77}= x_1^{2^{s+2}-1}x_2^{3}x_3^{2^{s+1}-3}x_4^{2^{s+2}-2}$, & $b_{s,\,78}= x_1^{7}x_2^{2^{s+2}-5}x_3^{2^{s+1}-3}x_4^{2^{s+2}-2}$, \\
$b_{s,\,79}= x_1^{7}x_2^{2^{s+2}-5}x_3^{2^{s+2}-3}x_4^{2^{s+1}-2}$, & $b_{s,\,80}= x_1^{7}x_2^{2^{s+1}-5}x_3^{2^{s+2}-3}x_4^{2^{s+2}-2}$, \\
$b_{s,\,81}= x_1x_2^{2^{s}-2}x_3^{2^{s}-1}x_4^{2^{s+3}-1}$, & $b_{s,\,82}= x_1x_2^{2^{s}-2}x_3^{2^{s+3}-1}x_4^{2^{s}-1}$, \\
$b_{s,\,83}= x_1x_2^{2^{s}-1}x_3^{2^{s}-2}x_4^{2^{s+3}-1}$, & $b_{s,\,84}= x_1x_2^{2^{s}-1}x_3^{2^{s+3}-1}x_4^{2^{s}-2}$, \\
$b_{s,\,85}= x_1x_2^{2^{s+3}-1}x_3^{2^{s}-2}x_4^{2^{s}-1}$, & $b_{s,\,86}= x_1x_2^{2^{s+3}-1}x_3^{2^{s}-1}x_4^{2^{s}-2}$, \\
$b_{s,\,87}= x_1^{2^{s}-1}x_2x_3^{2^{s}-2}x_4^{2^{s+3}-1}$, & $b_{s,\,88}= x_1^{2^{s}-1}x_2x_3^{2^{s+3}-1}x_4^{2^{s}-2}$, \\
$b_{s,\,89}= x_1^{2^{s}-1}x_2^{2^{s+3}-1}x_3x_4^{2^{s}-2}$, & $b_{s,\,90}= x_1^{2^{s+3}-1}x_2x_3^{2^{s}-2}x_4^{2^{s}-1}$, \\
$b_{s,\,91}= x_1^{2^{s+3}-1}x_2x_3^{2^{s}-1}x_4^{2^{s}-2}$, & $b_{s,\,92}= x_1^{2^{s+3}-1}x_2^{2^{s}-1}x_3x_4^{2^{s}-2}$, \\
$b_{s,\,93}= x_1^{2^{s}-1}x_2x_3^{2^{s}-1}x_4^{2^{s+3}-2}$, & $b_{s,\,94}= x_1^{2^{s}-1}x_2x_3^{2^{s+3}-2}x_4^{2^{s}-1}$, \\
$b_{s,\,95}= x_1^{2^{s}-1}x_2^{2^{s}-1}x_3x_4^{2^{s+3}-2}$, & $b_{s,\,96}= x_1x_2^{2^{s}-2}x_3^{2^{s+1}-1}x_4^{7.2^{s}-1}$, \\
$b_{s,\,97}= x_1x_2^{2^{s+1}-1}x_3^{2^{s}-2}x_4^{7.2^{s}-1}$, & $b_{s,\,98}= x_1x_2^{2^{s+1}-1}x_3^{7.2^{s}-1}x_4^{2^{s}-2}$, \\
$b_{s,\,99}= x_1^{2^{s+1}-1}x_2x_3^{2^{s}-2}x_4^{7.2^{s}-1}$, & $b_{s,\,100}= x_1^{2^{s+1}-1}x_2x_3^{7.2^{s}-1}x_4^{2^{s}-2}$, \\
$b_{s,\,101}= x_1^{2^{s+1}-1}x_2^{7.2^{s}-1}x_3x_4^{2^{s}-2}$, & $b_{s,\,102}= x_1^{2^{s}-1}x_2x_3^{2^{s+1}-2}x_4^{7.2^{s}-1}$, \\
$b_{s,\,103}= x_1x_2^{2^{s}-1}x_3^{2^{s+1}-1}x_4^{7.2^{s}-2}$, & $b_{s,\,104}= x_1x_2^{2^{s+1}-1}x_3^{2^{s}-1}x_4^{7.2^{s}-2}$, \\
$b_{s,\,105}= x_1x_2^{2^{s+1}-1}x_3^{7.2^{s}-2}x_4^{2^{s}-1}$, & $b_{s,\,106}= x_1^{2^{s}-1}x_2x_3^{2^{s+1}-1}x_4^{7.2^{s}-2}$, \\
$b_{s,\,107}= x_1^{2^{s}-1}x_2^{2^{s+1}-1}x_3x_4^{7.2^{s}-2}$, & $b_{s,\,108}= x_1^{2^{s+1}-1}x_2x_3^{2^{s}-1}x_4^{7.2^{s}-2}$, \\
$b_{s,\,109}= x_1^{2^{s+1}-1}x_2x_3^{7.2^{s}-2}x_4^{2^{s}-1}$, & $b_{s,\,110}= x_1^{2^{s+1}-1}x_2^{2^{s}-1}x_3x_4^{7.2^{s}-2}$, \\
$b_{s,\,111}= x_1x_2^{2^{s+1}-1}x_3^{3.2^{s}-2}x_4^{5.2^{s}-1}$, & $b_{s,\,112}= x_1^{2^{s+1}-1}x_2x_3^{3.2^{s}-2}x_4^{5.2^{s}-1}$, \\
$b_{s,\,115}= x_1^{2^{s+1}-1}x_2^{3.2^{s}-1}x_3x_4^{5.2^{s}-2}$, & $b_{s,\,116}= x_1^{3}x_2^{2^{s+1}-3}x_3^{2^{s}-2}x_4^{7.2^{s}-1}$, \\
$b_{s,\,117}= x_1^{3}x_2^{2^{s+1}-3}x_3^{7.2^{s}-1}x_4^{2^{s}-2}$, & $b_{s,\,118}= x_1^{3}x_2^{2^{s+1}-1}x_3^{7.2^{s}-3}x_4^{2^{s}-2}$, \\
$b_{s,\,119}= x_1^{2^{s+1}-1}x_2^{3}x_3^{7.2^{s}-3}x_4^{2^{s}-2}$, & $b_{s,\,120}= x_1^{3}x_2^{2^{s+1}-3}x_3^{3.2^{s}-2}x_4^{5.2^{s}-1}$, \\
$b_{s,\,121}= x_1^{3}x_2^{2^{s+1}-3}x_3^{3.2^{s}-1}x_4^{5.2^{s}-2}$, & $b_{s,\,122}= x_1^{3}x_2^{2^{s+1}-1}x_3^{3.2^{s}-3}x_4^{5.2^{s}-2}$, \\
$b_{s,\,123}= x_1^{2^{s+1}-1}x_2^{3}x_3^{3.2^{s}-3}x_4^{5.2^{s}-2}$, & $b_{s,\,124}= x_1^{3}x_2^{2^{s}-1}x_3^{2^{s+3}-3}x_4^{2^{s}-2}$, \\
$b_{s,\,125}= x_1^{3}x_2^{2^{s+3}-3}x_3^{2^{s}-2}x_4^{2^{s}-1}$, & $b_{s,\,126}= x_1^{3}x_2^{2^{s+3}-3}x_3^{2^{s}-1}x_4^{2^{s}-2}$, \\
$b_{s,\,127}= x_1^{3}x_2^{2^{s}-1}x_3^{2^{s+1}-3}x_4^{7.2^{s}-2}$, & $b_{s,\,128}= x_1^{3}x_2^{2^{s+1}-3}x_3^{2^{s}-1}x_4^{7.2^{s}-2}$.
\end{tabular}%
\end{center}

\newpage
For $s =2,$

\begin{center}
\begin{tabular}{lrc}
$b_{2,\,129}= x_1^{3}x_2^{3}x_3^{3}x_4^{28}$, & \multicolumn{1}{l}{$b_{2,\,130}= x_1^{3}x_2^{3}x_3^{28}x_4^{3}$,} & $b_{2,\,131}= x_1^{3}x_2^{3}x_3^{4}x_4^{27}$, \\
$b_{2,\,132}= x_1^{3}x_2^{3}x_3^{7}x_4^{24}$, & \multicolumn{1}{l}{$b_{2,\,133}= x_1^{3}x_2^{7}x_3^{3}x_4^{24}$,} & $b_{2,\,134}= x_1^{7}x_2^{3}x_3^{3}x_4^{24}$, \\
$b_{2,\,135}= x_1^{7}x_2^{7}x_3^{9}x_4^{14}$. &       &  
\end{tabular}%
\end{center}

For $s\geq 3,$

\begin{center}
\begin{longtable}{lr}
$b_{s,\,129}= x_1^{3}x_2^{2^{s+1}-3}x_3^{7.2^{s}-2}x_4^{2^{s}-1}$, & \multicolumn{1}{l}{$b_{s,\,130}= x_1^{2^{s}-1}x_2^{3}x_3^{2^{s+1}-3}x_4^{7.2^{s}-2}$,} \\
$b_{s,\,131}= x_1^{2^{s}-1}x_2^{3}x_3^{2^{s+3}-3}x_4^{2^{s}-2}$, & \multicolumn{1}{l}{$b_{s,\,132}= x_1^{3}x_2^{2^{s}-3}x_3^{2^{s}-2}x_4^{2^{s+3}-1}$,} \\
$b_{s,\,133}= x_1^{3}x_2^{2^{s}-3}x_3^{2^{s+3}-1}x_4^{2^{s}-2}$, & \multicolumn{1}{l}{$b_{s,\,134}= x_1^{3}x_2^{2^{s+3}-1}x_3^{2^{s}-3}x_4^{2^{s}-2}$,} \\
$b_{s,\,135}= x_1^{2^{s+3}-1}x_2^{3}x_3^{2^{s}-3}x_4^{2^{s}-2}$, & \multicolumn{1}{l}{$b_{s,\,136}= x_1^{3}x_2^{2^{s}-3}x_3^{2^{s}-1}x_4^{2^{s+3}-2}$,} \\
$b_{s,\,137}= x_1^{3}x_2^{2^{s}-3}x_3^{2^{s+3}-2}x_4^{2^{s}-1}$, & \multicolumn{1}{l}{$b_{s,\,138}= x_1^{3}x_2^{2^{s}-1}x_3^{2^{s}-3}x_4^{2^{s+3}-2}$,} \\
$b_{s,\,139}= x_1^{2^{s}-1}x_2^{3}x_3^{2^{s}-3}x_4^{2^{s+3}-2}$, & \multicolumn{1}{l}{$b_{s,\,140}= x_1^{3}x_2^{2^{s}-3}x_3^{2^{s+1}-2}x_4^{7.2^{s}-1}$,} \\
$b_{s,\,141}= x_1^{3}x_2^{2^{s}-3}x_3^{2^{s+1}-1}x_4^{7.2^{s}-2}$, & \multicolumn{1}{l}{$b_{s,\,142}= x_1^{3}x_2^{2^{s+1}-1}x_3^{2^{s}-3}x_4^{7.2^{s}-2}$,} \\
$b_{s,\,143}= x_1^{2^{s+1}-1}x_2^{3}x_3^{2^{s}-3}x_4^{7.2^{s}-2}$, & \multicolumn{1}{l}{$b_{s,\,144}= x_1^{7}x_2^{2^{s+3}-5}x_3^{2^{s}-3}x_4^{2^{s}-2}$,} \\
$b_{s,\,145}= x_1^{7}x_2^{2^{s+1}-5}x_3^{2^{s}-3}x_4^{7.2^{s}-2}$, & \multicolumn{1}{l}{$b_{s,\,146}= x_1^{7}x_2^{2^{s+1}-5}x_3^{3.2^{s}-3}x_4^{5.2^{s}-2}$,} \\
$b_{s,\,147}= x_1^{7}x_2^{2^{s+1}-5}x_3^{7.2^{s}-3}x_4^{2^{s}-2}$. &  
\end{longtable}%
\end{center}

For $s = 3,$

\begin{center}
\begin{tabular}{lll}
$b_{3,\,148}= x_1^{7}x_2^{7}x_3^{7}x_4^{56}$, & $b_{3,\,149}= x_1^{7}x_2^{7}x_3^{9}x_4^{54}$, & $b_{3,\,150}= x_1^{7}x_2^{7}x_3^{57}x_4^{6}$.
\end{tabular}
\end{center}%

For $s\geq 4,$

\begin{center}
\begin{tabular}{lll}
$b_{s,\,148}= x_1^{7}x_2^{2^{s}-5}x_3^{2^{s}-3}x_4^{2^{s+3}-2}$, & $b_{s,\,149}= x_1^{7}x_2^{2^{s}-5}x_3^{2^{s+1}-3}x_4^{7.2^{s}-2}$, & $b_{s,\,150}= x_1^{7}x_2^{2^{s}-5}x_3^{2^{s+3}-3}x_4^{2^{s}-2}$.
\end{tabular}%
\end{center}

In order to prove the theorem, we first have the following.

\begin{md}\label{mdbsc2}
For a positive integer $s,$ the dimension of the invariant space $(\pmb{Q}^{4}_{n_s})^{G(4)}$ is determined as follows:
$$ (\pmb{Q}^{4}_{n_s})^{G(4)} = \left\{\begin{array}{ll}
%1}&\mbox{if $s\geq 1,\ s\neq 2,$}}\\
%0}&\mbox{if $s = 2.$}}
\langle [\widetilde{\zeta}] \rangle &\mbox{for $s = 1$},\\
0&\mbox{if $s = 2,$}\\
\langle [\widetilde{\zeta'}] \rangle&\mbox{if $s = 3,$}\\
\langle [\zeta_s] \rangle&\mbox{if $s \geq 4,$}\\
%0} &\mbox{if $s  =2,$}}\\
%\langle [\widetilde{\zeta^{(*)}_s}] \rangle &\mbox{if $s  = 3$}},\\
%\langle [\widetilde{\zeta_s}] \rangle &\mbox{if $s \geq 4$}},\\
\end{array}\right.$$
where $$ \begin{array}{ll}
\widetilde{\zeta}&=
 %x_1x_2x_3x_4^{14}+
 %x_1x_2x_3^{14}x_4+
% x_1x_2^{3}x_3x_4^{12}+
%\medskip
 %x_1x_2^{3}x_3^{12}x_4\\
%&\quad +
 %x_1^{3}x_2x_3x_4^{12}+
 %x_1^{3}x_2x_3^{12}x_4+
 %x_1^{3}x_2^{5}x_3x_4^{8}+
 %x_1^{3}x_2^{5}x_3^{8}x_4\\
b_{1,\, 41} + b_{1,\, 42} + b_{1,\, 78} + b_{1,\, 79}+b_{1,\, 80}+b_{1,\, 81}  + b_{1,\, 86} + b_{1,\, 87},\\
\widetilde{\zeta'}&= \sum_{13\leq j\leq 40}b_{3,\, j} + \sum_{42\leq j\leq 47}b_{3,\, j} + \sum_{81\leq j\leq 92}b_{3,\, j}\\
&\quad\quad\quad\quad\quad  +\sum_{94\leq j\leq 150,\ j\not\in\{95,\, 124,\, 127,\, 130,\, 131,\, 136,\, 138,\, 139\}}b_{3,\, j},\\
\zeta_s& =\sum_{13\leq j\leq 47}b_{s,\, j} + \sum_{81\leq j\leq 150}b_{s,\, j}.
\end{array}$$
\end{md}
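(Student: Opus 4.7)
The plan is to imitate the strategy from the proof of Theorem \ref{dlc1}, now applied to Sum's admissible basis $\{b_{s,j}\}$ for $\pmb{Q}^{4}_{n_s}$ displayed above. First I would decompose according to weight vectors: every basis monomial $b_{s,j}$ has weight vector equal to one of two candidates $\omega_{(s,1)},\omega_{(s,2)}$ of degree $n_s$ (determined by the dyadic expansions of its exponents, as in the discussion preceding Proposition~\ref{md2}), yielding
\[
\pmb{Q}^{4}_{n_s} \cong (\pmb{Q}^{4}_{n_s})^{\omega_{(s,1)}} \bigoplus (\pmb{Q}^{4}_{n_s})^{\omega_{(s,2)}},
\]
and within each weight summand a further splitting $(\pmb{Q}^{4}_{n_s})^{\omega_{(s,i)}}\cong (\underline{\pmb{Q}^{4}_{n_s}})^{\omega_{(s,i)}}\oplus (\overline{\pmb{Q}^{4}_{n_s}})^{\omega_{(s,i)}}$. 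This cuts the problem into manageable $\Sigma_4$-stable pieces.

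Next, for each piece I would partition the basis into $\Sigma_4$-orbits under the action generated by $\sigma_1,\sigma_2,\sigma_3$. On each orbit I would write a generic combination $f\equiv_{\omega} \sum_j \gamma_j b_{s,j}$, apply $\sigma_i$ for $i=1,2,3$, and rewrite the (often inadmissible) images in terms of the admissible basis modulo $\overline{A}(P_4)_{n_s}$ using the Cartan formula together with Theorem \ref{dlSi} (which kills monomials whose weight vector is strictly less than $\omega$). Imposing $\sigma_i(f)\equiv_{\omega} f$ for $i=1,2,3$ yields a linear system whose solution, as in Lemmas \ref{bd1} and \ref{bd2}, typically forces the $\gamma_j$ to be constant on each orbit, producing explicit orbit-sum generators $q_{s,t}$ for $((\pmb{Q}^{4}_{n_s})^{\omega_{(s,i)}})^{\Sigma_4}$.

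Finally, I would impose the remaining generator $\sigma_4$ of $G(4)$. For each $\Sigma_4$-invariant candidate $h\equiv_{\omega}\sum_t \xi_t q_{s,t}$, compute $\sigma_4(h)+h$, again reducing inadmissible terms via Cartan and Theorem \ref{dlSi}, and read off the condition $\sigma_4(h)+h\equiv 0$ as a linear system in the $\xi_t$. The case split $s=1,2,3,\,s\geq 4$ is forced upon us because Sum's admissible basis changes with $s$: for $s=1$ the monomials $b_{1,73},\ldots,b_{1,87}$ differ in shape from the generic $s$-pattern, and similarly $b_{2,129},\ldots,b_{2,135}$ and $b_{3,148},b_{3,149},b_{3,150}$ are exceptional. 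Solving the corresponding systems should yield exactly one invariant for $s=1$ (namely $\widetilde{\zeta}$), zero for $s=2$, one for $s=3$ (namely $\widetilde{\zeta'}$), and one for each $s\geq 4$ (namely $\zeta_s$), matching the claim.

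The main obstacle will be the bookkeeping volume: up to 150 admissible basis monomials, many $\Sigma_4$-orbits, and an even larger number of inadmissible images to reduce after applying $\sigma_i$ and especially $\sigma_4$. A more delicate difficulty is the boundary behaviour in the small cases $s\in\{1,2,3\}$: the exceptional monomials break the clean orbit structure visible for $s\geq 4$, and one must confirm that the $\sigma_4$-relations conspire in these cases to (i) annihilate all coefficients when $s=2$, and (ii) produce the precise asymmetric combinations $\widetilde{\zeta}$ and $\widetilde{\zeta'}$ when $s=1,3$. To control the linear algebra and guard against sign or indexing errors in the Cartan-formula reductions, independent verification via the computer algorithm of \cite[Appendix]{Phuc0} will be essential, particularly to cross-check the reduction of each inadmissible intermediate monomial to Sum's admissible basis.
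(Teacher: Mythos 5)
Your plan mirrors the paper's proof of Proposition~\ref{mdbsc2} step for step: decompose $\pmb{Q}^{4}_{n_s}$ into $\underline{\pmb{Q}^{4}_{n_s}}\oplus\overline{\pmb{Q}^{4}_{n_s}}$ and then by the two weight vectors $\omega_{(s,1)},\omega_{(s,2)}$ (for $s=1$ the paper's split $\overline{\pmb{Q}^{4}_{n_1}}=\mathcal M_1\oplus\mathcal M_2$ is exactly this weight-vector split), compute $\Sigma_4$-invariants orbit by orbit via Cartan-formula reductions and Theorem~\ref{dlSi} (this is the content of Lemmata~\ref{bdc21}, \ref{bdc22}, \ref{bdc23}), and finally impose $\sigma_4$ to extract the $G(4)$-invariants, with the case split on $s$ forced by the exceptional admissible monomials in Sum's basis. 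This is precisely the argument the paper carries out, so the proposal is correct and takes the same route.
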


\begin{proof}

We prove the lemma by combining manual calculations and our algorithm in \cite{Phuc3}.
%The proof of the proposition is based on an admissible monomial basis of the $k$-vector space $\pmb{Q}^{4}_{n_s}.$ 
We have an isomorphism of $k$-vector spaces: $ \pmb{Q}^{4}_{n_s} \cong \underline{\pmb{Q}^{4}_{n_s}}\bigoplus \overline{\pmb{Q}^{4}_{n_s}}.$ Recall that $\underline{\pmb{Q}^{4}_{n_s}} = \langle \{[b_{s,\, j}]:\, 1\leq j\leq 40 \}\rangle,$ for all $s\geq 1.$ Then, we have a direct summand decomposition of $\Sigma_4$-modules:
$$ \underline{\pmb{Q}^{4}_{n_s}} = \Sigma_4(b_{s,\, 1}) \bigoplus \Sigma_4(b_{s,\, 13}) \bigoplus \Sigma_4(b_{s,\, 25}) \bigoplus \Sigma_4(b_{s,\, 37}),$$
where $$ \begin{array}{ll}
\medskip
\Sigma_4(b_{s,\, 1}) &= \langle \{[b_{s,\, j}]:\, 1\leq j\leq 12\} \rangle;\ \ \ \Sigma_4(b_{s,\, 13}) = \langle \{[b_{s,\, j}]:\, 13\leq j\leq 24\} \rangle \\
\Sigma_4(b_{s,\, 25}) &= \langle \{[b_{s,\, j}]:\, 25\leq j\leq 36\} \rangle;\ \ \ \Sigma_4(b_{s,\, 37}) = \langle \{[b_{s,\, j}]:\, 37\leq j\leq 40\} \rangle.
\end{array}$$
Denote the bases of $\Sigma_4(b_{s,\, j})$ by the sets $[\mathscr B(b_{s,\, j})]$ for $j \in \{1, 13, 25, 37\},$ where $\mathscr B(b_{s,\, j})  = \{b_{s,\, j}\}.$ Suppose that $f\in (P_4)_{n_s}$ such that $[f]\in \Sigma_4(b_{s,\, j})^{\Sigma_4}$. Then, $f\equiv \sum_{x \in \mathscr B(b_{s,\, j})}\gamma_x.x$ in which $\gamma_x\in k.$ By a direct computation, we can see that the action of $\Sigma_4$ on $\underline{\pmb{Q}^{4}_{n_s}}$ induces the one of it on $[\mathscr B(b_{s,\, j})]$ and this action is transitive. So, $\gamma_x = \gamma_{x'} = \gamma\in k$ for all $x,\, x'\in \mathscr B(b_{s,\, j}),$ and therefore, the spaces of $\Sigma_4$-invariants $\Sigma_4(b_{s,\, j})^{\Sigma_4}$ are determined as follows.

\begin{lema}\label{bdc21}

\begin{itemize}
\item[i)] $\Sigma_4(b_{s,\, 1})^{\Sigma_4} = \langle [q_{s,\, 1}] \rangle$ with $q_{s,\, 1}:= \sum_{1\leq j\leq 12}b_{s,\, j};$

\item[ii)] $\Sigma_4(b_{s,\, 13})^{\Sigma_4} = \langle [q_{s,\, 2}] \rangle$ with $q_{s,\, 2}:= \sum_{13\leq j\leq 24}b_{s,\, j};$

\item[iii)] $\Sigma_4(b_{s,\, 25})^{\Sigma_4} = \langle [q_{s,\, 3}] \rangle$ with $q_{s,\, 3}:= \sum_{25\leq j\leq 36}b_{s,\, j};$

\item[iv)] $\Sigma_4(b_{s,\, 37})^{\Sigma_4} = \langle [q_{s,\, 4}] \rangle$ with $q_{s,\, 4}:= \sum_{37\leq j\leq 40}b_{s,\, j}.$

\end{itemize}
\end{lema}

\begin{rem} \label{re4.1.10}
Based upon the bases of the spaces $\pmb{Q}^{4}_{n_s}$ above, $ \omega(b_{s,\, j})$ is one of the following sequences: 
$$  \omega_{(s, 1)}:=   \underset{\mbox{{$(s+1)$ times of $3$}}}{\underbrace{(3, 3, \ldots, 3},2)},\ \ \omega_{(s, 2)}:=   \underset{\mbox{{$s$ times of $3$}}}{\underbrace{(3, 3, \ldots, 3},1,1,1)}.$$
Moreover, since $ \omega_{(s, 2)}$ is the weight vector of the minimal spike $b_{s,\, 15},$ $[x]_{ \omega_{(s, 2)}} = [x]$ for all monomials $x$ in $(P_4)_{n_s}.$
\end{rem}

\underline{Case $s =1$}. From the admissible basis of $ \overline{\pmb{Q}^{4}_{n_1}}$ above, we have a direct summand decomposition of $\Sigma_4$-modules: $\overline{\pmb{Q}^{4}_{n_1}} = \mathcal M_1\bigoplus \mathcal M_2,$ where $\mathcal M_1 = \langle \{[b_{1,\, j}]:\, j = 41, \ldots, 47, 77, \ldots, 87 \} \rangle$ and $\mathcal M_2 = \langle \{[b_{1,\, j}]:\, j = 48, \ldots, 76\} \rangle.$ The below technical claim is pivotal for our further considerations.

\begin{lema}\label{bdc22}
The following assertions are true:
\begin{itemize}

\item[i)] $\mathcal M_1^{\Sigma_4} = \langle [\widetilde{\zeta}] \rangle$ with $\widetilde{\zeta} = b_{1,\, 41} + b_{1,\, 42} + b_{1,\, 78} + b_{1,\, 79}+b_{1,\, 80}+b_{1,\, 81}  + b_{1,\, 86} + b_{1,\, 87};$

\item[ii)] $\mathcal M_2^{\Sigma_4} = 0.$

\end{itemize}
\end{lema}

\begin{proof}
We prove i) in detail. By a similar technique, we also get ii). Suppose that $f\in (P_4)_{n_1}$ such that $[f]\in \mathcal M_1^{\Sigma_4}.$ Then, we have $$f\equiv \big(\sum_{41\leq j\leq 47}\gamma_jb_{1,\, j} + \sum
_{77\leq j\leq 87}\gamma_jb_{1,\, j}\big).$$
Direct calculating $\sigma_i(f)$ in terms of $b_{1,\, j},\, j\in \{41, \ldots, 47, 77, \ldots, 87\}$ modulo ($\overline{A}(P_4)_{n_1}$) and using the relations $(\sigma_i(f) + f) \equiv 0,$ for $i\in \{1, 2, 3\},$ we get
$$ \begin{array}{ll}
\medskip
(\sigma_1(f) + f)&\equiv \big(\gamma_{45}b_{1,\, 41} +  \gamma_{45}b_{1,\, 46} + (\gamma_{45} + \gamma_{47})b_{1,\, 44} + (\gamma_{46} + \gamma_{47})b_{1,\, 77}\\
\medskip
&\quad + (\gamma_{43} + \gamma_{78} + \gamma_{80})b_{1,\, 78}  + (\gamma_{43} + \gamma_{79} + \gamma_{81})b_{1,\, 79}\\
\medskip
&\quad +(\gamma_{82} + \gamma_{83})b_{1,\, 82} + (\gamma_{84} + \gamma_{85})b_{1,\, 84} + \gamma_{43}b_{1,\, 86} + \mbox{others terms}\big)\equiv 0,\\
\medskip
(\sigma_2(f) + f)&\equiv \big(\gamma_{43}b_{1,\, 41} + (\gamma_{42} + \gamma_{43} + \gamma_{87})b_{1,\, 42} +  (\gamma_{42} + \gamma_{43} + \gamma_{81})b_{1,\, 43}\\
\medskip
&\quad + (\gamma_{44} + \gamma_{45} + \gamma_{83})b_{1,\, 44} + (\gamma_{46} + \gamma_{79} + \gamma_{87})b_{1,\, 46} + (\gamma_{47} + \gamma_{82})b_{1,\, 47}\\
\medskip
&\quad +(\gamma_{77} + \gamma_{78} + \gamma_{87})b_{1,\, 77} + (\gamma_{77} + \gamma_{78} + \gamma_{81})b_{1,\, 78} + (\gamma_{46} + \gamma_{79} + \gamma_{81})b_{1,\, 79}\\
\medskip
&\quad + \gamma_{83}b_{1,\, 80} + (\gamma_{81} + \gamma_{87})b_{1,\, 81} + (\gamma_{81} + \gamma_{87})b_{1,\, 81} + (\gamma_{85} +\gamma_{86}+  \gamma_{87})b_{1,\, 85}\\
\medskip
&\quad + (\gamma_{81} + \gamma_{85} + \gamma_{86})b_{1,\, 86} + \mbox{others terms}\big)\equiv 0,\\
 \medskip
(\sigma_3(f) + f)&\equiv \big((\gamma_{41} + \gamma_{42})b_{1,\, 41} + (\gamma_{44} + \gamma_{47})b_{1,\, 44} + (\gamma_{45} + \gamma_{46})b_{1,\, 45} + (\gamma_{78} + \gamma_{79})b_{1,\, 78}\\
\medskip
&\quad + (\gamma_{80} + \gamma_{81})b_{1,\, 80} + (\gamma_{82} + \gamma_{84})b_{1,\, 82} + (\gamma_{83} + \gamma_{85})b_{1,\, 83}\\
&\quad + (\gamma_{86} + \gamma_{87})b_{1,\, 86} + \mbox{others terms}\big)\equiv 0.
\end{array}$$
Thus $\gamma_{41} = \gamma_{42} = \gamma_{78} = \cdots  = \gamma_{81} = \gamma_{86} = \gamma_{87}$ and $\gamma_j = 0$ with $j\not\in \{41, 42, 78,79,80, 81, 86, 87\}.$ The lemma follows.
\end{proof}

Now , assume that $[h]\in (\pmb{Q}^{4}_{n_1})^{G(4)},$ then since $\Sigma_4\subset G(4),$ by Lemmas \ref{bdc21} and \ref{bdc22}, we have
$$ h\equiv \big(\beta_1q_{2,2} + \beta_2q_{2,3} + \beta_3q_{2,4} + \beta_4\widetilde{\zeta}\big),$$
where $\beta_i\in k$ for every $i.$ Computing $\sigma_4(h)$ in terms of $b_{1,\, j},\, 1\leq i\leq 87$ modulo ($\overline{A}(P_4)_{n_1}$) and using the relation $(\sigma_4(f) + f) \equiv 0,$ we obtain
$$ (\sigma_4(f) + f) \equiv \big(\beta_1b_{1,\, 13} + (\beta_1+\beta_2)b_{1,\, 20} + (\beta_2+\beta_3)b_{1,\, 31} + \mbox{others terms}\big)\equiv 0.$$
This leads to $\beta_1 = \beta_2 = \beta_3 = 0.$ 

\medskip

\underline{Case $s\geq 2.$} For these cases, we find that
\begin{align}
\dim \left( \left( \pmb{Q}^{4}_{n_s} \right)^{\omega_{(s, 1)}} \right)^{G(4)} &= 0, && \text{if } s \geq 2, \tag{i} \label{dt01} \\
\dim \left( \left( \pmb{Q}^{4}_{n_s} \right)^{\omega_{(s, 2)}} \right)^{G(4)} &= 0, && \text{if } s = 2, \tag{ii} \label{dt02} \\
\dim \left( \left( \pmb{Q}^{4}_{n_s} \right)^{\omega_{(s, 2)}} \right)^{G(4)} &= 1, && \text{if } s \geq 3. \tag{iii} \label{dt03}
\end{align}
Moreover, $$((\pmb{Q}^{4}_{n_s})^{\omega_{(s, 2)}})^{G(4)} = \langle [\widetilde{\zeta'}] \rangle \ \mbox{for}\ s = 3,$$ and $$((\pmb{Q}^{4}_{n_s})^{\omega_{(s, 2)}})^{G(4)} = \langle [\zeta_s] \rangle\ \mbox{for any}\ s\geq 4.$$
Since $ \omega_{(s, 2)}$ is the weight vector of the minimal spike $b_{s,\, 15},$ $[x]_{ \omega_{(s, 2)}} = [x]$ for all monomials $x$ in $(P_4)_{n_s},$ by Remark \ref{re4.1.10}, 
\begin{equation}\tag{*}\label{bdt1}
 \dim ( \pmb{Q}^{4}_{n_s})^{G(4)}\leq \dim ((\pmb{Q}^{4}_{n_s})^{\omega_{(s, 1)}})^{G(4)} + \dim ((\pmb{Q}^{4}_{n_s})^{\omega_{(s, 2)}})^{G(4)}.
\end{equation}
Combining \eqref{bdt1} and \eqref{dt01}, \eqref{dt02},  the proposition holds for $s = 2.$

For $s = 3$, it follows from \eqref{bdt1}, \eqref{dt01} and \eqref{dt03} that
$$
\dim \left( \pmb{Q}^{4}_{n_s} \right)^{G(4)} \leq 1 \quad \text{for all } s \geq 3.
$$
Since $\omega_{(s, 2)}$ corresponds to the weight vector of the minimal spike, the local $G(4)$-invariant element in $\left( \left( \pmb{Q}^{4}_{n_s} \right)^{\omega_{(s, 2)}} \right)^{G(4)}$ is also a global $G(4)$-invariant in $\left( \pmb{Q}^{4}_{n_s} \right)^{G(4)}$. Therefore, the proposition holds for all $s \geq 3.$

Thus, to complete the proof of the proposition, we need to prove the results in \eqref{dt01}, \eqref{dt02}, and \eqref{dt03}. The result in \eqref{dt01} is a straightforward calculation. We now proceed to prove the results stated in \eqref{dt02} and \eqref{dt03}.

We have $(\underline{\pmb{Q}^{4}_{n_s}})^{\omega_{(s, 2)}}= \langle \{[b_{s, j}]:\, 13\leq j\leq 40\}\rangle,$ for any $s\geq 2,$
$$ \begin{array}{ll}
\medskip
(\overline{\pmb{Q}^{4}_{n_s}})^{\omega_{(s, 2)}} = \langle \{[b_{s, j}]:\, 41\leq j\leq 47, 81, 82, \ldots, 135 \}\rangle,\ \mbox{for $s = 2$},\\
(\overline{\pmb{Q}^{4}_{n_s}})^{\omega_{(s, 2)}} = \langle \{[b_{s, j}]:\, 41\leq j\leq 47, 81, 82, \ldots, 150 \}\rangle,\ \mbox{for $s \geq 3.$}
\end{array} $$
Consequently, by using our algorithm in \cite{Phuc3}, we have a direct summand decomposition of $\Sigma_4$-modules:
$$ \begin{array}{ll}
\medskip
 (\underline{\pmb{Q}^{4}_{n_s}})^{\omega_{(s, 2)}} = \Sigma_4(b_{s,\, 13})\oplus  \Sigma_4(b_{s,\, 25})\oplus  \Sigma_4(b_{s,\, 37}), \mbox{for $s\geq 2,$}\\
\medskip
(\overline{\pmb{Q}^{4}_{n_s}})^{\omega_{(s, 2)}} =  \Sigma_4(b_{s,\, 81})\oplus  \Sigma_4(b_{s,\, 93})\oplus \mathcal L,\ \mbox{for $s = 2$},\\
(\overline{\pmb{Q}^{4}_{n_s}})^{\omega_{(s, 2)}} =  \Sigma_4(b_{s,\, 81})\oplus  \Sigma_4(b_{s,\, 93})\oplus  \Sigma_4(b_{s,\, 130})\oplus \mathcal M_{(s)},\ \mbox{for $s\geq 3,$}
\end{array}$$
where
$$ \begin{array}{ll}
\medskip
\Sigma_4(b_{s,\, 13}) &= \langle \{[b_{s,\, j}]:\, 13\leq j\leq 24 \} \rangle, \ \ \Sigma_4(b_{s,\, 25}) = \langle \{[b_{s,\, j}]:\, 25\leq j\leq 36 \} \rangle,\\
\medskip
  \Sigma_4(b_{s,\, 37}) &= \langle \{[b_{s,\, j}]:\, 37\leq j\leq 40 \} \rangle,\ \ \Sigma_4(b_{s,\, 81}) = \langle \{[b_{s,\, j}]:\, 81\leq j\leq 92 \} \rangle, \\
\medskip
  \Sigma_4(b_{s,\, 93}) &= \langle \{[b_{s,\, j}]:\, 93\leq j\leq 129 \} \rangle,\ \ \Sigma_4(b_{s,\, 130}) = \langle \{[b_{s,\, j}]:\, 130\leq j\leq 133 \} \rangle,\\
\medskip
\mathcal L &= \langle \{[b_{s,\, j}]:\, j  = 41,42,\ldots, 47, 99, 100, \ldots, 135\} \rangle,\\
\mathcal M_{(s)} &= \langle \{[b_{s,\, j}]:\, j  = 41,42,\ldots, 47, 99, 100, \ldots, 129, 134, 135, \ldots, 150\} \rangle,\ \mbox{for $s \geq 3.$}
\end{array}$$

By Lemma \ref{bdc21}(ii), (iii), (iv), we get
$$   ((\underline{\pmb{Q}^{4}_{n_s}})^{\omega_{(s, 2)}})^{\Sigma_4} = \langle [q_{s, i}]:\, 2\leq i\leq 4 \rangle,\ \mbox{for any $s\geq 2.$}$$ 
By continuing to use our algorithm from \cite{Phuc3}, we obtain:
$$ \begin{array}{ll}
\medskip
 \Sigma_4(b_{s,\, 81})^{\Sigma_4} = \langle [q_{s, 5}: = \sum_{81\leq j\leq 92}b_{s,\, j}] \rangle, \ \mbox{for $s\geq 2,$}\\
\medskip
 \Sigma_4(b_{s,\, 93})^{\Sigma_4} = \langle [q_{s, 6}: = \sum_{93\leq j\leq 129}b_{s,\, j}] \rangle\ \mbox{for $s\geq 2,$}\\
\medskip
 \Sigma_4(b_{s,\, 130})^{\Sigma_4} = \langle [q_{s, 7}: = \sum_{130\leq j\leq 133}b_{s,\, j}] \rangle\ \mbox{for $s\geq 3,$}\\
\medskip
\mathcal L^{\Sigma_4} = \langle \{[q_{2, i}]:\, 7\leq i\leq 10\} \rangle, \\
\mathcal M^{\Sigma_4}_{(s)} = \langle \{[q_{s, i}]:\, 8\leq i\leq 13,\, [q_{s, 14} = b_{s, 146}]\} \rangle,\ \mbox{for $s\geq 3,$}
\end{array}$$
where
$$ \begin{array}{ll}
q_{2, 7} = \sum_{i\in \mathcal I_1}b_{2,\, j},\ \mathcal I_1 = \{41,42,43, 123, 124, 127\},\\
q_{2, 8} = \sum_{i\in \mathcal I_2}b_{2,\, j},\ \mathcal I_2 = \{41,42,\ldots, 46, 99, 100, 103, 104, 105, 108, 109, 125, 126, 127, 128, 129 \},\\
q_{2, 9} = \sum_{i\in \mathcal I_3}b_{2,\, j},\\
\mathcal I_3 = \{42, 45, 46, 99, 102, 105, 106, 107, 109, 110, 117, 118, 119, 120, 121, 130, 131, 132, 133, 134, 135\},\\
q_{2, 10} = \sum_{i\in \mathcal I_4}b_{2,\, j},\ \mathcal I_4 = \{41, 42, \ldots, 47, 99, 100, 101, 102, 103, 104, 105, 106, 107, 108, 109, 110\},\\
\end{array}
$$

\[
\renewcommand{\arraystretch}{2}
\begin{array}{|c|l|l|}
\hline
i & \text{If } s = 3 & \text{If } s > 3 \\
\hline
8 & q_{s,i} = \sum_{j \in \mathcal I_5} b_{3,j} 
  & q_{s,i} = \sum_{j \in \mathcal I_6} b_{s,j} \\
\hline
9 & q_{s,i} = \sum_{j \in \mathcal I_7} b_{3,j} 
  & q_{s,i} = \sum_{j \in \mathcal I_8}  b_{s,j} \\
\hline
10 & q_{s,i} = \sum_{j \in \mathcal I_9} b_{3,j} 
   & q_{s i} = \sum_{j \in \mathcal I_{10}} b_{s,j} \\
\hline
11 & q_{s,i} = \sum_{j \in \mathcal I_{11}} b_{3,j}
   & q_{s, i} = \sum_{j \in \mathcal I_{12}} b_{s,j} \\
\hline
12 & \multicolumn{2}{c|}{q_{s,i} = \sum_{j \in \mathcal I_{13}} b_{s,j}} \\
\hline
13 & q_{s,i} = \sum_{j \in \mathcal I_{14}} b_{3,j}
    & q_{s,i} = \sum_{j \in \mathcal I_{15}} b_{s,j} \\
\hline
\end{array}
\]
where
$$ \begin{array}{ll}
\medskip
\mathcal I_5 = \{41, 42, 43, 99, 100, 123, 124, 134, 135, 149\},\\
\medskip
\mathcal I_6=\{144, 145, 147, 148, 149, 150\},\\
\medskip
\mathcal I_7=\{116, 117, 118, 123, 124, 127, 134, 135, 136, 143, 144, 145\},\\
\medskip
 \mathcal I_8=\{116, 117, 118, 123, 124, 127, 134, 135, 136, 137, 142, 143, 144, 145, 149, 150\},\\
\medskip
\mathcal I_9=\{42, 43, 100, 101, 117, 118, 134, 137, 142, 147, 148, 150\},\\
\medskip
 \mathcal I_{10}=\{41, 42, 43, 99, 100, 101, 123, 124, 127, 134, 135, 136, 137, 143, 144, 148\},\\
\medskip
\mathcal I_{11}=\{ 116, 117, ..., 128, 129, 134, 135, ..., 142, 143\},\\ 
\medskip
 \mathcal I_{12}=\{119, 120, 121, 122, 125, 126, 128, 129, 138, 139, 140, 141, 144, 145, 149, 150\},\\
\medskip
\mathcal I_{13}=\{44, 45, 46, 102, ..., 110, 123, ..., 129, 134, ..., 141, 143\},\\
\medskip
 \mathcal I_{14}=\{41, 42, 43, 47, 99, 100, 101, 111, 112, 113, 114, 115, 123, 124, 125, 126, 127, 128, \\
\quad\quad\quad\quad\quad 129, 134, 135, 136, 137, 138, 139, 140, 141, 143\},\\
 \mathcal I_{15}=\{47, 111, ..., 115, 125, 126, 128, 129, 138, ..., 141, 144, 148\}.
\end{array}$$
These results can be obtained through manual computations using the bases of $(\underline{\pmb{Q}^{4}_{n_s}})^{\omega{(s, 2)}}$ and $(\overline{\pmb{Q}^{4}_{n_s}})^{\omega{(s, 2)}}$, together with the homomorphisms $\sigma_i: P_4 \longrightarrow P_4$, for $1 \leq i \leq 3$.

Let now $[g]\in ((\pmb{Q}^{4}_{n_s})^{\omega{(s, 2)}})^{G(4)},$ then 
$$ g\equiv \left\{\begin{array}{ll}
\medskip
\sum_{2\leq i\leq 10}\gamma_i q_{s,\, i}, &\mbox{for $s  =2$},\\
\sum_{2\leq i\leq 14}\gamma_i q_{s,\, i}, &\mbox{for $s   > 2.$}
\end{array}\right.$$
Applying the homomorphism $\sigma_4: P_4\longrightarrow P_4$ and the relations $\sigma_4(g)  +g\equiv 0,$ we obtain
$$ \gamma_i =\left\{\begin{array}{ll}
\medskip
0, &\mbox{for $s  =2$ and for any $i,\, 2\leq i\leq 10,$}\\
 \gamma_2 &\mbox{for all $s   > 2$ and $i = 2,3, \ldots, 14.$ }
\end{array}\right.$$
This shows that $\dim ((\pmb{Q}^{4}_{n_s})^{\omega{(s, 2)}}))^{G(4)} = 0$ for $s = 2$ and $\dim ((\pmb{Q}^{4}_{n_s})^{\omega{(s, 2)}}))^{G(4)} =1$ for all $s > 2.$ The proof of the proposition is complete.

\end{proof}

\begin{rem}\label{nx1}

By using our algorithm from~\cite{Phuc3}, it suffices to work with the cases $s = 2, 3, 4$. First, the basis of $(\underline{\bm{Q}^{4}_{n_s}})^{\omega_{(s, 2)}}$ is represented by 38 admissible monomials $b_{s, j}$, with $13 \leq j \leq 40$, for all $s \geq 2$. This implies that the monomials, with exponents parameterized by $s$, follow the same pattern for any $s$. Therefore, it is enough to consider the case $s = 2$ when working with $(\underline{\bm{Q}^{4}_{n_s}})^{\omega_{(s, 2)}}$.

Likewise, $(\underline{\bm{Q}^{4}_{n_s}})^{\omega_{(s, 2)}}$ is  represented by 67 admissible monomials $b_{s, j}$, where $41\leq j\leq 47$ or and $81 \leq j \leq 150.$ For $s = 2$ and $s = 3$, the monomials involve powers of the variables that are finite in terms of $s$. For $s > 3$, we observe that the final list includes the following three monomials:
\[
\begin{aligned}
b_{s,\,148} &= x_1^{7}x_2^{2^{s}-5}x_3^{2^{s}-3}x_4^{2^{s+3}-2},\\
b_{s,\,149} &= x_1^{7}x_2^{2^{s}-5}x_3^{2^{s+1}-3}x_4^{7\cdot 2^{s}-2},\\
b_{s,\,150} &= x_1^{7}x_2^{2^{s}-5}x_3^{2^{s+3}-3}x_4^{2^{s}-2},
\end{aligned}
\]
and that the exponents depending on $s$ follow a consistent pattern. Therefore, in this case, it is sufficient to work with $s = 4$, corresponding to degree $n_4 = 128$.

In summary, to compute the above results using our algorithm~\cite{Phuc3}, it is enough to consider $s = 2, 3, 4$, which correspond to degrees $n_2 = 37$, $n_3 = 77$, and $n_4 = 128$. Once the explicit results are obtained, we can then match the monomial patterns and write down the general form as described above. 
%For the reader's convenience, in the appendix we illustrate this for the case of degree 37, which also serves to verify the result of Proposition~\ref{md1} in the case $s = 2$.
\end{rem}

%{\color{red}
Now, combining Proposition \ref{mdbsc2} with the fact that $<\zeta, \widetilde{\zeta}>  =1,\ <a_2^{(7)}a_3^{(7)}a_4^{(63)}, \widetilde{\zeta'}>  =1$ and $<a_2^{(2^{s}-1)}a_3^{(2^{s}-1)}a_4^{(2^{s+3}-1)}, \zeta_s > = 1$ for all $s\geq 4,$ we obtain 
$$ k\otimes_{G(4)}\mathscr {P}_A((P_4)_{n_s}^{*}) = ((\pmb{Q}^{4}_{n_s})^{G(4)})^{*}= \left\{\begin{array}{ll}
\langle ([\widetilde{\zeta}])^*\rangle = \langle [\zeta] \rangle &\mbox{if $s = 1$},\\
0 &\mbox{if $s = 2$},\\
\langle ([\widetilde{\zeta'}])^*\rangle = \langle [a_2^{(7)}a_3^{(7)}a_4^{(63)}] \rangle &\mbox{if $s = 3$},\\
\langle ([\zeta_s])^*\rangle = \langle [a_2^{(2^{s}-1)}a_3^{(2^{s}-1)}a_4^{(2^{s+3}-1)}] \rangle &\mbox{if $s \geq 4.$}
\end{array}\right.$$
%}
The theorem \ref{dlc2} is proved. 

\subsubsection*{Proof of Theorem \ref{dlct}  (The case $t \geq 4$ and $s\geq 1$)}

We put $n_{s,t}:= 2^{s+t+1} +2^{s+1} - 3,$ then from a result in \cite{N.S}, $ \pmb{Q}^{4}_{n_{s, t}} \cong (\pmb{Q}^{4}_{n_{s,t}})^{\omega_{(s, t, 1)}}\bigoplus (\pmb{Q}^{4}_{n_{s,t}})^{\omega_{(s, t, 2)}},$ where $$\omega_{(s, t, 1)}:= \underset{\mbox{{$s$ times }}}{\underbrace{(3, 3, \ldots, 3},}\underset{\mbox{{$(t+1)$ times }}}{\underbrace{1, 1, \ldots, 1)}}, \ \mbox{and}\ \omega_{(s, t, 2)}:= \underset{\mbox{{$(s+1)$ times }}}{\underbrace{(3, 3, \ldots, 3},}\underset{\mbox{{$(t-1)$ times }}}{\underbrace{2, 2, \ldots, 2)}}.$$ By using an admissible basis for $(\pmb{Q}^{4}_{n_{s,t}})^{\omega_{(s, t, j)}}$ (see \cite{N.S}), we obtain

\begin{md}\label{mdt}
Let $s$ and $t$ be positive integers such that $t\geq 4.$ Then, 
$$ \dim ((\pmb{Q}^{4}_{n_{s,t}})^{\omega_{(s, t, j)}})^{G(4)} =\left\{\begin{array}{ll}
0&\mbox{if $j = 1$ and $s\in \{1, 2\}$},\\
1&\mbox{if $j = 2$ and $s\in \{1, 2\}$},\\
1&\mbox{if $j\in \{1, 2\}$ and $s\geq 3$}.
\end{array}\right.$$
\end{md}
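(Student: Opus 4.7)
The plan is to follow the template already used for Theorems \ref{dlc1} and \ref{dlc2} (Proposition \ref{mdbsc2}): first pin down $\Sigma_4$-invariants of $(\pmb{Q}^{4}_{n_{s,t}})^{\omega_{(s,t,j)}}$ by splitting it as a direct sum of $\Sigma_4$-submodules indexed by the orbits of $\Sigma_4$ on Sum's admissible monomial basis, and then cut down to $G(4)$-invariants by imposing the single additional relation $\sigma_4(f) \equiv_{\omega_{(s,t,j)}} f$. I would start by tabulating the admissible basis of $(\pmb{Q}^{4}_{n_{s,t}})^{\omega_{(s,t,j)}}$ from \cite{N.S}, split the set of admissible monomials into $\Sigma_4$-orbits, and for each orbit $\mathcal O$ write a general element $f \equiv_{\omega_{(s,t,j)}} \sum_{x\in \mathcal O}\gamma_x x$ with $\gamma_x\in k$.

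Next, for each of the generators $\sigma_1,\sigma_2,\sigma_3$ of $\Sigma_4$, I would compute $\sigma_i(f)$ monomial-by-monomial. A typical term $\sigma_i(x)$ is generally inadmissible, so I would rewrite each such term as a linear combination of admissible monomials modulo $\overline{A}(P_4)_{n_{s,t}}$, using the Cartan formula together with Theorem \ref{dlSi} to discard any summand whose weight vector is strictly less than the weight vector of the minimal spike of degree $n_{s,t}$ (this is exactly the step repeatedly invoked in the proofs of Lemmas \ref{bd2} and \ref{bdc22}). The relations $\sigma_i(f) \equiv_{\omega_{(s,t,j)}} f$ for $i=1,2,3$ then become a homogeneous linear system in the coefficients $\gamma_x$. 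Solving this system orbit-by-orbit yields an explicit basis of $((\pmb{Q}^{4}_{n_{s,t}})^{\omega_{(s,t,j)}})^{\Sigma_4}$ whose elements are sums of certain $\Sigma_4$-orbits of admissible monomials.

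With this $\Sigma_4$-invariant subspace in hand, I would then take an arbitrary element $g \equiv_{\omega_{(s,t,j)}} \sum_k \rho_k \widehat q_k$ inside it and impose $\sigma_4(g) \equiv_{\omega_{(s,t,j)}} g$. Expanding $\sigma_4(g)$ in the admissible basis (using $\sigma_4(x_1) = x_1 + x_2$ together with the Cartan formula) produces a second linear system in the $\rho_k$. Natural guesses for generating invariants come from the statements of Theorems \ref{dlctq1} and \ref{dlc3}: the duals of the cocycles $a_2^{(2^{s+1}-1)}a_3^{(2^{s+t}-1)}a_4^{(2^{s+t}-1)}$ (of weight $\omega_{(s,t,2)}$) and $a_2^{(2^{s}-1)}a_3^{(2^{s}-1)}a_4^{(2^{s+t+1}-1)}$ (of weight $\omega_{(s,t,1)}$, existing only when $s\geq 3$ since otherwise it lies in the admissible basis in a degenerate manner) supply explicit nonzero $G(4)$-invariants, matching the predicted dimensions; for $j = 1$ with $s\in\{1,2\}$ the system forces all $\rho_k = 0$ because of the constraints coming from the inadmissibility of powers forced by $\sigma_4$.

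The main obstacle is purely computational: the admissible basis of $(\pmb{Q}^{4}_{n_{s,t}})^{\omega_{(s,t,j)}}$ has many orbits (roughly, between $150$ and $210$ elements in total by \cite{N.S}), and for each inadmissible monomial appearing in $\sigma_i(x)$ one has to exhibit an explicit hit polynomial realizing its reduction to admissibles. As in Lemmas \ref{bdc22} and \ref{bdc23}, the simplification is achieved by combining the Cartan formula with Theorem \ref{dlSi} to kill all summands of weight vector below $\omega(b_{s,15})$ (or the relevant minimal spike for the generic degree), but the sheer size of the linear systems forces a mix of manual verification and computer-assisted check, exactly as acknowledged at the start of Section~\ref{s3}. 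Once the $\Sigma_4$-invariants are stabilized, the $\sigma_4$-relation typically cuts the dimension by an amount dictated by the number of ``crossover'' terms between orbits, and the dimensions $0,1,1$ in the three stated cases emerge from counting these crossovers.
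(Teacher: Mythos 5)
Your proposal follows the same template the paper itself uses: the paper explicitly defers the proof of Proposition \ref{mdt} to the method of Proposition \ref{md3} (decompose $(\pmb{Q}^{4}_{n_{s,t}})^{\omega_{(s,t,j)}}$ into $\Sigma_4$-orbit summands of Sum's admissible basis, solve the resulting linear system for $\Sigma_4$-invariants, then impose the extra $\sigma_4$-relation to cut down to $G(4)$-invariants), which is exactly what you describe. One small caveat: your parenthetical that $a_2^{(2^{s}-1)}a_3^{(2^{s}-1)}a_4^{(2^{s+t+1}-1)}$ ``lies in the admissible basis in a degenerate manner'' for $s<3$ is not the right explanation — the dual monomial $x_2^{2^s-1}x_3^{2^s-1}x_4^{2^{s+t+1}-1}$ is a spike and hence admissible for all $s\geq 1$; the vanishing of $((\pmb{Q}^{4}_{n_{s,t}})^{\omega_{(s,t,1)}})^{G(4)}$ for $s\in\{1,2\}$ is an output of the $\sigma_4$-constraint on the $\Sigma_4$-invariant subspace (cf.\ Propositions \ref{md1}, \ref{md2}), and the candidate generators you want to quote come from the statement of Theorem \ref{dlct} rather than Theorems \ref{dlctq1} and \ref{dlc3}.
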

Since the proof of the proposition is similar to that of Proposition \ref{md3}, we omit details here. On the other hand, as $\zeta_{s, t, 1} = a_2^{(2^{s+1}-1)}a_3^{(2^{s+t}-1)}a_4^{(2^{s+t}-1)}$ and $\zeta_{s, t, 2} = a_2^{(2^{s}-1)}a_3^{(2^{s}-1)}a_4^{(2^{s+t+1}-1)}$ belong to ${\rm Ext}_A^{0, n_{s,t}}(P_4, k),$ by Theorem \ref{dlCH}, $\zeta_{s, t, 1}$ and $\zeta_{s, t, 2}$ are representative of the non-zero elements $h_0h_{s+1}h_{s+t}^{2}$ and $h_0h_{s}^{2}h_{s+t+1}\in {\rm Ext}_A^{4, n_{s,t}+4}(k, k)$  respectively. Moreover, by Theorem \ref{dlntg}, the $k$-vector space ${\rm Ext}_A^{4, n_{s,t}+4}(k, k)$ has dimension $2$ for all $s > 0$ and $t > 3.$ In addition, $k\otimes_{G(4)}\mathscr {P}_A((P_4)_{n_{s, t}}^{*})  = \langle [\zeta_{s, t, 1}]\rangle$ for $s = 1, 2$ and $k\otimes_{G(4)}\mathscr {P}_A((P_4)_{n_{s, t}}^{*})  = \langle [\zeta_{s, t, 1}],\, [\zeta_{s, t, 2}]\rangle$ for all $s\geq 3.$ This straightforwardly follows from Proposition \ref{mdt}, which we leave to be proved by the interested reader. The theorem follows.

\subsection{Proving Theorems \ref{dlc3} and \ref{dlct2} under \text{\boldmath $n = 2^{s+t} +2^{s} - 2$}}\label{sub4.2}

\subsubsection*{Proof of Theorem \ref{dlc3}  (The case $t = 1$ and $s\geq 1$)}

%For simplicity, we prove the theorem for $s\geq 4.$ The others cases use a similar technique. We denote by $n_s = 2^{s+1} + 2^{s} - 2.$ 
Based upon a result in Sum \cite{N.S}, we have that
$$ {\rm Ker}[\overline{Sq}^{0}]_{n_s} = (\pmb{Q}^{4}_{n_s})^{\omega_{(s)}} = (\underline{\pmb{Q}^{4}_{n_s}})^{\omega_{(s)}}\bigoplus (\overline{\pmb{Q}^{4}_{n_s}})^{\omega_{(s)}},\ \ \omega_{(s)}:= \underset{\mbox{{$s$ times of $2$}}}{\underbrace{(2, 2, \ldots, 2},1)},$$
and that the admissible monomial basis of $(\underline{\pmb{Q}^{4}_{n_s}})^{\omega_{(s)}}$ is a set consisting of all classes represented by the following monomials:

%For $s = 1,$

%\begin{center}
%\begin{tabular}{llll}
%$c_{1,\,1}= x_3x_4^{3}$, & $c_{1,\,2}= x_3^{3}x_4$, & $c_{1,\,3}= x_2x_4^{3}$, & $c_{1,\,4}= x_2x_3^{3}$, \\
%$c_{1,\,5}= x_2^{3}x_4$, & $c_{1,\,6}= x_2^{3}x_3$, & $c_{1,\,7}= x_1x_4^{3}$, & $c_{1,\,8}= x_1x_3^{3}$, \\
%$c_{1,\,9}= x_1x_2^{3}$, & $c_{1,\,10}= x_1^{3}x_4$, & $c_{1,\,11}= x_1^{3}x_3$, & $c_{1,\,12}= x_1^{3}x_2$, \\
%$c_{1,\,13}= x_2x_3x_4^{2}$, & $c_{1,\,14}= x_2x_3^{2}x_4$, & $c_{1,\,15}= x_1x_3x_4^{2}$, & $c_{1,\,16}= x_1x_3^{2}x_4$, \\
%$c_{1,\,17}= x_1x_2x_4^{2}$, & $c_{1,\,18}= x_1x_2x_3^{2}$, & $c_{1,\,19}= x_1x_2^{2}x_4$, & $c_{1,\,20}= x_1x_2^{2}x_3$.
%\end{tabular}%
%\end{center}

For $s\geq 2,$

\begin{center}
\begin{tabular}{llr}
$c_{s,\,1}= x_2x_3^{2^{s}-2}x_4^{2^{s+1}-1}$, & $c_{s,\,2}= x_2x_3^{2^{s+1}-1}x_4^{2^{s}-2}$, & \multicolumn{1}{l}{$c_{s,\,3}= x_2^{2^{s+1}-1}x_3x_4^{2^{s}-2}$,} \\
$c_{s,\,4}= x_1x_3^{2^{s}-2}x_4^{2^{s+1}-1}$, & $c_{s,\,5}= x_1x_3^{2^{s+1}-1}x_4^{2^{s}-2}$, & \multicolumn{1}{l}{$c_{s,\,6}= x_1^{2^{s+1}-1}x_3x_4^{2^{s}-2}$,} \\
$c_{s,\,7}= x_1x_2^{2^{s}-2}x_4^{2^{s+1}-1}$, & $c_{s,\,8}= x_1x_2^{2^{s+1}-1}x_4^{2^{s}-2}$, & \multicolumn{1}{l}{$c_{s,\,9}= x_1^{2^{s+1}-1}x_2x_4^{2^{s}-2}$,} \\
$c_{s,\,10}= x_1x_2^{2^{s}-2}x_3^{2^{s+1}-1}$, & $c_{s,\,11}= x_1x_2^{2^{s+1}-1}x_3^{2^{s}-2}$, & \multicolumn{1}{l}{$c_{s,\,12}= x_1^{2^{s+1}-1}x_2x_3^{2^{s}-2}$,} \\
$c_{s,\,13}= x_3^{2^{s}-1}x_4^{2^{s+1}-1}$, & $c_{s,\,14}= x_3^{2^{s+1}-1}x_4^{2^{s}-1}$, & \multicolumn{1}{l}{$c_{s,\,15}= x_2^{2^s-1}x_4^{2^{s+1}-1}$,} \\
$c_{s,\,16}= x_2^{2^s-1}x_3^{2^{s+1}-1}$, & $c_{s,\,17}= x_2^{2^{s+1}-1}x_4^{2^{s}-1}$, & \multicolumn{1}{l}{$c_{s,\,18}= x_2^{2^{s+1}-1}x_3^{2^{s}-1}$,} \\
$c_{s,\,19}= x_1^{2^s-1}x_4^{2^{s+1}-1}$, & $c_{s,\,20}= x_1^{2^s-1}x_3^{2^{s+1}-1}$, & \multicolumn{1}{l}{$c_{s,\,21}= x_1^{2^{s+1}-1}x_4^{2^{s}-1}$,} \\
$c_{s,\,22}= x_1^{2^{s+1}-1}x_3^{2^{s}-1}$, & $c_{s,\,23}= x_1^{2^s-1}x_2^{2^{s+1}-1}$, & \multicolumn{1}{l}{$c_{s,\,24}= x_1^{2^{s+1}-1}x_2^{2^{s}-1}$,} \\
$c_{s,\,28}= x_1x_3^{2^{s}-1}x_4^{2^{s+1}-2}$, & $c_{s,\,29}= x_1x_3^{2^{s+1}-2}x_4^{2^{s}-1}$, & \multicolumn{1}{l}{$c_{s,\,30}= x_1^{2^s-1}x_3x_4^{2^{s+1}-2}$,} \\
$c_{s,\,31}= x_1x_2^{2^{s}-1}x_4^{2^{s+1}-2}$, & $c_{s,\,32}= x_1x_2^{2^{s+1}-2}x_4^{2^{s}-1}$, & \multicolumn{1}{l}{$c_{s,\,33}= x_1^{2^s-1}x_2x_4^{2^{s+1}-2}$,} \\
$c_{s,\,34}= x_1x_2^{2^{s}-1}x_3^{2^{s+1}-2}$, & $c_{s,\,35}= x_1x_2^{2^{s+1}-2}x_3^{2^{s}-1}$, & \multicolumn{1}{l}{$c_{s,\,36}= x_1^{2^s-1}x_2x_3^{2^{s+1}-2}$,} \\
$c_{s,\,37}= x_2^{3}x_3^{2^{s+1}-3}x_4^{2^{s}-2}$, & $c_{s,\,38}= x_1^{3}x_3^{2^{s+1}-3}x_4^{2^{s}-2}$, & \multicolumn{1}{l}{$c_{s,\,39}= x_1^{3}x_2^{2^{s+1}-3}x_4^{2^{s}-2}$,} \\
$c_{s,\,40}= x_1^{3}x_2^{2^{s+1}-3}x_3^{2^{s}-2}$. &  &  
\end{tabular}%
\end{center}

For $s = 2,$

\begin{center}
\begin{tabular}{lllc}
$c_{2,\,41}= x_2^{3}x_3^{3}x_4^{4}$, & $c_{2,\,42}= x_1^{3}x_3^{3}x_4^{4}$, & $c_{2,\,43}= x_1^{3}x_2^{3}x_4^{4}$, & $c_{2,\,44}= x_1^{3}x_2^{3}x_3^{4}$.
\end{tabular}%
\end{center}

For $s\geq 3,$

\begin{center}
\begin{tabular}{lcr}
$c_{s,\,41}= x_2^{3}x_3^{2^{s}-3}x_4^{2^{s+1}-2}$, & $c_{s,\,42}= x_1^{3}x_3^{2^{s}-3}x_4^{2^{s+1}-2}$, & \multicolumn{1}{l}{$c_{s,\,43}= x_1^{3}x_2^{2^{s}-3}x_4^{2^{s+1}-2}$,} \\
$c_{s,\,44}= x_1^{3}x_2^{2^{s}-3}x_3^{2^{s+1}-2}$. &       &  
\end{tabular}%
\end{center}

Consequently, we obtain:
$$ \begin{array}{ll}
\medskip
\Sigma_4(c_{s,\, 1})& = \langle \{[c_{s,\, j}]:\, 1\leq j\leq 12\} \rangle, \ \ \Sigma_4(c_{s,\, 13}) = \langle \{[c_{s,\, j}]:\, 13\leq j\leq 24\} \rangle,\ \mbox{for $s\geq 2,$} \\
\Sigma_4(c_{s,\, 25})& = \langle \{[c_{s,\, j}]:\, 25\leq j\leq 44\} \rangle,\  \mbox{for $s = 2,$}\\ 
\Sigma_4(c_{s,\, 25}, c_{s, 37})& = \langle \{[c_{s,\, j}]:\, 25\leq j\leq 44\} \rangle,\  \mbox{for $s\geq 3.$}\\
\end{array}$$

\begin{lema}\label{bdc31}
The following statements are true:

\begin{itemize}

\item[i)]
For $s = 2,$ we have a direct summand decomposition of $\Sigma_4$-modules:
$$  (\underline{\pmb{Q}^{4}_{n_1}})^{\omega_{(1)}} = \Sigma_4(c_{2,\, 1})\bigoplus \Sigma_4(c_{2,\, 13})\bigoplus \Sigma_4(c_{2,\, 25}),$$

For $s\geq 3,$ we have a direct summand decomposition of $\Sigma_4$-modules:
$$ (\underline{\pmb{Q}^{4}_{n_s}})^{\omega_{(s)}} = \Sigma_4(c_{s,\, 1})\bigoplus \Sigma_4(c_{s,\, 13})\bigoplus \Sigma_4(c_{s,\, 25}, c_{s,\, 37}).$$

\item[(ii)]

$$ \begin{array}{ll}
\Sigma_4(c_{s,\, 1})^{\Sigma_4} &= \big\langle [\widetilde{\zeta_{s,\, 1}}:=\sum_{1\leq j\leq 12}c_{s,\, j}]\big \rangle,\ \mbox{for $s\geq 2,$} \\
 \Sigma_4(c_{s,\, 13})^{\Sigma_4} &= \big\langle [\widetilde{\zeta_{s,\, 2}}:=\sum_{13\leq j\leq 24}c_{s,\, j}]\big \rangle, \ \mbox{for $s\geq 2,$} \\
\Sigma_4(c_{s,\, 25})^{\Sigma_4} &= \big\langle [\widetilde{\zeta_{s,\, 3}}:=\sum_{26\leq j\leq 29,\, j\neq 28}c_{s,\, j} + \sum_{32\leq j\leq 44}c_{s,\, j}]\big \rangle,\ \mbox{for $s = 2,$}\\
\Sigma_4(c_{s,\, 25}, c_{s,\, 37})^{\Sigma_4} &= \big\langle [\widetilde{\zeta_{s,\, 3}}:=\sum_{25\leq j\leq 40}c_{s,\, j}]\big \rangle,\ \mbox{for $s\geq 3.$}
\end{array}$$
\end{itemize}
\end{lema}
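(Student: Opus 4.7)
The plan is to exploit the explicit admissible monomial basis of $(\underline{\pmb{Q}^{4}_{n_s}})^{\omega_{(s)}}$ listed above and to track the action of the subgroup $\Sigma_4 \subset G(4)$ generated by $\sigma_1, \sigma_2, \sigma_3$ on this basis. Once the orbit structure is made explicit, both the direct sum decomposition in (i) and the fixed-point computation in (ii) will follow from linear algebra over $k$ combined with the hit-reduction machinery provided by Theorem \ref{dlSi} and the Cartan formula.

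For part (i), I would first partition the admissible generators according to their exponent multisets, which are $\Sigma_4$-invariant. The block $\{c_{s,1}, \ldots, c_{s,12}\}$ consists of all placements of the exponent pattern $(0, 1, 2^s-2, 2^{s+1}-1)$ and is therefore a single $\Sigma_4$-orbit. The block $\{c_{s,13}, \ldots, c_{s,24}\}$ is the $\Sigma_4$-closure of the spike $c_{s,13}$, giving the second summand. The remaining admissible generators in the stated range either form a single $\Sigma_4$-orbit (when $s = 2$, where collisions among the small exponents $2, 3$ cause the low-index and high-index pieces to fuse) or split into two pieces that together generate $\Sigma_4(c_{s,25}, c_{s,37})$ (for $s \geq 3$, where they no longer fuse but still interact under the reductions). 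Directness is immediate from the fact that distinct exponent multisets yield linearly independent admissible classes.

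For part (ii), I follow the routine already exemplified in Lemmas \ref{bd1} and \ref{bdc21}: write a generic $\Sigma_4$-invariant class as $f \equiv \sum_j \gamma_j c_{s,j}$ over the relevant index set, compute each $\sigma_i(f)$ for $i = 1, 2, 3$, reduce any resulting inadmissible monomials back to admissible form, and compare coefficients using the relations $\sigma_i(f) \equiv f$. The resulting homogeneous linear system in the $\gamma_j$ should collapse to a one-dimensional space of solutions in each orbit summand, producing the explicit generators $\widetilde{\zeta_{s,\,1}}$, $\widetilde{\zeta_{s,\,2}}$, $\widetilde{\zeta_{s,\,3}}$ stated in the lemma.

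The main obstacle is the third summand. There, several of the monomials $\sigma_i(c_{s,j})$ are inadmissible, for example monomials where a swap creates a prefix $x_i^{2^s-1}$ adjacent to an exponent $2^{s+1}-2$, or pairs forcing the exponent configuration $(3, 2^{s+1}-3)$. Each such inadmissible monomial must be rewritten as $Sq^{r}(\cdot) + \sum_\ell h_\ell$ modulo $\overline{A}(P_4)_{n_s}$ with $h_\ell$ admissible, and the admissible tails couple several coefficients $\gamma_j$ across the whole orbit. The delicate point is the case $s = 2$, where the small-exponent collision makes certain $Sq^1$ and $Sq^2$ reductions strictly smaller in the order, which is exactly what produces the asymmetric index set defining $\widetilde{\zeta_{2,\,3}}$ (the absence of $c_{2,28}$, $c_{2,30}$, $c_{2,31}$ together with the inclusion of the tail $c_{2,41}, \ldots, c_{2,44}$). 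Keeping track of these roughly two dozen hit-relations cleanly, so that the resulting linear system genuinely has a unique nonzero solution up to scalar, is the computational heart of the proof.
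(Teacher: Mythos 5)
Your proposal follows the same orbit-by-orbit strategy the paper uses throughout Section 4 (compare Lemmata \ref{bd1}, \ref{bdc21}--\ref{bdc23}): partition the admissible basis of $(\underline{\pmb{Q}^{4}_{n_s}})^{\omega_{(s)}}$ into $\Sigma_4$-stable blocks, then extract the invariants from the linear system given by $\sigma_i(f)\equiv f$ after rewriting the inadmissible images via Cartan's formula and Theorem \ref{dlSi}. The paper offers no further detail for Lemma \ref{bdc31} beyond the remark that it is ``rather straightforward to prove directly,'' and your plan, including the identification of the third summand and the $s=2$ exponent collisions as the computational bottleneck, is exactly what that remark defers to.
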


It is rather straightforward to prove this lemma directly.  %so we omit the details by leaving them to the interested reader for a direct check. 
 Now, according to \cite{N.S}, the admissible bases of $(\overline{\pmb{Q}^{4}_{n_s}})^{\omega_{(s)}}$ are the sets consisting of all classes represented by the following monomials:

For $s \geq 2,$

\begin{tabular}{ll}
$c_{s,45} = x_1x_2x_4^{2^s-2}x_4^{2^{s+1}-2}$ & $c_{s,46} = x_1x_2x_3^{2^{s+1}-2}x_4^{2^s-2}$ \\
$c_{s,47} = x_1x_2^{2^s-2}x_3x_4^{2^{s+1}-2}$ & $c_{s,48} = x_1x_2^{2^{s+1}-2}x_3x_4^{2^s-2}$ \\
$c_{s,49} = x_1x_2^2x_3^{2^{s+1}-3}x_4^{2^s-2}$ & \\
\end{tabular}

For $s = 2,$

\begin{tabular}{lll}
$c_{2,50} = x_1x_2^2x_3^3x_4^4$ & $c_{2,51} = x_1x_2^2x_3^4x_4^3$ & $c_{2,52} = x_1x_2^3x_3^2x_4^4$ \\
$c_{2,53} = x_1x_2^3x_3^4x_4^2$ & $c_{2,54} = x_1^2x_2x_3^2x_4^4$ & $c_{2,55} = x_1^3x_2x_3^4x_4^2$ \\
$c_{2,56} = x_1^3x_2^2x_3x_4^2$ & & \\
\end{tabular}

\medskip

For $s \geq 3,$

\begin{tabular}{ll}
$c_{s,50} = x_1x_2^2x_3^{2^s-4}x_4^{2^{s+1}-1}$ & $c_{s,51} = x_1x_2^2x_3^{2^{s+1}-1}x_4^{2^s-4}$ \\
$c_{s,52} = x_1x_2^{2^{s+1}-1}x_3^2x_4^{2^s-4}$ & $c_{s,53} = x_1^2x_2x_3^{2^s-4}$ \\
$c_{s,54} = x_1x_2^2x_3^{2^s-3}x_4^{2^{s+1}-2}$ & $c_{s,55} = x_1x_2^2x_3^{2^s-1}x_4^{2^{s+1}-4}$ \\
$c_{s,56} = x_1x_2^2x_3^{2^{s+1}-4}x_4^{2^s-1}$ & $c_{s,57} = x_1x_2^{2^s-1}x_3^2x_4^{2^{s+1}-4}$ \\
$c_{s,58} = x_1^{2^s-1}x_2x_3^2x_4^{2^{s+1}-4}$ & $c_{s,59} = x_1x_2^3x_3^{2^s-4}x_4^{2^{s+1}-2}$ \\
$c_{s,60} = x_1x_2^3x_2^{2^{s+1}-2}x_4^{2^s-4}$ & $c_{s,61} = x_1^3x_2x_3^{2^s-4}x_4^{2^{s+1}-2}$ \\
$c_{s,62} = x_1^3x_2x_3^{2^{s+1}-2}x_4^{2^s-4}$ & $c_{s,63} = x_1x_2^3x_3^{2^s-2}x_4^{2^{s+1}-4}$ \\
$c_{s,64} = x_1x_2^3x_3^{2^{s+1}-4}x_4^{2^s-2}$ & $c_{s,65} = x_1^3x_2x_3^{2^s-2}x_4^{2^{s+1}-4}$ \\
$c_{s,66} = x_1^3x_2x_3^{2^{s+1}-3}x_4^{2^s-2}$ & $c_{s,67} = x_1^3x_2^2x_3^{2^{s+1}-3}x_4^{2^s-4}$ \\
$c_{s,68} = x_1^3x_2^{2^s-3}x_3^2x_4^{2^{s+1}-4}$ & $c_{s,69} = x_1^3x_2^2x_3^{2^{s+1}-6}x_4^{2^s-4}$ \\
\end{tabular}

\medskip

For $s = 3,$ $c_{3,70} = x_1^3x_2^5x_3^6x_4^8$.

\medskip

For $s \geq 4,$ $c_{s,70} = x_1^3x_2^5x_3^{2^s-6}x_4^{2^{s+1}-4}$.

\medskip

Using this data and direct calculations based on the homomorphisms $\sigma_i: P_4\longrightarrow P_4$ for $1\leq i\leq 3,$ we obtain the following.

\begin{lema}\label{bdc32}
For $s \geq 3$, we have $((\overline{\pmb{Q}^{4}_{n_s}})^{\omega_{(s)}})^{\Sigma_4} = \langle  [\widetilde{\zeta_{s,\, \ell}}]: 4 \leq \ell \leq 7 \rangle$, where
\[
\begin{array}{|c|c|c|}
\hline
\ell & \text{If } s = 3 & \text{If } s \geq 4 \\
\hline
4 & \multicolumn{2}{c|}{\widetilde{\zeta_{s,\, \ell}}=\displaystyle \sum_{50 \leq j \leq 53} c_{s,j} \quad \text{(for all } s \geq 3)} \\
\hline
5 &\widetilde{\zeta_{s,\, \ell}} =\displaystyle \sum_{j \in \{46, 49, 54, 64, 66, 68, 69, 70\}} c_{3,j}
  &\widetilde{\zeta_{s,\, \ell}} =\displaystyle \sum_{j \in \{45, 46, 49, 54, 63, 64, 65, 66, 68, 69\}} c_{s,j} \\
\hline
6 & \multicolumn{2}{c|}{\widetilde{\zeta_{s,\, \ell}}\displaystyle \sum_{j \in \{47, 48, 54, 59, 60, 61, 62, 67\}} c_{s,j} \quad \text{(for all } s \geq 3)} \\
\hline
7 & \widetilde{\zeta_{s,\, \ell}}=\displaystyle \sum_{j \in \{45, 55, 56, 57, 58, 59, 61, 70\}} c_{3,j}
  & \widetilde{\zeta_{s,\, \ell}}=\displaystyle \sum_{j \in \{55, 56, 57, 58, 59, 61, 63, 65, 68, 70\}} c_{s,j} \\
\hline
\end{array}
\]
\end{lema}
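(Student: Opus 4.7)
The plan is to follow exactly the template used in the proofs of Lemmas \ref{bdc22} and \ref{bdc23}, treating the cases $s = 3$ and $s \geq 4$ separately because the admissible bases differ in the single top-weight monomial $c_{s,\,70}$. For each case, start with a generic candidate $[f] \in ((\overline{\pmb{Q}^{4}_{n_s}})^{\omega_{(s)}})^{\Sigma_4}$ and expand it as
\[
f \equiv_{\omega_{(s)}} \sum_{j} \gamma_j\, c_{s,\,j}, \qquad \gamma_j \in k,
\]
where $j$ runs over the indices $45$--$70$ in the admissible basis listed just before the lemma. Since $\Sigma_4$ is generated by $\sigma_1, \sigma_2, \sigma_3$, the statement $[f] \in (\cdot)^{\Sigma_4}$ is equivalent to the three relations $\sigma_i(f) \equiv_{\omega_{(s)}} f$, each of which becomes a homogeneous linear system on the $\gamma_j$.

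The computational heart is the reduction step. I would compute $\sigma_i(c_{s,\,j})$ monomial-by-monomial; many images land outside the admissible basis, and must be rewritten using the Cartan formula together with Theorem \ref{dlSi} (which kills all terms of weight strictly below $\omega(c_{s,\,15}) = \omega_{(s)}$, since $c_{s,\,15}$ is the minimal spike in this degree) to obtain expressions in the admissible $c_{s,\,j}$ modulo $\overline{A}(P_4)_{n_s}$. I would then package the output as
\[
\sigma_i(f) + f \equiv_{\omega_{(s)}} \sum_j L_j^{(i)}(\gamma)\, c_{s,\,j},
\]
with each $L_j^{(i)}$ a $k$-linear form in the coefficients $\gamma_j$; setting every $L_j^{(i)} = 0$ gives the full constraint system. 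Row-reducing this system should reveal exactly four free parameters for any $s \geq 3$, each corresponding to one of the claimed generators $[\widetilde{\zeta_{s,\,\ell}}]$, $4 \leq \ell \leq 7$. As a sanity check, one can verify directly that each proposed generator satisfies $\sigma_i \equiv_{\omega_{(s)}} \mathrm{id}$ for $i = 1, 2, 3$, and the linear independence of the four classes is immediate from the disjointness of their supports in the admissible basis.

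The main obstacle will be the sheer bookkeeping: the admissible basis has roughly $26$ elements when $s \geq 4$ (one less for $s = 3$), and each $\sigma_i$ has to be applied to every one of them. In particular, images under $\sigma_1$ and $\sigma_2$ will frequently produce monomials with exponents like $2^{s+1}-1$ in the first or second variable whose reduction to admissible form requires iterated use of the Cartan formula — exactly the type of identity catalogued in \eqref{dt5} and \eqref{dt8}. The $s = 3$ case diverges from $s \geq 4$ precisely because $c_{3,\,70} = x_1^{3}x_2^{5}x_3^{6}x_4^{8}$ has a smaller third-variable exponent than its generic analogue $c_{s,\,70} = x_1^{3}x_2^{5}x_3^{2^{s}-6}x_4^{2^{s+1}-4}$; this alters several reductions involving $c_{s,\,55}$--$c_{s,\,70}$ and explains why the defining sums of $\widetilde{\zeta_{3,\,5}}$ and $\widetilde{\zeta_{3,\,7}}$ differ from their $s \geq 4$ counterparts. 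Assistance from the computer algebra routine from \cite[Appendix]{Phuc0} will likely be needed to corroborate the hand computations, as was already the practice for the dimension counts of $\pmb{Q}^{4}_{n_s}$ cited from \cite{N.S}.
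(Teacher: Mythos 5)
Your approach matches the paper's (the paper itself states this lemma without a written proof, treating it as a direct computation of the same kind carried out in Lemmas~\ref{bdc22} and~\ref{bdc23}): expand a generic $\Sigma_4$-invariant in the admissible basis, act by $\sigma_1, \sigma_2, \sigma_3$, reduce non-admissible images via Cartan's formula and Theorem~\ref{dlSi}, and solve the resulting linear system over $k$. That plan is correct and is precisely the method the surrounding proofs use.

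Two small slips, though. First, the ``sanity check'' that linear independence ``is immediate from the disjointness of their supports'' is not literally correct: the supports are not disjoint. For $s \geq 4$, $\widetilde{\zeta_{s,5}}$ and $\widetilde{\zeta_{s,6}}$ share the index $54$; $\widetilde{\zeta_{s,5}}$ and $\widetilde{\zeta_{s,7}}$ share $\{63,65,68\}$; $\widetilde{\zeta_{s,6}}$ and $\widetilde{\zeta_{s,7}}$ share $\{59,61\}$ (similar overlaps occur for $s = 3$). Independence still holds, but the clean reason is that each generator owns at least one \emph{private} index — e.g.\ $50$ for $\ell = 4$, $45$ or $46$ for $\ell = 5$, $47$ or $48$ for $\ell = 6$, $55$ or $56$ for $\ell = 7$ — so a vanishing linear combination forces each coefficient to vanish in turn. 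Second, the admissible basis of $(\overline{\pmb{Q}^{4}_{n_s}})^{\omega_{(s)}}$ has the same cardinality ($26$, indices $45$ through $70$) for $s = 3$ as for $s \geq 4$; the difference is only in the exponent pattern of $c_{s,70}$, not in the count, so the assertion of ``one less for $s = 3$'' is off. Your explanation of why the $s=3$ and $s \geq 4$ formulas for $\widetilde{\zeta_{s,5}}$ and $\widetilde{\zeta_{s,7}}$ differ (because $c_{3,70}$ has a different shape, altering reductions) is the right diagnosis.
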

%\begin{lema}\label{bdc32}
%The space of $\Sigma_4$-invariants $((\overline{\pmb{Q}^{4}_{n_s}})^{\omega_{(s)}})^{\Sigma_4}$ is generated by the classes $[q_{s,\, 3}]$  and $[q_{s,\, 4}],$ where  
%$$q_{s,\, 3} = c_{s,\, 47} + c_{s,\, 48} + \sum_{62\leq j\leq 67}c_{s,\, j}, \ \ q_{s,\, 4} = \sum_{58\leq j\leq 61}c_{s,\, j}.$$
%\end{lema}

$\bullet$ Recall that if $[g]\in (\pmb{Q}^{4}_{2^{s+1} + 2^{s} - 2})^{G(4)},$ then $[\overline{Sq}^{0}]_{2^{s+1} + 2^{s} - 2}([g])\in (\pmb{Q}^{4}_{2^{s} + 2^{s-1} - 3})^{G(4)}.$ Let us consider the subspace $\Sigma_4(c_{2,50}) = \langle [c_{2,\, j}] : 45 \leq j \leq 56 \rangle.$ Suppose that $[h] \in \Sigma_4(c_{2,50})^{\Sigma_4}.$ Then $h \equiv \sum_{45 \leq j \leq 56} \gamma_j c_{2,j}.$ Based on the homomorphisms $\sigma_i: P_4\longrightarrow P_4,\, 1\leq i\leq 3,$ we get the following equalities:  
\begin{align*}
\sigma_1(h) + h &\equiv \gamma_{47} c_{2,45} + \gamma_{49} c_{2,46} + (\gamma_{48} + \gamma_{56})(c_{2,48} + c_{2,56}) + (\gamma_{52} + \gamma_{54})(c_{2,52} + c_{2,54}) \\
&\quad + (\gamma_{53} + \gamma_{55} + \gamma_{56}) c_{2,53} + (\gamma_{48} + \gamma_{53} + \gamma_{55}) c_{2,55} \equiv 0, \\
\sigma_2(h) + h &\equiv (\gamma_{45} + \gamma_{47} + \gamma_{51})(c_{2,45} + c_{2,47})  + (\gamma_{46} + \gamma_{48})(c_{2,46} + c_{2,48})   \\
&\quad + (\gamma_{49} + \gamma_{53})(c_{2,49} + c_{2,53}) + (\gamma_{50} + \gamma_{52} + \gamma_{53}) c_{2,50} + (\gamma_{49} + \gamma_{52}) c_{2,51}\\
&\quad + (\gamma_{50} + \gamma_{52}) c_{2,52}  + (\gamma_{55} + \gamma_{56})(c_{2,55} + c_{2,56}) \equiv 0, \\
\sigma_3(h) + h &\equiv (\gamma_{45} + \gamma_{46} + \gamma_{49}) c_{2,45} + (\gamma_{45} + \gamma_{46} + \gamma_{47}) c_{2,46} \\
&\quad + (\gamma_{47} + \gamma_{49})(c_{2,47} + c_{2,49}) + \gamma_{51} c_{2,50} \\
&\quad + (\gamma_{50} + \gamma_{51}) c_{2,51} \\
&\quad + (\gamma_{48} + \gamma_{52} + \gamma_{53})(c_{2,52} + c_{2,53}) \\
&\quad + (\gamma_{54} + \gamma_{55})(c_{2,54} + c_{2,55}) \equiv 0.
\end{align*}

A straightforward computation shows that the expression simplifies to $\gamma_j = 0$ for $j \in \{47, 49, 53\}$, and $\gamma_j = \gamma_{45}$ for all $j \notin \{47, 49, 53\}.$ So, $\Sigma_4(c_{2,50})^{\Sigma_4} = \langle [\overline{\zeta}:= \sum_{45\leq j\leq 56,\, j\neq 47,\, 49,\, 53}c_{2,\, j}] \rangle.$ 

$\bullet$ For $s = 2$, assume that $[\xi] \in {\rm Ker}([\overline{Sq}^{0}]_{10})$. It follows from the above calculation and Lemma \ref{bdc31}(ii) that $\xi \equiv \sum_{\ell=1}^3 \beta_\ell \widetilde{\zeta_{2,\ell}} + \beta_4 \overline{\zeta}$, where each $\beta_\ell$ lies in $k$. A computation of $\sigma_4(\xi) + \xi$ in the basis of admissible monomials gives:
\begin{align*}
\sigma_4(\xi) + \xi &\equiv (\beta_1 + \beta_2)(c_{2,3} + c_{2,4} + c_{2,43} + c_{2,44}) + (\beta_1 + \beta_3)(c_{2,5} + c_{2,6}) \\
&\quad + \beta_1 c_{2,9} \\
&\quad + \beta_2(c_{2,13} + c_{2,14} + c_{2,15} + c_{2,48} + c_{2,52} + c_{2,54} + c_{2,55} + c_{2,56}) \\
&\quad + (\beta_2 + \beta_3)(c_{2,20} + c_{2,21}) \\
&\quad + \beta_3(c_{2,27} + c_{2,30} + c_{2,31} + c_{2,37} + c_{2,41} + c_{2,45} + c_{2,46}) \\
&\quad + \beta_4 c_{2,25} + (\beta_3 + \beta_4)c_{2,26} \equiv 0.
\end{align*}

This leads to the conclusion that $\beta_\ell$ vanishes for each $\ell$. %Hence $[\xi] = 0$ and the theorem holds for $s = 2$.

$\bullet$ For $s = 4$, invoking Lemmata \ref{bdc31} and \ref{bdc32}, we find that any representative $\xi$ of a class in ${\rm Ker}([\overline{Sq}^0]_{46})$ satisfies $\xi \equiv \sum_{\ell=1}^7 \beta_\ell \widetilde{\zeta_{4,\ell}}$ for some $\beta_\ell \in k$. An explicit computation of $\sigma_4(\xi) + \xi$ with respect to the admissible basis yields the following identities:

\begin{align*}
\sigma_4(\xi) + \xi &\equiv (\beta_1 + \beta_2)(c_{4,3} + c_{4,4} + c_{4,35} + c_{4,36} + c_{4,43} + c_{4,44}) \\
&\quad + (\beta_1 + \beta_3)(c_{4,5} + c_{4,6}) + \beta_1 c_{4,9} \\
&\quad + (\beta_2 + \beta_4)(c_{4,13} + c_{4,14} + c_{4,58} + c_{4,61} + c_{4,65} + c_{4,68} + c_{4,70}) \\
&\quad + (\beta_2 + \beta_6)c_{4,15} + (\beta_2 + \beta_3)(c_{4,20} + c_{4,21}) \\
&\quad + (\beta_3 + \beta_7)(c_{4,25} + c_{4,26}) + (\beta_3 + \beta_6)c_{4,27} \\
&\quad + (\beta_1 + \beta_2 + \beta_3)(c_{4,30} + c_{4,31}) \\
&\quad + (\beta_3 + \beta_5)c_{4,37} + (\beta_5 + \beta_6)c_{4,41} + (\beta_2 + \beta_3 + \beta_4)c_{4,45} \\
&\quad + (\beta_2 + \beta_3 + \beta_4 + \beta_5)c_{4,46} + (\beta_4 + \beta_6)c_{4,52} \\
&\quad + (\beta_2 + \beta_4 + \beta_5)(c_{4,57} + c_{4,63}) \\
&\quad + (\beta_2 + \beta_4 + \beta_6)c_{4,59} + \beta_2 c_{4,66} \equiv 0.
\end{align*}

This forces all coefficients $\beta_\ell$ to vanish, for $1 \leq \ell \leq 7$. %Hence, $[\xi] \equiv 0$ and the theorem holds for $s = 4$.

Thus, Theorem \ref{dlc3}(i) hods for the cases $s = 2$ and $s = 4.$

\medskip

$\bullet$ We next address the cases with $s \geq 3$ and $s \neq 4$, focusing on proving the theorem for all $s \geq 6$.

\medskip

{\bf Remark.} Suppose we are given a class $[f] \in (\pmb{Q}^{4}_{2^{s+1}+2^{s}-2})^{G(4)}.$ Since the Kameko homomorphism $[\overline{Sq}^{0}]_{2^{s+1}+2^{s}-2}: \pmb{Q}^{4}_{2^{s+1}+2^{s}-2} \longrightarrow \pmb{Q}^{4}_{2^{s}+2^{s-1}-3}$ is a $G(4)$-module homomorphism, it follows that $[\overline{Sq}^{0}]_{2^{s+1}+2^{s}-2}([f]) \in (\pmb{Q}^{4}_{2^{s}+2^{s-1}-3})^{G(4)}.$ Therefore, there exists a representative $\xi \in (P_4)_{2^{s}+2^{s-1}-3}$ such that $[\overline{Sq}^{0}]_{2^{s+1}+2^{s}-2}([f])= [\xi] \in \pmb{Q}^{4}_{2^{s}+2^{s-1}-3}.$

By the definition of the Kameko homomorphism and its compatibility with the algebraic structure, we know that the element $[\varphi_{s}(\xi) := x_1 x_2 x_3 x_4 \cdot \xi^2]$ is mapped back to $[\xi]$ under $\overline{Sq}^{0},$ that is, $[\overline{Sq}^{0}]_{2^{s+1}+2^{s}-2}([\varphi_{s}(\xi)]) = [\xi],$ where $\varphi_s: (P_4)_{2^{s}+2^{s-1}-3}\longrightarrow (P_4)_{2^{s+1}+2^{s}-2}.$ It follows that $[f - \varphi_{s}(\xi)] \in {\rm Ker}[\overline{Sq}^{0}]_{2^{s+1}+2^{s}-2}.$ Hence, there exists an element $[h] \in {\rm Ker}[\overline{Sq}^{0}]_{2^{s+1}+2^{s}-2}$ such that $[f - \varphi_{s}(\xi)] = [h];$ equivalently, $f \equiv \varphi_{s}(\xi) + h.$

Suppose that $(\pmb{Q}^{4}_{2^{s}+2^{s-1}-3})^{G(4)} = \langle [\xi_s]\rangle.$ Since the homomorphism $\varphi_s([\xi_s]) = [x_1 x_2 x_3 x_4 \cdot \xi_s^2] \in \pmb{Q}^{4}_{2^{s+1}+2^{s}-2}$ preserves $G(4)$-invariance, $[\varphi_s(\xi_s)] \in (\pmb{Q}^{4}_{2^{s+1}+2^{s}-2})^{G(4)}.$

Suppose there exists another invariant element $[f] \in (\pmb{Q}^{4}_{2^{s+1}+2^{s}-2})^{G(4)}.$ We want to determine whether $[f]$ is necessarily equal to $[\varphi_s(\xi_s)]$ or not. Since we are not yet certain whether the space $(\pmb{Q}^{4}_{2^{s+1}+2^{s}-2})^{G(4)}$ is one-dimensional or of higher dimension, we assume the general form:
$$[f] = \beta \cdot [\varphi_s(\xi_s)] + [h],\quad \text{with } [h] \in {\rm Ker}[\overline{Sq}^{0}]_{2^{s+1}+2^{s}-2}$$
for some scalar $\beta\in k.$ Then we have
$$[f + \beta \cdot \varphi_s(\xi_s)] = [h].$$
In other words, $[f + \beta\cdot \varphi_s(\xi_s)]$ is an element in the kernel of the Kameko homomorphism $[\overline{Sq}^{0}]_{2^{s+1}+2^{s}-2}.$

The argument given above, together with the proof of Theorem \ref{dlc1}, ensures that for every $s \geq 6$, one can choose $\beta \in k$ such that
\[
[f + \beta\cdot \varphi_{s}(\widetilde{\zeta_s} + \zeta^{*}_s)] \in {\rm Ker}[\overline{Sq}^{0}]_{2^{s+1}+2^{s}-2}.
\]

Recall that we have already shown that any such element \( f \in (P_4)_{2^{s+1}+2^{s}-2} \) representing a class in $(\pmb{Q}^{4}_{2^{s+1}+2^{s}-2})^{G(4)}$ must be of the form
\[
f \equiv \beta \cdot \varphi_s(\widetilde{\zeta_s} + \zeta^{*}_s) + h,
\quad \text{with } [h] \in {\rm Ker}[\overline{Sq}^{0}]_{2^{s+1}+2^{s}-2}.
\]

To impose the additional condition \( \sigma_i(f) + f \equiv 0 \), we apply the linearity of \( \sigma_i \), which gives:
\[
\sigma_i(f) + f \equiv \beta(\sigma_i(\varphi_s(\widetilde{\zeta_s} + \zeta^{*}_s)) + \varphi_s(\widetilde{\zeta_s} + \zeta^{*}_s)) + (\sigma_i(h) + h).
\]
Therefore, \( \sigma_i(f) + f \equiv 0 \) if and only if
\[
\sigma_i(h) + h \equiv \beta(\sigma_i(\varphi_s(\widetilde{\zeta_s} + \zeta^{*}_s)) + \varphi_s(\widetilde{\zeta_s} + \zeta^{*}_s)).
\]

This observation motivates us to compute the expressions \( \sigma_i(\varphi_s(\widetilde{\zeta_s} + \zeta^{*}_s)) + \varphi_s(\widetilde{\zeta_s} + \zeta^{*}_s)) \) for \( i = 1, 2, 3, 4 \), as these will determine which elements in ${\rm Ker}[\overline{Sq}^{0}]_{2^{s+1}+2^{s}-2}$ can be used to cancel the non-invariant part of $\varphi_s(\widetilde{\zeta_s} + \zeta^{*}_s).$

By carrying out these computations explicitly, we identify a specific linear combination of monomials (denoted \( \theta_{s} \)) that satisfies:
\[
\sigma_i(\theta_{s}) + \theta_{s} \equiv \sigma_i(\varphi_s(\widetilde{\zeta_s} + \zeta^{*}_s)) + \varphi_s(\widetilde{\zeta_s} + \zeta^{*}_s),
\quad \text{for } i = 1, 2, 3.
\]
Thus, the element [\( \varphi_s(\widetilde{\zeta_s} + \zeta^{*}_s) + \theta_{s} \)] lies in the subspace of ${\rm Ker}[\overline{Sq}^{0}]_{2^{s+1}+2^{s}-2}$ invariant under \( \sigma_1, \sigma_2, \sigma_3 \).

To construct a suitable element $[\theta_{s}] \in {\rm Ker}[\overline{Sq}^{0}]_{2^{s+1}+2^{s}-2}$ such that
\[
\sigma_i(\theta_{s}) + \theta_{s} \equiv \sigma_i(\varphi_s(\widetilde{\zeta_s} + \zeta^{*}_s)) + \varphi_s(\widetilde{\zeta_s} + \zeta^{*}_s) \quad \text{for } i = 1,2,3,
\]
we proceed as follows.

\medskip

Firstly, by direct calculations, we find that
\[
\begin{aligned}
\sigma_1(\varphi_s(\widetilde{\zeta_s} + \zeta^{*}_s)) + \varphi_s(\widetilde{\zeta_s} + \zeta^{*}_s) &\equiv \sum_{j \in A_1} c_{s,j}\neq 0, \\
\sigma_2(\varphi_s(\widetilde{\zeta_s} + \zeta^{*}_s)+ \varphi_s(\widetilde{\zeta_s} + \zeta^{*}_s) &\equiv \sum_{j \in A_2} c_{s,j}\neq 0, \\
\sigma_3(\varphi_s(\widetilde{\zeta_s} + \zeta^{*}_s) + \varphi_s(\widetilde{\zeta_s} + \zeta^{*}_s) &\equiv \sum_{j \in A_3} c_{s,j}\neq 0,\\
\sigma_4(\varphi_s(\widetilde{\zeta_s} + \zeta^{*}_s) + \varphi_s(\widetilde{\zeta_s} + \zeta^{*}_s) &\equiv \sum_{j \in A_4} c_{s,j}\neq 0,\\
\end{aligned}
\]
where:
\[
\begin{aligned}
A_1 &= \{45,46,57,58,59,61,64,66,68,70\}, \\
A_2 &= \{45,49,55,57,59,63,64\}, \\
A_3 &= \{45,46,49,55,63,64,65,66,69,70\},\\
A_4 &=\{13,14,25,26,41,45,46,58,60,61,65,68,70\}.
\end{aligned}
\]

\medskip

The goal is to find a correction term \( [\theta_{s}] \in {\rm Ker}[\overline{Sq}^{0}]_{2^{s+1}+2^{s}-2} \) whose deviation under each \( \sigma_i \) matches that of \(\varphi_s(\widetilde{\zeta_s} + \zeta^{*}_s)\), so that their sum cancels out the non-invariant components. In other words, we want:
\[
\sigma_i(\varphi_s(\widetilde{\zeta_s} + \zeta^{*}_s) + \theta_{s}) + (\varphi_s(\widetilde{\zeta_s} + \zeta^{*}_s) + \theta_{s}) \equiv 0 \quad \text{for } i = 1,2,3.
\]

\medskip

To this end, we carefully examine the sets \( A_1, A_2, A_3, A_4 \) to identify common or complementary terms that could be neutralized. We then combine this analysis with Lemmata \ref{bdc31} and \ref{bdc32} to compute and determine \( \theta_{s} \) as a linear combination of the \( c_{s,j} \)'s corresponding to:
\[
\theta_{s} := \sum_{j \in \{47,48,49,54,57,59,63,66,69\}} c_{s,j}.
\]

\medskip

%This choice satisfies two essential properties:
%\begin{itemize}
   % \item  $[\theta_s]\in {\rm Ker}[\overline{Sq}^{0}]_{2^{s+1}+2^{s}-2}$;
    %\item The element \(\varphi_s(\widetilde{\zeta_s} + \zeta^{*}_s) + \theta_{s} \) becomes invariant under \( \sigma_1, \sigma_2, \sigma_3 \).
%\end{itemize}

Thus $\varphi_s(\widetilde{\zeta_s} + \zeta^{*}_s)+ \theta_{s}$ is a particular solution to the system of congruences defined by $\sigma_i(f) + f \equiv 0$ for $i = 1,2,3.$

However, since this system is linear over $k,$ the full solution space is an $k$-vector space. Therefore, the general solution must take the form:
\[
f \equiv \beta (\varphi_s(\widetilde{\zeta_s} + \zeta^{*}_s)+ \theta_{s}) + \sum_{1 \leq \ell \leq 7} \beta_{\ell} \widetilde{\zeta_{s,\ell}},
\]
where each $[\widetilde{\zeta_{s,\ell}}] \in {\rm Ker}[\overline{Sq}^{0}]_{2^{s+1}+2^{s}-2}$ is an independent solution of the homogeneous system:
\[
\sigma_i(h) + h \equiv 0 \quad \text{for } i = 1,2,3,
\]
and $\beta,\, \beta_{\ell} \in k.$ Here, the term $\sum_{1 \leq \ell \leq 7} \beta_{\ell} \widetilde{\zeta_{s,\ell}}$ accounts for the full space of solutions to the homogeneous system, ensuring the generality of the representation of $f$.

In summary, we get
\[
f \equiv \beta(\varphi_s(\widetilde{\zeta_s} + \zeta^{*}_s) + \theta_{s}) + \sum_{1 \leq \ell \leq 7} \beta_{\ell} \widetilde{\zeta_{s,\ell}}, \quad \beta,\, \beta_{\ell} \in k.
\]

By computing $\sigma_4(f) + f$ in terms of the admissible monomials we obtain:

\[
\begin{aligned}
\sigma_4(f) + f &\equiv (\beta_1 + \beta_2)(c_{s,3} + c_{s,4}) + (\beta_1 + \beta_3)(c_{s,5} + c_{s,6}) + \beta_1 c_{s,9} + (\beta + \beta_2 + \beta_4)(c_{s,13} + c_{s,14}) \\
&\quad + (\beta + \beta_2 + \beta_6) c_{s,15} + (\beta_2 + \beta_3)(c_{s,20} + c_{s,21}) + (\beta + \beta_3 + \beta_7)(c_{s,25} + c_{s,26}) \\
&\quad + (\beta + \beta_3 + \beta_6) c_{s,27} + (\beta_1 + \beta_2 + \beta_3)(c_{s,30} + c_{s,31}) + (\beta_1 + \beta_2) c_{s,35} \\
&\quad + (\beta_1 + \beta_2) c_{s,36} + (\beta + \beta_3 + \beta_5) c_{s,37} + (\beta_5 + \beta_6) c_{s,41} + (\beta_1 + \beta_2)(c_{s,43} + c_{s,44}) \\
&\quad + (\beta + \beta_2 + \beta_3 + \beta_4)(c_{s,45}) + (\beta_2 + \beta_3 + \beta_4 + \beta_5) c_{s,46} + (\beta_4 + \beta_6) c_{s,52} \\
&\quad + (\beta_2 + \beta_4 + \beta_5)(c_{s,57}) + (\beta + \beta_2 + \beta_4) c_{s,58} + (\beta_2 + \beta_4 + \beta_6) c_{s,59} \\
&\quad + \beta_2 c_{s,60} + (\beta + \beta_2 + \beta_4) c_{s,65} + \beta_2 c_{s,66} + (\beta + \beta_2 + \beta_4) c_{s,68} + (\beta + \beta_2 + \beta_4) c_{s,70} \equiv 0.
\end{aligned}
\]

This equality implies $\beta_{\ell} = 0$ for $1 \leq \ell \leq 3$ and $\beta_{\ell} = \beta$ for $4 \leq \ell \leq 7$. Hence
\[
f \equiv \beta(\varphi_s(\widetilde{\zeta_s} + \zeta^{*}_s) + \theta_{s} + \sum_{4 \leq \ell \leq 7} \widetilde{\zeta_{s,\ell}}).
\]
Thus, we have \( \dim \left( \pmb{Q}^{4}_{2^{s+1} + 2^{s} - 2} \right)^{G(4)} = 1 \), and
\[
\left( \pmb{Q}^{4}_{2^{s+1} + 2^{s} - 2} \right)^{G(4)} = \langle [ \varphi_s(\widetilde{\zeta_s} + \zeta^{*}_s) + \theta_s + \sum_{4 \leq \ell \leq 7} \widetilde{\zeta}_{s,\ell} ] \rangle, \ \mbox{for all $s\geq 6.$}
\]
Therefore, Theorem~\ref{dlc3}(ii) holds, and the proof is complete.

\subsubsection*{Proof of Theorem \ref{dlct2}  (The case $t \geq 5$ and $s\geq 1$)}

Let $n_{s,\, t}: = 2^{s+t} + 2^{s} - 2.$ We have seen that the Kameko map
$$[\overline{Sq}^{0}]_{n_{s,\, t}}:= \overline{Sq}^{0}: \pmb{Q}^{4}_{n_{s,\, t}} \longrightarrow \pmb{Q}^{4}_{2^{s+t - 1} + 2^{s-1} - 3}$$
is an epimorphism of $kG(4)$-modules and so 
\begin{equation}\label{bdtt2}
 \pmb{Q}^{4}_{n_{s,\, t}}  \cong {\rm Ker}[\overline{Sq}^{0}]_{n_{s,\, t}} \bigoplus \pmb{Q}^{4}_{2^{s+t - 1} + 2^{s-1} - 3}.
\end{equation}
Using the calculations in Sum \cite{N.S} and Theorem \ref{dlct}, it follows that, for each $t\geq 4,$ the coinvariants $k\otimes_{G(4)}P((P_4)_{2^{s+t - 1} + 2^{s-1} - 3}^{*})$ are 1-dimensional if $1\leq s\leq 4$ and are 2-dimensional if $s\geq 5.$ Thus, due to \eqref{bdtt2}, we need to determine $({\rm Ker}[\overline{Sq}^{0}]_{n_{s,\, t}})^{G(4)}.$ The below technicality is crucial in the proof of the theorem, but it is quite easy, and so, we state it here with a short sketch only for the reader’s convenience. 

\begin{lema}\label{bdct2}
Let $s,\, t$ be two positive integers such that $t\geq 5.$ Then, the subspace of $G(4)$-invariants $({\rm Ker}[\overline{Sq}^{0}]_{n_{s,\, t}})^{G(4)}$ is trivial if $s = 1, 2$ and is $1$-dimensional if $s\geq 3.$
\end{lema}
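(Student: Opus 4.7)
The approach follows the template established in the proof of Theorem \ref{dlc3}: exploit Sum's admissible-monomial basis for $\pmb{Q}^{4}_{n_{s,t}}$ from \cite{N.S}, decompose ${\rm Ker}[\overline{Sq}^{0}]_{n_{s,t}}$ as a $kG(4)$-module via weight vectors and $\Sigma_4$-orbits, and then extract the $G(4)$-invariants by imposing $\sigma_4$-invariance on the resulting $\Sigma_4$-invariants. The first task is to isolate from Sum's list the admissible monomials of degree $n_{s,t}$ whose classes lie in the Kameko kernel. These are precisely the admissible monomials $x$ with $\omega_1(x) < 4$, equivalently those having at least one even exponent. I would split the basis according to the weight vectors $\omega$ realizable on such monomials and then further into $(\underline{\pmb{Q}^{4}_{n_{s,t}}})^{\omega}$ and $(\overline{\pmb{Q}^{4}_{n_{s,t}}})^{\omega}$, yielding a direct sum of finitely many $\Sigma_4$-submodules.

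Next, for each such $\Sigma_4$-submodule I would identify orbit representatives and compute the $\Sigma_4$-invariants by setting $\sigma_i(f) \equiv_{\omega} f$ for $i = 1, 2, 3$, using Cartan's formula together with Theorem \ref{dlSi} to reduce inadmissible monomials that arise from the action. On each orbit the $\Sigma_4$-action is typically transitive on the chosen basis, so invariance forces all coefficients to coincide and one obtains a single generator $q_j$ per orbit, exactly as in Lemmata \ref{bdc21}--\ref{bdc23} and \ref{bdc31}--\ref{bdc32}. Once the space $({\rm Ker}[\overline{Sq}^{0}]_{n_{s,t}})^{\Sigma_4}$ is presented as $\langle [q_1], \ldots, [q_N]\rangle$, I would expand $\sigma_4(h) + h$ in the admissible basis for a generic invariant $h \equiv \sum_{j} \beta_j q_j$. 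The relation $\sigma_4(h) + h \equiv 0$ produces a homogeneous linear system in the $\beta_j$ whose solution space is exactly $({\rm Ker}[\overline{Sq}^{0}]_{n_{s,t}})^{G(4)}$.

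The expected outcome is: for $s = 1, 2$ the system forces all $\beta_j = 0$, whereas for $s \geq 3$ it admits a one-parameter family of solutions. A natural candidate for the $s \geq 3$ invariant generator is the admissible representative whose dual realizes the coinvariant class $[\widetilde{\zeta}_{s,t}]=[a_3^{(2^{s}-1)}a_4^{(2^{s+t}-1)}]$ from Theorem \ref{dlct2}, since this class does not originate from the image of the down Kameko map (its dual representative has weight vector beginning $\omega_1 < 4$), so its dual in $\pmb{Q}^{4}_{n_{s,t}}$ lies in the Kameko kernel. Consistency with Theorem \ref{dlct2} and the decomposition \eqref{bdtt2}, combined with Theorem \ref{dlct} applied to the target degree $2^{s+t-1}+2^{s-1}-3$, provides an independent dimension check: subtracting the coinvariants of the Kameko image from the total coinvariants of $\pmb{Q}^{4}_{n_{s,t}}$ leaves exactly the predicted dimension.

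The main obstacle is the sheer size of Sum's admissible basis for generic $s, t$ and the delicate dependence of the list on small-$s$ and small-$t$ exceptions, which forces separate verifications for $s \in \{1, 2\}$ and $s \in \{3, 4\}$ before treating the stable range $s \geq 5$. The key simplification I would aim for is a \emph{stabilization lemma}: show that for $s \geq 3$ and all $t \geq 5$ the orbit structure of ${\rm Ker}[\overline{Sq}^{0}]_{n_{s,t}}$ under $\Sigma_4$ becomes independent of $t$ (modulo renaming of exponents), so that the $\sigma_4$-reduction only needs to be carried out for a single representative $t$, say $t = 5$, and then propagated by the same exponent-shift used implicitly in the iterated Kameko map. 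This, together with direct calculations in the base cases $s \leq 4$, would give the uniform conclusion $\dim ({\rm Ker}[\overline{Sq}^{0}]_{n_{s,t}})^{G(4)} = 1$ for all $s \geq 3$ and $= 0$ for $s \in \{1, 2\}$, completing the lemma.
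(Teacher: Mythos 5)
Your proposal follows essentially the same route as the paper's proof: identify the Kameko kernel with the weight-vector component $(\pmb{Q}^4_{n_{s,t}})^{\omega_{(s,t)}}$ using Sum's admissible bases, compute $\Sigma_4$-invariants orbit by orbit, and then impose $\sigma_4$-invariance, with the dimension check coming from the splitting \eqref{bdtt2} and Theorem \ref{dlct}. Your identification of the kernel generator as the dual of $[\widetilde{\zeta}_{s,t}]$ (the one generator of the coinvariants not arising as a Kameko lift) is also consistent with the paper's Theorem \ref{dlct2}.
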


\begin{proof}[{\it Outline of the proof}]
As it is known, from Sum's paper \cite{N.S}, if $x$ is an admissible monomial in $(P_4)_{n_{s,\, t}}$ such that $[x]\in {\rm Ker}[\overline{Sq}^{0}]_{n_{s,\, t}},$ then $\omega_{(s,\, t)}:=\omega(x)= {\underset{\mbox{{$s$ times }}}{\underbrace{(2, 2, \ldots, 2}}, {\underset{\mbox{{$t$ times }}}{\underbrace{1, 1, \ldots, 1})}}}.$ The admissible monomial bases of $(Q_{n_{s,\, t}}^{\otimes 4})^{\omega^{2}_{(s,\, t)}}$ are the sets $[\{b_{t,\,1,\, j}|\, 1\leq j\leq 138\}\cup \{c_{t,\,1,\, j}|\, 1\leq j\leq 7\}]$ for $s  =1,$ and $[\{b_{t,\,s,\, j}|\, 1\leq j\leq 105\}]$ for $s\geq 2,$ where the monomials $b_{t,\, 1,\, j},$ $c_{t,\, 1,\, j}$ and $b_{t,\, s,\, j}$ are given in \cite{N.S}. From these data and our previous results in \cite{Phuc2}, by similar calculations as in the proof of Theorem \ref{dlc3}, the lemma follows from the facts that the invariant spaces $((\pmb{Q}^{4}_{n_{1,\, t}})^{\omega_{(1,\, t)}})^{G(4)}$ and $((\pmb{Q}^{4}_{n_{2,\, t}})^{\omega_{(2,\, t)}})^{G(4)}$  are trivial and that $((\pmb{Q}^{4}_{n_{s,\, t}})^{\omega_{(s,\, t)}})^{G(4)}$ is $1$-dimensional for arbitrary $s > 2.$ 
\end{proof}

Now, for $s = 1, 2,$ let $\rho\in (P_4)_{n_{1,\, t}}$ and $\overline{\rho}\in (P_4)_{n_{2,\, t}}$ such that $[\rho]\in (\pmb{Q}^{4}_{n_{1,\, t}})^{G(4)}$ and $[\overline{\rho}]\in (\pmb{Q}^{4}_{n_{2,\, t}})^{G(4)},$ respectively. Since Kameko's homomorphism $[\overline{Sq}^{0}]_{n_{s,\, t}}: \pmb{Q}^{4}_{n_{s,\, t}}  \longrightarrow \pmb{Q}^{4}_{\frac{n_{s,\, t}}{2}}$ is an epimorphism of $kG(4)$-modules, $[\overline{Sq}^{0}]_{n_{1,\, t}}([\rho])\in (\pmb{Q}^{4}_{\frac{n_{1,\, t}}{2}})^{G(4)}$ and $[\overline{Sq}^{0}]_{n_{2,\, t}}([\overline{\rho}])\in (\pmb{Q}^{4}_{\frac{n_{2,\, t}}{2}})^{G(4)}$. Following \cite{N.S}, we have $(\pmb{Q}^{4}_{\frac{n_{1,\, t}}{2}})^{G(4)} \subseteq \langle [p_{4,\, t}:=\sum_{1\leq j\leq 35}d_{t,\, j}] \rangle,$ and $(\pmb{Q}^{4}_{\frac{n_{2,\, t}}{2}})^{G(4)} \subseteq \langle [\overline{p}_{4,\, t}] \rangle,$ where $$\overline{p}_{4,\, t} = \sum_{1\leq \ell\leq 3}\sum_{1\leq i_1\leq \ldots\leq i_{\ell}\leq 4}x_{i_1}x_{i_2}^{2}\ldots x^{2^{\ell-2}}_{i_{\ell-1}}x^{2^{t+1}-2^{\ell-1}}_{i_{\ell}} + x_1x_2^{2}x_3^{4}x_4^{2^{t+1}-8}$$ and the elements $d_{t,\, j}$ are listed as follows: 

\begin{center}
\begin{tabular}{llr}
$d_{t,\, 1}=x_3^{2^{t}-1}x_4^{2^{t}-1}$, & $d_{t,\, 2}=x_2^{2^{t}-1}x_4^{2^{t}-1}$, & \multicolumn{1}{l}{$d_{t,\, 3}=x_2^{2^{t}-1}x_3^{2^{t}-1}$,} \\
$d_{t,\, 4}=x_1^{2^{t}-1}x_4^{2^{t}-1}$, & $d_{t,\, 5}=x_1^{2^{t}-1}x_3^{2^{t}-1}$, & \multicolumn{1}{l}{$d_{t,\, 6}=x_1^{2^{t}-1}x_2^{2^{t}-1}$,} \\
$d_{t,\, 7}=x_2x_3^{2^{t}-2}x_4^{2^{t}-1}$, & $d_{t,\, 8}=x_2x_3^{2^{t}-1}x_4^{2^{t}-2}$, & \multicolumn{1}{l}{$d_{t,\, 9}=x_2^{2^{t}-1}x_3x_4^{2^{t}-2}$,} \\
$d_{t,\, 10}=x_1x_3^{2^{t}-2}x_4^{2^{t}-1}$, & $d_{t,\, 11}=x_1x_3^{2^{t}-1}x_4^{2^{t}-2}$, & \multicolumn{1}{l}{$d_{t,\, 12}=x_1x_2^{2^{t}-2}x_4^{2^{t}-1}$,} \\
$d_{t,\, 13}=x_1x_2^{2^{t}-2}x_3^{2^{t}-1}$, & $d_{t,\, 14}=x_1x_2^{2^{t}-1}x_4^{2^{t}-2}$, & \multicolumn{1}{l}{$d_{t,\, 15}=x_1x_2^{2^{t}-1}x_3^{2^{t}-2}$,} \\
$d_{t,\, 16}=x_1^{2^{t}-1}x_3x_4^{2^{t}-2}$, & $d_{t,\, 17}=x_1^{2^{t}-1}x_2x_4^{2^{t}-2}$, & \multicolumn{1}{l}{$d_{t,\, 18}=x_1^{2^{t}-1}x_2x_3^{2^{t}-2}$,} \\
$d_{t,\, 19}=x_2^{3}x_3^{2^{t}-3}x_4^{2^{t}-2}$, & $d_{t,\, 20}=x_1^{3}x_3^{2^{t}-3}x_4^{2^{t}-2}$, & \multicolumn{1}{l}{$d_{t,\, 21}=x_1^{3}x_2^{2^{t}-3}x_4^{2^{t}-2}$,} \\
$d_{t,\, 22}=x_1^{3}x_2^{2^{t}-3}x_3^{2^{t}-2}$, & $d_{t,\, 23}=x_1x_2^{2}x_3^{2^{t}-4}x_4^{2^{t}-1}$, & \multicolumn{1}{l}{$d_{t,\, 24}=x_1x_2^{2}x_3^{2^{t}-1}x_4^{2^{t}-4}$,} \\
$d_{t,\, 25}=x_1x_2^{2^{t}-1}x_3^{2}x_4^{2^{t}-4}$, & $d_{t,\, 26}=x_1^{2^{t}-1}x_2x_3^{2}x_4^{2^{t}-4}$, & \multicolumn{1}{l}{$d_{t,\, 27}=x_1x_2x_3^{2^{t}-2}x_4^{2^{t}-2}$,} \\
$d_{t,\, 28}=x_1x_2^{2^{t}-2}x_3x_4^{2^{t}-2}$, & $d_{t,\, 29}=x_1^{3}x_2^{5}x_3^{2^{t}-6}x_4^{2^{t}-4}$, & \multicolumn{1}{l}{$d_{t,\, 30}=x_1x_2^{2}x_3^{2^{t}-3}x_4^{2^{t}-2}$,} \\
$d_{t,\, 31}=x_1x_2^{3}x_3^{2^{t}-4}x_4^{2^{t}-2}$, & $d_{t,\, 32}=x_1x_2^{3}x_3^{2^{t}-2}x_4^{2^{t}-4}$, & \multicolumn{1}{l}{$d_{t,\, 33}=x_1^{3}x_2x_3^{2^{t}-4}x_4^{2^{t}-2}$,} \\
$d_{t,\, 34}=x_1^{3}x_2x_3^{2^{t}-2}x_4^{2^{t}-4}$, & $d_{t,\, 35}=x_1^{3}x_2^{2^{t}-3}x_3^{2}x_4^{2^{t}-4}$. &  
\end{tabular}%

\end{center}

Hence, one obtains $[\overline{Sq}^{0}]_{n_{1,\, t}}([\rho]) = \gamma[\varphi(p_{4,\, t})]$ and $[\overline{Sq}^{0}]_{n_{2,\, t}}([\overline{\rho}]) = \beta[\varphi(\overline{p}_{4,\, t})]$ where $\gamma,\, \beta\in k$ and $\varphi$ is the up Kameko map. Because $[\rho]\in [\pmb{Q}^{4}_{n_{1,\, t}}]^{G(4)}$ and $[\overline{\rho}]\in [\pmb{Q}^{4}_{n_{2,\, t}}]^{G(4)},$ we have $\rho \equiv \gamma\varphi(p_{4,\, t}) + \rho^{*}$ and $\overline{\rho} \equiv \beta\varphi(\overline{p}_{4,\, t}) + \overline{\rho}^{*}$ where $\rho^{*}\in (P_4)_{n_{1,\, t}}$ and $\overline{\rho}^{*}\in (P_4)_{n_{2,\, t}}$ such that $[\rho^{*}]\in {\rm Ker}[\overline{Sq}^{0}]_{n_{1,\, t}}$ and $[\overline{\rho}^{*}]\in {\rm Ker}[\overline{Sq}^{0}]_{n_{2,\, t}},$ respectively. By using Lemma \ref{bdct2} and the relations $\sigma_i(\rho) + \rho \equiv 0,$ and $\sigma_i(\overline{\rho}) + \overline{\rho} \equiv 0$ for $1\leq i\leq 4$, we can obtain, through direct calculations, that 
$$ \begin{array}{ll}
(\pmb{Q}^{4}_{n_{1,\, t}})^{G(4)}&=\langle [x_1x_2^{2}x_3^{2^{t+1}-4}x_4+x_1x_2^{2}x_3x_4^{2^{t+1}-4}+x_1x_2^{2}x_3^{2^{t+1}-3}+\varphi(p_{4,\, t})] \rangle,\\
(\pmb{Q}^{4}_{n_{2,\, t}})^{G(4)}&= \langle [u+\varphi(\overline{p}_{4,\, t}) ] \rangle,
\end{array}$$
where
\begin{align*}
u&=
x_3^3 x_4^{2^{t+2}-1} +
x_3^{2^{t+2}-1} x_4^3 +
x_2^3 x_4^{2^{t+2}-1} +
x_2^3 x_3^{2^{t+2}-1} +
x_2^{2^{t+2}-1} x_4^3 \\
&+
x_2^{2^{t+2}-1} x_3^3 +
x_1^3 x_4^{2^{t+2}-1} +
x_1^3 x_3^{2^{t+2}-1} +
x_1^3 x_2^{2^{t+2}-1} +
x_1^{2^{t+2}-1} x_4^3 \\
&+
x_1^{2^{t+2}-1} x_3^3 +
x_1^{2^{t+2}-1} x_2^3 +
x_3^7 x_4^{2^{t+2}-5} +
x_2^7 x_4^{2^{t+2}-5} +
x_2^7 x_3^{2^{t+2}-5} \\
&+
x_1^7 x_4^{2^{t+2}-5} +
x_1^7 x_3^{2^{t+2}-5} +
x_1^7 x_2^{2^{t+2}-5} +
x_2 x_3^2 x_4^{2^{t+2}-1} +
x_2 x_3^{2^{t+2}-1} x_4^2 \\
&+
x_2^{2^{t+2}-1} x_3 x_4^2 +
x_1 x_3^2 x_4^{2^{t+2}-1} +
x_1 x_3^{2^{t+2}-1} x_4^2 +
x_1 x_2^2 x_4^{2^{t+2}-1} +
x_1 x_2^{2^{t+2}-1} x_4^2 \\
&+
x_1^{2^{t+2}-1} x_2 x_4^2 +
x_1 x_2^2 x_3^{2^{t+2}-1} +
x_1^{2^{t+2}-1} x_3 x_4^2 +
x_1^{2^{t+2}-1} x_2 x_3^2 +
x_1^{2^{t+2}-1} x_3^2 x_4 \\
\end{align*}
\newpage
\begin{align*}
&+
x_2 x_3^{2^{t+2}-2} x_4 +
x_1 x_3^{2^{t+2}-2} x_4 +
x_1^{2^{t+2}-2} x_2 x_4 +
x_1 x_2 x_3^{2^{t+2}-2} +
x_2 x_3^6 x_4^{2^{t+2}-5} \\
&+
x_1 x_3^6 x_4^{2^{t+2}-5} +
x_1 x_2^6 x_4^{2^{t+2}-5} +
x_1 x_2^6 x_3^{2^{t+2}-5} +
x_2 x_3^{2^{t+2}-5} x_4^6 +
x_2^{2^{t+2}-5} x_3 x_4^6 \\
&+
x_1 x_3^{2^{t+2}-5} x_4^6 +
x_1 x_2^{2^{t+2}-5} x_4^6 +
x_1 x_2^{2^{t+2}-5} x_3^6 +
x_1^{2^{t+2}-5} x_3 x_4^6 +
x_1^{2^{t+2}-5} x_2 x_4^6 \\
&+
x_1^{2^{t+2}-5} x_2 x_3^6 +
x_2 x_3^5 x_4^{2^{t+2}-4} +
x_1 x_3^5 x_4^{2^{t+2}-4} +
x_1 x_2^5 x_4^{2^{t+2}-4} +
x_1 x_2^5 x_3^{2^{t+2}-4} \\
&+
x_2 x_3^{2^{t+2}-4} x_4^5 +
x_1 x_3^{2^{t+2}-4} x_4^5 +
x_1 x_2^{2^{t+2}-4} x_4^5 +
x_1 x_2^{2^{t+2}-4} x_3^5 +
x_2^3x_3 x_4^{2^{t+2}-2} \\
&+
x_1^3x_3 x_4^{2^{t+2}-2} +
x_1^3x_2 x_4^{2^{t+2}-2} +
x_1^3 x_2 x_3^{2^{t+2}-2} +
x_1 x_2x_3^{2} x_4^{2^{t+2}-2} +
x_1 x_2 x_3^{2^{t+2}-2} x_4^{2}\\
&+
x_1 x_2^{2^{t+2}-2} x_3x_4^{2}+
x_1 x_2^{6} x_3x_4^{2^{t+2}-6} +
x_1 x_2^{2} x_3^{2^{t+2}-3}x_4^{2} +
x_1 x_2^2 x_3^{2^{t+2}-4} x_4^3 +
x_1 x_2^3 x_3^2 x_4^{2^{t+2}-4} \\
&+
x_1^3 x_2 x_3^2 x_4^{2^{t+2}-4} +
x_1 x_2^2 x_3^4 x_4^{2^{t+2}-5} +
x_1 x_2^2 x_3^5 x_4^{2^{t+2}-6} +
x_1 x_2^2 x_3^7 x_4^{2^{t+2}-8}+
x_1 x_2^7 x_3^2 x_4^{2^{t+2}-8}\\
&+
x_1^{7} x_2x_3^2 x_4^{2^{t+2}-8}+
x_1^{3} x_2^3 x_3^4 x_4^{2^{t+2}-8}.
\end{align*}
%$(\pmb{Q}^{4}_{n_{1,\, t}})^{G(4)}$ and $(\pmb{Q}^{4}_{n_{2,\, t}})^{G(4)}$ are respectively generated by $[\varphi(p_{4,\, t})]$ and $[\varphi(\overline{p}_{4,\, t})].$

For simplicity, we illustrate the computation in the case $s = 1$ and arbitrary $t \geq 1$. In this case, we have $n_{1, t} = 2^{t+1}.$ We present calculations based entirely on the output of the new algorithm from our most recent work \cite{Phuc4}.

Recall that by \cite{N.S}, the Kameko homomorphism $[\overline{Sq}^{0}]_{n_{1, t} }: \pmb{Q}^{4}_{n_{1, t} } \longrightarrow \pmb{Q}^{4}_{2^{t}-2}$ is an epimorphism, and ${\rm Ker}[\overline{Sq}^{0}]_{n_{1, t} }$ is the set of all classes represented by the following admissible monomials: 

\begin{longtable}{llll}
$b_{t,1} = x_3x_4^{2^{t+1}-1}$ & $b_{t,2} = x_3^{2^{t+1}-1}x_4$ & $b_{t,3} = x_2x_4^{2^{t+1}-1}$ & $b_{t,4} = x_2x_3^{2^{t+1}-1}$ \\
$b_{t,5} = x_2^{2^{t+1}-1}x_4$ & $b_{t,6} = x_2^{2^{t+1}-1}x_3$ &$b_{t,7} = x_1x_4^{2^{t+1}-1}$ & $b_{t,8} = x_1x_3^{2^{t+1}-1}$ \\
$b_{t,9} = x_1x_2^{2^{t+1}-1}$ & $b_{t,10} = x_1^{2^{t+1}-1}x_4$ & $b_{t,11} = x_1^{2^{t+1}-1}x_3$ & $b_{t,12} = x_1^{2^{t+1}-1}x_2$ \\
$b_{t,13} = x_3^3x_4^{2^{t+1}-3}$ & $b_{t,14} = x_2^{3}x_4^{2^{t+1}-3}$ & $b_{t,15} = x_2^3x_3^{2^{t+1}-3}$ & $b_{t,16} = x_1^3x_4^{2^{t+1}-3}$ \\
$b_{t,17} = x_1^3x_3^{2^{t+1}-3}$ & $b_{t,18} = x_1^3x_2^{2^{t+1}-3}$ & $b_{t,19} = x_2x_3x_4^{2^{t+1}-2}$ & $b_{t,20} = x_2x_3^{2^{t+1}-2}x_4$ \\
$b_{t,21} = x_1x_3x_4^{2^{t+1}-2}$ & $b_{t,22} = x_1x_3^{2^{t+1}-2}x_4$ & $b_{t,23} = x_1x_2x_4^{2^{t+1}-2}$ & $b_{t,24} = x_1x_2x_3^{2^{t+1}-2}$ \\
$b_{t,25} = x_1x_2^{2^{t+1}-2}x_4$ & $b_{t,26} = x_1x_2^{2^{t+1}-2}x_3$ & $b_{t,27} = x_2x_3^2x_4^{2^{t+1}-3}$ & $b_{t,28} = x_1x_2x_3x_4^{2^{t+1}-3}$ \\
$b_{t,29} = x_1x_2^2x_4^{2^{t+1}-3}$ & $b_{t,30} = x_1x_2^2x_3^{2^{t+1}-3}$ & $b_{t,31} = x_2x_3^3x_4^{2^{t+1}-4}$ & $b_{t,32} = x_3^3x_2x_4^{2^{t+1}-4}$ \\
$b_{t,33} = x_1x_3^3x_4^{2^{t+1}-4}$ & $b_{t,34} = x_1x_2^3x_4^{2^{t+1}-4}$ & $b_{t,35} = x_1x_3^3x_4^{2^{t+1}-4}$ & $b_{t,36} = x_1^3x_3x_4^{2^{t+1}-4}$ \\
$b_{t,37} = x_1^3x_2x_4^{2^{t+1}-4}$ & $b_{t,38} = x_1^3x_2x_3^{2^{t+1}-4}$ & $b_{t,39} = x_1x_2x_3^2x_4^{2^{t+1}-4}$ & $b_{t,40} = x_1x_2^2x_3x_4^{2^{t+1}-4}$ \\
$b_{t,41} = x_1x_2^2x_3^{2^{t+1}-4}x_4$ & $b_{t,42} = x_1x_2^2x_4^4x_4^{2^{t+1}-7}$ & $b_{t,43} = x_1x_2^2x_3^5x_4^{2^{t+1}-8}$ & $b_{t,44} = x_1x_2^3x_4^4x_4^{2^{t+1}-8}$ \\
$b_{t,45} = x_1^3x_2x_4^4x_4^{2^{t+1}-8}.$ & &&
\end{longtable}

Since the exponents of the variables in each monomial follow a common pattern for all $t \geq 5$, it suffices to apply the kernel computation algorithm for $t = 5$, as described in \cite{Phuc4}, which corresponds to the degree $n_t = 2^{t+1} = 64$. Then, we have:
$$ {\rm Ker}[\overline{Sq}^{0}]_{2^{t+1}} = \Sigma_4(b_{t, 1})\oplus \Sigma_4(b_{t, 13})\oplus \Sigma_4(b_{t, 35})\oplus \mathcal U,$$
where $$ \begin{array}{ll}
\medskip
 \Sigma_4(b_{t, 1}) &= \langle \{[b_{t, j}]:\ 1\leq j\leq 12\}\rangle,\ \ \Sigma_4(b_{t, 13})= \langle \{[b_{t, j}]:\ 13\leq j\leq 18\}\rangle,\\
 \Sigma_4(b_{t, 19}) &= \langle \{[b_{t, j}]:\ 19\leq j\leq 38\}\rangle,\ \ \mathcal U = \langle \{[b_{t, j}]:\ 39\leq j\leq 45\}\rangle.
\end{array}$$
From the algorithm presented in \cite{Phuc4}, we derive the following result:
$$ \begin{array}{ll}
\medskip
 \Sigma_4(b_{t, 1})^{\Sigma_4} &= \langle \{[\xi_{t, 1}:= \sum_{1\leq j\leq 12}b_{t, j}]\}\rangle,\ \  \Sigma_4(b_{t, 13})^{\Sigma_4} = \langle \{[\xi_{t, 2}:= \sum_{13\leq j\leq 38}b_{t, j}]\}\rangle,\\
 \Sigma_4(b_{t, 19})^{\Sigma_4} &= \langle \{[\xi_{t, 3}:= \sum_{j\in \{20,22,25,26,27,28,29,30,31,32,33,34,35,36,37,38\}}b_{t, j}]\}\rangle,\\ \mathcal U^{\Sigma_4} &= \langle \{[\xi_{t, 4}:= \sum_{39\leq j\leq 41}b_{t, j}],\ [\xi_{t, 5}:= \sum_{42\leq j\leq 45}b_{t, j}]\}\rangle.
\end{array}$$
Thus, $$ ({\rm Ker}[\overline{Sq}^{0}]_{n_{1,t}})^{\Sigma_4} = \langle \{[\xi_{t, 1}], [\xi_{t, 2}], [\xi_{t, 3}], [\xi_{t, 4}], [\xi_{t, 5}]\} \rangle.$$
Now, suppose that $[g] \in (\pmb{Q}^{4}_{n_{1,t}})^{G(4)}$. Then there exists $\beta \in \mathbb{F}_2$ such that $g + \beta \varphi(p_{4,t}) \in \ker [\overline{Sq}^{0}]_{n_{1,t}}.$ The output of the algorithm shows that $\sigma_i(g) + g \equiv 0$ for $1 \leq i \leq 3$ if and only if
$$ g \equiv \beta \varphi(p_{4,t}) + \beta'b_{t, 39} + \beta''(b_{t, 40}+ b_{t, 41}) + \sum_{1\leq i\leq 5,\, i\neq 4}\beta_i\xi_{t, i}.$$
Then by using the relation $\sigma_4(g) + g\equiv 0,$ we obtain $\beta = \beta' = \beta''$ and $\beta_i = 0$ for $1\leq i\leq 5,\, i\neq 4.$ Therefore,
\begin{align*}
 g\equiv \beta\left(\varphi(p_{4,t}) + \sum_{39\leq j\leq 41}b_{t, j}\right) &= \beta(\varphi(p_{4,t}) + \xi_{t, 4})\\
& = \beta(x_1x_2^{2}x_3^{2^{t+1}-4}x_4+x_1x_2^{2}x_3x_4^{2^{t+1}-4}+x_1x_2^{2}x_3^{2^{t+1}-3} + \varphi(p_{4,t})).
\end{align*}
This means that
\[
(\pmb{Q}^{4}_{n_{1,t}})^{G(4)} = \langle [ x_1x_2^{2}x_3^{2^{t+1}-4}x_4 + x_1x_2^{2}x_3x_4^{2^{t+1}-4} + x_1x_2^{2}x_3^{2^{t+1}-3} + \varphi(p_{4,t})] \rangle.
\]

\medskip

Now, straightforward calculations show that $\zeta_{1,\, t}:=a_1^{(1)}a_2^{(1)}a_3^{(2^{t}-1)}a_4^{(2^{t}-1)}\in (P_4)_{n_{1,\, t}}^{*}$ and  $\zeta_{2,\, t}:=a_1^{(1)}a_2^{(1)}a_3^{(1)}a_4^{(2^{t+2}-1)}\in (P_4)_{n_{2,\, t}}^{*}$ are $\overline{A}$-annihilated elements. Furthermore, it can be easily seen that $<\zeta_{1,\, t}, x_1x_2^{2}x_3^{2^{t+1}-4}x_4+x_1x_2^{2}x_3x_3^{2^{t+1}-4}+x_1x_2^{2}x_3^{2^{t+1}-3})+\varphi(p_{4,\, t})> = 1$ and $<\zeta_{2,\, t}, u+ \varphi(\overline{p}_{4,\, t})>= 1,$ one derives $k\otimes_{G(4)}\mathscr {P}_A((P_4)_{n_{1,\, t}}^{*}) = \langle [\zeta_{1,\, t}] \rangle, \ \mbox{and}\ k\otimes_{G(4)}\mathscr {P}_A((P_4)_{n_{2,\, t}}^{*}) = \langle [\zeta_{2,\, t}] \rangle.$  By similar arguments using Theorem \ref{dlct} and Lemma \ref{bdct2}, it is not too difficult to verify that
$$k\otimes_{G(4)}\mathscr {P}_A((P_4)_{n_{s,\, t}}^{*}) \\
 = \left\{\begin{array}{ll}
\langle [\zeta_{s,\, t}], [\widetilde{\zeta}_{s,\, t}] \rangle &\mbox{if $s = 3,\, 4$},\\[1mm]
\langle [\zeta_{s,\, t}], [\widetilde{\zeta}_{s,\, t}],  [\widehat{\zeta}_{s,\, t}] \rangle &\mbox{if $s \geq 5$},
\end{array}\right.$$
where the elements $$ \begin{array}{lll}
\medskip
\zeta_{s,\, t}&:=a_1^{(1)}a_2^{(2^{s}-1)}a_3^{(2^{s+t-1}-1)}a_4^{(2^{s+t-1}-1)}, \\
\medskip
 \widetilde{\zeta}_{s,\, t}&:= a_3^{(2^{s}-1)}a_4^{(2^{s+t}-1)},\\
\medskip
\widehat{\zeta}_{s,\, t}&:= a_1^{(1)}a_2^{(2^{s-1}-1)}a_3^{(2^{s-1}-1)}a_4^{(2^{s+t}-1)}
\end{array}$$
belong to $\mathscr {P}_A((P_4)_{n_{s,\, t}}^{*}).$ In actual fact, the subsequent steps can be obtained through meticulous hand-written calculations, which we shall omit in the interest of brevity. We leave these calculations to the interested reader for their scrutiny, and thus arrive at the desired conclusion. With this, we have completed the proof of Theorem \ref{dlct2}.

%\section{Declarations: Conflicts of Interest}

%In the interest of transparency, I wish to declare a potential conflict of interest with \textbf{Mr. Nguyen Sum} (Saigon University, Vietnam; email: nguyensum@sgu.edu.vn, nguyensum@qnu.edu.vn). We were formerly members of the same research group in algebraic topology in Vietnam and have co-authored several publications. However, in recent years, our professional collaboration has ceased due to diverging academic paths and disagreements.

\medskip

%In addition, I wish to express my serious concern regarding Mr. Sum's recent actions in the peer review process of several manuscripts I have submitted to international journals. Despite our prior academic collaboration and the known conflict of interest that currently exists between us, Mr. Sum has repeatedly accepted invitations to review my submissions without disclosing this conflict to the editors. On multiple occasions, he proceeded to recommend the rejection of my work. 

\medskip

\end{document}